\newtheorem{lemma}{Lemma}[section]
\newtheorem{theorem}[lemma]{Theorem}
\newtheorem{proposition}[lemma]{Proposition}
\newtheorem{corollary}[lemma]{Corollary}
\newtheorem{remark}[lemma]{Remark}
\newtheorem{example}[lemma]{Example}
\newtheorem{definition}[lemma]{Definition}
\providecommand{\keywords}[1]{\textbf{\textit{Keywords: }} #1}\numberwithin{equation}{section}
\newcommand{\h}{\hbar}
\newcommand{\di}{\displaystyle}
\newcommand{\1}{\mathbb I}
\newcommand{\ta}{{\tt a}}
\newcommand{\tM}{{\tt M}}
\newcommand{\tJ}{{\tt J}}
\newcommand{\tN}{{\tt N}}
\def\beq{\begin{equation}}   \def\eeq{\end{equation}}
\def\bea{\begin{eqnarray}}  \def\eea{\end{eqnarray}}
\def\noi{\noindent}
\title{	On time dependent  Schr\"odinger equations: global well-posedness and growth of Sobolev norms}
\author{
A. Maspero
\footnote{
Laboratoire de Math\'ematiques Jean Leray, Universit\'e de Nantes, 2 rue de la Houssini\`ere
BP 92208, 44322 Nantes Cedex 3 \newline
 \textit{Email: } \texttt{alberto.maspero@univ-nantes.fr}},  
  D. Robert
 \footnote{Laboratoire de Math\'ematiques Jean Leray, Universit\'e de Nantes, 2 rue de la Houssini\`ere 
BP 92208, 44322 Nantes Cedex 3 \newline
 \textit{Email: } \texttt{didier.robert@univ-nantes.fr}}
  }
\date{\today}
\begin{document}
	
	\maketitle

	\begin{abstract}
	In this paper  we consider  
	time dependent  Schr\"odinger linear PDEs of the form $\im \partial_t \psi = L(t)\psi$, where  $ L(t)$  is a continuous family of self-adjoint operators. We give conditions for well-posedness  and polynomial growth for the evolution  in abstract Sobolev spaces.\\
	If  $L(t) = H +V(t)$ where  $V(t)$ is a  perturbation  smooth  in time  and $H$  is a self-adjoint positive operator
	whose spectrum can be enclosed in spectral clusters whose distance is increasing, we prove that the Sobolev norms of the solution grow at most as $t^\epsilon$ when $t\mapsto \infty$, for any $\epsilon >0$. If $V(t)$ is analytic in time we improve the bound to $(\log t)^\gamma$, for some $\gamma >0$. The proof follows the strategy, due to Howland,  Joye and Nenciu, of the adiabatic approximation of the flow.
	We recover most of known results  and obtain new estimates   for several models including  $1$-degree of freedom Schr\"odinger operators on $\R$ and Schr\"odinger operators on Zoll manifolds.
		\end{abstract}
	
	\keywords{linear  Schr\"odinger operators,  time dependent Hamiltonians,  growth in time of Sobolev norms}
	
	{\bf \em AMS classification:} 35Q41, 47G30. 
	\section{Introduction and Statement of the main results}
	\subsection{Introduction}
	In this paper we study  properties of time dependent Schr\"odinger-type linear partial differential equations defined on scales of Hilbert spaces.  Our aim  is twofold: (i)  to put in a
	unified setting several results only   known in particular  cases  concerning well-posedness and growth of  norms for large time and (ii)  to generalize and extend such results to new models.

 More precisely, given a scale of Hilbert spaces $\{\cH^k\}_{k\in \R}$, we denote by $\la \cdot, \cdot \ra_0$ the scalar product of $\cH^0$, and  we consider  Cauchy problems of the form
 \begin{equation}
	\label{eq:sc0}
	\begin{cases}
	\im \partial_t\psi(t) =  L(t) \psi(t)\\
	\psi\vert_{t=s} = \psi_s\in\cH^k \ , \quad s \in \R
	\end{cases}
	\end{equation}
where $L(t)$  is a  time-dependent, linear, symmetric (w.r.t. $\la \cdot, \cdot \ra_0$) and unbounded operator in $\cH^0$. We  want  here  to establish a list of  simple criteria which ensure the global in time well-posedness, the unitarity in the base space $\cH^0$, as well as giving   bounds on the growth of the $\cH^k$-norms  for the solution of  \eqref{eq:sc0}. In all the paper we assume that the spaces $\cH^k$ are defined as the domains of the powers of a positive self-adjoint operator $H$, i.e. $\cH^k \equiv D(H^{k/2})$. \\

Our first result concerns a very general class of operators $L(t)$.  Roughly speaking, under the condition that  the commutator  $[L(t),H]$ is $H^\tau$-bounded   for some $\tau < 1$, we will prove that  the flow $\cU(t,s)$ of \eqref{eq:sc0} exists globally in time in $\cH^k$ and its norm grows at most polynomially in time as $t \to \infty$, and more precisely  we prove the upper bound 
\begin{equation}
\label{eq:bound0}
\norm{\cU(t,s)\psi}_{\cL(\cH^k)} \leq C \,  \la t- s\ra ^{\frac{k}{2(1-\tau)}}
\end{equation} 
for some constant $C$ independent of $t$. Here $\la x \ra = (1+x^2)^{1/2}$.\\
It is remarkable that such a bound,  in the case $\tau = 0$, is optimal, since   there exist  operators $L(t)$, $H$ with  $[L(t), H]$ bounded s.t.\ the solution of \eqref{eq:sc0}   fulfills $\norm{\cU(t,0)\psi}_{\cH^k} \geq C \,  \la t\ra^{\frac{k}{2}}$. Such an example was constructed by Delort in \cite{del1}, choosing $L(t) =  H+ V(t)$  where $H=  -\Delta + |x|^2 $ on $\R$ is the harmonic oscillator and  $V(t)$ is an ad-hoc pseudodifferential operator of order 0 (see Remark \ref{rem:del} for more details).

 \vspace{1em}
 
 However, with stronger assumptions on $L(t)$ and $H$, one might hope to improve the bound \eqref{eq:bound0}.
  Indeed it is well known that in many interesting situations the norm of flow of \eqref{eq:sc0}  grows much more slowly, in particular at most as $t^\epsilon$ when $t \to \infty$, for any $\epsilon >0$. 
  This is the case for example for equation \eqref{eq:sc0} on $\T$ with  $L(t) = -\Delta + V(t,x)$,  as proved by Bourgain in \cite{Bourgain1999}. Here $\Delta$ is the laplacian and $V(t,x)$ is a smooth potential. The same bound holds also when $L(t) = -\Delta + V(t,x)$ is defined on Zoll manifolds, as proved by Delort \cite{del2}.
 
 The crucial feature of these examples is a spectral property of  the principal operator  $-\Delta$ on Zoll manifolds. Indeed its spectrum can  be enclosed in clusters  whose distance is increasing (we will refer to such property as  {\em increasing spectral gap condition}). Note that, in the example of Remark \ref{rem:del}, the harmonic oscillator $-\Delta + |x|^2$  on $\R$ does not fulfill the increasing spectral gap condition.
 
 Such property motivates our second result. 
In order to improve the upper bound \eqref{eq:bound0}, we put ourselves in the situation where  $L(t)$ is of the form 
 $ L(t) = H + V(t)$  and we assume that $H$ has increasing spectral gaps. Then provided that $V(t)$ is smooth in time, we prove that for every $\varepsilon >0$ the  bound 
 \begin{equation}
 \label{eq:boun1}
  \Vert\cU(t,s)\Vert_{{\cal L}(\cH^k)} \leq C_{k,\varepsilon}\la t-s\ra^\varepsilon,\;\;\forall t, s\in\R
 \end{equation}
 holds. This is essentially the content of Theorem \ref{thm:grest} below. It is important to note that we allow $V(t)$ to be an {\em unbounded} perturbation. More precisely we can take $V(t)$ to be $H^\nu$-bounded, where $\nu<1$ depends only on the spectral properties of $H$.  \\
 In the case where $t \mapsto V(t)$ is analytic, we are able to further improve the bound \eqref{eq:boun1}, obtaining the logarithmic estimate 
  \begin{equation}
 \label{eq:boun2}
  \Vert\cU(t,s)\Vert_{{\cal L}(\cH^k)} \leq \gamma (\log \la t-s \ra )^{\sigma k} ,\;\;\forall t, s\in\R
 \end{equation}
 where the constant $\sigma>0$ can be explicitly calculated. This is the content of Theorem \ref{thm:grest2} below. Once again when $V(t)$ is a bounded perturbation the exponent $\sigma$ that we find is optimal   (see Remark \ref{rem:bou} below).
 
 Finally we apply our abstract theorems to several different models, including one degree of freedom Schr\"odinger operators, perturbations of the laplacian on compact manifolds, Dirac equations, a discrete NLS model and some classes of pseudodifferential operators. We recover many known results proven with different techniques, often improving such results (allowing e.g.  unbounded perturbations) but also obtaining new results.  More details and references will be given in  Section \ref{app}.

 \vspace{1em}
 The problem of estimating the growth of higher norms for equation \eqref{eq:sc0} is very old, and goes back to the pioneering works  initiated by Howland \cite{how}  and developed by    Joye \cite{joy0,joy},    Nenciu \cite{nen}   and  Barbaroux-Joye \cite{bajo}. \\
Such authors, roughly speaking, under  the increasing spectral gaps condition on $H$ and  the assumption that the perturbation  $V(t)$ is smooth in time  and  {\em bounded},    use the method of adiabatic approximation   to prove  that  for    every $\varepsilon >0$   we have 
$$
\Vert\cU(t,s)\Vert_{{\cal L}(\cH^1)} \leq C_{1,\varepsilon}\la t-s\ra^\varepsilon,\;\;\forall t, s\in\R.
$$
Our aim here is to  extend the adiabatic approximation  schema of  Joye and  Nenciu  to  a class of  {\em  unbounded}   perturbations $V(t)$ and to  control the growth of the  $\cH^k$-norm $\forall k >0$.

\vspace{1em}
As a final remark, we would like to mention some situations in which it is possible to prove better bounds, and in particular  to prove that  $\norm{\cU(t,s)}_{\cL(\cH^k)}$ is uniformly bounded in time $\forall k \geq 0$. 
 Such  results can be obtained  for instance provided that the  perturbation $V$ fulfills some stronger assumptions, for example being quasi-periodic in time and small in size. 
Indeed in these cases one might try to apply KAM methods to conjugate  $L(t)$ to a  diagonal operator  with constant coefficients, which in turn implies that  the $\cH^k$-norms are uniformly bounded in time $\forall k \geq 0$.
  The problem of the existence of such a conjugation goes in the literature under the name of {\em reducibility} and  had a tremendous development in the last 20 years. To list the achievements of such theory is out of the scope of this manuscript: we limit ourselves to state the latest results in the various models considered in Section \ref{app}.

\subsection{Main result}

We start to make more precise assumptions. We ask that the scale of Hilbert spaces is generated by a  positive self-adjoint operator  $H$ in ${\cal H}$, in the following sense: first $H$ has  a dense domain $ D(H)\equiv \cH^2$.
 Then, defining for every $k \geq 0 $ the operator  $H^k$  by functional calculus (spectral decomposition), we demand that   $\cH^k \equiv D(H^{k/2})$.
  For $k<0$,  $\cH^k$ is defined by duality  as the completion of $\cH$  with respect to  the norm $\Vert u\Vert_k = \sup\{\vert\la v,u\ra\vert,\;\Vert v\Vert_{-k}\leq 1\}$. Notice that  for every $m\in\R$  and $k\in\R$, $H^m$  is an isometry   from $\cH^{k+2m}$ onto $\cH^k$. Denote by $\cH^{\infty} := \cap_{k \in \R} \cH^k$.  \\
 Let us denote by $\Vert\cdot\Vert_k$  the natural norm on $\cH^k$, which in turns is equivalent to $\norm{H^{k/2} \cdot}_0$. Finally  given a Banach space $\cB$, we denote  by $C_b(\R, \cB)$ the Banach space of continuous  and bounded  maps $f : \R \mapsto \cB$  with the usual sup norm $\Vert f\Vert_\infty:= \sup_{t \in \R} \norm{f(t)}_\cB$. 
 We denote by $C_b^\infty(\R, \cB)$ the space of maps $f:\R \to \cB$ smooth. \\
 Given $\cA, \cB$ Banach spaces, we will denote by $\cL(\cA, \cB)$ the set of linear bounded maps from $\cA$ to $\cB$. In case $\cA \equiv \cB$ we will simply  write $\cL(\cA)$.\\
Given an operator $A$, we say that $A$ is $H^\nu$-bounded if $AH^{-\nu}$ is a bounded operator on $\cH^0$.

\begin{remark}  Recall that 
$\cH^\infty $ is dense in $\cH^k$  $\forall k\in [0, \infty[$.  This follows from the spectral decomposition   of $H$:
$ H = \int_0^\infty\lambda dE_H(\lambda)$ (see \cite{reed1975ii}). Let $E_H[a,b]=\int_a^bdE_H(\lambda)$  be the spectral  projector on $[a, b]$.
 If $\psi\in D(H^{k/2})$ then $E_H[0,N]\in\cH^\infty$   for all $N>0$  and $\di{\lim_{N\rightarrow\infty}\Vert H^{k/2}(\psi-\psi_N)\Vert_0=0}$.
\end{remark}
Let us introduce  now a time dependent family  of operators $L(t)$   and the following conditions:
	\begin{itemize}\item[(H0)] There exist  integers $\tm\geq 0$  and $k_0 > 2\tm$ such that  $t \mapsto L(t) \in C_b(\R, \cL(\cH^{k+2\tm} ,\cH^{k}))$  for $0\leq k\leq k_0$ . 
		\item[(H1)] For every $t \in \R$,  $L(t)$  is  symmetric  on $\cH^{k_0+2\tm}$ w.r.t.\ the scalar product of $\cH^0$  i.e.
	$$
	\la L(t) \psi, \phi \ra_0 = \la \psi, L(t) \phi \ra_0, \qquad \forall \psi, \phi\in\cH^{k_0+2\tm} \ . 
	$$
	
	\item[(H2)] There  exists $k_1>2\tm$  such that $ [L(\cdot), H]H^{-1}\in C_b(\R, \cL(\cH^{k}))$  for $0\leq k \leq 2k_1.$
\end{itemize}
The first theorem concerns  existence of a global in time flow of equation \eqref{eq:sc0}:
\begin{theorem}
	\label{thm:flow2}
Assume  that   $L(t)$ fulfills the assumptions {\rm (H0), (H1), (H2)}.
	Then for all $k$ with  $0\leq k \leq \min(k_0, k_1)-4\tm$, equation \eqref{eq:sc0}  admits a unique propagator   
	$\cU(t, s) \in C^0\left(\R \times \R, \ \cL(\cH^{k})\right)$ fulfilling
	\begin{itemize}
		\item[(i)]  {\em Well-posedness:} for every initial datum $\psi_s \in \cH^{k+2\tm}$, there exists a unique global solution 
		$\psi(t):= \cU(t,s)\psi_s \in \cH^{k}$  of \eqref{eq:sc0}  such that
		$\psi(\cdot)\in C^0(\R,\cH^{k+2\tm})\cap C^1(\R,\cH^k).$
		\item[(ii)] {\em Unitarity:} for every initial datum $\psi_s \in \cH^{k}$, the $\cH^0$ norm is preserved by the flow,  $\norm{\psi(t)}_{\cH^0} = \norm{\psi_s}_{\cH^0}$, $\forall t\in\R$.
		\item[(iii)] {\em Group property:} $\forall t, r, s\in\R$ 
		\begin{equation}\label{gr}
		\cU(t,s) = \cU(t,r)\, \cU(r,s),\;\; \; \cU(s,s)=\1 \ . 
		\end{equation}
		\item[(iv)]{\em Upper bound on growth:} for every $k \geq 0$, there  exists  $C_{k}>0$  such that 
		\begin{equation}
		\label{exp.gr}
		\Vert\cU(t,s)\Vert_{{\cal L}(\cH^k)}\leq C_{k}{\rm e}^{C_k\vert t-s\vert},\;\;\forall t, s\in\R.
		\end{equation}
	\end{itemize}
	In particular  $\cU(t,s)$  extends  to a  unitary operator in $\cH^0$  fulfilling the group property $(iii)$.\\
	Furthermore for every  $t\in\R$ and  every $k\geq 2\tm$,  $(L(t), \cH^k, \cH^0)$ is essentially self-adjoint.
\end{theorem}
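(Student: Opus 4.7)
My plan is to combine (a) essential self-adjointness of each frozen-time operator $L(t)$ via a commutator criterion, (b) a Galerkin-type regularization built on the spectral projectors of $H$, and (c) uniform-in-the-regularization energy estimates in the scale $\cH^k$. As a preliminary step I would verify that for each fixed $t\in\R$ the operator $(L(t), \cH^{2\tm}, \cH^0)$ is essentially self-adjoint: symmetry is given by (H1), the $H^{\tm}$-boundedness by (H0), and (H2) supplies the commutator bound $\|[L(t), H]\psi\|_0 \leq C\|H\psi\|_0$ needed to invoke Nelson's commutator theorem with comparison operator $N:=H^{\tm}+1$. Essential self-adjointness at level $k\geq 2\tm$ (the last claim of the theorem) would follow from the same argument once (H2) has been iterated to control $[L(t), H^{k/2}]$.

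To construct the propagator I would introduce the spectral projector $P_N := E_H[0,N]$ and the truncated Hamiltonian $L_N(t) := P_N L(t) P_N$. By (H0) and (H1), $L_N(t)$ is bounded, self-adjoint, and continuous in $t$, so Picard iteration yields a jointly strongly continuous unitary propagator $\cU_N(t,s)$ on $\cH^0$; since $[P_N, H]=0$, each $\cH^k$ is invariant under $\cU_N(t,s)$. The central analytic estimate is a uniform-in-$N$ bound on $\|\cU_N(t,s)\psi\|_k$. Differentiating the square of the $\cH^k$-norm and using the symmetry of $L_N$ to cancel the leading term gives
\beq
\frac{d}{dt} \|\cU_N(t,s)\psi\|_k^2 = 2\operatorname{Im} \la H^{k/2} \cU_N(t,s)\psi,\, [H^{k/2}, L_N(t)] \cU_N(t,s)\psi\ra_0,
\eeq
and expanding $[H^{k/2}, L(t)]$ as a telescoping sum in $[H, L(t)]$ on $\cH^\infty$ and applying (H2) across the intermediate scale-levels $0\leq k\leq 2k_1$ shows that $[H^{k/2}, L(t)] H^{-k/2}$ extends to a uniformly (in $t$) bounded operator on $\cH^0$. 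A Gr\"onwall argument then yields the exponential bound (iv), uniformly in $N$.

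Finally I would pass to the limit $N\to\infty$: the uniform bounds together with the strong convergence $L_N(t)\psi \to L(t)\psi$ in $\cH^0$ for $\psi\in\cH^{2\tm}$ (a consequence of the spectral theorem and (H0)) imply that $\{\cU_N(\cdot,s)\psi\}$ is Cauchy in $C^0([-T,T],\cH^k)$ for every $\psi\in\cH^{k+2\tm}$, and the limit $\cU(t,s)$ inherits (iv) and the unitarity (ii) on $\cH^0$ by density. Uniqueness (from another energy estimate applied to the difference of two solutions) yields the group property (iii), and well-posedness (i) follows by differentiating the Duhamel identity $\cU(t,s)\psi = \psi - \im \int_s^t L(r)\cU(r,s)\psi\,dr$ in $\cH^k$. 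The main obstacle is the commutator step: although (H2) provides only the first-order bound $\|[L(t),H]\psi\|_0\leq C\|H\psi\|_0$, iterating it to control $[H^{k/2}, L(t)] H^{-k/2}$ on $\cH^0$ requires careful algebraic manipulations on $\cH^\infty$, and the bookkeeping of orders in the telescoping pieces $H^{j}[H, L(t)] H^{k/2-1-j}$ across the available scale $k\leq 2k_1$ is precisely what forces the loss of $4\tm$ derivatives in the statement.
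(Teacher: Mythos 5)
Your proposal follows essentially the same strategy as the paper's proof: regularize $L(t)$ by a bounded operator that commutes with $H$, establish a uniform-in-$N$ Gr\"onwall-type bound on $\cU_N(t,s)$ in each $\cH^k$ using (H2) through the identity $[L_N,H^k]=P_N[L,H^k]P_N$, pass to the limit by a Cauchy-in-$N$ argument, and appeal to Nelson's commutator criterion (Proposition \ref{propone}) for essential self-adjointness; the only cosmetic difference is the choice of the spectral cutoff $P_N=E_H[0,N]$ in place of the resolvent smoother $R_N=(1+H^\tm/N)^{-1}$ used in Lemma \ref{lem:RN}. One caution for the write-up: the Cauchy step needs a quantitative operator-norm rate such as $\norm{L_N(t)-L(t)}_{\cL(\cH^{k+m},\cH^k)}\lesssim N^{-c}$ rather than mere strong convergence of $L_N(t)\psi\to L(t)\psi$ in $\cH^0$, but your spectral cutoff does furnish $\norm{1-P_N}_{\cL(\cH^{k+2\tm},\cH^k)}\le N^{-\tm}$, so the gap is in the phrasing rather than in the idea.
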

It is remarkable that the assumptions of Theorem \ref{thm:flow2} are the time-dependent assumptions of Nelson commutator theorem to prove essentially self-adjointness, see Proposition \ref{propone}.  We shall see later that Theorem \ref{thm:flow2} has many  applications  for  proving existence and uniqueness for time dependent Schr\"odinger equations with time dependent  Hamiltonians.
\begin{remark}
For $k=1$ similar  results are proved in \cite[Appendix A]{lewin},   \cite[Theorem II.27]{sim}  and \cite{ki}. 
\end{remark}

\begin{remark}
\label{rem:exp.gr} 
At this level of generality, the estimate on the growth of Sobolev norms of Theorem \ref{thm:flow2} (iv)  is optimal. Indeed one  example is the following. Let 
 $H = -\frac{d^2}{dx^2} +x^2$ be the harmonic oscillator   and $L = x\frac{d}{\im dx} + \frac{d}{\im dx}x$  on $L^2(\R)$.  We have 
 $[H, L] = -2\left(\frac{d^2}{dx^2} + x^2\right)$ and the assumptions (H0)--(H2) are satisfied. But  we have
 $$
\cU(t,0)u(x) = {\rm e}^{t/2}u({\rm e}^tx).
 $$
 So we get 
 $$
 \int_\R\vert\frac{d}{dx}{\rm e}^{\im tL}u(x)\vert^2dx = {\rm e}^t\Vert\frac{d}{dx}f(x)\Vert_{L^2(\R)}.
 $$
\end{remark}

\vspace{1em}
A first improvement on the growth \eqref{exp.gr} can be obtained by asking that the commutator  $[L(t), H]$ is more regular than what is assumed in (H2). More precisely    we introduce the following assumption:
\begin{itemize}
\item[(H3)]   There  exist  $k_1>2\tm$ and  $ \tau <1$  real     such that  
$[L(\cdot), H] H^{-\tau} \in C_b\left(\R, \cL(\cH^{k})\right)$   for every 
$0\leq k \leq 2k_1$. 
\end{itemize}
\begin{theorem}
	\label{thm:norm}
	(i) Assume that $L(t)$  satisfies  the properties   {\rm (H0), (H1)} and {\rm (H3)}.   Let   $0\leq k \leq \min\{k_0, 2k_1\}-4\tm$, 
	and $ p\in \N$ such that 
$$
\frac{k}{1-\tau} \leq p \ .
$$	
	Then there exists  a positive constant $C_{k,\nu,p}$, independent of $t$,  such that 
	\begin{equation}
	\label{bound}
	\norm{\cU(t,s) \psi_s}_{{2k} } \leq C_{k,\tau, p} \, \la t- s \ra^{p} \norm{\psi_s}_{{2k}},\;\forall \psi_s\in\cH^{2k} \ . 
	\end{equation}
	 (ii) Assume that {\rm (H3)}  is satisfied for every $k\in\N$  and that $\tau < 1 $ is rational. Then     for every real $r>0$  we have
\beq
\label{bound3}
\norm{\cU(t,s)\psi_s}_{r} \leq C_r \, \la t- s \ra^{\frac{r}{2(1-\tau)}} \norm{\psi_s}_{r} \ . 
\eeq
	 
		\end{theorem}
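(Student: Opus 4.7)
\emph{Proof strategy.} Since (H3) trivially implies (H2), Theorem \ref{thm:flow2} already provides the propagator $\cU(t,s)$ acting on every admissible $\cH^{2k}$ and preserving the $\cH^0$-norm, so it remains only to upgrade the exponential bound \eqref{exp.gr} to \eqref{bound} and \eqref{bound3}. For (i) I first treat integer $k\ge 1$ with $2k\le\min\{k_0,2k_1\}-4\tm$. Taking $\psi_s\in\cH^\infty$ (dense in $\cH^{2k}$, so the estimate extends by continuity afterwards), I set $\psi(t):=\cU(t,s)\psi_s$ and $\phi(t):=H^k\psi(t)$. Conjugating the Schr\"odinger equation by $H^k$ yields the inhomogeneous equation $\im\partial_t\phi=L(t)\phi+[H^k,L(t)]\psi$, and the symmetry of $L(t)$ on $\cH^0$ gives the energy identity
\[
\frac{d}{dt}\norm{\psi(t)}_{2k} \;=\; \frac{d}{dt}\norm{\phi(t)}_0 \;\le\; \norm{[H^k,L(t)]\psi(t)}_0 .
\]

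To bound the source I would expand $[L(t),H^k]=\sum_{j=0}^{k-1}H^j\,[L(t),H]\,H^{k-1-j}$ and use (H3) in the form $[L(t),H]=B(t)H^\tau$ with $B(t)$ uniformly bounded on each $\cH^{2j}$ for $0\le j\le k_1$. Using the identity $\norm{H^j v}_0=\norm{v}_{2j}$, each summand satisfies
\[
\norm{H^j B(t) H^{k-1-j+\tau}\psi}_0 \;=\; \norm{B(t) H^{k-1-j+\tau}\psi}_{2j} \;\le\; C\,\norm{\psi}_{2(k-1+\tau)},
\]
so that summing the $k$ terms and integrating in time gives
\[
\norm{\psi(t)}_{2k}\;\le\;\norm{\psi_s}_{2k}+C_k\int_s^t\norm{\psi(r)}_{2k-2(1-\tau)}\,dr .
\]

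The key step is then to close this via log-convexity of the Hilbert scale $\cH^\bullet=D(H^{\bullet/2})$: combined with $\norm{\psi(r)}_0=\norm{\psi_s}_0$ from Theorem~\ref{thm:flow2}(ii),
\[
\norm{\psi(r)}_{2k-2(1-\tau)} \;\le\; \norm{\psi(r)}_{2k}^{\,1-(1-\tau)/k}\,\norm{\psi_s}_{2k}^{\,(1-\tau)/k}.
\]
Setting $M(t):=\sup_{s\le r\le t}\norm{\psi(r)}_{2k}$ produces the algebraic inequality
\[
M(t)\;\le\;\norm{\psi_s}_{2k}+C_k\la t-s\ra\, M(t)^{1-(1-\tau)/k}\norm{\psi_s}_{2k}^{(1-\tau)/k},
\]
which an elementary Young-type argument converts into $M(t)\le C_k\la t-s\ra^{k/(1-\tau)}\norm{\psi_s}_{2k}$. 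For any integer $p\ge k/(1-\tau)$ one has $\la t-s\ra^{k/(1-\tau)}\le\la t-s\ra^p$, establishing \eqref{bound} for integer $k$; non-integer $k$ in the admissible range follow by interpolating $\cU(t,s)$ between two consecutive integer scales.

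For part (ii), rationality of $\tau$ lets me pick arbitrarily large integers $k$ with both $k/(1-\tau)\in\N$ and $2k\ge r$; part (i) then gives $\norm{\cU(t,s)}_{\cL(\cH^{2k})}\le C\la t-s\ra^{k/(1-\tau)}$ while $\cU(t,s)$ is unitary on $\cH^0$. Complex interpolation on the Hilbert scale $\cH^\bullet$ with parameter $\theta=r/(2k)$ yields
\[
\norm{\cU(t,s)}_{\cL(\cH^r)} \;\le\; 1^{\,1-\theta}\cdot\bigl(C\la t-s\ra^{k/(1-\tau)}\bigr)^{\theta} \;\le\; C_r\la t-s\ra^{r/(2(1-\tau))},
\]
which is exactly \eqref{bound3}. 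The main technical obstacle will be to make the Duhamel/energy argument rigorous for unbounded $L(t)$ and $[H^k,L(t)]$: one needs to justify time-differentiation of $\norm{\phi(t)}_0^2$ and the mapping properties of the commutator on an appropriate dense core, which I would handle by working first on $\cH^\infty$ and then passing to $\cH^{2k}$ via the a priori bound just obtained.
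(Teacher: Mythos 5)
Your proof is correct and reaches the same pivotal inequality \eqref{step1} as the paper, but closes it by a different mechanism. You and the paper both reduce to proving, for a dense core and integer $k$, that
\[
\Vert\cU(t,s)\psi_s\Vert_{2k}\le\Vert\psi_s\Vert_{2k}+C_k\int_s^t\Vert\cU(r,s)\psi_s\Vert_{2(k-(1-\tau))}\,dr,
\]
you via the energy identity $\frac{d}{dt}\Vert H^k\psi\Vert_0\le\Vert[H^k,L(t)]\psi\Vert_0$ and an explicit telescoping $[L,H^k]=\sum_{j}H^j[L,H]H^{k-1-j}$, the paper via the integrated Duhamel form and Lemma \ref{lem:comm2} (which is the same commutator information packaged differently). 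The real divergence is in how this Gr\"onwall-type inequality is closed. The paper iterates it finitely many times, lowering the Sobolev index by $\theta=1-\tau$ at each step until it drops to or below $0$, then uses unitarity on $\cH^0$; this yields the exponent $\lceil k/(1-\tau)\rceil$. You instead interpolate $\Vert\psi(r)\Vert_{2(k-(1-\tau))}\le\Vert\psi(r)\Vert_{2k}^{1-(1-\tau)/k}\Vert\psi_s\Vert_0^{(1-\tau)/k}$ and close with a Young-type algebraic absorption, directly obtaining the sharper exponent $k/(1-\tau)$ in a single step (with the trivially handled caveat that $k<1-\tau$ must be treated separately since then the interpolation exponent $1-(1-\tau)/k$ becomes nonpositive). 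This is legitimate and a bit slicker, though it front-loads the interpolation machinery the paper defers to part (ii). For part (ii) the two proofs coincide: rationality of $\tau$ is used to pick $k$ with $k/(1-\tau)\in\N$, apply (i), and interpolate against the unitary $\cH^0$ endpoint. The one thing to be careful about, which you flag but do not fully carry out, is the rigorous justification of the differential energy identity for the unbounded generator; the paper avoids this by working with the integral form $\cU^*H^k\cU=H^k+\im^{-1}\int\cU^*[L,H^k]\cU$ supplied directly by the propagator construction, which is the cleaner route.
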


This result shows that if $[L,H]$ is $H^{\tau}$-bounded with $\tau <1$, then the growth of the Sobolev norm is at most polynomial in time.
\begin{remark}
\label{rem:del}
At this level of generality, the bound obtained in \eqref{bound} is optimal, at least for $\tau = 0$. Indeed Delort \cite{del1} proved that there exists a time-dependent pseudodifferential operator $V(t)$ of order $0$ such that the propagator of the equation 
$\im \dot \psi = (-\Delta + |x|^2) \psi + V(t)\psi$, $x \in \R$,  fulfills $\norm{\cU(t,s) \psi}_{\cH^k} \sim |t-s|^{k/2}$ (where $\cH^k := D((-\Delta + |x|^2)^{k/2})$). In such example, $H=-\Delta + |x|^2$, and  condition (H3) is fulfilled with $\tau = 0$. Then one sees that \eqref{bound} is optimal.
\end{remark}

\vspace{1em}

In order to improve further the polynomial growth in \eqref{bound},  we make more restrictive assumptions on the structure of $L(t)$. First we ask that $L(t)$ is a perturbation of $H$, i.e. $L(t) =  H+V(t)$,
where $V(t)$ is a time-dependent self-adjoint operator. Clearly we assume that $L(t)$ satisfies  (H0), (H1) and (H2)   (in particular we can take $\tm = 1$).  Then we know from Theorem \ref{thm:flow2} that  the Hamiltonian $L(t):=H+V(t)$ generates  a   propagator
   in each space $\cH^k$, $k\in\N$, $k \leq \min\{k_0, k_1\}-2$, which is unitary in $\cH^0$. \\ 
    We make two further  assumptions. The first one concerns the structure of the spectrum of $H$, which is asked to fulfill the following condition on increasing spectral gaps:
 \begin{itemize}
  \item[(Hgap)] The spectrum $\sigma(H)$ of $H$ can be enclosed in clusters $\{\sigma_j\}_{j \in \N}$, 
  \begin{equation}
    \label{gap1}
 \sigma(H)\subseteq \bigcup_{1\leq j <\infty}\sigma_j \ ,
 \end{equation}
 where each $\sigma_j$  is a  bounded interval  of $\R$ (we assume that they are listed in increasing order).
 Define\footnote{Clearly $\Delta_j$ are the distances between  of the spectral clusters, while $\delta_j$ are  their diameters.}
  $$ \Delta_j := {\rm dist}(\sigma_{j+1}, \sigma_j) \ , \quad \delta_j := \sup_{\lambda_1, \lambda_2 \in \sigma_j}|\lambda_1 - \lambda_2| \ . $$
 
 Then there exist $\mu>0$ and positive constants $\alpha, \beta $ (independent of $j$) such that 
 \begin{align}
 \label{gap2}
    \frac{1}{\alpha} \, j^\mu \leq \Delta_j \leq \alpha \, j^\mu,\qquad \delta_j \leq \beta \, j^\mu \ , \quad \forall j \in \N \ . 
 \end{align}
 \end{itemize}
 \begin{remark}
\label{rem:loc}
If $H$ fulfills {\em (Hgap)}, then its spectrum is localized in the following sense:  there exist positive constants $C_1, C_2$ (independent of $j$) such that 
\begin{equation}
\label{loc.sc}
C_1 \, j^{\mu+1}\leq \min \sigma_j \leq \max \sigma_j  \leq C_2  \, j^{\mu+1} \ , \quad \forall j \in \N \ . 
\end{equation}
In particular
$$
 \max\sigma_j\leq \min\sigma_{j+1},\qquad \max\sigma_j \leq \frac{C_2}{C_1} \, \min\sigma_j \ , \quad \forall j \in \N \ . 
$$
\end{remark} 

 The second assumption concerns the perturbation  $V(t)$:
 
\begin{itemize} 
  \item[(Vs$)_n$]  Let $n\geq 1$.  There  exists  $\nu$ with\footnote{ here $\mu$ is the rate of growth of the spectral gap as defined in \eqref{gap2}}
  $$0 \leq \nu < \frac{\mu}{\mu+1}$$ 
     such that $V(\cdot)H^{-\nu}$  belongs to $C^\infty_b(\R,\cL(\cH^{k}))$ for all $0 \leq k \leq 2n$. In particular   $\forall \ell \geq 0$, there exists a positive $R_{n,\ell}$ s.t.   
\begin{eqnarray}
\label{Vrk}
\sup_{t \in \R} \Vert H^p \ \partial_t^\ell V(t) \ H^{-p-\nu}\Vert_{\cL(\cH^0)} \leq R_{n,\ell} \  ,  \ \ \  \forall 0 \leq p \leq n \ . 
\end{eqnarray}
 \end{itemize}

   The following result  is an extension of Theorem 2 of \cite{nen}.
   \begin{theorem}\label{thm:grest} 
Fix 	an arbitrary $n \geq 0$.    Assume that $H+V(t)$ fulfills {\rm (H0), (H1), (H2), (Hgap)} and {\rm (Vs$)_n$}.  Then for
    any  real $0 < k \leq 2n$ and  every  $\varepsilon>0$ there exists  $C_{k,\varepsilon}$, independent of $t$,  such that 
   \beq\label{ineqcinf0}
   \Vert\cU(t,s)\Vert_{\cL(\cH^{k})} \leq C_{k,\varepsilon}\la t-s\ra^\varepsilon,\;\; \forall t, s\in\R.
   \eeq
   \end{theorem}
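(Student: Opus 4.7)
The plan is to implement the Howland--Joye--Nenciu adiabatic iteration, adapted to the unbounded perturbations $V$ allowed by (Vs$)_n$. Let $P_j:=E_H(\sigma_j)$ be the spectral projector of $H$ onto the cluster $\sigma_j$ provided by (Hgap), and for any sufficiently regular operator $A$ introduce the block decomposition
\[
A=A^{\rm d}+A^{\rm od},\qquad A^{\rm d}:=\sum_{j} P_j A P_j,\qquad A^{\rm od}:=\sum_{i\neq j}P_i A P_j.
\]
The essential technical tool is the off-diagonal commutator equation: for an off-diagonal $Y$ the unique off-diagonal solution $X$ of $[H,X]=Y$ is
\[
X=\sum_{i\neq j}\frac{1}{\lambda_{ij}}\, P_i Y P_j,
\]
with $\lambda_{ij}$ a suitable spectral difference across $\sigma_i$ and $\sigma_j$. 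By (Hgap) one has $|\lambda_{ij}|\gtrsim (\min(i,j))^{\mu}$ while $\max\sigma_j\lesssim j^{\mu+1}$ by \eqref{loc.sc}, so the map $Y\mapsto X$ gains a factor of order $H^{-\mu/(\mu+1)}$: if $Y$ is $H^\alpha$-bounded then $X$ is $H^{\alpha-\mu/(\mu+1)}$-bounded. Since (Vs$)_n$ imposes $\nu<\mu/(\mu+1)$, the quantity $\theta:=\tfrac{\mu}{\mu+1}-\nu$ is strictly positive, which is what drives the iteration.

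With this tool I would construct inductively, for each $N\geq 1$, a smooth one-parameter family of unitaries $W_N(t)=\exp(\im S_N(t))\cdots \exp(\im S_1(t))$ whose generators $S_\ell(t)$ are off-diagonal, self-adjoint, smooth in time, and $H^{\nu-\ell\theta}$-bounded. The aim is that the conjugated Hamiltonian
\[
L_N(t):=W_N(t)^*(H+V(t))W_N(t)-\im\, W_N(t)^*\partial_t W_N(t)
\]
has the form $L_N(t)=H+D_N(t)+R_N(t)$, with $D_N$ \emph{diagonal} ($[D_N,P_j]=0$ for every $j$) and $R_N$ off-diagonal and $H^{\nu-N\theta}$-bounded together with all its time derivatives up to order depending on $n$. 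At step $\ell$, one chooses $S_\ell$ as the off-diagonal solution of $[H,S_\ell]=R_{\ell-1}^{\rm od}$, which gains a factor $\theta$ in $H$-regularity by the previous lemma; the smoothness of $V$ and the bounds \eqref{Vrk} ensure that every derivative produced by $\im\partial_t W_N$ stays controlled in the right operator norm.

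For the final estimate, since $D_N(t)$ commutes with every $P_j$, the flow $\cU_N^{\rm d}(t,s)$ generated by $H+D_N(t)$ preserves the ranges of the spectral projectors, so using \eqref{loc.sc} one checks directly that $\cU_N^{\rm d}$ is uniformly bounded on $\cH^k$ in $t$. A Duhamel expansion around $\cU_N^{\rm d}$ with perturbation $R_N$, combined with the elementary estimate $\|R_N\psi\|_{k}\lesssim \|\psi\|_{k-2(N\theta-\nu)}$, gives by induction on the Sobolev index
\[
\|\cU_N(t,s)\|_{\cL(\cH^k)}\leq C_{k,N}\,\la t-s\ra^{k/(2(N\theta-\nu))}.
\]
Conjugating back through $\cU(t,s)=W_N(t)\cU_N(t,s)W_N(s)^*$ and using $\|W_N^{\pm 1}\|_{\cL(\cH^k)}\leq C_{k,N}$ yields \eqref{ineqcinf0} upon choosing $N=N(k,\varepsilon)$ large enough that $k/(2(N\theta-\nu))<\varepsilon$.

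The main obstacle is the inductive construction of the $S_\ell$: one has to preserve smoothness in time (each iteration introduces a further derivative through $\im\partial_t W_N$), maintain the off-diagonal structure of the remainder at each step, and propagate the $H^{\nu-\ell\theta}$-boundedness uniformly across all relevant Sobolev scales $\cH^k$ with $0\leq k\leq 2n$. Because $V$ is only $H^\nu$-bounded rather than bounded (the novelty with respect to \cite{nen}), one must work consistently in weighted operator ideals and verify that composition of $H^\alpha$- and $H^\beta$-bounded operators across the block decomposition respects the correct powers of $H$; this is the point where the condition $\nu<\mu/(\mu+1)$ is genuinely needed.
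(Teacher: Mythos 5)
Your proposal takes a genuinely different route from the paper. The paper iteratively \emph{subtracts} a smoothing operator $B_m(t)$ (built from the time-dependent spectral projectors $\Pi_{m,j}(t)$ of the already-perturbed Hamiltonian $H_m(t)$) so that the modified flow $\cU_{ad,m}$ exactly intertwines the $\Pi_{m,j}(t)$; the $\cH^k$-bounds then come from the equivalence of $\Vert\cdot\Vert_k$ with the block-weighted norm and from the commutator-free intertwining relation. Your proposal instead \emph{conjugates} by Lie transforms built on the \emph{fixed} spectral projectors $P_j$ of $H$, solving homological equations $[H,S_\ell]=R^{\rm od}_{\ell-1}$ to normal-form the Hamiltonian to block-diagonal plus smoothing. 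The two constructions are dual in spirit and yield the same final exponent; the paper's route has the practical advantage that one never needs to invert $[H,\cdot]$ explicitly — all the relevant estimates come from contour integrals of resolvents, which sidestep the block-summation and cluster-width issues you would have to confront head-on.

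There is, however, a concrete gap in the homological step as written. You solve $[H,X]=Y$ by $X=\sum_{i\neq j}\lambda_{ij}^{-1}P_iYP_j$ for ``a suitable spectral difference $\lambda_{ij}$,'' but this is a scalar approximation that treats $H$ as constant on each cluster. Under (Hgap) the cluster widths satisfy only $\delta_j\leq\beta j^\mu$, i.e., they may be of the \emph{same} order as the adjacent gaps $\Delta_j\asymp j^\mu$. For adjacent clusters $j=i+1$, the error incurred by replacing $H\vert_{{\rm Ran}\,P_i}$ by a scalar is $O(\delta_i/\Delta_i)=O(1)$ relative to $Y_{ij}$, so this $X$ does \emph{not} solve the homological equation up to a small remainder and the iteration breaks. (It does work when the clusters are points, e.g.\ for the $1$-D anharmonic oscillator, or when $\delta_j/\Delta_j\to 0$ as in the Zoll case; but not under the general (Hgap).) The fix is the exact off-diagonal solution, expressible through double resolvent integrals $\oint_{\Gamma_i}\oint_{\Gamma_j}\frac{1}{z-z'}(H-z)^{-1}Y(H-z')^{-1}\,dz\,dz'$ — precisely the kind of object the paper manipulates via Lemma \ref{lem:an02}/\ref{lem:an03}. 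Two further points to tidy up: convergence of the block sum $\sum_{i\neq j}X_{ij}$ in $\cL(\cH^k)$ does not follow from uniform bounds on the individual blocks and needs an argument (the paper gets this for free from the contour estimates); and your stated $H^{\nu-\ell\theta}$-boundedness of $S_\ell$ is inconsistent with the recursion you describe (since the homological gain is $\mu/(\mu+1)=\theta+\nu$ per step, $S_\ell$ is in fact $H^{-\ell\theta}$-bounded — better than claimed, but the bookkeeping should be redone systematically before the remainder $R_N$ can be asserted to be $H^{\nu-N\theta}$-bounded).
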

    If one assumes that   $ V(t)$ is analytic in time,     better  estimates  were proved for 1-D Hamiltonians \cite{wang08}  or for perturbations of the laplace operator on the torus \cite{bo,zhang}.
We are able to extend such results to our more general situation, provided $V$ fulfills the following analytic estimates:
\begin{itemize}
\item[(Va$)_n$] Let $n \geq 0$. There  exists $\nu$ with   $0\leq \nu< \frac{\mu}{\mu+1}$ such that $V(\cdot) H^{-\nu}$ is an operator in $\cL(\cH^{k})$, $\forall 0 \leq k \leq 2n$, analytic in time. In particular  there  exist  $c_{0,n}, c_{1,n}>0$
 such that  $\forall \ell \geq 0$ 
 \begin{eqnarray}\label{Van}
\sup_{t \in \R}\Vert H^p \ \partial_t^\ell V(t) \ H^{-p-\nu}\Vert_{\cL(\cH^0)} \leq  c_{0,n}\, c_{1,n}^{\ell} \, \ell!   \ , \ \ \ \forall 0 \leq p \leq n \ . 
\end{eqnarray}
\end{itemize}    
   
    Then we have 
       \begin{theorem}
       \label{thm:grest2} 
      Fix 	an arbitrary $n \geq 0$.    Assume that $H+V(t)$ fulfills {\rm (H0), (H1), (H2), (Hgap)} and {\rm (Va$)_n$}.  Then for
    any  real $0 < k \leq 2n$ there exists a  positive  $\gamma$, independent of $t$,  s.t.  
  \begin{equation}
   \label{ineqcinf}
   \Vert\cU(t,s)\Vert_{\cL(\cH^{k})} \leq \gamma \left(\log \la t-s\ra\right)^{\frac{k}{2}\left( \frac{\mu}{\mu+1}-\nu\right)^{-1}},\qquad \forall t, s\in\R.
   \end{equation}
   \end{theorem}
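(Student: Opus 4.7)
The plan is to extend the Howland--Joye--Nenciu adiabatic approximation that underlies Theorem \ref{thm:grest} by using the analytic bound \eqref{Van} to choose the number of iterations as a function of $|t-s|$. Writing $\eta:=\frac{\mu}{\mu+1}-\nu>0$, the goal is an adiabatic remainder of size $C^N\,N!\,H^{-N\eta}$, which can be balanced against the elapsed time by taking $N\sim\log\la t-s\ra$ and produces a logarithmic bound on Sobolev norms.

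The first step is to construct, for each $N\ge1$, a smooth family of unitaries $W_N(t)\in\cL(\cH^k)$ such that
\[
\widetilde L_N(t):=W_N(t)^{*}(H+V(t))W_N(t)-\im\,W_N(t)^{*}\pa_t W_N(t)=H+D_N(t)+R_N(t),
\]
where $D_N(t)$ commutes with every cluster projector $P_j:=E_H(\sigma_j)$ and $R_N(t)$ is the off-diagonal remainder after $N$ steps. One builds $W_N$ inductively by $W_j=W_{j-1}\,e^{\im X_j(t)}$, with $X_j$ anti-Hermitian and determined by the commutator equation $[H,X_j]=Y_j^{\rm od}$ whose right-hand side is the off-diagonal part produced at step $j-1$. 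Between clusters $P_j$ and $P_\ell$ the inversion of $[H,\cdot]$ divides by a quantity $\gtrsim j^{\mu}$; together with the spectral localization $\min\sigma_j\gtrsim j^{\mu+1}$ from Remark \ref{rem:loc} this amounts to a net factor $H^{-\mu/(\mu+1)}$ in $\cL(\cH^k)$. Combined with the $H^\nu$-loss permitted by $(Va)_n$, each iteration then contributes a net gain of $H^{-\eta}$.

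The second step is the quantitative control of $R_N$. Each iteration produces one additional time derivative of $V$, and by the analytic bound \eqref{Van} we have $\Vert H^p\pa_t^\ell V\,H^{-p-\nu}\Vert\le c_{0,n}c_{1,n}^\ell\,\ell!$, so the factorials $\ell_1!\cdots\ell_m!$ appearing along the iteration collapse via Leibniz/Faà di Bruno bookkeeping into a single $N!$. The expected outcome, uniformly in $t$, is schematically
\[
\Vert R_N(t)\psi\Vert_k\le C^N\,N!\,\Vert H^{-N\eta}\psi\Vert_{k+2},\qquad 0\le k\le 2n.
\]
For $\psi_s\in\cH^k$ I split with a spectral cut $\Pi_{\le E}:=E_H([0,E])$. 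The low-frequency part obeys $\Vert\Pi_{\le E}\psi_s\Vert_k\le E^{k/2}\Vert\psi_s\Vert_0\le E^{k/2}\Vert\psi_s\Vert_k$ and is preserved in $\cH^0$-norm by the unitarity proved in Theorem \ref{thm:flow2}(ii). For the high-frequency part use $\cU=W_N\,\widetilde\cU_N\,W_N^{*}$: the block-diagonal piece $D_N$ commutes with the $P_j$'s and preserves the $\cH^k$-scale with constants uniform in $N$, while the remainder, restricted to $\Pi_{>E}\psi_s$, gains $E^{-N\eta}$. A Duhamel/Gronwall argument then produces
\[
\Vert\cU(t,s)\Pi_{>E}\psi_s\Vert_k\le C\,\Vert\psi_s\Vert_k\,\exp\bigl(c\,|t-s|\,C^N N!\,E^{-N\eta}\bigr).
\]

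Finally, I optimize by choosing $N=\lfloor\alpha\log\la t-s\ra\rfloor$ with $\alpha>0$ sufficiently small and $E$ so that $|t-s|\,C^N N!\,E^{-N\eta}=O(1)$; using $N!\le N^N$, this forces $E\sim(\log\la t-s\ra)^{1/\eta}$, and the low-frequency bound gives $E^{k/2}\sim(\log\la t-s\ra)^{k/(2\eta)}$, which is exactly \eqref{ineqcinf}. The main obstacle I anticipate lies in the second step: one must carry out the bookkeeping so that $N$ successive commutator inversions and time differentiations really produce a clean $C^N N!\,H^{-N\eta}$-bound on the remainder in every $\cH^k$-norm for $k\le 2n$, uniformly in $t$. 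In particular, it is essential to check that every correction $X_j$, its time derivatives up to the required order, and the conjugating unitaries $W_N$ remain bounded on the full scale with combinatorial constants growing only factorially (and not super-factorially), so that the analyticity of $V$ propagates all the way through the iteration rather than being dissipated by composition losses.
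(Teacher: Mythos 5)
Your plan is a genuine alternative to the paper's proof and, at the level of formal order-counting, it is coherent: the per-step gain $\eta=\frac{\mu}{\mu+1}-\nu$ is the correct yardstick, the factorial $N!$ is the right price for $N$ iterations under the analytic hypothesis (Va$)_n$, and the optimization $N\sim\log\la t-s\ra$ together with a spectral cut $E\sim(\log\la t-s\ra)^{1/\eta}$ indeed reproduces the exponent $\tfrac{k}{2\eta}$. However, the route differs substantially from the paper's: the paper does not conjugate by a product of unitaries solving homological equations against $H$; it runs the Nenciu algorithm, iteratively constructing time-dependent spectral projectors $\Pi_{m,j}(t)$ of the perturbed operators $H_m(t)=H+V(t)+B_0(t)+\cdots+B_{m-1}(t)$ and defining the adiabatic correction $B_m(t)=\im\sum_j\Pi_{m,j}\,\partial_{(t,L)}\Pi_{m,j}$ via contour integrals (Lemma \ref{lem:an2}, Proposition \ref{ind.ass}). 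This has two consequences you do not address. First, because the projectors move with $H_m(t)$, the Nenciu scheme never has a leftover block-diagonal piece $V^{\rm d}(t)$ to transport across steps; in your scheme, which solves $[H,X_j]=Y_j^{\rm od}$ against the fixed $H$ only, the persisting $V^{\rm d}(t)$, which is $H^\nu$-unbounded, contributes $[X_{m+1},V^{\rm d}]$ as the \emph{dominant} term in each remainder, so the claimed per-step gain $\eta$ must be rederived in its presence — it works out at the exponent level but needs a genuine argument that the commutator structure does not degrade the operator-scale bounds $\Vert H^p\cdot H^{-p-\nu}\Vert$ that you need uniformly for $0\le p\le n$. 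Second, the paper introduces the diadic clustering parameter $\tJ$ and the key choice $\tJ\sim\frac{1}{\mu\delta}\log(\tM+1)$ (together with $\tM+1\sim\tfrac14\log\la T\ra$). This is not a technical footnote: the factor $2^{\tJ\mu\delta}$ serves as a smallness parameter that controls all combinatorial constants (see condition \eqref{cond} and the inductive estimates of Proposition \ref{ind.ass}), and it is this choice, fed through the norm-equivalence constants $2^{\tJ(\mu+1)2n}$ of Lemma \ref{compk}, that actually produces the logarithmic growth in the final bound \eqref{est.15}. Your scheme has no analogous mechanism, so the burden of proving a clean $C^N N!$ bound with a \emph{$N$-uniform} $C$ falls entirely on the Faà di Bruno bookkeeping, which is exactly what you flag as the open issue. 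You should also verify that the conjugating unitaries $W_N(t)$, their time derivatives, and the block-diagonal evolution they generate are bounded on every $\cH^k$ with constants controlled in $N$; in the paper this corresponds to Corollary \ref{cor:prop} (i) and the norm-equivalence Lemma \ref{compk}, and is not automatic for your $X_j$'s since they are built from an unbounded $V$. In summary: the proposal is a defensible alternative strategy and identifies the right numerology, but the hard part — the uniform-in-$N$ factorial control of the remainder on the scale $\cH^k$ in the presence of the $H^\nu$-unbounded diagonal part — is only sketched, and the paper avoids most of it by adaptively moving the projectors with $H_m(t)$ and using $\tJ$ as a smallness parameter.
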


   Notice that Theorem \ref{thm:grest}   and Theorem \ref{thm:grest2} hold true with only time regularity on $V(t)$ and a limited amount of regularity in the  scale spaces $\cH^k$. On the contrary, all the previous results deal with potentials which are smooth or analytic  in the  scale of spaces  $\cH^k$.  In particular in \cite{wang08, zhang}  the authors assume
  analyticity in $t$ and $x$. Here we only need    analyticity in $t$  and some  finite  amount of regularity in $x$.    
   \begin{remark}
   \label{rem:bou}
   The bound on the growth in \eqref{ineqcinf} is sharp at least  in the case $\nu = 0$, $\mu = 1$.  Indeed Bourgain \cite{Bourgain1999} constructed a potential $V(x,t)$ which is analytic in both $x$ and $t$, periodic in both variables, such that the solution of the equation
   $\im \dot \psi = - \partial_{xx} \psi + V(x,t)\psi$, $x \in \T$,  has Sobolev norms fulfilling
   $\norm{\psi(t)}_{H^s} \sim C(\log \la t\ra)^s$. Since $H\equiv -\partial_{xx}$ fulfills {\rm (Hgap)} with $\mu = 1$ and $V$ fulfills {\rm (Va$)_n$} with $\nu = 0$, we see that the bound in \eqref{ineqcinf} is optimal.
   \end{remark}

\begin{remark}  
Theorem  \ref{thm:grest2}   could be extended,  with a different  exponent, replacing analytic estimates (Va$)_n$  by  Gevrey estimates:
\end{remark}
\begin{itemize}
\item[(Vg$)_n$] Fix $n \geq 0$. There exist $0 \leq \nu <\frac{\mu}{\mu+1}$ and $s >1$ s.t. 
\begin{eqnarray}\label{Vams2}
\sup_{t \in \R} \ 
\Vert  H^p \ \partial_t^\ell V(t) \ H^{-p - \nu}\Vert_{\cL(\cH^0)}
 \leq  c_{0,m} \, C_{1,m}^{\ell}\, (\ell!)^s, \; \forall  \ell \geq 0,\; 0\leq p\leq n \ . 
\end{eqnarray}
\end{itemize}

   \subsection{Scheme of the proof} The proof proceeds essentially in three steps. First we  prove Theorem \ref{thm:flow2}.  The strategy is to regularize the operator $L(t)$ obtaining  a sequence of bounded operators $L_N(t)$ for which we are able to prove uniform estimates on the flow they generate, and then to pass to the limit. This in turn is possible  thanks to the  boundedness of $[L(t), H] H^{-1}$.
   Theorem \ref{thm:norm} then follows easily by a recursive argument.
   
The strategy to prove   Theorem \ref{thm:grest} and Theorem \ref{thm:grest2} is to extend the scheme of Nenciu  \cite{nen} to deal with unbounded perturbations. The idea is to construct an adiabatic approximation $\cU_{ad}(t,s)$ of the flow $\cU(t,s)$, for which the norms $\cH^k$ are bounded uniformly in time. In case of time-analytic perturbations, special care is needed in order to perform estimates.

   \vspace{1em}
 \noindent  {\em Organization of the paper:} In Section 2 we prove Theorem \ref{thm:flow2} and Theorem \ref{thm:norm}. In Section 3 we prove the control of the growth of the Sobolev norms in  case of  perturbations depending smoothly in time, namely we prove Theorem \ref{thm:grest}. In Section 4 we consider perturbations  depending analytically in time and we prove Theorem \ref{thm:grest2}. In Section 5 we apply the abstract theorems to different kind of Schr\"odinger equations.

   \vspace{2em}
   \noindent{\bf Acknowledgements.} We thank Joe Viola for useful comments, Mathieu  Lewin for pointing us interesting references
    and Dario Bambusi for several stimulating   exchanges.\\
   The first author is supported by ANR -15-CE40-0001-02 "BEKAM" of the Agence Nationale de la Recherche. 

%

\section{Existence of the propagator}
The aim of this section is to prove Theorem \ref{thm:flow2} and Theorem \ref{thm:norm}.
It is technically more  convenient  to consider the integral form of equation (\ref{eq:sc0})
\beq\label{eq:scint}
 \psi(t) = \psi_s + \im^{-1}\int_s^t L(r)\psi(r)dr
\eeq
We begin with an easy lemma:
\begin{lemma}
\label{lem:comm2}
	 Assume  that the condition {\rm (H3)}  is satisfied. Let $\theta := 1-\tau$. Then \\
	 (i) 
	For  $k\in\N$, $1\leq k\leq k_1$, we have  $[L, H^k]H^{-k+\theta}\in C_b\left(\R, \cL(\cH^0)\right)$.\\
	(ii)  For any $\theta^\prime <\theta  $   and any real  $p$ such that $0<p <k_1$  we have 
	$[L, H^p]H^{-p+\theta^\prime}\in C_b\left(\R, \cL(\cH^0)\right)$
	\end{lemma}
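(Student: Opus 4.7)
I would prove part (i) by induction on $k$. The base case $k=1$ is exactly (H3), written as $[L,H]H^{\theta-1}\in C_b(\R,\cL(\cH^l))$ for $0\le l\le 2k_1$. For the inductive step, I would use the telescoping identity
\[
[L,H^k]=\sum_{j=0}^{k-1}H^j\,[L,H]\,H^{k-1-j},
\]
together with the fact that $H^{\theta-1}$ commutes with powers of $H$, to rewrite
\[
[L,H^k]H^{-k+\theta}=\sum_{j=0}^{k-1}H^j\bigl([L,H]H^{\theta-1}\bigr)H^{-j}.
\]
Each summand is a similarity $A\mapsto H^jAH^{-j}$ applied to the bounded operator $[L,H]H^{\theta-1}$; since $H^j:\cH^{l+2j}\to\cH^l$ is an isometry, this similarity is bounded on $\cH^l$ as long as $[L,H]H^{\theta-1}\in\cL(\cH^{l+2j})$, which by (H3) holds whenever $l+2j\le 2k_1$. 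With $j\le k-1\le k_1-1$ and $l=0$ the claim follows.

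For part (ii) I would split $p=m+s$ with $m$ a non-negative integer and $s\in[0,1)$; when $s=0$ the assertion reduces to (i) combined with the boundedness of $H^{\theta'-\theta}$, so I focus on $0<s<1$. The factorization $H^p=H^mH^s$ and an elementary commutator expansion give
\[
[L,H^p]H^{-p+\theta'}=\bigl([L,H^m]H^{-m+\theta}\bigr)H^{\theta'-\theta}+H^m\,[L,H^s]\,H^{-s-m+\theta'}.
\]
The first summand is bounded on $\cH^0$ by part (i) together with $\theta'<\theta$; the second summand is where the real work lies.

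To handle it, I would invoke the Balakrishnan integral representation
\[
H^s=\frac{\sin(\pi s)}{\pi}\int_0^\infty\lambda^{s-1}H(H+\lambda)^{-1}\,d\lambda,\qquad 0<s<1,
\]
together with the resolvent identity $[L,(H+\lambda)^{-1}]=-(H+\lambda)^{-1}[L,H](H+\lambda)^{-1}$, which yield
\[
[L,H^s]=\frac{\sin(\pi s)}{\pi}\int_0^\infty\lambda^{s}(H+\lambda)^{-1}[L,H](H+\lambda)^{-1}\,d\lambda.
\]
Since $H^m$ and $H^{-s-m+\theta'}$ commute with every resolvent, pushing them inside the integral produces the integrand $\lambda^s(H+\lambda)^{-1}\bigl(H^m[L,H]H^{-m}\bigr)(H+\lambda)^{-1}H^{-s+\theta'}$. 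Factoring $[L,H]=\bigl([L,H]H^{\theta-1}\bigr)H^{1-\theta}$, the similarity $H^m[L,H]H^{-m}$ splits as $\bigl(H^m([L,H]H^{\theta-1})H^{-m}\bigr)H^{1-\theta}$; the bracketed factor is bounded on $\cH^0$ by (H3) precisely because $2m\le 2k_1$ (ensured by $m<p<k_1$).

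The last step is convergence of the resulting integral in the operator norm of $\cH^0$. Commuting the stray $H^{1-\theta}$ past the second resolvent and applying spectral calculus of $H$ yields a pointwise bound on the integrand of order $\lambda^{\theta'-\theta-1}$ at infinity and $\lambda^s$ at the origin. Both endpoints are integrable: the large-$\lambda$ one precisely because $\theta'<\theta$ (this is the essential use of the strict inequality), the small-$\lambda$ one because $s>0$ and $H$ has a positive lower bound on its spectrum. This convergence is the main obstacle I foresee; everything else reduces to careful bookkeeping of regularity indices, which also explains the thresholds $k\le k_1$ in (i) and $p<k_1$ in (ii).
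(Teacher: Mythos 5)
Your proof is correct and, for part (ii), takes a genuinely different route from the paper. For part (i) the two arguments are essentially cosmetic variants: the paper runs an induction using $[L,H^{k+1}] = [L,H^k]H - H^k[H,L]$, while you unfold the same telescoping structure into $\sum_{j=0}^{k-1}H^j[L,H]H^{k-1-j}$ and conjugate each summand by $H^j$; both rest on the same application of (H3) on $\cH^{2j}$. For part (ii), however, the paper represents $H^p$ by a Cauchy contour integral $\frac{1}{2\pi\im}\oint_\Gamma z^{p-1}(H-z)^{-1}H\,dz$ over an unspecified "suitable" complex contour $\Gamma$ and manipulates resolvent commutators there, whereas you use the Balakrishnan real-line representation $H^s=\frac{\sin\pi s}{\pi}\int_0^\infty\lambda^{s-1}H(H+\lambda)^{-1}\,d\lambda$ (after first peeling off the integer part via $H^p=H^mH^s$). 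This buys you two things: the cancellation that produces the clean integrand $\lambda^s(H+\lambda)^{-1}[L,H](H+\lambda)^{-1}$ is an honest algebraic identity rather than a contour manipulation, and the convergence analysis reduces to transparent pointwise spectral bounds on $[0,\infty)$---namely $\lambda^s$ near $0$ (needing $s>0$, or handled separately when $s=0$) and $\lambda^{\theta'-\theta-1}$ near $\infty$ (the only use of the strict inequality $\theta'<\theta$). The paper's version leaves the contour and its estimates implicit ("It results that $IIH^{-p+\theta'}$ is bounded...", "Using the same trick..."). Your bookkeeping of the threshold $p<k_1$ via $m\le k_1$ so that (H3) applies on $\cH^{2m}$ is also explicit and correct. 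One small point you glossed over: when $s=0$ the argument via part (i) actually gives boundedness with $\theta'=\theta$, not merely $\theta'<\theta$; this does not affect the claim but is worth noting since the paper's remark after the lemma singles out the non-integer case as the one where $\theta'<\theta$ seems genuinely needed.
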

\begin{proof}
	(i) The proof is by induction on $k$. First write 
	$ [L, H^{k+1}] =  [L, H^k] \, H - H^k \, [H, L],  $ 
		 which shows that
	$$
	[L, H^{k+1}] H^{-k-1+\theta} = [L, H^k] \, H^{-k+\theta} - H^k \, [H, L]  H^{-1+\theta} H^{-k} \ .
	$$
	The inductive assumption and the hypothesis $[H, L]  H^{-1+\theta} $ bounded as an operator from 
	$\cH^{\ell} \to \cH^{\ell}$, $\forall 0 \leq  \ell \leq 2k_1$,  imply the inductive assumption. \\
		(ii) For   simplicity let us give the proof for $0< p<1$.
We use the following Cauchy formula
	$$
	H^p\psi  = \frac{1}{2\im\pi}\oint_\Gamma z^{p-1}(H-z)^{-1}H\psi  dz
	$$
	for a suitable  complex contour $\Gamma$. 
	Using that $[L,(H-z)^{-1}] = (H-z)^{-1}[L,H](H-z)^{-1}$  we get
	\bea
	[L,H^p] &=&  \frac{1}{2\im\pi}\oint_\Gamma z^{p-1}(H-z)^{-1}[L,H]H(H-z)^{-1}Hdz + 
	\frac{1}{2\im\pi}\oint_\Gamma z^{p-1}(H-z)^{-1}[L,H]dz \nonumber\\
	&=& I + II
	\eea
	We  have
	$$
	IIH^{-p+\theta^\prime} = \frac{1}{2\im\pi}\oint_\Gamma z^{p-1}(H-z)^{-1}H^{1-p-\theta+\theta^\prime}H^{-s}[L,H]H^{-1+\theta}H^sdz,
	$$
	where $s= \theta^\prime -\theta +1-p$. It results  that  $IIH^{-p+\theta^\prime}$  is bounded on $\cH$ if $\theta^\prime<\theta$.\\
	Using the  same trick   we get  that  $IH^{-p+\theta^\prime} $ is bounded on $\cH$  if $\theta^\prime < \theta$.\\
	The same  proof can be done for $ k <p<k+1$.
\end{proof}
\begin{remark}
It is not clear  that  the above estimate can be proved   under assumption (H3)  with $\theta^\prime=\theta$  if $p$ is not an integer.
\end{remark}
Let $\tm$ as in Theorem \ref{thm:flow2} and suppose further that $\tm >0$ (the case $\tm = 0$ corresponds to bounded $L(t)$).   The main idea of the proof is to regularize $L(t)$ is such a way that it becomes a bounded operator, for which it is possible to construct a unitary flow. To do so, for any $N  \geq 1 $ introduce the smoothing  operator
$$
R_N := \left( 1 + \frac{H^\tm}{N} \right)^{-1} \ .
$$
The following lemma describes the main properties of the  smoothing operator $R_N$.
\begin{lemma}
	\label{lem:RN}
	There exists a positive $C_\tm$ such that $\forall k, N >0  $ one has:
	\begin{itemize}
		\item[(i)]  $R_N: \cH^k \to \cH^{k+2\tm}$ and $\norm{R_N}_{\cL(\cH^{k}, \cH^{k+2\tm})} \leq N$.
		\item[(ii)]  $\norm{R_N}_{\cL(\cH^k, \cH^{k})} \leq C_\tm$.
		\item[(iii)] $\norm{R_N - \uno}_{\cL(\cH^{k+2\tm}, \cH^{k})} \leq \frac{C_\tm}{N}$.
		\item[(iv)] $\norm{R_N - \uno}_{\cL(\cH^{k+2\tm\eta}, \cH^{k})} \leq \frac{C_{\tm,\eta}}{N^\eta}$, $\forall \eta\in]0, 1]. $
	\end{itemize}
\end{lemma}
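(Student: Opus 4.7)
The plan is to reduce every assertion to a scalar computation via the Borel functional calculus of $H$. Since $H$ is positive self-adjoint with $\cH^k = D(H^{k/2})$ and the norm on $\cH^k$ equivalent to $\Vert H^{k/2}\cdot\Vert_0$, any operator norm $\Vert T\Vert_{\cL(\cH^k,\cH^{k'})}$ is controlled by $\Vert H^{k'/2} T H^{-k/2}\Vert_{\cL(\cH^0)}$. Crucially, $R_N = (1+H^\tm/N)^{-1}$ is a bounded Borel function of $H$ (with values in $(0,1]$), so it commutes with every power $H^{k/2}$ on the relevant domain. Hence all four estimates reduce to supremum bounds of explicit scalar functions of $\lambda \in [0,\infty)$ over $\sigma(H)$.

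For (ii), I would observe that $\Vert R_N\Vert_{\cL(\cH^0)} = \sup_{\lambda\geq 0}(1+\lambda^\tm/N)^{-1} \leq 1$, and then commute $R_N$ with $H^{k/2}$ to transport the same bound to $\cH^k$. For (i), the spectral theorem gives
\[
\Vert H^\tm R_N\Vert_{\cL(\cH^0)} \,=\, \sup_{\lambda\geq 0}\frac{\lambda^\tm}{1+\lambda^\tm/N} \,=\, N,
\]
so, using the identity $H^{(k+2\tm)/2} R_N = H^\tm R_N\, H^{k/2}$ on $\cH^\infty$ and extending by density,
\[
\Vert R_N\psi\Vert_{k+2\tm} \,=\, \Vert H^\tm R_N H^{k/2}\psi\Vert_0 \,\leq\, N\,\Vert\psi\Vert_k.
\]

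For (iii), the algebraic identity $R_N - \uno = -N^{-1} H^\tm R_N$ together with (i) and (ii) immediately yields $\Vert (R_N - \uno)\psi\Vert_k \leq C_\tm N^{-1}\Vert\psi\Vert_{k+2\tm}$ after commuting $H^{k/2}$ through.

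Finally (iv) rests on the elementary interpolation inequality for the spectral function $f(\lambda) := (\lambda^\tm/N)/(1+\lambda^\tm/N)$: for every $\eta \in (0,1]$,
\[
f(\lambda) \,\leq\, C_\eta\, N^{-\eta}\,\lambda^{\tm\eta}, \qquad \lambda \geq 0,
\]
which one checks by the substitution $\mu = \lambda^\tm/N$ and the trivial bound $\mu/(1+\mu) \leq \min(\mu,1)$. Spectral calculus then gives $\Vert(R_N - \uno)H^{-\tm\eta}\Vert_{\cL(\cH^0)} \leq C_{\tm,\eta} N^{-\eta}$, and commutation with $H^{k/2}$ concludes. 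The entire proof is essentially bookkeeping; the only point requiring a moment's attention is the fractional case (iv), but since every operator involved is a function of $H$ and hence simultaneously diagonal in its spectral representation, no genuine obstacle arises.
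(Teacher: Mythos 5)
Your proof is correct and is exactly the kind of ``easy computation'' the paper alludes to: since $R_N$ is a bounded Borel function of $H$, everything reduces to elementary scalar estimates on $\sigma(H)$ after commuting with $H^{k/2}$. The one small deviation is in part (iv): the paper indicates deriving it by interpolating between (ii) and (iii), whereas you prove it directly from the scalar bound $\mu/(1+\mu)\leq \mu^\eta$ with $\mu=\lambda^{\tm}/N$; in a Hilbert scale generated by a single self-adjoint operator these routes are equivalent (the interpolation inequality is itself proved by precisely this scalar inequality), and your direct argument has the advantage of not invoking any abstract interpolation theorem.
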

\begin{proof}

	The proof is an easy computation, and it is skipped. Notice that (iv) follows from (ii) and (iii) using interpolation.
\end{proof}
Now we regularize the operator $L(t)$ by defining
$$
L_N(t) := R_N \, L(t) \, R_N \ . 
$$
\begin{lemma}\label{lem:LN}  
For every $N\geq 1$ ,  $L_N(t)$  is symmetric on $\cH^0$  and bounded 
on $\cH^k$  for $0\leq k\leq k_0-2\tm$.  
Furthermore  for every $\eta\in]0, 1]$  there exists  $C_\eta>0$ such that for $0\leq k\leq k_0-2\tm$  we have: 
	\begin{equation}
	\label{est:LN} 
	\norm{ L_N(t) - L(t)}_{\cL(\cH^{k+2\tm(1+\eta)}, \cH^k)} \leq \frac{C_\eta}{N^\eta},\quad N\geq1, \quad t\in\R  \ . 
	\end{equation}
\end{lemma}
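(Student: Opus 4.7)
I would establish the three statements of the lemma in the order they appear: symmetry on $\cH^0$, boundedness on each $\cH^k$, and the quantitative approximation estimate \eqref{est:LN}.

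For the symmetry, the key point is that $R_N=(\uno+N^{-1}H^\tm)^{-1}$ is defined by the functional calculus of the positive self-adjoint operator $H$, hence is itself bounded self-adjoint on $\cH^0$ and leaves every $\cH^j$ invariant. For any $\psi,\phi\in\cH^{k_0+2\tm}$ one then has $R_N\psi,R_N\phi\in\cH^{k_0+2\tm}$, so hypothesis {\rm (H1)} applies and yields
$$
\la L_N(t)\psi,\phi\ra_0=\la L(t)R_N\psi,R_N\phi\ra_0=\la R_N\psi,L(t)R_N\phi\ra_0=\la \psi,L_N(t)\phi\ra_0.
$$
This extends by density once $L_N(t)$ is shown bounded on $\cH^0$, which follows from the next step. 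For the boundedness on $\cH^k$ with $0\le k\le k_0-2\tm$, I would simply read off the composition
$$
\cH^k\ \xrightarrow{R_N}\ \cH^{k+2\tm}\ \xrightarrow{L(t)}\ \cH^k\ \xrightarrow{R_N}\ \cH^k,
$$
which is bounded by Lemma \ref{lem:RN}(i), hypothesis {\rm (H0)} (valid because $k+2\tm\le k_0$), and Lemma \ref{lem:RN}(ii). The resulting norm is finite but not uniform in $N$, which is all that is needed.

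The heart of the lemma is the approximation estimate. The plan is to split
$$
L_N(t)-L(t)=R_N\,L(t)(R_N-\uno)+(R_N-\uno)L(t)
$$
and bound each summand as a map $\cH^{k+2\tm(1+\eta)}\to\cH^k$. For the second summand, {\rm (H0)} applied at the index $k+2\tm\eta\in[0,k_0]$, which is in range since $k\le k_0-2\tm$ and $\eta\le 1$, gives $L(t):\cH^{k+2\tm(1+\eta)}\to\cH^{k+2\tm\eta}$ boundedly, and then Lemma \ref{lem:RN}(iv) furnishes the factor $N^{-\eta}$ while absorbing the extra $2\tm\eta$ derivatives. For the first summand I would use the same ingredients in reverse order: Lemma \ref{lem:RN}(iv) first, to obtain $(R_N-\uno):\cH^{k+2\tm(1+\eta)}\to\cH^{k+2\tm}$ with norm $\le C_\eta/N^\eta$, then {\rm (H0)} to reach $\cH^k$, and finally Lemma \ref{lem:RN}(ii) to handle the outer $R_N$. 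Summing the two bounds delivers \eqref{est:LN}.

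No serious obstacle is expected: the argument is essentially a careful index-bookkeeping exercise built on the three smoothing estimates of Lemma \ref{lem:RN} together with the mapping property {\rm (H0)}. The only minor point to verify is that every intermediate index appearing in the chains lies in the range $[0,k_0]$ where {\rm (H0)} is available, which is exactly what the hypothesis $k\le k_0-2\tm$ together with $0<\eta\le 1$ guarantees.
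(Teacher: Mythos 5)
Your proposal is correct and follows essentially the same route as the paper: the same splitting $L_N-L=R_NL(R_N-\uno)+(R_N-\uno)L$, with (H0) and Lemma \ref{lem:RN}(ii),(iv) supplying the mapping bounds and the $N^{-\eta}$ factor. The paper simply declares symmetry and boundedness routine and proves only \eqref{est:LN}; you spell them out, but the content is the same.
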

\begin{proof}
	We prove only the estimate. By Lemma \ref{lem:RN} one has 
	\begin{align*}
	\norm{L_N(t) - L(t)}_{\cL(\cH^{k+2\tm(1+\eta)}, \cH^k)} & \leq \norm{R_N L(t) (R_N - \uno)}_{\cL(\cH^{k+2\tm(1+\eta)}, \cH^k)} + \norm{(R_N - \uno) L(t)}_{\cL(\cH^{k+2\tm(1+\eta)}, \cH^k)}\\
	& \leq (\norm{L(t)}_{\cL(\cH^{k+2\tm}, \cH^{k})} + \norm{L(t)}_{\cL(\cH^{k+2\tm(1+\eta)}, \cH^{k+2\tm\eta})})\frac{C_{\tm,\eta}}{N^\eta} \leq \frac{C_\eta}{N^\eta} \ ,
	\end{align*}
	where the last inequality follows from (H0) using that $k+2\tm \leq k_0$.
	\end{proof}
We use $L_N(t)$ as a propagator for a regularized differential equation. More precisely  consider the regularized Schr\"odinger equation
\begin{equation}
\begin{cases}
\im \partial_t \psi =  L_N(t) \psi \\
\psi\vert_{t=s} = \psi_s \ , \quad s \in \R
\end{cases}
\end{equation}
Since the operator $ L_N(t)$ is bounded on $\cH^k$ to itself for every $k$,  $0\leq k\leq k_1$, it generates a flow $\cU_N(t,s)\in C(\R\times \R, \cL(\cH^k))$ 
 for $0\leq k\leq k_1$, which is unitary in $\cH^0$.
\begin{lemma}
\label{lem:UN}
For any  $0<k \leq 2k_1 $,  there exists a positive constant $C_k$, independent of $N$, such that  
	$$
	\norm{\cU_N(t,s)}_{\cL(\cH^k)} \leq \ e^{C_k |t-s|}\ , \quad \forall N  > 0 \ .
	$$
\end{lemma}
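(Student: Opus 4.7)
The plan is an energy estimate for $\|H^{k/2}\cU_N(t,s)\psi\|_0^2$, followed by Grönwall and a density argument. Since $L_N(t)$ is bounded on every $\cH^j$ by Lemma~\ref{lem:LN}, the propagator $\cU_N(t,s)$ is strongly differentiable on $\cH^\infty$, which is dense in $\cH^k$; I would work on $\cH^\infty$ and extend at the very end. Setting $\phi(t) := H^{k/2}\cU_N(t,s)\psi$, using $\im\partial_t \cU_N = L_N \cU_N$, and exploiting the symmetry of $L_N(t)$ (inherited from $L(t)$ since $R_N$ is self-adjoint), the diagonal term $\langle L_N\phi,\phi\rangle_0$ is real and drops out, leaving
$$
\frac{d}{dt}\|\phi(t)\|_0^2 \;=\; -2\,\Im\,\langle [H^{k/2},L_N(t)]\, H^{-k/2}\phi(t),\,\phi(t)\rangle_0.
$$
So the problem reduces to an $N$-uniform bound on $\|[H^{k/2},L_N(t)]\, H^{-k/2}\|_{\cL(\cH^0)}$.

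The crucial observation is that $R_N = (1 + H^{\tm}/N)^{-1}$ is a Borel function of $H$, and hence commutes with $H^{k/2}$. This lets me factor
$$
[H^{k/2},L_N(t)]\, H^{-k/2} \;=\; R_N\,\bigl([H^{k/2},L(t)]\, H^{-k/2}\bigr)\, R_N,
$$
at which point the outer $R_N$'s are uniformly bounded on $\cH^0$ by Lemma~\ref{lem:RN}(ii). For the middle factor I would note that (H2) is precisely (H3) with $\tau = 0$, so Lemma~\ref{lem:comm2} applies with $\theta = 1$: part (i) handles integer $k/2 \in [1, k_1]$ and part (ii) (applied with any fixed $0 < \theta' < 1$) handles real $0 < k/2 < k_1$; together they span the admissible range $0 < k \leq 2k_1$ of the statement.

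Putting the pieces together yields $|\frac{d}{dt}\|\phi(t)\|_0^2| \leq C_k\,\|\phi(t)\|_0^2$ with $C_k$ independent of $N$, and Grönwall closes the argument on $\cH^\infty$; density then extends the bound to all of $\cH^k$. The main technical obstacle will be producing the commutator estimate uniformly in $N$, and the essential device is that the regulariser $R_N$ is itself a function of $H$: this is precisely what makes the commutator factor cleanly through a time-independent bounded operator on $\cH^0$ and keeps the constants insensitive to the cutoff $N$.
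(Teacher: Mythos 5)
Your overall architecture — differentiate $\|H^{k/2}\cU_N(t,s)\psi\|_0^2$, use symmetry of $L_N$ to kill the diagonal term, factor the commutator through $R_N$, bound uniformly in $N$ by Lemma~\ref{lem:comm2}, then Gr\"onwall — is the same as the paper's, and the key observation that $R_N$ is a function of $H$ so $[L_N,H^k]=R_N[L,H^k]R_N$ is exactly what the paper exploits. The sign in your energy identity is off (using $\im\partial_t\psi=L(t)\psi$ one gets $+2\,\Im\langle[H^{k/2},L_N]H^{-k/2}\phi,\phi\rangle$), but since you take absolute values this is immaterial.

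There is, however, a genuine gap in your invocation of Lemma~\ref{lem:comm2}. You write that ``(H2) is precisely (H3) with $\tau=0$.'' This is backwards: (H2) asks that $[L,H]H^{-1}$ be bounded, which is (H3) at the \emph{endpoint} $\tau=1$, so $\theta:=1-\tau=0$, not $\theta=1$. (H3) with $\tau=0$ would be the much stronger requirement that $[L,H]$ itself be bounded, with no $H^{-1}$ regularisation at all. With $\theta=0$, Lemma~\ref{lem:comm2}(ii) only yields $[L,H^p]H^{-p+\theta'}\in\cL(\cH^0)$ for $\theta'<0$; since $H^{-\theta'}=H^{|\theta'|}$ is unbounded, this does \emph{not} give $[L,H^p]H^{-p}\in\cL(\cH^0)$ for noninteger $p$. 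So the step where you ``handle real $0<k/2<k_1$'' via Lemma~\ref{lem:comm2}(ii) fails, and your energy estimate closes only for integer $k/2$.

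The repair is what the paper actually does: run the Gr\"onwall argument for the integer exponents $H^k$ ($1\le k\le k_1$), for which Lemma~\ref{lem:comm2}(i) at $\theta=0$ does give $[L,H^k]H^{-k}$ bounded on $\cH^0$, obtaining $\norm{\cU_N(t,s)}_{\cL(\cH^{2k})}\le {\rm e}^{C_k|t-s|}$ uniformly in $N$; then interpolate between these and the trivial isometry bound $\norm{\cU_N(t,s)}_{\cL(\cH^0)}=1$ to reach arbitrary real $k\in(0,2k_1]$. This interpolation step is not optional under (H2)~alone — it is the mechanism that substitutes for the fractional-power commutator estimate you tried to use.
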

\begin{proof}
First we control $\norm{\cU_N(t,s)}_{\cL(\cH^{2k})}$ for $0 \leq k\leq k_1$.	We must show that $H^{k} \,\cU_N(t,s) \, H^{-k}$ is bounded uniformly in $N$ as an operator from $\cH^0$ to itself. Remark that, due to the unitarity of $\cU_N(t,s)$ in $\cH^0$, one has 
	$$
	\norm{H^k \,\cU_N(t,s) \, H^{-k}}_{\cL(\cH^0)} = \norm{\cU_N(t,s)^* \, H^k \,\cU_N(t,s) \, H^{-k}}_{\cL(\cH^0)} \ .
	$$
	Now one has 
	\begin{align*}
	\cU_N(t,s)^* \, H^k \,\cU_N(t,s) \, H^{-k} & = \uno + \int_s^t \cU_N(r,s)^* \, [L_N(r), H^k ] \, \cU_N(r,s) \, H^{-k}dr\\
	& = \uno + \int_s^t \cU_N(r,s)^* \, R_N \, [L(r), H^k ] \, H^{-k} \, R_N \, H^k \, \cU_N(r,s) \, H^{-k}dr
	\end{align*}
	where we used that $[R_N,H^k]=0$  and
	\begin{equation*}
	[L_N(t), H^k] = R_N \, [L(t), H^k] \, R_N \ .
	\end{equation*}
	By Lemma \ref{lem:comm2},  for $0 \leq k \leq k_1$, one has the bound   $\norm{[L(t), H^k ] \, H^{-k} }_{\cL(\cH^0)} \leq C_k$ for some positive constant $C_k$,   thus it follows (using also Lemma \ref{lem:RN} $(ii)$) that  uniformly in $N$
	$$
	\norm{R_N \, [L(t), H^k ] \, H^{-k} \, R_N}_{\cL(\cH^0)} \leq C_k \ , \quad \forall N >0 \  , \qquad 0 \leq k \leq k_1 \ . 
	$$
	Such estimate combined with the unitarity of $\cU_N(t,s)$ in $\cL(\cH^0)$ gives 
	\begin{align*}
	\norm{H^k \,\cU_N(t,s) \, H^{-k}}_{\cL(\cH^0)}  & \leq 1+ \int_s^t \norm{\cU_N(r,s)^* \, R_N \, [L(r), H^k ] \, H^{-k} \, R_N \, H^k \, \cU_N(r,s) \, H^{-k}}_{\cL(\cH^0)}dr \\
	& \leq 1 + C_k \int_s^t \norm{H^k \, \cU_N(r,s) \, H^{-k}}_{\cL(\cH^0)}dr
	\end{align*}
	which by Gronwall allows us to conclude that
	$$
	\norm{\cU_N(t,s)}_{\cL(\cH^{2k})}  \leq e^{C_k |t-s|} \ , \qquad \forall 0 \leq k \leq k_1 \ . 
	$$
	Interpolating with the trivial bound $\norm{\cU_N(t,s)}_{\cL(\cH^{0})} =1$ gives the result for general $k$.
\end{proof}

\begin{proof}[Proof of Theorem \ref{thm:flow2}.]
Fix arbitrary $t,s \in \R$. Choose $\eta>0$  small enough.
	The first step is to show that  for every $\psi\in \cH^{k+2\tm(1+\eta)}$, the sequence $\{ \cU_N(t,s)\psi \}_N$ is a Cauchy sequence in the space  $\cH^k$.
	For $k \leq k_2 \equiv \min\{k_0, 2k_1\}-4\tm$ one has
	\begin{align*}
	\norm{\cU_N(t,s)\psi - \cU_{N'}(t,s)\psi}_{k}   = \norm{\int_s^t \partial_r( \cU_{N'}(t, r) \, \cU_N(r,s)\psi) \, dr}_{k} &\\
	= \norm{\int_s^t  \cU_{N'}(t, r) \, \left( L_N(r) - L_{N'}(r) \right) \cU_N(r,s) \psi  \, dr}_{k}  &\\
	\leq |t-s| \sup_{r \in [s, t]}  \norm{ \cU_{N'}(t, r)}_{\cL(\cH^{k})}  \,\norm{ L_N(r) - L_{N'}(r) }_{\cL(\cH^{k+2\tm(1+\eta)}, \cH^k)}
	\norm{ \cU_{N}(r, s)}_{\cL(\cH^{k+2\tm(1+\eta)})} \norm{\psi}_{k+2\tm(1+\eta)} & \\
	\leq C\left(\frac{1}{N^\eta} + \frac{1}{(N')^\eta} \right) \ |t-s| \  e^{(C_k + C_{k +2\tm(1+\eta)})|t-s|} \ \norm{\psi}_{k+2\tm(1+\eta)}, 
	\end{align*}
	where in the last inequality we used a easy variant of estimate \eqref{est:LN} in  Lemma \ref{lem:LN}.
	For any $t,s$ in a bounded interval, and $\psi\in \cH^{k+2\tm(1+\eta)}$, the sequence 
	$\{ \cU_N(t,s)\psi \}_N \subset \cH^k$ is a Cauchy  sequence.   Since $\cH^{k+2\tm(1+\eta)}$ is  dense in  $\cH^{k}$ and $\norm{ \cU_N(t,s)}_{\cL(\cH^k)} \leq e^{C_k|t-s|}$ uniformly in $N$, by an easy density argument one shows that for any  $\psi  \in \cH^{k}$ the sequence $\{ \cU_N(t,s)\psi \}_N$ is also  Cauchy     in $\cH^k$, $k \leq k_2$. Thus for every $\psi  \in \cH^{k}$ the  limit
	$$
	\cU(t,s)\psi := \lim_{N \to \infty}  \cU_N(t,s)\psi 
	$$
	exists in $\cH^k$, $k < k_2$.  Moreover  we have  the following error estimate, for $N>0$ large enough, 
	\beq
	\label{ineq:est}
	\Vert\cU(t,s)\psi-\cU_N(t,s)\psi\Vert_k \leq \frac{C}{N^\eta}\, \vert t-s\vert\, {\rm e}^{C\vert t-s\vert}\, \Vert\psi\Vert_{k+2\tm(1+\eta)} \ , \quad 0 \leq k \leq  k_2 \ . 
	\eeq
	By the principle of uniform boundedness (Banach-Steinhaus Theorem), $\cU(t,s) \in \cL(\cH^k)$. 
	Since $\cU_N(t,s)$ is an isometry in $\cH^0$,
	$$
	\norm{\cU(t,s)\psi}_{0} = \lim_{N \to \infty} \norm{ \cU_N(t,s)\psi }_{0} = \norm{\psi}_{0}
	$$
	which shows that $\cU(t,s)$ is an isometry on $\cH^0$. \\
	Let us prove  now that $\psi(t) = \cU(t,s)\psi_s$   satisfies the  integral equation \eqref{eq:scint}. Denote 
	$\psi^{N}(t) = \cU_N(t,s)\psi_s$. Then we have 
	\beq\label{eq:Nscint}
 \psi^N(t) = \psi_s + \im^{-1}\int_s^t L_N(r)\psi^N(r)dr \ . 
\eeq
	Using Lemma \ref{lem:LN}   and estimate (\ref{ineq:est}) there  exists $C>0$, depending on $a, b, k$ but not on $N$,  such that for 
	 $a\leq s\leq r\leq t\leq b$, $k \leq  k_2$  we have  
	$$
	\Vert L_N(r)\psi^N(r) - L(r)\psi(r)\Vert_k \leq C\left(\Vert\psi(r) -\psi^N(r)\Vert_{k+2\tm} +\frac{1}{N^\eta}\Vert\psi\Vert_{k+2\tm(1+\eta)}\right).
	$$
	So we can pass to the limit  in (\ref{eq:Nscint})   and we get 
	\beq\label{eq:scint2}
 \psi(t) = \psi_s + \im^{-1}\int_s^t L(r)\psi(r)dr \ . 
\eeq
	In particular if $\psi_s\in\cH^{k+2\tm(1+\eta)}$  then $t\mapsto\psi(t)$  is strongly derivable from $\R$ into $\cH^k$ and satisfies  the Schr\"odinger equation (\ref{eq:sc0}).
		Furthermore
	$$
	\cU(t,s)\psi =  \lim_{N \to \infty}  \cU_N(t,s)\psi = \lim_{N \to \infty}  \cU_N(t,r) \cU_N(r, s)\psi = \cU(t, r) \cU(r, s)\psi 
	$$
 	where the limits are in the $\cH^k$ topology. This shows the group property.
	
	Finally we  have shown  that 
	$(t,s) \mapsto \cU(t,s) \in \cL(\cH^{k+2\tm}, \cH^k)$ is strongly continuously differentiable with strong derivatives
	$$
	\partial_t \cU(t,s)  = - \im L(t) \cU(t,s) \ . 
	$$
	With the same proof we get also
	$$
	\partial_s \cU(t,s) = \im \, \cU(t,s) L(s)  \ . 
	$$
\end{proof}

We now prove the second theorem, concerning the growth of the norms.
\begin{proof}[Proof of Theorem \ref{thm:norm}.]
(i) It is enough to prove (\ref{bound})  for $\psi_s\in\cH^\infty$. We have proved in Theorem \ref{thm:flow2}    that $\cU(t,s)$ is an isometry in $\cH^0$  so  we have
$$
\Vert\cU(t,s)\psi_s\Vert_{2k} = \Vert\cU^*(t,s)\, H^k \, \cU(t,s)\psi_s\Vert_0.$$
 But  we have
 $$
 \cU^*(t,s)\, H^k\, \cU(t,s)\psi_s = H^k\psi_s + \im^{-1}\int_s^t \cU^*(r,s)\, [L(r), H^k]\, \cU(r,s)\psi_s \, dr
 $$
 Hence using assumption (H3)    and Lemma \ref{lem:comm2},  we get the first estimate
\beq\label{step1}
 \Vert\cU(t,s)\psi_s\Vert_{2k} \leq \Vert\psi_s\Vert_{2k} + C_k\int_s^t \Vert\cU(r,s)\psi_s\Vert_{2(k-\theta)}dr
\eeq
After $m$  iterations of (\ref{step1}), with another constant $C_{k,m}$,   we get 
\bea\label{stepm}
 \Vert\cU(t,s)\psi_s\Vert_{2k} \leq 
C_{k,m}\left(\Vert\psi_s\Vert_{2k} + \vert t-s\vert(\Vert\psi_s\Vert_{2(k-\theta)}+\cdots +\vert t-s\vert^{m-1}\Vert\psi_s\Vert_{2(k-(m-1)\theta)} \right) \nonumber\\
+C_{k,m}\int_s^t\int_s^{t_1}\cdots \int_s^{t_{m-1}} \Vert\cU(t_m,s)\psi_s\Vert_{2(k-m\theta)}dt_mdt_{m-1}\cdots dt_{1}.
\eea
Now choose  $m$  such that $m\theta \geq k$ in such a way that $\Vert\cU(t_m,s)\psi_s\Vert_{2(k-m\theta)} \leq \Vert\cU(t_m,s)\psi_s\Vert_{0}$. Then use the unitarity of $\cU(t,s)$ in $\cH^0$ to obtain the bound \eqref{bound}.
	
	

(ii) If $\theta =\frac{p}{q}$  we get  the inequality for $r=2k$ with $k=p\ell$, $m=\ell q$ from Theorem \ref{thm:norm}. We conclude by an usual interpolation argument.
\end{proof}

With very similar arguments one can prove the following result about convergence of flows. 
\begin{theorem}
Let $L(t)$ be an operator fulfilling (H0)--(H2) with $k_0 = k_1 = \infty$. Let  $\{L_n(t)\}_{n \geq 1}$ be a sequence of operators fulfilling  (H0), (H1) and (H2) with $k_0 = k_1 = \infty$ uniformly in $n$, namely  $\forall k \geq 0$, there exists $C_k >0$ s.t.
\begin{equation}
\label{conv2}
\sup_{t \in \R} \norm{ [L_n(t), H] H^{-1} }_{\cL( \cH^k)} \leq C_k \ , \quad \forall n  \ .
\end{equation}
Assume that there exists $m \geq 0$ s.t. $\forall k\geq 0$
\begin{equation}
\label{conv}
\sup_{t \in \R} \norm{L_n(t) - L(t)}_{\cL(\cH^{k+m}, \cH^k)} \to 0 , \quad  n \to \infty \ .
\end{equation}
Denote by $\cU_n(t,s)$ the propagator of $L_n(t)$ and by $\cU(t,s)$ the propagator of $L(t)$. Then for every $\psi \in \cH^{k+m}$, for every $t,s \in \R$ fixed, one has 
\begin{equation}
\label{conv3}
\norm{\cU_n(t,s)\psi - \cU(t,s)\psi}_{k} \to 0 , \quad n \to \infty \ .
\end{equation}
\end{theorem}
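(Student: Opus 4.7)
The plan is to adapt the Duhamel-type comparison used in the proof of Theorem \ref{thm:flow2} to compare the two propagators. For $\psi$ in a sufficiently smooth subspace, Theorem \ref{thm:flow2} guarantees that $r \mapsto \cU(t,r)\,\cU_n(r,s)\psi$ is strongly differentiable with
$$
\frac{d}{dr}\bigl[\cU(t,r)\,\cU_n(r,s)\psi\bigr] = \im\,\cU(t,r)\bigl(L(r)-L_n(r)\bigr)\cU_n(r,s)\psi,
$$
so integrating from $s$ to $t$ yields the key identity
$$
\cU_n(t,s)\psi - \cU(t,s)\psi = \im\int_s^t \cU(t,r)\bigl(L(r)-L_n(r)\bigr)\cU_n(r,s)\psi\, dr.
$$

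The first step is to bound the $\cH^k$-norm of the right-hand side by
$$
|t-s|\,\sup_{r\in[s,t]}\|\cU(t,r)\|_{\cL(\cH^k)}\cdot\sup_{r\in[s,t]}\|L(r)-L_n(r)\|_{\cL(\cH^{k+m},\cH^k)}\cdot\sup_{r\in[s,t]}\|\cU_n(r,s)\psi\|_{k+m}.
$$
The first factor is controlled by Theorem \ref{thm:flow2}(iv) applied to $L$. The middle factor tends to $0$ as $n\to\infty$ by hypothesis \eqref{conv}. For the third factor, I would apply Theorem \ref{thm:flow2}(iv) to each $L_n$: tracing through Lemma \ref{lem:UN}, the constant in the bound $\|\cU_n(r,s)\|_{\cL(\cH^{k+m})} \leq e^{C_{k+m}|r-s|}$ depends only on the commutator bounds $\|[L_n(\cdot),H^j]H^{-j}\|_{\cL(\cH^0)}$ for finitely many $j$, which Lemma \ref{lem:comm2} in turn expresses in terms of the quantities $\|[L_n(\cdot),H]H^{-1}\|_{\cL(\cH^\ell)}$ that are uniformly bounded in $n$ by \eqref{conv2}. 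This establishes \eqref{conv3} for every $\psi$ in the dense subspace $\cH^\infty$.

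To extend convergence to arbitrary $\psi\in\cH^{k+m}$, I would use a standard density argument: given $\varepsilon>0$, pick $\tilde\psi\in\cH^\infty$ with $\|\psi-\tilde\psi\|_{k+m}<\varepsilon$, split
$$
\cU_n(t,s)\psi - \cU(t,s)\psi = \cU_n(t,s)(\psi-\tilde\psi) - \cU(t,s)(\psi-\tilde\psi) + \bigl[\cU_n(t,s)\tilde\psi - \cU(t,s)\tilde\psi\bigr],
$$
and use the uniform-in-$n$ bound $\sup_n\|\cU_n(t,s)\|_{\cL(\cH^{k+m})} < \infty$ together with the analogous bound for $\cU$ to control the first two terms by $C\varepsilon$ on compact time intervals, while the last term tends to $0$ by the previous step.

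The main technical point I anticipate is verifying that the growth constants in Theorem \ref{thm:flow2}(iv), when applied to the family $\{L_n\}$, are genuinely uniform in $n$. This amounts to bookkeeping through the induction in Lemma \ref{lem:comm2} and the Gronwall loop in Lemma \ref{lem:UN} to confirm that at each fixed order $k+m$ only finitely many instances of $\|[L_n(\cdot),H]H^{-1}\|_{\cL(\cH^\ell)}$ enter, so that the uniform bound \eqref{conv2} suffices to pass to uniform bounds on $\|\cU_n\|_{\cL(\cH^{k+m})}$. Once this is confirmed, the rest of the argument mirrors the Cauchy estimate for $\{\cU_N(t,s)\psi\}_N$ in the proof of Theorem \ref{thm:flow2}, with the regularization parameter $N$ replaced by the approximation index $n$.
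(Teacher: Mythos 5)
Your proposal is correct and follows essentially the same route as the paper: Duhamel comparison of the two propagators, uniform-in-$n$ exponential bounds on $\|\cU_n(t,s)\|_{\cL(\cH^{k+m})}$ obtained by tracing Lemma \ref{lem:UN} through the uniform commutator bound \eqref{conv2}, and then letting the middle factor go to zero via \eqref{conv}. The only difference is that you explicitly add a density step (proving for $\psi\in\cH^\infty$, then extending by the uniform propagator bounds), which the paper leaves implicit; this is a reasonable and slightly more careful rendering of the same argument.
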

\begin{proof}
By Theorem \ref{thm:flow2} the flows $\cU_n(t,s)$ and $\cU(t,s)$ are well defined and fulfill (i)--(iv) of Theorem \ref{thm:flow2}. We claim that for every $k \geq 0$,  $\exists \wt C_k >0$ s.t. 
\begin{equation}
\label{conv4}
\norm{\cU_n(t,s)}_{\cL(\cH^k)} \leq {\rm e}^{\wt C_k |t-s|} \ , \quad \forall n \geq 0 \ . 
\end{equation}
Such estimate  follows by arguing similarly to  the proof  of  Lemma \ref{lem:UN} and using  estimate \eqref{conv2}  to estimate $[L_n(t), H^k] H^{-k}$. We skip the details.
Now we have
\begin{align*}
	\norm{\cU_n(t,s)\psi - \cU(t,s)\psi}_{k}   
	= \norm{\int_s^t  \cU(t, r) \, \left( L_n(r) - L(r) \right) \cU_N(r,s) \psi  \, dr}_{k}  &\\
	\leq |t-s| \sup_{r \in [s, t]}  \norm{ \cU(t, r)}_{\cL(\cH^{k})}  \,\norm{ L_n(r) - L(r) }_{\cL(\cH^{k+m}, \cH^k)}
	\norm{ \cU_{n}(r, s)}_{\cL(\cH^{k+m}) }\norm{\psi}_{k+m} & \\
	\leq \sup_{r \in [s, t]}\norm{ L_n(r) - L(r) }_{\cL(\cH^{k+m}, \cH^k)} \ |t-s| \  e^{(C_k + \wt C_{k +m})|t-s|} \ \norm{\psi}_{k+m}, 
	\end{align*} 
	which converges to $0$ by \eqref{conv}.
\end{proof}

\section{Growth of norms for perturbations smooth in time}

In this section we prove  Theorem \ref{thm:grest}. 
First we show that under assumptions (Hgap) and  (Vs$)_n$, the operator $H+V(t)$ satisfies a spectral gap property. Then we describe the algorithm  which will allow us to construct an adiabatic approximation $\cU_{ad}(t,s)$ of the flow of the operator $H+V(t)$. Here we  follow the strategy of  \cite{nen}, adding analytic estimates to the construction. Finally we show how to use the adiabatic approximation $\cU_{ad}(t,s)$ to control the growth of the Sobolev norm.\\
   
  \subsection{Spectral properties of $H+V(t)$} 
It is more  convenient to  have dyadic gaps  between the clusters, so we define a new sequence of clusters as  follows.  Fix a large integer $\tJ \geq 1$ (to be chosen later on). Define the new clusters
   \begin{align}
\wt \sigma_1 := \bigcup_{1 \leq l \leq 2^{\tJ}} \sigma_l \ , \qquad \wt \sigma_j = \bigcup_{2^{\tJ+j-2} + 1 \leq l \leq 2^{\tJ + j-1}} \sigma_l \qquad  \mbox{ for } j \geq 2 \ . 
\end{align}
We define as well
\begin{align}
\wt\Delta_j :=  {\rm dist}(\widetilde\sigma_{j+1}, \widetilde\sigma_j)  \ , \qquad \wt\delta_j := \sup_{\lambda_1, \lambda_2 \in \wt\sigma_j}|\lambda_1 - \lambda_2| \ , \\
\label{lambda.def}
\lambda_j^+= \max_{\lambda \in \wt\sigma_j} \lambda \ , \quad \lambda_j^-=\min_{\lambda \in \wt\sigma_j} \lambda \ . 
\end{align}
 So condition (Hgap)   is  written   now  as
\begin{itemize}
\item[$\widetilde{{\rm (Hgap)}}$] The spectrum of $H$ fulfills  $\sigma(H)\subseteq \bigcup_{1\leq j <+\infty}\widetilde\sigma_j$ and there exist positive constants $\wt\alpha, \wt\beta$ (independent of $\tJ$) s.t. $\forall j \in \N$
\begin{align}
\label{diamcluster}
\wt\alpha^{-1} \, 2^{(\tJ+j-1)\mu} \leq \wt\Delta_j \leq \wt\alpha \, 2^{(\tJ+j-1)\mu} , \qquad \wt\delta_j \leq \wt\beta \,  2^{(\tJ+j-1)(\mu+1)}  \ . 
\end{align}
\end{itemize} 

\begin{remark}
\label{rem:loc2}
Let  $H(t)$ be an operator  fulfilling $\wt{{\rm(Hgap)}}$ uniformly in time $t\in\R$. Then  there exist positive constants $\wt{C}_1 , \wt{C}_2$ (independent of $\tJ, j$) such that 
\begin{equation}
\label{loc2}
\begin{aligned}
&\lambda_1^+ \leq \wt{C}_2 \, 2^{\tJ(\mu+1)}  \ , \\
 & \wt{C}_1 \, 2^{(\tJ+j-1)(\mu+1)}\leq \lambda_{j}^- \leq \lambda_{j}^+  \leq \wt{C}_2 \, 2^{(\tJ+j-1)(\mu+1)} \ , \quad \forall j \geq 2 \ .
 \end{aligned}
\end{equation}
In particular  we have the very useful property
\beq
\label{eqenergy}
\max \wt\sigma_j \leq \frac{\wt C_2}{\wt C_1} \min \wt\sigma_j \ , \qquad \forall j \in \N \ . 
\eeq
\end{remark}

We will  denote by $\Gamma_j$, $j \geq 1$,  an  anti clock-wise oriented rectangle in the complex plane which isolates the cluster $\wt\sigma_j$, that is  $\Gamma_j$ contains  only  $\wt\sigma_j$ at its interior. We fix such contours so that 
\begin{equation}
\label{gamma}
\inf_{\lambda \in \Gamma_1} {\rm dist}(\lambda, \sigma(H)) \geq  \frac{\wt\Delta_{1}}{2} \ , \qquad 
\inf_{\lambda \in \Gamma_j} {\rm dist}(\lambda, \sigma(H)) \geq  \frac{\wt\Delta_{j-1}}{2} \ , \qquad j \geq 2 \ .
\end{equation}
 Finally define
\begin{equation}
\label{delta}
\delta := 1- \frac{\mu+1}{\mu} \nu \ .
\end{equation}
It is important to remark that  by our assumptions $0 < \delta \leq  1$. \\

We prove now a  perturbative result.  It is in this lemma which enters into play the restriction $\nu < \frac{\mu}{\mu+1}$. This is indeed the condition which guarantees that the operator $H + V(t)$ has a spectrum with increasing spectral gaps.
 \begin{lemma}
 \label{perturb}
 Let  $H$ satisfy {\rm (Hgap)} and  $V(t)$ satisfy {\rm (Vs$)_n$} for some $n \geq 0$. There exists a constant $C_H$ (depending only on $H$), such that if $\tJ$  is large enough to fulfill
 \begin{equation}
 \label{condz}
2^{\tJ \mu \delta } \geq 2^{4} \,  C_H \,  \sup_{t \in \R}\norm{V(t) \, H^{-\nu}}_{\cL(\cH^0)} 
\end{equation}
then  $H+V(t)$  fulfills
$\widetilde{{\rm (Hgap)}}$ uniformly in $t\in\R$, with new clusters
\begin{equation}
\label{new.clus}
\wt\sigma_{j}' = [\lambda_j^- - \frac{\wt\Delta_{j-1}}{4}] \cup \wt\sigma_j \cup  [\lambda_j^+ + \frac{\wt\Delta_{j-1}}{4}]  \ , \quad j \in \N \ . 
\end{equation}
Here we defined $\wt\Delta_0 := \wt\Delta_1$.
 \end{lemma}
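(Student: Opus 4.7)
My plan is to prove the lemma via standard analytic perturbation theory of the spectrum, controlling the resolvent of $H+V(t)$ on suitable contours surrounding each cluster $\wt\sigma_j'$.

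First I would introduce, for each $j \geq 1$, a new rectangular contour $\Gamma_j'$ in the complex plane enclosing the enlarged cluster $\wt\sigma_j'$ defined in \eqref{new.clus}, chosen so that $\mathrm{dist}(\lambda,\sigma(H)) \geq \wt\Delta_{j-1}/8$ for all $\lambda \in \Gamma_j'$ (with the convention $\wt\Delta_0:=\wt\Delta_1$), while $|\lambda| \leq C\, 2^{(\tJ+j-1)(\mu+1)}$ uniformly in $\lambda\in\Gamma_j'$ by Remark \ref{rem:loc2}. On such contours, the spectral theorem applied to $H$ yields
\begin{equation*}
\|H^\nu (\lambda - H)^{-1}\|_{\cL(\cH^0)} \leq \sup_{\eta\in\sigma(H)} \frac{\eta^\nu}{|\lambda-\eta|} \leq C_H\,\frac{2^{(\tJ+j-1)(\mu+1)\nu}}{2^{(\tJ+j-1)\mu}} = C_H\, 2^{-(\tJ+j-1)\mu\delta},
\end{equation*}
where $\delta = 1 - \frac{\mu+1}{\mu}\nu > 0$ is precisely \eqref{delta} and where the key exponent arithmetic uses $\nu < \frac{\mu}{\mu+1}$.

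Second, using $V(t) = (V(t)H^{-\nu})\cdot H^\nu$ and the assumption \eqref{condz}, I would obtain
\begin{equation*}
\|V(t)(\lambda-H)^{-1}\|_{\cL(\cH^0)} \leq \|V(t)H^{-\nu}\|_{\cL(\cH^0)}\cdot C_H\, 2^{-(\tJ+j-1)\mu\delta} \leq \tfrac{1}{4}\, 2^{-(j-1)\mu\delta} \leq \tfrac{1}{4}
\end{equation*}
uniformly in $t\in\R$, $j\geq 1$, and $\lambda\in\Gamma_j'$. This makes the Neumann series $(I - V(t)(\lambda-H)^{-1})^{-1} = \sum_{k\geq 0}(V(t)(\lambda-H)^{-1})^k$ converge, hence $\lambda - (H+V(t))$ is invertible on $\Gamma_j'$ and the resolvent is continuous in $\lambda$ there. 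Consequently $\Gamma_j'$ lies in the resolvent set of $H+V(t)$ for every $t$.

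Third, I would form the Riesz projectors $P_j(t) = -\frac{1}{2\pi\im}\oint_{\Gamma_j'}(\lambda - H - V(t))^{-1} d\lambda$. These are finite-dimensional (since the corresponding projectors for $H$ are, and by deforming between $H$ and $H+V(t)$ via $H+\theta V(t)$, $\theta\in[0,1]$, the rank is preserved), the ranges $P_j(t)\cH^0$ are mutually orthogonal and sum (in the strong sense) to the identity, and $\sigma(H+V(t)) \cap \mathrm{int}(\Gamma_j') \subset \wt\sigma_j'$ follows from the fact that every eigenvalue of $H+V(t)$ enclosed in $\Gamma_j'$ differs from some $\eta\in\wt\sigma_j$ by at most $\wt\Delta_{j-1}/4$ (by the Neumann bound applied to eigenvectors). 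This places $\sigma(H+V(t))\subseteq \bigcup_j \wt\sigma_j'$.

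Finally, I would check the geometric bounds in $\wt{\mathrm{(Hgap)}}$ for the enlarged clusters: by construction $\wt\Delta_j'\geq \wt\Delta_j/2$ and $\wt\delta_j' \leq \wt\delta_j + \wt\Delta_{j-1}/2 + \wt\Delta_j/2$, so using \eqref{diamcluster} one gets \eqref{diamcluster} for $\wt\sigma_j'$ with new constants $\wt\alpha', \wt\beta'$ depending only on $\wt\alpha, \wt\beta$ but not on $\tJ$ or $j$. The main technical hurdle is the resolvent estimate in the first step — reconciling the size of $H^\nu$ on the cluster $\wt\sigma_j$ (which grows like $2^{(\tJ+j-1)(\mu+1)\nu}$) with the size of the gap $\wt\Delta_{j-1}$ — and it is exactly here that the hypothesis $\nu < \mu/(\mu+1)$, equivalent to $\delta > 0$, is essential to make the perturbation small uniformly in $j$.
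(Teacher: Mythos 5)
Your core mechanism coincides with the paper's: factor $V(t)(\lambda-H)^{-1} = [V(t)H^{-\nu}]\,[H^{\nu}(\lambda-H)^{-1}]$, use the spectral theorem to bound $\|H^\nu(\lambda-H)^{-1}\|$ in terms of $2^{-(\tJ+j)\mu\delta}$ where $\delta = 1-\frac{\mu+1}{\mu}\nu>0$, invoke \eqref{condz} to make the Neumann series converge, and conclude invertibility. The exponent arithmetic is right and with a comfortable margin.

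However, the step where you \emph{locate} the spectrum inside the enlarged clusters $\wt\sigma_j'$ introduces claims that are not supported by the hypotheses. Assumption {\rm (Hgap)} only says $\sigma(H)$ is contained in clusters whose gaps grow; it does \emph{not} say the spectral projectors of $H$ on each cluster are finite-rank, nor that $H$ has compact resolvent, nor that $H+V(t)$ has eigenvalues. So the sentence ``these are finite-dimensional (since the corresponding projectors for $H$ are)'' is unjustified, and the subsequent phrase ``every eigenvalue of $H+V(t)$ enclosed in $\Gamma_j'$ differs from some $\eta$ by at most $\wt\Delta_{j-1}/4$ (by the Neumann bound applied to eigenvectors)'' presupposes discrete spectrum and is not an actual argument. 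The paper's proof avoids all of this by taking a shorter route: instead of circumscribing each $\wt\sigma_j'$ by a contour, it picks a \emph{real} point $z$ in one of the gap intervals $[\lambda_j^+ + \wt\Delta_j/4,\ \lambda_{j+1}^- - \wt\Delta_j/4]$, notes that $z\notin\sigma(H)$ and runs the same Neumann estimate to conclude $z$ is in the resolvent set of $H+V(t)$; since $H+V(t)$ is self-adjoint (hence has real spectrum) this immediately implies $\sigma(H+V(t))\subset\R\setminus\bigcup_j(\text{gaps})=\bigcup_j\wt\sigma_j'$, with no projectors and no assumption on multiplicity. I would drop the Riesz-projector machinery and the finite-rank claim, and instead simply check invertibility at each real point of a gap, closing the argument by self-adjointness. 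Your final paragraph checking the gap/diameter estimates for $\wt\sigma_j'$ with $\tJ$-independent constants is fine and matches the remark following the lemma.
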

\begin{proof}
We show that  any $z \in \bigcup_j [\lambda_j^+ +\frac{\wtDelta_j}{4} , \, \lambda_{j+1}^- - \frac{\wtDelta_j}{4}]$ belongs to the resolvent set of $H+V(t)$.
For  $z\in\C\backslash\R$ we have
$$
H+V(t) -z = \left(V(t)(H-z)^{-1} +\1\right)(H-z)= \left([V(t)\, H^{- \nu}]\, [H^{\nu}\, (H-z)^{-1}] +\1\right)\, (H-z) \ .
$$
By spectral decomposition
\beq\label{specdecomp}
H^{\nu}(H-z)^{-1} = \sum_{j\geq 1}\int_{\Gamma_j}\frac{\zeta^\nu}{\zeta -z} \, dE_H(\zeta)
\eeq
where $\{E_H(\zeta)\}_{\zeta\in\R}$  is the  spectral decomposition of $H$. One has 
$$
\norm{H^{\nu}(H-z)^{-1} }_{\cL(\cH^0)} = \sup_{\zeta \in \sigma(H)} \abs{\frac{\zeta^\nu}{\zeta - z} }  \leq \Big(1+\frac{z}{{\rm dist}(z, \sigma(H))} \Big)^\nu  \ \frac{1}{{\rm dist}(z, \sigma(H))^{1- \nu}} \ . 
$$
Fix $z \in \bigcup_j [\lambda_j^+ +\frac{\wtDelta_j}{4} , \, \lambda_{j+1}^- - \frac{\wtDelta_j}{4}]$. Then  (using also \eqref{diamcluster}, \eqref{loc2})
$$
\norm{H^{\nu}(H-z)^{-1} }_{\cL(\cH^0)} \leq  \frac{4 \ (\lambda_{j+1}^-)^\nu}{\wt\Delta_{j}} \leq 4\,  \wt\alpha \,  \wt C_2^\nu \,  2^\mu \, 2^{(\tJ +j) [(\mu+1)\nu - \mu ] } \leq C_H \, 2^{-(\tJ +j)\mu \delta} \ , 
$$
where $\delta  >0$ is defined in \eqref{delta}.
Thus provided  \eqref{condz} holds one has 
\begin{equation}
\label{cond00a} 
\sup_{t \in \R}\norm{V(t) \, H^{-\nu}}_{\cL(\cH^0)} \ \norm{H^{\nu}(H-z)^{-1} }_{\cL(\cH^0)} \leq 1/2 
\end{equation}
and we can invert $[V(t)\, H^{- \nu}]\, [H^{\nu}\, (H-z)^{-1}] +\1$ by Neumann series and define the resolvent
$$
R_V(t, \lambda) := (H- z)^{-1} \, \left([V(t)\, H^{- \nu}]\, [H^{\nu}\, (H-z)^{-1}] +\1\right)^{-1}
 \ . 
$$
This shows that 
any $z \in \bigcup_j [\lambda_j^+ +\frac{\wtDelta_j}{4} , \, \lambda_{j+1}^- - \frac{\wtDelta_j}{4}]$ belongs to the resolvent set of $H+ V(t)$, $\forall t \in \R$. 
Thus
$$
\sigma(H+V(t)) \subset \bigcup_{j \geq 1} \wt\sigma_j' \ . 
$$
The lemma follows easily.
Notice that we get in particular that    for every $t\in\R$, $H+V(t)$ is self-adjoint  on the domain $D(H)$ of $H$. 
 \end{proof}
  \begin{remark}
  One has that $\wt\Delta_j':= {\rm dist}(\wt\sigma_{j+1}', \wt\sigma_{j}')$, $\wt\delta_j':= \sup_{\lambda_1, \lambda_2 \in \wt\sigma_j'} |\lambda_1 - \lambda_2|$ fulfill \eqref{diamcluster} with new constants $\wt\alpha$, $\wt\beta$. 
  \end{remark} 
   {\bf In the following we will always use the clusters $\wt\sigma_j'$'s.} By abusing the notation we will suppress the up-script $'$ and write only $\wt\sigma_j \equiv \wt\sigma_j'$.\\
   
    \begin{lemma} 
    \label{lem:est:res0}
There exists  $\wt C_H>0$, independent on $j, \tJ$, such that  for all $ j \geq 1$ 
\begin{equation}
\label{est:res0}
\sup_{z \in \Gamma_j} \Vert H^\nu(H-z)^{-1}\Vert_{\cL(\cH^0)}  \leq \frac{\wt C_H}{\wt\Delta_{j-1}^\delta} \ , 
\end{equation}
where $\delta$ is defined in \eqref{delta}.
\end{lemma}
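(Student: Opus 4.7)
My plan is to proceed by spectral calculus, exactly as in the proof of Lemma \ref{perturb}, with extra care taken because now the point $z$ lies on the contour $\Gamma_j$ surrounding the (enlarged) cluster $\wt\sigma_j$ rather than in between clusters. Since $H$ is self-adjoint,
$$
\|H^\nu(H-z)^{-1}\|_{\cL(\cH^0)} = \sup_{\zeta\in\sigma(H)}\left|\frac{\zeta^\nu}{\zeta-z}\right|,
$$
so the entire argument reduces to a scalar estimate on this supremum, uniform in $z\in\Gamma_j$.

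First I would record the two geometric inputs that determine the contour. Because $\Gamma_j$ isolates only $\wt\sigma_j$ (in the enlarged sense \eqref{new.clus}) and sits at distance $\sim\wt\Delta_{j-1}/4$ from the original spectrum, there exist constants independent of $j,\tJ$ with
$$
\text{dist}(z,\sigma(H)) \gtrsim \wt\Delta_{j-1},\qquad |z|\lesssim \lambda_j^+,\qquad \forall z\in\Gamma_j,
$$
where the first bound uses the definition \eqref{new.clus} of $\wt\sigma_j'$ together with \eqref{gamma}, and the second uses the spectral localization \eqref{loc2}.

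Next I split the supremum into $\zeta\le 2|z|$ and $\zeta>2|z|$. In the first range $\zeta^\nu\le (2|z|)^\nu$ and $|\zeta-z|\ge\text{dist}(z,\sigma(H))$, yielding $|\zeta^\nu/(\zeta-z)|\lesssim |z|^\nu/\wt\Delta_{j-1}$. In the second range $|\zeta-z|\ge \zeta/2$ and hence $|\zeta^\nu/(\zeta-z)|\le 2\zeta^{\nu-1}$; since $\nu<1$ this is decreasing in $\zeta$ and therefore dominated by $|z|^{\nu-1}$, which is in turn controlled by the first bound because $\lambda_j^+\ge \wt\Delta_{j-1}$. Combining, one gets
$$
\sup_{z\in\Gamma_j}\|H^\nu(H-z)^{-1}\|_{\cL(\cH^0)} \le C\,\frac{(\lambda_j^+)^\nu}{\wt\Delta_{j-1}}.
$$

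Finally, I would convert this into the claimed bound using the explicit power-laws \eqref{diamcluster} and \eqref{loc2}: $\lambda_j^+\lesssim 2^{(\tJ+j-1)(\mu+1)}$ and $\wt\Delta_{j-1}\gtrsim 2^{(\tJ+j-2)\mu}$. A direct comparison of exponents shows that
$$
\frac{(\lambda_j^+)^\nu}{\wt\Delta_{j-1}}\ \lesssim\ 2^{-(\tJ+j-2)\mu\bigl(1-\tfrac{\mu+1}{\mu}\nu\bigr)} = 2^{-(\tJ+j-2)\mu\delta},
$$
and the right-hand side is $\lesssim \wt\Delta_{j-1}^{-\delta}$ again by \eqref{diamcluster}. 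Absorbing all universal constants into $\wt C_H$ gives \eqref{est:res0}. The only step that needs care, and which I view as the main point, is checking that the two exponents $-(\tJ+j-2)\mu+(\tJ+j-1)(\mu+1)\nu$ (coming from the estimate) and $-(\tJ+j-2)\mu\delta$ (the target) differ by a bounded constant; this is exactly the reason for the definition \eqref{delta} of $\delta$ and why the restriction $\nu<\mu/(\mu+1)$ forces $\delta>0$.
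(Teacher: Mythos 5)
Your proof is correct, and reaches the same intermediate estimate
$\sup_{z\in\Gamma_j}\|H^\nu(H-z)^{-1}\|\lesssim (\lambda_j^+)^\nu/\wt\Delta_{j-1}$
and the same exponent bookkeeping as the paper; the only step where you genuinely diverge is in how the scalar supremum over $\zeta\in\sigma(H)$ is handled. The paper first factors
$|\zeta|^\nu/|\zeta-z| = \bigl(|\zeta|/|\zeta-z|\bigr)^\nu\,|\zeta-z|^{-(1-\nu)}$
and then splits the spectrum by cluster index, showing $|\zeta|/|\zeta-z|\lesssim 2^{\tJ+j-1}$ when $\zeta\in\wt\sigma_j$ and that this ratio is $O(1)$ on every other cluster $\wt\sigma_{j'}$, the latter using the growth \eqref{loc2} of the clusters. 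You instead split the spectrum by the scalar threshold $\zeta\lessgtr 2|z|$ and estimate numerator and denominator directly, using monotonicity of $\zeta^{\nu-1}$ in the far regime. Your split is a bit more elementary, in that it avoids the weighted factorization and makes no explicit reference to the cluster indices of the far part of the spectrum; the cost is that you must invoke, and should make explicit, that every admissible contour $\Gamma_j$ can be chosen so that $|z|$ is comparable to $\lambda_j^\pm$ uniformly on $\Gamma_j$ (the paper only pins down the distance from $\Gamma_j$ to $\sigma(H)$, not its diameter, so one must take the rectangles of height $O(\wt\Delta_{j-1})$ as is implicitly done throughout). Both routes produce the same final comparison of exponents and the same conclusion, with constants differing only by bounded factors such as $2^{\mu\nu}$.
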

\begin{proof}
We show that there exists a constant $\wt C>0$, independent on $j$,$\tJ$,   s.t.  for every $z \in \Gamma_j$, 
\begin{equation}
\label{est:res1}
 \Vert H^\nu(H-z)^{-1}\Vert_{\cL(\cH^0)} \leq \wt C\frac{2^{(\tJ+j-1)\nu}}{{\rm dist}(z,\sigma(H))^{1-\nu}} \ . 
\end{equation}
Then \eqref{est:res0} follows easily using \eqref{gamma} and \eqref{diamcluster}.\\
To prove \eqref{est:res1}, recall that $\norm{H^{\nu}(H-z)^{-1} }_{\cL(\cH^0)}  = \sup_{\zeta \in \sigma(H)} \frac{|\zeta|^\nu}{|\zeta - z|}  $ and write
$$
\frac{\vert\zeta\vert^\nu}{\vert\zeta-z\vert} = 
\left(
\frac{\vert\zeta\vert}{\vert\zeta-z\vert}\right)^\nu\frac{1}{\vert\zeta-z\vert^{1-\nu}} \ . 
$$
Let $z\in\Gamma_j$. If $\zeta \in\wt\sigma_j$ we have by \eqref{loc2}   and  \eqref{diamcluster} 
$$
\frac{\vert\zeta\vert}{\vert\zeta-z\vert} \leq \wt C_2 \frac{2^{(\tJ + j -1)(\mu+1)}}{\wt\Delta_{j-1}} \leq C_3 \,  2^{(\tJ+j-1)} \ , 
$$
where $C_3 >0$ is independent of $j, \tJ$.\\
Now if $\zeta \in\wt\sigma_{j^\prime}$, $j^\prime\neq j$ then 
 $\zeta\approx 2^{(\tJ+j^\prime-1)(\mu+1)}$  and there exists $ C_4>0$ s.t.  $\vert\zeta-z\vert \geq \wt C_4 2^{(\tJ+j^\prime-1)(\mu+1)}$ (notice that ${\rm length}(\wt\sigma_k)\geq c2^{(\tJ+k-1)(\mu+1)}$)  so 
$$ 
\frac{\vert\zeta\vert}{\vert\zeta-z\vert} \leq \wt C_4.
$$
Hence \eqref{est:res1}  follows with $\wt C = \max (C_3, C_4)$.
\end{proof}

   \subsection{Adiabatic approximation}
Let us start now the adiabatic approximation as  explained  in \cite{ nenf, nen,joy,joy0}.\\ 
We present first the formal construction. In a second step we perform analytic estimates to prove that all the objects are well defined.\\

The idea is to construct a sequence of operators $B_m(t)$ such that for every $m \geq 0$ the flow  $\cU_{ad,m}(t,s)$ of $H + V(t) - B_m(t)$ is adiabatic, in particular  it preserves   the $\cH^k$-norm, and $B_m(t)$ is a more and more regularizing operator  in a suitable sense.  The $B_m(t)$  are  constructed step by step  such that at each step we have an adiabatic  transport  for spectral projectors. Let us recall here the  adiabatic approximation used at each step following \cite{nenf, nen}.\\
Consider $H_W(t) = L(t)+W(t)$ a perturbation of $L(t) := H+V(t)$  such that $\sigma({H_W})\subseteq\bigcup_{j\geq 1}\sigma^W_j$,
 a   splitting of the spectrum of $H_W(t)$ into  clusters $\sigma^W_j$, uniform in time $t\in\R$. $\Pi_j^W(t)$  denotes the spectral projector of $H_W(t)$  onto $\sigma^W_j$.  We are looking  for an adiabatic transport for all the $\{\Pi_j^W(t)\}_{j\geq 1}$which means that we want to find an Hamiltonian $H_{ad}(t) = L(t) - B(t)$ (a   "small" perturbation of $L(t)$) such that 
 \beq\label{adia0}
  \Pi_{m,j}(s) = \cU_{ad,m}^*(t,s)\, \Pi_{m,j}(t)\, \cU_{ad,m}(t,s),\qquad \forall t, s\in\R, j\geq 1.
   \eeq
Taking the time derivative we see that (\ref{adia0})  is  satisfied if and only if 
\beq\label{adia1}
\im[B,\Pi^W_j] = \partial_t\Pi^W_j +\im [L,\Pi^W_j] := F_j.
\eeq
It is not difficult to solve the homological equation (\ref{adia1})  using the decomposition
$\di{B = \sum_{k,k^\prime \geq 1}\Pi_{k}^WB\Pi_{k^\prime}^W}$.
First note that, by the properties of orthogonal projectors, one has $\Pi_k^W \, F_j \,\Pi_{k^\prime}^W = 0$ $\forall k\neq j$, $k^\prime \neq j$ and  
$\Pi_j^W \, F_j  \, \Pi_{j}^W = 0$, hence there are no diagonal terms in the homological equation.
 We can thus assume that   $\Pi_{k}^WB\Pi_{k^\prime}^W = 0$ if $k, k^\prime \neq j$ and
   we have $\Pi_j^WB\Pi_{k^\prime}^W = \im\, \Pi_j^WF_{j} \Pi_{k^\prime}^W$. \\
  So,  by a computation using that 
 $\{\Pi^W_k\}_{k\geq1}$  is a complete family  of orthogonal projectors, we get 
 \beq\label{adia2}
 B= \im \left(\sum_{k\geq 1}\Pi_{k}^W\left(\partial_t\Pi_{k}^W +\im[L,\Pi_{k}^W]\right)\right) \ . 
 \eeq
 The Nenciu algorithm \cite{nenf} is obtained by iterating this  formal computation:
 \bea
 W\longrightarrow W+B,\;\;\;\; 
 \Pi_{k}^W\longrightarrow \Pi_{k}^{W+B} \nonumber\\
 B_{new}  =   \im\left(\sum_{k\geq 1}\Pi_{k}^{W+B}\left(\partial_t\Pi_{k}^{W+B}+\im [L,\Pi_{k}^{W+B}]\right)\right) \ . 
 \eea
We describe now how to construct the  $B_m(t)$'s. A   sequence $H_m(t)$   of  perturbations  of $L(t)$  is constructed 
   by induction as follows: 
\begin{align*}
& H_0(t) := L(t) \\
& H_{m+1}(t) := H_m(t) +  B_m(t) \ , \quad \forall m \geq 0 \ ,
\end{align*}   
 where the $B_m(t)$ are obtained from the spectral projectors of $H_m(t)$. More precisely, we will prove that at each step $\sigma(H_m(t)) \subseteq \bigcup_{j\geq 1} \wt\sigma_j$, where the $\wt \sigma_j$'s are the ones of \eqref{new.clus}. Denote by $\Pi_{m,j}(t)$ the spectral projector of $H_m(t)$ on the cluster $\wt\sigma_j$. Then following (\ref{adia2}) we define
 \begin{equation}
   \label{Bk}
   B_m(t) := \im \sum_{1\leq j <+\infty}\Pi_{m,j}(t) \, \partial_{(t,L)}\Pi_{m,j}(t),
   \end{equation}
 where 
 $$\partial_{(t,L)}A(t) := \partial_t A(t) + \im [L(t), A(t)]$$
  is the Heisenberg derivative  of $A$.\\
So that according (\ref{adia0}) and  (\ref{adia1}) the flow  $\cU_{ad,m}(t,s)$ of  $H + V(t) - B_m(t)$ fulfills the adiabatic property
 $$
\cU_{ad,m}(t,s) \Pi_{m,j}(s) = \Pi_{m,j}(t) \cU_{ad,m}(t,s) \ , \quad \forall j \geq 1 \ , \ \forall t, s \in \R \ , 
$$
(see Lemma \ref{lem:adia} below) and thus it is an adiabatic approximation of the flow $\cU(t,s)$  of $H+V(t)$. The reason to iterate the procedure is that at each step the $B_m(t)$'s are more regularizing operators  (see Corollary  \ref{cor:prop}). \\
 
 Let us give  some technical details  to justify this construction  under our assumptions. 
 Let us denote 
 $$B_{m,j}(t):=\Pi_{m,j}(t)\, \partial_{(t,L)}\Pi_{m,j}(t) \ . $$
  Notice that $B_{0,j}(t) = \Pi_{0,j}(t)\, \partial_t\Pi_{0,j}(t)$, since $[L(t), \Pi_{0,j}(t)] = 0$.\\
 In the following we shall   denote $\Vert\cdot\Vert \equiv \norm{\cdot}_{\cL(\cH^0)}$ the operator norm in $\cL(\cH^0)$.		
\begin{lemma}
\label{estadB}
Under the same assumptions of Theorem \ref{thm:grest}, fix an arbitrary $\tM \in \N$.
 If $\tJ$ is sufficiently large,  for every integers $\ell \geq 0$, $0 \leq m \leq \tM$, $0 \leq p\leq n$,   there  exists  $C_{m,n,\ell}>0$, independent of $\tJ$,  such that  
\begin{equation}
\label{bm}
\sup_{t \in \R}\Vert H^p\partial_t^\ell B_{m,j}(t)H^{-p}\Vert \leq \frac{C_{m,n, \ell }}{\wt \Delta_{j-1}^{\delta(1+m)}},\;\quad \forall j\in\N \ .
\end{equation}
Therefore $B_m(t)$ in \eqref{Bk} is well defined, $\ B_m(\cdot) \in C^\infty_b(\R, \cL(\cH^k))$ for any $0 \leq k \leq 2n$ and
\begin{equation}
\label{bm1} 
\sup_{t \in \R} \norm{H^p \ \partial_t^\ell B_m(t) \ H^{-p} } \leq \frac{\wt C_{m,n, \ell }}{2^{\tJ\mu \delta(1+m)}}  \ , \quad \forall \ell \geq 0, \ 0 \leq p \leq n \ . 
\end{equation}
Finally $B_m(t)$ is a self-adjoint  operator in $\cH^0$.
\end{lemma}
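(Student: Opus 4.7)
The plan is to argue by induction on $m$, simultaneously establishing four facts: (a) $H_m(t)$ has spectrum contained in the clusters $\{\wt\sigma_j\}_{j\geq 1}$ of Lemma~\ref{perturb}, uniformly in $t\in\R$; (b) the projectors $\Pi_{m,j}(t)$ admit the Cauchy representation $\Pi_{m,j}(t)=\tfrac{1}{2\pi\im}\oint_{\Gamma_j}(z-H_m(t))^{-1}dz$ with the contours $\Gamma_j$ fixed once and for all as in~\eqref{gamma}; (c) the conjugated resolvent satisfies $\sup_{z\in\Gamma_j}\|H^{\nu}(z-H_m(t))^{-1}\|\le C/\wt\Delta_{j-1}^{\delta}$ uniformly in $t$; and (d) the estimate~\eqref{bm} holds. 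The base case $m=0$ is immediate from Lemma~\ref{perturb}, Lemma~\ref{lem:est:res0} and the observation $[L,\Pi_{0,j}]=0$, so that $B_{0,j}=\Pi_{0,j}\partial_t\Pi_{0,j}$; differentiating the Cauchy integral and using $\partial_t^\ell V=(\partial_t^\ell V\cdot H^{-\nu})H^{\nu}$ together with the $H^{\nu}$-conjugated resolvent bound and $\mathrm{length}(\Gamma_j)\lesssim \wt\Delta_{j-1}$ gives~\eqref{bm} at $m=0$, $p=0$. The case $p>0$ is handled by commuting $H^p$ through the resolvent: since $(z-H)^{-1}$ commutes with $H^p$, a Neumann expansion $(z-H-V)^{-1}=\sum_k (z-H)^{-1}[V(z-H)^{-1}]^k$ reduces matters to bounds on $H^p V H^{-p-\nu}$ supplied by $(\mathrm{Vs})_n$ for $0\leq p\leq n$.

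For the inductive step, assume~\eqref{bm} holds for all $m'<m$. Summing the geometric series in $\wt\Delta_{j-1}^{-\delta(1+m')}$ over $j$ yields~\eqref{bm1} for those $B_{m'}$, hence $W_m:=\sum_{m'<m}B_{m'}$ obeys $\|H^p W_m H^{-p-\nu}\|\le \|H^p W_m H^{-p}\|\cdot\|H^{-\nu}\|\le C_{\tM}\,2^{-\tJ\mu\delta}$. Thus, for $\tJ$ large (independently of the step), Lemma~\ref{perturb} applies to $H+(V+W_m)$ and establishes~(a),(b); the resolvent estimate~(c) for $H_m$ follows from Lemma~\ref{lem:est:res0} applied to $H$ combined with the Neumann series
\[
(z-H_m)^{-1}=(z-H)^{-1}\sum_{k\ge 0}\bigl[(V+W_m)(z-H)^{-1}\bigr]^{k},
\]
whose convergence in $\cL(\cH^0)$ is~\eqref{cond00a} and whose $H^p$-conjugation is controlled exactly as in the base case.

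The crucial computation is~(d). Since $[H_m,\Pi_{m,j}]=0$ and $L-H_m=-W_m$, the definition~\eqref{Bk} gives
\[
B_{m,j}=\Pi_{m,j}\,\partial_t\Pi_{m,j}-\im\,\Pi_{m,j}[W_m,\Pi_{m,j}].
\]
For the first summand, use $\partial_t\Pi_{m,j}=\tfrac{1}{2\pi\im}\oint_{\Gamma_j}(z-H_m)^{-1}(\partial_t V+\partial_t W_m)(z-H_m)^{-1}dz$; the $\partial_t V$ contribution is bounded exactly as at $m=0$ and produces a factor $\wt\Delta_{j-1}^{-\delta}$, while the $\partial_t W_m$ contribution produces an extra factor $\|\partial_t W_m\|\le C\,2^{-\tJ\mu\delta m}\le C'\wt\Delta_{j-1}^{-\delta m}$ by the inductive hypothesis~\eqref{bm1} for $\partial_t B_{m'}$. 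For the second summand, $\|\Pi_{m,j}[W_m,\Pi_{m,j}]\|$ inherits a factor $\|W_m\|\le C\,2^{-\tJ\mu\delta m}$ together with the $\wt\Delta_{j-1}^{-\delta}$ from one additional resolvent in $\partial_t\Pi_{m,j}$ or from a commutator expansion; in total, \eqref{bm} is obtained for step $m$ with $\ell=0$, $p=0$. Higher $\ell$ is handled by applying Leibniz to the Cauchy integrals, each factor $\partial_t^{\ell'} V$ or $\partial_t^{\ell'}W_m$ being estimated by $(\mathrm{Vs})_n$ or the inductive hypothesis at lower $m'$; the number of resulting terms is controlled by $C_{m,n,\ell}$. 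Conjugation by $H^p$ with $0\le p\le n$ is again absorbed by the Neumann representation of $(z-H_m)^{-1}$ and the bounds $\|H^p\partial_t^{\ell'}V H^{-p-\nu}\|$, $\|H^p\partial_t^{\ell'}B_{m'}H^{-p}\|$.

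Finally, summing~\eqref{bm} over $j\ge 1$ produces a convergent geometric series and yields~\eqref{bm1}; self-adjointness of $B_m$ follows from the standard Kato identity $\Pi_{m,j}(\partial_{(t,L)}\Pi_{m,j})\Pi_{m,j}=0$, which implies that $\im\,\Pi_{m,j}\partial_{(t,L)}\Pi_{m,j}+(\im\,\Pi_{m,j}\partial_{(t,L)}\Pi_{m,j})^{*}=0$, and symmetry of the infinite sum is justified by~\eqref{bm}. I expect the main technical obstacle to be the careful bookkeeping of the $H^p$-conjugations and the Leibniz terms: one must check that in every Neumann expansion of $(z-H_m)^{-1}$, every commutator $[H^p,V]$ or $[H^p,B_{m'}]$ that is generated can be re-absorbed into a constant depending only on $n,\ell,m$ using $(\mathrm{Vs})_n$, (H2) and the inductive hypothesis, without consuming the smallness factor $2^{-\tJ\mu\delta}$ that is essential for closing the iteration.
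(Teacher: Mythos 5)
Your decomposition $B_{m,j}=\Pi_{m,j}\partial_t\Pi_{m,j}-\im\,\Pi_{m,j}[W_m,\Pi_{m,j}]$ and the Cauchy-integral manipulations around it are all fine, but the final estimate cannot close, for two related reasons.

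First, a concrete error: you claim $\|\partial_t W_m\|\le C\,2^{-\tJ\mu\delta m}\le C'\,\wt\Delta_{j-1}^{-\delta m}$. Both inequalities are wrong. Since $W_m=B_0+\cdots+B_{m-1}$ and, by the inductive \eqref{bm1}, $\|\partial_t B_l\|\lesssim 2^{-\tJ\mu\delta(1+l)}$, the sum is dominated by the $l=0$ term, so $\|\partial_t W_m\|\lesssim 2^{-\tJ\mu\delta}$ (not $2^{-\tJ\mu\delta m}$). More seriously, the second inequality is reversed: by $\wt{\rm (Hgap)}$, $\wt\Delta_{j-1}\approx 2^{(\tJ+j-2)\mu}$ grows with $j$, so $\wt\Delta_{j-1}^{-\delta m}\lesssim 2^{-\tJ\mu\delta m}$ and not the other way around. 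A $\tJ$-dependent constant can \emph{not} be bounded by a $j$-decaying factor.

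Second, and this is the structural gap: the bound \eqref{bm} requires a decay in $j$ that \emph{improves by a factor $\wt\Delta_{j-1}^{-\delta}$ at each step} $m\mapsto m+1$. Your decomposition only delivers the global smallness $\|W_m\|\lesssim 2^{-\tJ\mu\delta}$, which is a constant in $j$. This is enough to keep constants uniform (and to apply the perturbation Lemma~\ref{perturb}), but it produces the \emph{same} $\wt\Delta_{j-1}^{-\delta}$ rate at every step, not $\wt\Delta_{j-1}^{-\delta(1+m)}$. To gain a $j$-dependent factor you would have to show that $\Pi_{m,j}\,W_m$ essentially reduces to the diagonal block $\sum_l B_{l,j}$, which by the inductive hypothesis carries the required $\wt\Delta_{j-1}^{-\delta}$ gain; but you nowhere prove that the off-diagonal parts $\Pi_{m,j}B_{l,k}$, $k\neq j$, are negligible, and this is exactly where the argument must do real work. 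The paper accomplishes precisely this via the Nenciu identities: introducing $L_{m,j}=\Pi_{m+1,j}-\Pi_{m,j}$, $K_{m,j}=\Pi_{m,j}L_{m,j}$, $D_{m,j}=\Pi_{m+1,j}L_{m,j}$, observing that $K_{m,j}$ admits the Cauchy representation $-\frac{1}{2\pi\im}\oint_{\Gamma_j}R_m(t,\lambda)\,B_{m,j}(t)\,R_{m+1}(t,\lambda)\,d\lambda$ which manifestly contains the $j$-block $B_{m,j}$ (hence inherits the $\wt\Delta_{j-1}^{-(m+1)\delta}$ decay), and finally plugging this into the algebraic identity $B_{m+1,j}=\im\,D_{m,j}\,\partial_{(t,L)}\Pi_{m+1,j}+\Pi_{m+1,j}\,\partial_{(t,H-H_{m+1})}K_{m,j}$ from \cite[(2.42)]{nen}. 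Without an identity of that type that extracts the $j$-th block, rather than the full $W_m$, your induction does not improve the exponent of $\wt\Delta_{j-1}$.
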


Lemma \ref{estadB} is quite technical and  we postpone its proof  at the end of the section.

\begin{remark}
\label{cor:Bsmooth}
In particular $B_m(t)$ satisfies  the   condition {\rm (Vs$)_n$} (with $\nu=0$).
\end{remark}

   Define    for $m\geq 1$ the operators 
   \begin{align*}
   & H_m(t) := H +V(t) + B_0(t) +\cdots +B_{m-1}(t) \equiv L(t)+W_m(t) \\
   & H_{ad,m}(t) := H + V(t) - B_m(t) \equiv L(t) - B_m(t) 
   \end{align*}
The following corollary follows immediately from Lemma \ref{estadB}:
  \begin{corollary}
  \label{cor:hgap}
Fix $\tM \geq 1$. Then provided $\tJ$ is sufficiently large, the following holds true:
\begin{itemize}
\item[(i)]  For every $1 \leq m \leq \tM$ and $t \in \R$,  the operators $H_m(t) $ and $H_{ad,m}(t)$ are  self-adjoint operators   generating a unitary flow in $\cH^0$.
\item[(ii)] For every $1 \leq m \leq \tM$ and $t \in \R$,  $H_m(t)$ and $H_{ad,m}(t)$ fulfill
  $\wt{{\rm(Hgap)}}$  uniformly in time $t\in\R$  with $\wt\sigma_j$'s as in Lemma \ref{perturb}. 
\end{itemize}   
  \end{corollary}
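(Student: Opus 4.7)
The corollary should follow immediately from Lemma \ref{estadB} combined with Lemma \ref{perturb}. My plan is to rewrite each of $H_m(t)$ and $H_{ad,m}(t)$ in the form $H + \widetilde V(t)$ and verify that, provided $\tJ$ is taken large enough, the new perturbation $\widetilde V(t)$ inherits the hypotheses used in Lemma \ref{perturb}.

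Concretely, set $\widetilde V_m(t) := V(t) + \sum_{k=0}^{m-1} B_k(t)$, so that $H_m(t) = H + \widetilde V_m(t)$; analogously set $\widetilde V_{ad,m}(t) := V(t) - B_m(t)$, so that $H_{ad,m}(t) = H + \widetilde V_{ad,m}(t)$. By Lemma \ref{estadB} each $B_k(t)$ is bounded and self-adjoint on $\cH^0$, hence a Kato--Rellich perturbation of $L(t) = H + V(t)$ with relative bound zero. Since $L(t)$ is already self-adjoint on $D(H)$ (as established at the end of the proof of Lemma \ref{perturb}), so are $H_m(t)$ and $H_{ad,m}(t)$. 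Moreover, Lemma \ref{estadB} yields $B_k(\cdot) \in C^\infty_b(\R, \cL(\cH^k))$ for $0 \leq k \leq 2n$, so the assumptions (H0), (H1), (H2) are preserved when passing from $L(t)$ to $H_m(t)$ or $H_{ad,m}(t)$, and Theorem \ref{thm:flow2} produces the unitary propagators. This proves (i).

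For (ii), I would apply Lemma \ref{perturb} with $V$ replaced by $\widetilde V_m$ (and similarly by $\widetilde V_{ad,m}$). The only non-trivial input is the smallness condition \eqref{condz}. Since $H$ is bounded below by \eqref{loc.sc}, the operator $H^{-\nu}$ is bounded on $\cH^0$, and combining this with \eqref{bm1} at $p=\ell=0$ gives
\[
\sup_{t\in\R} \|B_k(t) H^{-\nu}\|_{\cL(\cH^0)} \leq \|H^{-\nu}\|_{\cL(\cH^0)} \, \sup_{t\in\R}\|B_k(t)\|_{\cL(\cH^0)} \leq \frac{C_k}{2^{\tJ \mu \delta (1+k)}}.
\]
Summing over $0 \leq k \leq \tM - 1$ and invoking the triangle inequality yields
\[
\sup_{t\in\R}\|\widetilde V_m(t) H^{-\nu}\|_{\cL(\cH^0)} \leq \sup_{t\in\R}\|V(t) H^{-\nu}\|_{\cL(\cH^0)} + \sum_{k=0}^{\tM - 1} \frac{C_k}{2^{\tJ\mu\delta(1+k)}},
\]
uniformly in $1 \leq m \leq \tM$. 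Hence, after possibly enlarging $\tJ$, condition \eqref{condz} holds with $V$ replaced by $\widetilde V_m$, and Lemma \ref{perturb} delivers the spectral decomposition \eqref{new.clus} for $H_m(t)$. The argument for $H_{ad,m}(t)$ is identical and in fact simpler, involving only the single perturbation $-B_m(t)$.

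The only point requiring care is the choice of $\tJ$: it must be fixed once and for all, large enough to absorb all the $B_k$ terms in \eqref{condz} uniformly for $0 \leq k \leq \tM$. Since $\tM$ is prescribed in advance and the bound \eqref{bm1} is geometric in $\tJ$, this is a harmless finite threshold and presents no real obstacle.
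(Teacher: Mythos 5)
Your proof is correct and follows essentially the same route as the paper: rewrite $H_m(t)$ and $H_{ad,m}(t)$ as $H$ plus a perturbation, use the boundedness and self-adjointness of the $B_k$'s from Lemma \ref{estadB} for part (i), and verify the smallness hypothesis of Lemma \ref{perturb} using estimate \eqref{bm1} for part (ii). The only difference is cosmetic — you make Kato--Rellich and the reduction $\|B_kH^{-\nu}\|\leq\|H^{-\nu}\|\,\|B_k\|$ explicit, while the paper checks the full $(p,\ell)$ range of the {\rm (Vs$)_n$} bounds rather than just $p=\ell=0$ — but the content is the same.
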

  \begin{proof}
  $(i)$ By Lemma \ref{estadB} $\forall 0 \leq m \leq \tM$  the operator $B_m(t)$ ia a  bounded self-adjoint operator. Hence  $H_m(t) = L(t) + W_m(t)$ and $H_{ad,m}(t) = L(t) - B_m(t)$  are  bounded perturbations of $L(t)$, and thus they are  self-adjoint operators   generating a unitary flow in $\cH^0$.\\
$(ii)$ Write $H_m(t)$ as   $H_{m}(t) \equiv H + W(t)$ with $W(t):=V(t) + B_0(t) + \cdots + B_m(t)$.  By (Vs$)_n$ and  \eqref{bm1} it fulfills 
$$
\sup_{t \in \R} \norm{H^p \, \partial_{t}^\ell W(t) \, H^{-p-\nu}} \leq  R_{n,\ell} + (m+1)\frac{\wt C_{m,n,\ell}}{2^{\tJ \mu \delta}} \leq 2 R_{n,\ell} \ ,
$$
provided $\tJ$ is sufficiently large (depending on $\tM$). Then  Lemma \ref{perturb} gives  the claim.\\
The proof for $H_{ad,m}(t)$ is analogous.
  \end{proof}

We will denote by $\cU_{ad,m}(t,s)$ the propagator of $H_{ad,m}(t)$.
  The two key points, proved in Corollary \ref{cor:prop} below,  are the following:
  \begin{itemize}
  \item[(i)]  $\cU_{ad,m}(t,s)$  is an adiabatic approximation of $\cU(t, s)$  which  preserves the $\cH^k$-norms.
  \item[(ii)] the operators $B_m$'s are smoothing operators.
  \end{itemize}
  In order to prove those two properties it is convenient to measure the $\cH^k$-norm   with the help of the  projectors $\Pi_{m,j}$'s. More precisely perform the construction at order $m$. 
Introduce the block diagonal operator
  $$
  \Lambda_m(t) := \sum_{1\leq j <\infty} \, 2^{(j-1)(\mu+1)} \ \Pi_{m,j}(t) \ . 
  $$
As the $\Pi_{m,j}$'s are  orthogonal projectors one has that 
$$
\norm{\Lambda_m(t) \psi}_0^2 = \sum_{j \geq 1} 2^{2(j-1)(\mu+1)} \, \norm{\Pi_{m, j}(t) \psi}_0^2 \ , \qquad \forall \psi \in \cH^0 \ .  
$$
The next lemma shows that  the norm  $\norm{H^p \cdot}$ is equivalent to the  norm   $\norm{\Lambda_m(t)^p \cdot}$:
  \begin{lemma}
  \label{compk} Fix  $n\in\N$, $n\geq 0$. 
  Assume that  {\rm (Vs$)_{n}$}  is satisfied. Then for any $1 \leq m \leq \tM$,  there exist positive  $c_1$ and $c_2$, depending  on $n, V, \sigma(H), \norm{H^n B_i(t) \ H^{-n}}$,    such that    $\forall 0 \leq p \leq n$,  $\forall \psi\in\cH^{2p}$, $\forall t\in\R$
 \begin{align}
 \label{Hadk}
 c_1 \Vert\psi\Vert_{2p} \leq & \Vert(H_{ad,m}(t) +c_0)^p \psi \Vert_0 \leq  c_2 \Vert\psi\Vert_{2p}  \\ 
 \label{E9} 
 c_1 \, 2^{\tJ p(\mu+1)}\,  \Vert\Lambda_m^p(t)\psi\Vert_0 \leq
 & \Vert(H_{ad,m}(t)+c_0)^p\psi\Vert_0 \leq c_2 \, 2^{\tJ p(\mu+1)}\,  \Vert\Lambda_m^p(t)\psi\Vert_0 \ . 
\end{align}

\end{lemma}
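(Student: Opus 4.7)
\medskip
\noindent\textbf{Proof sketch.} The plan is to establish \eqref{Hadk} first by a perturbative argument comparing $H_{ad,m}(t)+c_0$ to $H+c_0$ on the scale $\{\cH^{2p}\}_{p\le n}$, and then to deduce \eqref{E9} from \eqref{Hadk} via the spectral decomposition of $H_{ad,m}(t)$ on the clusters $\{\wt\sigma_j\}_{j\ge 1}$ given by Corollary \ref{cor:hgap}.

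For \eqref{Hadk}, write $H_{ad,m}(t)+c_0 = (H+c_0) + (V(t)-B_m(t))$. By assumption {\rm (Vs$)_n$} we have $V(t)H^{-\nu}\in\cL(\cH^0)$ and by Lemma \ref{estadB} $B_m(t)$ is bounded in $\cL(\cH^0)$; since $\nu<\mu/(\mu+1)<1$, the perturbation $V(t)-B_m(t)$ is $H$-bounded with relative bound $0$. Consequently, for $c_0$ sufficiently large (depending on the constants in {\rm (Vs$)_n$} and on $\sup_t\|B_m(t)\|$, both of which are uniform in $t\in\R$ by Lemma \ref{estadB}), the case $p=1$ follows from a standard relative-boundedness argument. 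For $p\ge 2$ I would argue by induction: using again {\rm (Vs$)_n$}, the bound
$$
\sup_{t\in\R}\ \|H^p\, V(t)\, H^{-p-\nu}\|_{\cL(\cH^0)} \le R_{n,0}\ ,\qquad 0\le p\le n\ ,
$$
together with the analogous estimate for $B_m(t)$ from \eqref{bm1}, shows that $V(t)-B_m(t)$ maps $\cH^{2p+2\nu}$ into $\cH^{2p}$ continuously. Hence $(H_{ad,m}(t)+c_0)^p$ and $(H+c_0)^p$ differ by an operator of lower order on the scale, and induction together with relative-boundedness (with relative bound less than one, since $2\nu<2$) gives both inequalities in \eqref{Hadk} with constants independent of $t$ and of $m\le\tM$.

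For \eqref{E9}, by Corollary \ref{cor:hgap}(ii) the operator $H_{ad,m}(t)$ is self-adjoint with spectrum contained in $\bigcup_{j\ge 1}\wt\sigma_j$ and the $\Pi_{m,j}(t)$ are mutually orthogonal spectral projectors summing to the identity. Decomposing $\psi=\sum_j \Pi_{m,j}(t)\psi$ gives, by orthogonality and the functional calculus,
$$
\|(H_{ad,m}(t)+c_0)^p\psi\|_0^{2}=\sum_{j\ge 1}\|(H_{ad,m}(t)+c_0)^p\,\Pi_{m,j}(t)\psi\|_0^{2}\ .
$$
On the range of $\Pi_{m,j}(t)$ the operator $H_{ad,m}(t)+c_0$ acts as multiplication by a value in $[\lambda_j^-+c_0,\lambda_j^++c_0]$. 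Choosing $c_0$ large enough (and recalling that $\tJ$ is fixed) the localization \eqref{loc2} yields, for all $j\ge 1$,
$$
\kappa_1\, 2^{(\tJ+j-1)(\mu+1)}\ \le\ \lambda_j^-+c_0\ \le\ \lambda_j^++c_0\ \le\ \kappa_2\, 2^{(\tJ+j-1)(\mu+1)}
$$
with $\kappa_1,\kappa_2>0$ depending only on $\tJ$ and the constants in $\wt{\rm (Hgap)}$; note that the addition of $c_0$ is what handles the first cluster, where only an upper bound on $\lambda_1^+$ is available. Squaring and summing, and comparing with
$$
\|\Lambda_m^p(t)\psi\|_0^{2}=\sum_{j\ge 1}2^{2(j-1)(\mu+1)p}\,\|\Pi_{m,j}(t)\psi\|_0^{2}\ ,
$$
the common factor $2^{2\tJ p(\mu+1)}$ appears on every term, yielding \eqref{E9}.

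The main obstacle lies in the inductive step for \eqref{Hadk}: one must verify that, upon expanding $(H_{ad,m}(t)+c_0)^p$ in powers of $H$ and of the perturbation, all commutator corrections remain of strictly lower order on the scale $\{\cH^k\}$. This is exactly what the quantitative estimates $\|H^p\,\partial_t^\ell V(t)\,H^{-p-\nu}\|\le R_{n,\ell}$ in {\rm (Vs$)_n$} and their analogues $\|H^p\,B_m(t)\,H^{-p}\|<\infty$ from \eqref{bm1} provide, so that the argument closes uniformly in $t\in\R$ and $m\le\tM$. Once \eqref{Hadk} is established, \eqref{E9} is essentially a bookkeeping exercise with the spectral theorem.
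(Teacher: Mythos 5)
Your proposal follows essentially the same route as the paper. The paper's proof isolates the inductive step for \eqref{Hadk} in an abstract Lemma (\ref{Hn}) that controls $W_n(t) := (H+W(t))^n - H^n$ via the recursion $W_{n+1} = W_n H + W_n W + H^n W$ and the hypothesis $\|H^p W H^{-p-\nu}\| \le D_n$, concluding by the interpolation inequality (infinitesimal relative bound since $\nu<1$); applying it with $W = V - B_m$ gives \eqref{Hadk}, and then \eqref{E9} is obtained exactly as you do, by decomposing along the spectral projectors $\Pi_{m,j}(t)$ and using the cluster localization \eqref{loc2}, with $c_0$ handling the bottom cluster. Your sketch captures both steps and the reasons they close (the $R_{n,\ell}$ bounds from (Vs$)_n$, the $H^p B_m H^{-p}$ bounds from \eqref{bm1}, and $\nu<1$), so it is a correct description of the same argument; the only cosmetic remark is that the relevant condition is $\nu<1$ giving relative bound zero after interpolation, not merely a bound less than one.
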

The proof is postponed in Appendix \ref{app:normcontrol}.\\

We prove now some properties of the adiabatic evolution.

  \begin{lemma}\label{lem:adia}
  For every integer $0 \leq m \leq \tM$ and $j \geq 1$ we have
  \beq\label{adia}
  \Pi_{m,j}(t) = \cU_{ad,m}(t,s)\, \Pi_{m,j}(s)\, \cU_{ad,m}(t,s)^*,\qquad \forall t, s\in\R.
   \eeq
  \end{lemma}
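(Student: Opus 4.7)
The natural strategy is to show that the quantity
\[
G_j(t) := \cU_{ad,m}(t,s)^*\,\Pi_{m,j}(t)\,\cU_{ad,m}(t,s)
\]
is constant in $t$ for fixed $s$, and equals $\Pi_{m,j}(s)$ at $t=s$. To this end I would differentiate, using that $\cU_{ad,m}$ is the unitary propagator of $H_{ad,m}(t)=L(t)-B_m(t)$ (Corollary \ref{cor:hgap}), obtaining
\[
\partial_t G_j(t) = \cU_{ad,m}(t,s)^*\Bigl[\partial_t \Pi_{m,j}(t) + \im[H_{ad,m}(t),\Pi_{m,j}(t)]\Bigr]\cU_{ad,m}(t,s).
\]
Writing $H_{ad,m}=L-B_m$ and grouping $\partial_t \Pi_{m,j}+\im[L,\Pi_{m,j}] = \partial_{(t,L)}\Pi_{m,j}$, the expression in the bracket reduces to
\[
\partial_{(t,L)}\Pi_{m,j}(t) - \im[B_m(t),\Pi_{m,j}(t)],
\]
so the whole argument boils down to verifying the homological equation
\[
[B_m(t),\Pi_{m,j}(t)] = -\im\,\partial_{(t,L)}\Pi_{m,j}(t).
\]

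The main work is this homological identity, which I would prove directly from the defining formula $B_m = \im \sum_k \Pi_{m,k}\,\partial_{(t,L)}\Pi_{m,k}$. Two elementary facts are needed. First, differentiating $\Pi_{m,k}^2=\Pi_{m,k}$ with the Leibniz rule for $\partial_{(t,L)}$ yields $\Pi_{m,k}(\partial_{(t,L)}\Pi_{m,k})\Pi_{m,k}=0$, and in particular $(\Pi_{m,k}\partial_{(t,L)}\Pi_{m,k})\Pi_{m,k}=0$. Second, $\sum_k \Pi_{m,k} = \uno$ (with strong convergence, by Corollary \ref{cor:hgap} and the spectral theorem), so $\sum_k \partial_{(t,L)}\Pi_{m,k}=0$. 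Then a direct expansion of $[B_m,\Pi_{m,j}] = \im\sum_k [\Pi_{m,k}\partial_{(t,L)}\Pi_{m,k},\,\Pi_{m,j}]$, splitting into $k=j$ and $k\neq j$, and using $\Pi_{m,k}\Pi_{m,j}=\delta_{kj}\Pi_{m,j}$ together with the rewriting $\Pi_{m,k}\partial_{(t,L)}\Pi_{m,k} = \partial_{(t,L)}\Pi_{m,k} - (\partial_{(t,L)}\Pi_{m,k})\Pi_{m,k}$, collapses the sum telescopically via $\sum_{k\neq j}\partial_{(t,L)}\Pi_{m,k} = -\partial_{(t,L)}\Pi_{m,j}$, yielding exactly $-\im\partial_{(t,L)}\Pi_{m,j}$.

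Once the homological equation is established, substituting back gives $\partial_t G_j(t) \equiv 0$; evaluating at $t=s$ yields $G_j(t)=\Pi_{m,j}(s)$, which is the claim (after applying $\cU_{ad,m}(t,s)$ and $\cU_{ad,m}(t,s)^*$ on either side, using unitarity in $\cH^0$).

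The only real subtlety is justifying the termwise manipulations. One must check that $t\mapsto \Pi_{m,j}(t)$ is strongly $C^1$ (which follows from writing $\Pi_{m,j}(t)=\frac{1}{2\im\pi}\oint_{\Gamma_j}(H_m(t)-z)^{-1}dz$ and using the smoothness of $H_m(t)$ guaranteed by Lemma \ref{estadB}), that $\Pi_{m,j}(t)\cU_{ad,m}(t,s)\psi$ is differentiable in $t$ for $\psi$ in an appropriate Sobolev space (Theorem \ref{thm:flow2}), and that the sum defining $B_m$ converges in the strong operator topology—this last point is immediate from the summable bound \eqref{bm} on each $B_{m,j}$. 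The expected main technical obstacle is precisely controlling the infinite sum defining $B_m$ uniformly when it is paired with the unbounded operator $L$ inside the commutator; this is handled by working first on $\cH^\infty$ (dense in every $\cH^k$) and then extending by continuity once the identity is established there.
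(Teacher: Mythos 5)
Your proof is correct and follows exactly the same strategy as the paper: differentiate $\cU_{ad,m}(t,s)^*\,\Pi_{m,j}(t)\,\cU_{ad,m}(t,s)$, note that the generator of $\cU_{ad,m}$ is $L-B_m$ so the derivative is $\cU_{ad,m}^*\,\partial_{(t,L-B_m)}\Pi_{m,j}\,\cU_{ad,m}$, and reduce to the homological identity $\partial_{(t,L-B_m)}\Pi_{m,j}=0$; the paper dismisses this last identity as following "easily using the definition of $B_m(t)$ and properties of orthogonal projectors," whereas you spell out the algebra (and the domain/convergence issues) explicitly, which is a useful elaboration but not a different argument.
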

 \begin{proof}
 For any  propagator $\cU(t,s)$ with generator  $t \mapsto L(t)$ of class $C^1$  and any  $C^1$  and bounded  operator $A(t)$ we have 
 $$
 \partial_t\left(\cU(t,s)^*\, A(t)\, \cU(t,s)\right) =   \cU(t,s)^*\,\partial_{(t,L)}A(t)\,\cU(t,s).
 $$
Since the generator of $\cU_{ad,m}(t,s)$ is $L(t) - B_m(t)$,   it is enough to prove that 
\beq\label{derheis}
 \partial_{(t,L-B_m)}\Pi_{m,j}(t) =0,\quad \forall t\in\R \ , \qquad \forall j \geq 1 \ , \  m \geq 0 \ .
 \eeq
 This follows easily using  the definition of $B_m(t)$   and  properties  of  orthogonal  projectors. 
 \end{proof}

\vspace{1em}
    
\begin{corollary}
\label{cor:prop} 
(i) For every $0 \leq m \leq \tM$, $\cU_{ad, m}(t,s)$ preserves the $\cH^k$-norms. More precisely for every $0 \leq p \leq n$, there exists $C_p >0$ s.t.  
$$
\norm{\cU_{ad, m}(t,s)}_{\cL(\cH^{2p})} \leq C_p \, \qquad \ \forall t,s \in \R \ .
$$
(ii)  For every $0 \leq m \leq \tM$, $B_{m}(t): \cH^0 \mapsto \cH^{2p}$ provided $p < \frac{\mu}{\mu+1} \delta (1+m)$.
\end{corollary}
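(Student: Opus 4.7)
The plan is to exploit the adiabatic identity \eqref{adia} of Lemma \ref{lem:adia} together with the norm-equivalence of Lemma \ref{compk}. Rewriting \eqref{adia} as the intertwining relation
\[
\Pi_{m,j}(t)\,\cU_{ad,m}(t,s) \;=\; \cU_{ad,m}(t,s)\,\Pi_{m,j}(s)
\]
(which is immediate from \eqref{adia} after multiplication on the right by $\cU_{ad,m}(t,s)$ and use of unitarity on $\cH^0$) and then summing in $j$ with the weights $2^{(j-1)(\mu+1)}$, I obtain $\Lambda_m(t)\cU_{ad,m}(t,s) = \cU_{ad,m}(t,s)\Lambda_m(s)$, and by iteration the same identity with $\Lambda_m^p$ in place of $\Lambda_m$ for every integer $p\geq 0$. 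Using the $\cH^0$-unitarity of $\cU_{ad,m}(t,s)$ from Corollary \ref{cor:hgap}(i) this gives the crucial equality $\|\Lambda_m^p(t)\cU_{ad,m}(t,s)\psi\|_0 = \|\Lambda_m^p(s)\psi\|_0$. Applying the two-sided equivalence $\|\psi\|_{2p}\sim 2^{\tJ p(\mu+1)}\|\Lambda_m^p(t)\psi\|_0$, furnished by combining \eqref{Hadk} and \eqref{E9} of Lemma \ref{compk}, once at time $t$ on the left and once at time $s$ on the right, yields a bound of the form $\|\cU_{ad,m}(t,s)\psi\|_{2p}\leq (c_2/c_1)^2\|\psi\|_{2p}$, uniform in $t,s$. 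This proves (i) with $C_p=(c_2/c_1)^2$.

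\textbf{Part (ii).} Here I would use the mutual orthogonality of the spectral projectors. From $\Pi_{m,j}\Pi_{m,k}=\delta_{jk}\Pi_{m,j}$ and $B_{m,j}(t)=\Pi_{m,j}(t)\partial_{(t,L)}\Pi_{m,j}(t)$ it follows that $\Pi_{m,j}(t) B_m(t) = \im\, B_{m,j}(t)$, so the vectors $\{B_{m,j}(t)\psi\}_j$ are pairwise $\cH^0$-orthogonal and
\[
\|\Lambda_m^p(t) B_m(t)\psi\|_0^2 \;=\; \sum_{j\geq 1} 2^{2p(j-1)(\mu+1)}\,\|B_{m,j}(t)\psi\|_0^2.
\]
Inserting the estimate $\|B_{m,j}(t)\|_{\cL(\cH^0)}\leq C_{m,n,0}\,\wt\Delta_{j-1}^{-\delta(1+m)}$ from Lemma \ref{estadB} with $p=\ell=0$, together with the lower bound $\wt\Delta_{j-1}\gtrsim 2^{(\tJ+j-2)\mu}$ from \eqref{diamcluster}, the right-hand side is dominated by a geometric series whose generic term is proportional to $2^{2(j-1)[p(\mu+1)-\mu\delta(1+m)]}\|\psi\|_0^2$. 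This series converges precisely when $p(\mu+1)<\mu\delta(1+m)$, that is, when $p<\tfrac{\mu}{\mu+1}\delta(1+m)$, and gives $\|\Lambda_m^p(t)B_m(t)\psi\|_0\lesssim\|\psi\|_0$. A final application of the norm equivalence in Lemma \ref{compk} converts this to $\|B_m(t)\psi\|_{2p}\lesssim\|\psi\|_0$, which is exactly (ii).

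\textbf{Main obstacle.} Both items are essentially formal consequences of the machinery already built in Lemmas \ref{lem:adia}, \ref{estadB}, \ref{compk} and Corollary \ref{cor:hgap}, so the only delicate point is the handling of the infinite sums defining $\Lambda_m(t)$ and $B_m(t)$: one has to verify that the intertwining of (i) and the termwise expansion of (ii) are legitimate on the relevant dense subspaces before closing the estimate by continuity. Since strong convergence of partial sums $\sum_{j\leq N} 2^{p(j-1)(\mu+1)}\Pi_{m,j}(t)$ holds on $\cH^{2p}$ in view of Lemma \ref{compk}, and the orthogonality in (ii) makes the interchange of sum and norm automatic, no serious difficulty arises beyond careful bookkeeping.
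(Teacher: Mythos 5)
Your proposal is correct and follows essentially the same route as the paper's proof: for (i), the intertwining relation $\Lambda_m(t)\cU_{ad,m}(t,s)=\cU_{ad,m}(t,s)\Lambda_m(s)$ coming from Lemma \ref{lem:adia}, combined with unitarity on $\cH^0$ and the norm equivalence in Lemma \ref{compk}; for (ii), the orthogonal decomposition $\Pi_{m,j}(t)B_m(t)=\im\,B_{m,j}(t)$, the decay $\|B_{m,j}(t)\|\lesssim\wt\Delta_{j-1}^{-\delta(1+m)}$ from Lemma \ref{estadB}, and summation of the resulting geometric series under the stated condition on $p$.
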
    
\begin{proof}
(i) First note that by Lemma \ref{lem:adia} one has that
$\Lambda_m(t) \cU_{ad,m}(t,s) = \cU_{ad,m}(t,s) \Lambda_m(s)$. Then by  Lemma \ref{compk} and the unitarity of $ \cU_{ad,m}(t,s)$ in $\cH^0$ one has
\begin{align*}
\norm{\cU_{ad, m}(t,s) \psi_s}_{2p} & \leq C \norm{ \Lambda_m^p(t) \cU_{ad, m}(t,s) \psi_s}_{0} \leq C \norm{ \cU_{ad, m}(t,s) \Lambda_m^p(s)\psi_s}_{0}  \\
& \leq  C \norm{  \Lambda_m^p(s)\psi_s}_{0} \leq  C \norm{\psi_s}_{2p} \ . 
\end{align*}
(ii) Recall that $\Pi_{m,j}(t) B_m(t) = B_{m,j}(t)$. Now we have
\begin{align*}
\norm{B_m(t) \psi_s}_{2p}^2 & \leq C \norm{ \Lambda_m^p(t) B_m(t) \psi_s}_{0}^2 \leq 
C \sum_{j \geq 1} 2^{(j-1)(\mu+1)2p}  \norm{ B_{m,j}(t)\psi_s}_{0}^2  \\
& \leq  C \norm{\psi_s}_0  \sum_{j \geq 1} 2^{2(j-1)[(\mu+1)p - \mu\delta(1+m)]} \leq  C \norm{\psi_s}_0 \ 
\end{align*}
provided $p < \frac{\mu}{\mu+1} \delta (1+m)$.
\end{proof}

    We are finally ready to prove Theorem \ref{thm:grest}.
    
   \begin{proof}[Proof of Theorem \ref{thm:grest}] 
   
Fix $\epsilon >0$ and  choose $\tM$ such that 
\begin{equation}
\label{M.choice}
\frac{1}{\epsilon} \frac{(\mu+1) n}{ \mu \delta } \leq \tM+1 \ .
\end{equation}
Choose $\tJ$ sufficiently large to perform the construction at step $\tM$.
As the evolution $\cU(t,s)$ is unitary in $\cH^0$ and $\Pi_{\tM,j}(t)$ is a projector we have 
\begin{equation}
\label{est.11}
\norm{\Pi_{\tM, j}(t)\, \cU(t,s) \psi_s}_0 \leq \norm{\psi_s}_0  \ , \quad \forall j\geq 1, \ \ \ \forall t,s \in \R \ . 
\end{equation}  
   We  compare  the evolution $\cU(t,s)$ with the adiabatic evolution $\cU_{ad, \tM}(t,s)$ defined above.  In order to do this, write
   $$
\im \dot \psi = (H + V(t)) \psi = H_{ad,\tM}(t) \psi + B_{\tM}(t)\psi   
   $$
and use the  Duhamel formula
\beq\label{duhaB}
 \cU(t,s)  = \cU_{ad, \tM}(t,s)-i\int_s^t\cU_{ad,\tM}(t,r)\, B_{\tM}(r)\, \cU(r,s)dr.
\eeq
By  equation \eqref{adia},  the property $\Pi_{\tM,j}(t) \,  B_\tM(t) = B_{\tM,j}(t)$ and Lemma \ref{estadB} one has
\begin{equation}
\label{est.12}
\begin{aligned}
\norm{\Pi_{\tM, j}(t)\cU(t,s) \psi_s}_0 &  \leq \norm{\cU_{ad,\tM}(t,s) \, \Pi_{\tM, j}(s)\psi_s}_0 + \norm{\int_s^t\cU_{ad,\tM}(t,r)\, B_{\tM,j}(r)\, \cU(r,s)\psi_s dr}_0  \\
& \leq  \norm{ \Pi_{\tM, j}(s)\psi_s}_0 + \la t-s\ra  2^{-(j-1)(\tM +1)\mu \delta}\norm{\psi_s}_0 \ , 
\end{aligned}
\end{equation}
where in the last line we used that, provided  $\tJ$ is sufficiently large,
$$
   \sup_{t \in \R} \Vert B_{\tM,j}(t)\Vert\leq \frac{C_{\tM,n,0}}{\wt\Delta_{j-1}^{(\tM+1)\delta}} \leq \frac{1}{2^{(j-1)(\tM +1)\mu \delta}}\ ,\qquad  \forall j\geq 1,\; t\in\R.
$$
We compute now the norm of $\cU(t,s)\psi_s$ in $\cH^{2n}$. Fix  $\tN \equiv \tN(t)  $ to be chosen later. By Lemma \ref{compk}
\begin{align*}
\norm{\cU(t,s)\psi_s}_{2n}^2 & \leq  \frac{c_2}{c_1} \, 2^{\tJ(\mu +1)2n} \,  \norm{\Lambda_\tM^n(t)\,  \cU(t,s)\psi_s}_0^2   \leq  \frac{c_2}{c_1} \, 2^{\tJ(\mu +1)2n} (I + II) \ , 
\end{align*}
where
\begin{align*}
I :=\sum_{ 1 \leq j \leq \tN} 2^{(j-1)(\mu+1)2n} \norm{\Pi_{\tM, j}(t)\cU(t,s) \psi_s}_0^2  \ , \quad II:=  \sum_{ j \geq \tN+1} 2^{(j-1)(\mu+1)2n}  \norm{\Pi_{\tM, j}(t)\cU(t,s) \psi_s}_0^2 \ .
\end{align*}
To estimate $I$, use \eqref{est.11} to obtain
\begin{equation}
\label{est80}
I \leq \norm{\psi_s}_0^2 \sum_{ 0 \leq j \leq \tN-1} 2^{j(\mu+1)2n} \leq   \norm{\psi_s}_0^2 \ \frac{2^{\tN(\mu+1)2n} -1}{2^{(\mu+1)2n} -1}
\leq C \norm{\psi_s}_0^2 \ 2^{\tN(\mu+1)2n}\ ,
\end{equation}
where $C$ depends only on $n, \mu$.
To estimate the second summand, we use \eqref{est.12} and Lemma \ref{compk} to obtain
\begin{align}
\notag
II &\leq 4 \sum_{j \geq \tN}  2^{j(\mu+1)2n} \norm{\Pi_{\tM, j}(s)\psi_s}_0^2 + 4\la t-s\ra^2  \norm{\psi_s}_0^2 \ \sum_{j\geq \tN } 2^{2j [(\mu+1)n-(\tM+1)\mu\delta]} \\
\label{est81} 
&\leq 4 \norm{\psi_s}_{2n}^2 + 4  \la t-s \ra^2      \norm{\psi_s}_0^2 \  \frac{2^{[(\mu+1)n-(\tM+1)\mu\delta]2\tN}}{1-2^{2 [(\mu+1)n-(\tM+1)\mu\delta]}}
\end{align}
where we used that $(\mu+1)n/\mu\delta \leq \tM + 1$.
Thus, \eqref{est80} and \eqref{est81} give
\begin{align}
\label{est82}
\norm{\cU(t,s)\psi_s}_{2n}^2 & \leq \wt C\, 2^{\tJ(\mu +1)2n}  \, \norm{\psi_s}_{2n}^2 \, \left[ 2^{\tN(\mu +1)2n} + \la t-s \ra^2       2^{[(\mu+1)n-(\tM+1)\mu\delta]2\tN}\right] \ ,
\end{align}
where $\wt C$ does not depend on $\tN$.
Now choose $\tN(t)$ in such a way to optimize \eqref{est82}, i.e. pick
$$
\tN(t) = \frac{1}{(\tM+1)\mu \delta} \log \la t-s \ra
$$
to obtain  
\begin{equation}
\label{est.13}
\norm{\cU(t,s)\psi_s}_{2n}^2 \leq C \,  2^{\frac{2(\mu+1)n}{(\tM+1)\mu \delta} \log \la t-s \ra} \, \norm{\psi_s}_{2n}^2 \ .
\end{equation}
Using \eqref{M.choice} one has 
$$
\norm{\cU(t,s)\psi_s}_{2n}^2 \leq C \, \la t-s \ra^{\frac{2(\mu+1)n}{(\tM+1)\mu\delta}} \,  \norm{\psi_s}_{2n}^2 \leq 
C \, \la t-s \ra^{2\epsilon} \,  \norm{\psi_s}_{2n}^2 \ ,
$$
which is the desired estimate.

 \end{proof}
We also  get  the following application of the adiabatic approximation concerning the spectra of Floquet operators (\cite{how, nen, joy}).\\
Let assume that  conditions {\rm (Hgap), (Vs$)_n$} are  satisfied  and  suppose that $V(t)$  is periodic with period $T>0$. Denote ${\cal F} := \cU(T, 0)$ the Floquet operator (or monodromy operator). Let us recall that 
$\cU(nT, 0)= {\cal F}^N$ so the spectrum of ${\cal F}$ gives informations on the large time behavior of the propagator.
\begin{theorem}\label{floq}
Let us  assume that  conditions {\rm (Hgap), (Vs$)_n$}  are  satisfied, $V$ is $T$-periodic and that $(H+\im)^{-N}$  is in the trace class  for $N$ large enough. 
Then the Floquet operator ${\cal F}$ has no absolutely continuous  spectrum.
\end{theorem}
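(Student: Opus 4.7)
The plan is to apply the Birman--Kato invariance theorem for the absolutely continuous spectrum of unitary operators, comparing the Floquet operator $\cF = \cU(T,0)$ with the adiabatic Floquet operator $\cF_{ad,m} := \cU_{ad,m}(T,0)$ for $m$ chosen large enough. The $T$-periodicity of $V(t)$ propagates through the iterative construction of Section 3, so $H_m(t)$, $H_{ad,m}(t)$, the spectral projectors $\Pi_{m,j}(t)$ and the operators $B_m(t)$ are all $T$-periodic. The trace class assumption on $(H+\im)^{-N}$ forces $H$ to have compact resolvent, hence each cluster $\widetilde\sigma_j$ (which is a bounded set) contains only finitely many eigenvalues, so each $\Pi_{m,j}(t)$ has finite rank.

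Step one: show that $\cF_{ad,m}$ has pure point spectrum. Applying Lemma \ref{lem:adia} with $s=0$, $t=T$ and using periodicity $\Pi_{m,j}(T) = \Pi_{m,j}(0)$ gives
\beq
\Pi_{m,j}(0) = \cU_{ad,m}(T,0)\, \Pi_{m,j}(0)\, \cU_{ad,m}(T,0)^* \ ,
\eeq
so $\cF_{ad,m}$ commutes with each finite rank projector $\Pi_{m,j}(0)$. Since these projectors form a complete family of orthogonal projectors on $\cH^0$, the unitary $\cF_{ad,m}$ decomposes into a direct sum of unitary operators on finite-dimensional subspaces, and therefore has pure point spectrum.

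Step two: show that $\cF - \cF_{ad,m}$ is trace class for $m$ large enough. The Duhamel formula \eqref{duhaB} evaluated at $t=T$, $s=0$ gives
\beq
\cF - \cF_{ad,m} = -\im \int_0^T \cU_{ad,m}(T,r)\, B_m(r)\, \cU(r,0)\, dr .
\eeq
Since $\cU_{ad,m}(T,r)$ and $\cU(r,0)$ are uniformly bounded on $\cH^0$, it suffices that $B_m(r)$ be trace class uniformly in $r \in [0,T]$. By Corollary \ref{cor:prop}(ii), $B_m(t)$ sends $\cH^0$ into $\cH^{2p}$ for every $p < \frac{\mu}{\mu+1}\delta(1+m)$; equivalently $H^p B_m(t)$ is bounded on $\cH^0$, so $B_m(t) = H^{-p}\cdot(H^p B_m(t))$. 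Choosing $m$ large enough that the corresponding $p$ exceeds $N/2$, the operator $H^{-p}$ is trace class (from $(H+\im)^{-N}$ trace class, by standard spectral calculus and the localization \eqref{loc.sc}), so $B_m(t)$ is trace class with uniformly bounded trace norm on $[0,T]$. The Bochner integral is therefore trace class.

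Step three: conclude via Birman--Kato. Both $\cF$ and $\cF_{ad,m}$ are unitary operators on $\cH^0$ (unitarity of $\cU(t,s)$ comes from Theorem \ref{thm:flow2}; unitarity of $\cU_{ad,m}(t,s)$ from Corollary \ref{cor:hgap}), and their difference is trace class. Kato's theorem on trace class perturbations of unitaries then yields that the absolutely continuous parts of $\cF$ and $\cF_{ad,m}$ are unitarily equivalent. Since $\cF_{ad,m}$ has pure point spectrum, it has no absolutely continuous spectrum, and hence $\cF$ has none either. The main technical point is verifying that $m$ can be chosen so that $B_m(t)$ is trace class while still having the algorithm well defined, which forces $\tJ$ large (depending on $m$), but this is compatible with the construction and fits the hypothesis ``$N$ large enough''.
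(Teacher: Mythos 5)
Your proof follows the paper's argument essentially exactly: use the Duhamel identity \eqref{duhaB} to write $\cF - \cF_{ad,m}$ as an integral of $B_m$ (trace class for $m$ large), observe via Lemma \ref{lem:adia} and $T$-periodicity that $\cF_{ad,m}$ commutes with the finite-rank projectors $\Pi_{m,j}(0)$ and hence has pure point spectrum, and conclude by the Birman--Krein--Kato trace-class invariance of the absolutely continuous spectrum for unitaries. One small arithmetic slip: $(H+\im)^{-N}$ trace class gives $H^{-p}$ trace class for $p \geq N$ (not $p > N/2$, which would only yield Hilbert--Schmidt), but since you may take $m$ as large as you like this does not affect the conclusion.
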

\begin{proof}  It results from Lemma \ref{estadB} that $B_m(t)$ is in the trace class  for $m$ large enough.  So from (\ref{duhaB}) we infer  that  $\cU(T,0)  -\cU_{ad, m}(T,0)$ is in the trace class.  It is easy  to see that the Hamiltonians $H_m(t)$ are  $T-$periodic (from the induction construction).  So it results from (\ref{adia})  that   $ \cU_{ad, m}(T,0)$
 commutes with $\Pi_{m,j}(0)$, the spectral projectors of $H_m(T)$. But $(H_m(T) +\im)^{-1}$ is a compact operator
  hence the spectrum of  $ \cU_{ad, m}(T,0)$ is purely  discrete. Applying the Birman-Krein-Kato \cite{bikr}  theorem on the stability of the absolutely spectrum under  class trace perturbations  we get Theorem \ref{floq}.
\end{proof}

\subsection{Proof of Lemma \ref{estadB} }
The proof is by induction.
Through all the proof, we will denote by $C_{m,n,\ell}$ some positive constants which depend on $m,n, \ell$ but not on $j, \tJ$. 

 We will prove \eqref{bm} together with the estimate $\forall j \in \N$
 \begin{align}
  \label{Pm01}
 &\sup_{t \in \R}
 \norm{H^p \,  \partial_t^{\ell+1}  \Pi_{m,j}(t) \, H^{-p} }  \ , \ 
  \sup_{t \in \R}
  \norm{ H^p \, \partial_t^\ell  \ \partial_{(t,L)} \Pi_{m,j}(t) \, H^{-p}} 
  \leq 
 \frac{C_{m,n,\ell}}{\wt\Delta_{j-1}^{\delta }} \ , \quad  \forall 0 \leq p \leq n  , \  \ell \geq 0
  \end{align}
  {\bf Step $m=0$. \ } Recall  that $H_0(t) = H + V(t) \equiv L(t)$.  Provided $\tJ$ is sufficiently large, by Lemma  \ref{lem:an02} the projectors 
   \begin{equation}
   \label{pi0j}
   \Pi_{0,j}(t) := - \frac{1}{2\pi \im} \oint_{\Gamma_j} R_0(t, \lambda) \ d \lambda \ , \qquad \forall j \geq 1 \   
   \end{equation}
   are well defined and fulfill 
\begin{align*}
\sup_{t \in \R}\norm{H^p \, \partial_t^{\ell+1}  \Pi_{0,j}(t) \, H^{-p}} \leq  \frac{C_{0,n,\ell} }{\wt \Delta_{j-1}^{\delta}} \ , \quad \forall 0 \leq p \leq n, \ \ell \geq 0 \ , 
\end{align*}
for some constants $C_{0,n,\ell}$ independent of $j$. 
This proves \eqref{Pm01} for $m=0$.
  Recall that  $B_{0,j}(t) := \Pi_{0,j}(t) \, \partial_t \Pi_{0,j}(t)$. Then by Leibnitz rule and \eqref{Pm01} it follows immediately \eqref{bm} for $m = 0$.\\
  
\noindent   {\bf Step $m\rightsquigarrow m+1$.} Assume that we performed already $m$ steps, with $m < \tM$. Then we constructed the operators  $B_i(t)= \sum_j B_{i,j}(t)$   $\  \forall \, 1 \leq i \leq m$. 
In order to construct $B_{m+1}(t)$, we need the spectral projectors of the operator  $H_{m+1}(t) \equiv H+ V(t) + B_0(t) + \cdots + B_m(t)$ (see formula \eqref{Bk}).
By  Corollary \ref{cor:hgap}  $H_{m+1}(t)$ fulfills  $\wt{{\rm(Hgap)}}$
provided $\tJ$ is sufficiently large. 
Therefore  we can apply  Lemma \ref{lem:an02} and obtain that 
$H_{m+1}(t)$ fulfills  $\wt{{\rm(Hgap)}}$ and that the projectors 
$$
\Pi_{m+1,j}(t) = - \frac{1}{2\pi \im } \oint_{\Gamma_j} R_{m+1}(t, \lambda) \ d\lambda \ , \qquad  R_{m+1}(t, \lambda):= (H_{m+1}(t) - \lambda)^{-1}
$$
are well defined $\forall j \geq 1$  and   fulfill
   \begin{equation}
   \label{est66}
    \sup_{t \in \R} \norm{H^p \, \partial_{t}^{\ell+1}  \Pi_{m+1}(t) \ H^{-p}} \leq  \frac{C_{m+1,n,\ell} }{\wt \Delta_{j-1}^{\delta}} \ , \qquad \forall 0 \leq p \leq n , \ \ell \geq 0 \ . 
   \end{equation}
This proves the first of \eqref{Pm01} for $m+1$. \\
 We pass to estimate $\partial_t^\ell \, \partial_{(t,L)} \Pi_{m+1}(t)$.
Using  the definition of $H_{m+1}$  we get 
\begin{equation}
\label{pr.norma0}
\partial_{(t,L)} \Pi_{m+1,j}(t) = \partial_t \Pi_{m+1,j}(t) - \im \sum_{l=0}^m [B_l(t) , \, \Pi_{m+1,j}(t) - \Pi_{l,j}(t)] - \im \sum_{l=0}^m [B_l(t), \Pi_{l,j}(t)]  \ . 
\end{equation}
Consider the last term in the r.h.s. above.
Note that $\partial:=\partial_{(t,L)}$  is a derivative in the algebra $\cL(\cH^0)$. So  for any projector  $\Pi$   we have 
$\Pi \ \partial\Pi = \partial\Pi - \partial\Pi \ \Pi$. Using the definition of $B_l$ and the properties of the projectors, one gets the identity
$$
[B_l(t), \Pi_{l,j}(t)] = - \im \partial_{(t,L)} \Pi_{l,j}(t) \ .
$$
Therefore using the inductive estimates \eqref{bm1} and \eqref{est66} we get 
\begin{equation}
\label{est67}
\sup_{t \in \R}\norm{H^p \ \partial_t^\ell [B_l(t), \Pi_{l,j}(t)] H^{-p}} \leq \frac{C_{l,n, \ell}'}{\wt\Delta_{j-1}^{\delta}} \ , \quad \forall \ell \geq 0, \ 0 \leq p \leq n \ .  
\end{equation}
Consider now the term in the middle of \eqref{pr.norma0}. To estimate it, remark that 
\begin{equation}
\label{pi-pi}
\Pi_{m+1,j}(t) - \Pi_{l,j}(t) = - \frac{1}{2\pi \im } \oint_{\Gamma_j} \, R_{m+1}(t, \lambda) \, (H_{m+1}(t) - H_l(t) ) \, R_{l}(t, \lambda) \, d\lambda \ .
\end{equation}
As $H_{m+1}(t)- H_{l}(t)= \sum_{k=l}^m B_k(t)$,  by   \eqref{bm1}
$$
\sup_{t \in \R} \norm{H^p \ \partial_{t}^\ell \left(H_{m+1}(t)- H_{l}(t) \right)\  H^{-p}}   \leq  \wt C_{m,n,\ell} , \quad \forall \ell \geq 0, \ 0 \leq p \leq n \ ,
$$
thus we can apply Lemma \ref{lem:an03} and get that
\begin{equation}
\label{est68}
\sup_{t \in \R} \norm{H^p \ \partial_{t}^\ell \left(\Pi_{m+1,j}(t) - \Pi_{l,j}(t)\right) \ H^{-p}} 
\leq    \frac{ \wt C_{m,n, \ell}}{ \wt \Delta_{j-1}}   \   , \quad \forall \ell \geq 0, \ 0 \leq p \leq n \ .  
\end{equation}
Therefore by Leibnitz rule and estimates \eqref{bm1}, \eqref{est68} we find that 
\begin{equation}
\label{est69}
\sup_{t \in \R} \norm{H^p   \left(\partial_{t}^\ell  
\sum_{l=0}^m [B_l(t) , \, \Pi_{m+1,j}(t) - \Pi_{l,j}(t)]\right) \ H^{-p}} 
\leq    \frac{  C_{m,n, \ell}'}{ \wt \Delta_{j-1}}  \  , \quad \forall \ell \geq 0, \ 0 \leq p \leq n \ .  
\end{equation}
We come back to the estimate of $\partial_{(t,L)} \Pi_{m+1,j}(t)$. Using \eqref{est66}, \eqref{est67} and \eqref{est69} we get
\begin{equation*}
\sup_{t \in \R} \norm{H^p \ \partial_t^\ell \, \partial_{(t,L)} \Pi_{m+1,j}(t)  \ H^{-p}} \leq \frac{  C_{m+1,n, \ell}}{ \wt \Delta_{j-1}^\delta} \ , \quad \forall 0 \leq p \leq n \ , \ell \geq 0 \ ,   
\end{equation*}
proving the inductive estimate \eqref{Pm01}. \\

We define now a series of objects and in the next lemma we give the estimates.
Let
\begin{align}
\label{L.def}
L_{m,j}(t) := \Pi_{m+1,j}(t) - \Pi_{m,j}(t) \\
\label{K.def}
K_{m,j}(t) := \Pi_{m,j}(t) \, L_{m,j}(t) \\
\label{D.def}
D_{m,j}(t) := \Pi_{m+1,j}(t)\, L_{m,j}(t)
\end{align}
\begin{lemma}
	For every integers $\ell \geq 0$, $0 \leq p \leq n$,  $ 0 \leq m < \tM$, provided $\tJ$ is sufficiently large, 	one has 
	\begin{align}
	\label{L.norm0}
 \sup_{t \in \R} 	\norm{H^p \ \partial_{t}^\ell  L_{m,j}(t) \ H^{-p}} \leq \frac{ c_{m,n,\ell}}{\wt \Delta_{j-1}}  \ , \\
	\label{K.norm0} 
 \sup_{ t \in \R} 	\norm{ H^p \ \partial_{t}^\ell  K_{m,j}(t) \ H^{-p} } \leq  \, \frac{\wt c_{m,n, \ell}}{\wt \Delta_{j-1}^{(m+1)\delta + 1}} \ , \\
	\label{D.norm0}
 \sup_{ t \in \R} 	\norm{H^p \ \partial_{t}^\ell  D_{m,j}(t) \ H^{-p}} \leq \frac{\widehat c_{m,n,\ell}}{\wt \Delta_{j-1}^{(m+1)\delta + 1}} \ .
	\end{align}
\end{lemma}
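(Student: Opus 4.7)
The bound \eqref{L.norm0} for $L_{m,j}$ is essentially in hand from the inductive construction. Starting from the Cauchy representation
\[
L_{m,j}(t) = -\frac{1}{2\pi\im}\oint_{\Gamma_j} R_{m+1}(t,\lambda)\, B_m(t)\, R_m(t,\lambda)\, d\lambda,
\]
one inserts the uniform weighted resolvent bound of Lemma \ref{lem:est:res0} (transferred from $H$ to $H_m, H_{m+1}$ via the bounded-perturbation arguments of Corollary \ref{cor:hgap}) together with the inductive bound \eqref{bm1} on $H^p\, B_m\, H^{-p}$. This is exactly the specialization $l=m$ of the estimate \eqref{est68}. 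Time derivatives propagate through Leibnitz rule, using the inductive control on $\partial_t^\ell B_m$ and on the derivatives of the resolvents.

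The crux of the lemma is to gain the additional factor $\wt\Delta_{j-1}^{-(m+1)\delta}$ for $K_{m,j}$ and $D_{m,j}$. The key algebraic facts, which follow from $\Pi_{m,k}(t)\Pi_{m,k'}(t) = \delta_{kk'}\Pi_{m,k}(t)$, the derivation property of $\partial_{(t,L)}$, and the definition of $B_{m,k}$, are the identities
\[
\Pi_{m,j}(t)\, B_m(t) = B_{m,j}(t), \qquad B_m(t)\, \Pi_{m,j}(t) = B_{m,j}(t) - \partial_{(t,L)} \Pi_{m,j}(t).
\]
Whenever $B_m$ is multiplied by a projector $\Pi_{m,j}$ on one side, one therefore recovers (up to the extra term $\partial_{(t,L)} \Pi_{m,j}$) a factor $B_{m,j}$, whose norm is already of order $\wt\Delta_{j-1}^{-(m+1)\delta}$ by the inductive bound \eqref{bm}.

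To extract the remaining $\wt\Delta_{j-1}^{-1}$, I use that $K_{m,j}$ and $D_{m,j}$ are off-diagonal between spectral subspaces: a short computation gives $K_{m,j} = \Pi_{m,j}\, K_{m,j}\,(1-\Pi_{m+1,j})$ and $D_{m,j} = \Pi_{m+1,j}\, D_{m,j}\,(1-\Pi_{m,j})$, and, exploiting $[H_m,\Pi_{m,j}] = [H_{m+1},\Pi_{m+1,j}] = 0$ together with $H_{m+1}-H_m = B_m$, one obtains the Sylvester equations
\begin{align*}
H_m\, K_{m,j} - K_{m,j}\, H_{m+1} &= B_{m,j}\,(1-\Pi_{m+1,j}), \\
H_{m+1}\, D_{m,j} - D_{m,j}\, H_m &= \Pi_{m+1,j}\, B_m\,(1-\Pi_{m,j}).
\end{align*}
Since the spectra of the restricted Hamiltonians ($H_m|_{\mathrm{ran}\,\Pi_{m,j}}$ inside $\wt\sigma_j$, versus $H_{m+1}|_{\mathrm{ran}\,(1-\Pi_{m+1,j})}$ outside $\wt\sigma_j$, and the analogous pair for $D_{m,j}$) are separated by $\sim \wt\Delta_{j-1}$, the standard Sylvester inversion represents $K_{m,j}$ and $D_{m,j}$ as contour integrals along $\Gamma_j$ whose norms are controlled by $\|\text{RHS}\|/\wt\Delta_{j-1}$. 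For $D_{m,j}$ one further reduces $\Pi_{m+1,j}\,B_m = B_{m,j} + L_{m,j}\,B_m$ (by applying the projector identities to the decomposition $\Pi_{m+1,j} = \Pi_{m,j} + L_{m,j}$), which ensures the right-hand side inherits the $\wt\Delta_{j-1}^{-(m+1)\delta}$ decay from $B_{m,j}$ (the remainder $L_{m,j}\,B_m(1-\Pi_{m,j})$ is, by Step 1 and \eqref{bm1}, even smaller). Combining, one obtains \eqref{K.norm0} and \eqref{D.norm0}.

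The main technical difficulty is not the algebra but verifying that all Sylvester inversions respect the weighted norms $\|H^p\,\cdot\, H^{-p}\|_{\cL(\cH^0)}$, uniformly in $j$ and $\tJ$. This is handled by the uniform weighted resolvent estimates of Lemma \ref{lem:est:res0} (applied to $H_m$ and $H_{m+1}$, which differ from $H$ by a bounded perturbation with small norm thanks to Lemma \ref{estadB}), together with careful bookkeeping in the Leibnitz expansion of $\partial_t^\ell$, invoking the inductive derivative estimates on $B_m$, $\Pi_{m,j}$, $\Pi_{m+1,j}$, $\partial_{(t,L)}\Pi_{m,j}$ and $L_{m,j}$ assembled earlier in the proof of Lemma \ref{estadB}.
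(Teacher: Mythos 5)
Your treatment of $L_{m,j}$ and $K_{m,j}$ is essentially the paper's argument in a different dress: the Sylvester inversion you invoke is exactly the contour formula the paper uses directly (for $K_{m,j}$, the paper commutes $\Pi_{m,j}$ through $R_m(t,\lambda)$ to replace $B_m$ by $B_{m,j}$ under the contour integral $-\frac{1}{2\pi\im}\oint_{\Gamma_j}R_m B_{m,j}R_{m+1}\,d\lambda$, and then applies Lemma~\ref{lem:an03}, which is what your Sylvester bound produces). So far these are equivalent routes.

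For $D_{m,j}$, however, there is a genuine gap. Your Sylvester equation $H_{m+1}D_{m,j}-D_{m,j}H_m=\Pi_{m+1,j}B_m(1-\Pi_{m,j})$ and the split $\Pi_{m+1,j}B_m=B_{m,j}+L_{m,j}B_m$ are correct, but the claim that the remainder $L_{m,j}B_m(1-\Pi_{m,j})$ is ``even smaller'' than $B_{m,j}$ is false in general. From \eqref{L.norm0} and \eqref{bm1} the best you can say is $\|L_{m,j}B_m\|\lesssim \wt\Delta_{j-1}^{-1}\cdot 2^{-\tJ\mu\delta(1+m)}$, whereas $\|B_{m,j}\|\lesssim\wt\Delta_{j-1}^{-(m+1)\delta}$; since $\wt\Delta_{j-1}\sim 2^{(\tJ+j-2)\mu}$, the ratio $\|L_{m,j}B_m\|/\|B_{m,j}\|\sim 2^{(\tJ+j-2)\mu[(m+1)\delta-1]-\tJ\mu\delta(m+1)}$ diverges as $j\to\infty$ as soon as $(m+1)\delta>1$. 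In that regime the Sylvester inversion only yields $\|D_{m,j}\|\lesssim\wt\Delta_{j-1}^{-2}$, not the required $\wt\Delta_{j-1}^{-(m+1)\delta-1}$, and the whole induction stalls precisely in the deep steps $m\gtrsim 1/\delta$ where it is most needed. The paper sidesteps this by using the purely algebraic identity (Nenciu's (2.41), with a typo $K_{m+1,j}$ for $K_{m,j}$ in the present text) $D_{m,j}=\Pi_{m+1,j}\,K_{m,j}\,(1-L_{m,j})^{-1}$: here the full $j$-decay sits in $K_{m,j}$, which has already been estimated, and the two extra factors $\Pi_{m+1,j}$ and $(1-L_{m,j})^{-1}$ are $O(1)$ uniformly in $j$ (the latter by Neumann series once $\tJ$ is large). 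You would need to import this identity, or an equivalent rearrangement, to close the $D_{m,j}$ estimate.
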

\begin{proof}
	First we prove \eqref{L.norm0}. One has 
	\begin{equation}
	\label{L}
	L_{m,j}(t) = - \frac{1}{2\pi \im } \oint_{\Gamma_j} R_{m+1}(t, \lambda) \ B_m(t) \ R_{m}(t, \lambda) \ d \lambda  \ .
	\end{equation}
We apply Lemma \ref{lem:an03} with $P = V + B_0 + \cdots + B_m$, $Q=V + B_0 + \cdots + B_{m-1}$,  $B = B_m$,  and get 
$$
\sup_{t \in \R} \norm{H^p \ \partial_{t}^\ell L_{m,j}(t) \ H^{-p} }  \leq \frac{c_{m,n, \ell}}{\wt \Delta_{j-1}}  \  , \quad \forall \ell \geq 0, \ 0 \leq p \leq n \ .   
$$
	For later use we study  the operator $(1-L_{m,j}(t))^{-1}$. Provided $m < \tM$ and $\tJ$ is sufficiently large, estimate \eqref{L.norm0} with $\ell = 0$ guarantees  that
		$(1-L_{m,j}(t))$ is invertible by Neumann series in $H^p$ and
	$$
	\sup_{t \in \R} \norm{H^p \ (1-L_{m,j}(t))^{-1} \ H^{-p}} \leq 2 \ , \quad \forall 0 \leq p \leq n \ .  
	$$
	To study its derivatives we can proceed as in \eqref{ris.der0},  and get 
	\begin{align}
	\label{der.inv.L0}
	\sup_{t \in \R}\norm{H^p \ \partial_{t}^\ell (1-L_{m,j}(t))^{-1} \ H^{-p}} \leq   \frac{\wt c_{m,n,\ell}'}{\wt \Delta_{j-1}} \ , \quad \forall 0 \leq p \leq n \ , \ell \geq 1 \ ,
	\end{align}
	provided $\tJ$ is sufficiently large.
	
	Estimate  \eqref{K.norm0} follows immediately from the identity
	\begin{equation}
	\label{K}
	K_{m,j}(t) = - \Pi_{m,j}(t) \frac{1}{2\pi \im } \oint_{\Gamma_j} R_{m}(t, \lambda) \ B_m(t) \ R_{m+1}(t, \lambda) \ d \lambda  = - \frac{1}{2\pi \im } \oint_{\Gamma_j} R_{m}(t, \lambda) \ B_{m,j}(t) \ R_{m+1}(t, \lambda) \ d \lambda
	\end{equation}
and the application of  Lemma \ref{lem:an03} with $B=B_{m,j}$, using the bound
$$
 \sup_{t \in \R}  \norm{H^p \ \partial_t^\ell B_{m,j}(t) \ H^{-p}} \leq \frac{c_{m,n,\ell}}{\wt \Delta_{j-1}^{(m+1)\delta}}  \  , \quad \forall \ell \geq 0, \ 0 \leq p \leq n \ .  
$$
which follows from the   inductive assumption.
 
	Finally we prove \eqref{D.norm0}. Using $\Pi_{m+1,j}^2 = \Pi_{m+1,j}$ and simple algebraic manipulations one proves  that   \cite[(2.41)]{nen} 
	\begin{equation}
	\label{D}
	D_{m,j}(t) = \Pi_{m+1,j}(t) \, K_{m+1,j}(t) \, (1-L_{m,j}(t))^{-1} \ . 
	\end{equation}
	Then Leibnitz rule,   \eqref{est66}, \eqref{K.norm0}, \eqref{der.inv.L0} give the claimed estimate.
\end{proof}

\vspace{1em}

We can now conclude the proof by calculating the norm of $B_{m+1,j}$. 
	One has the formula \cite[(2.42)]{nen} 
	\begin{equation}
	\label{B}
	B_{m+1,j}(t) = \im D_{m,j}(t) \,  \partial_{(t,L)} \Pi_{m+1,j}(t)  + \Pi_{m+1,j}(t) \,  \partial_{(t,H-H_{m+1})} K_{m,j}(t) \ . 
	\end{equation}
	Consider first the term $ D_{m,j}(t) \partial_{(t,L)} \Pi_{m+1,j}(t)$. Then \eqref{D.norm0} and the inductive assumption give
\begin{equation}
\label{B.110}
\sup_{t \in \R}\norm{H^p \ \partial_{t}^\ell( D_{m,j}(t) \, \partial_{(t,L)} \Pi_{m+1,j}(t)) \ H^{-p}}\leq \frac{c_{m,n,\ell}}{\wt \Delta_{j-1}^{(m+2)\delta + 1}} \ , \qquad \forall 0 \leq p \leq n, \ \ell \geq 0 \ . 
\end{equation}
	To estimate $\partial_{(t,H-H_{m+1})} K_{m,j}$ use that $H - H_{m+1} = -\sum_{i=0}^m B_i$, so that 
	$$
	\partial_{(t,H-H_{m+1})} K_{m,j} = \partial_{t} K_{m,j}  - \im \sum_{0 \leq i \leq m} [B_i, K_{m,j}] \ . 
	$$
	Using once again \eqref{K.norm0} and the inductive assumption we get 
	\begin{equation}
	\label{B.120}
	\sup_{t \in \R}\norm{H^p \ \partial_{t}^\ell \partial_{(t,H-H_{m+1})}  K_{m,j}(t) \ H^{-p}}\leq \frac{c_{m,n,\ell}}{\wt \Delta_{j-1}^{(m+1)\delta + 1}}   \ , \qquad \forall 0 \leq p \leq n, \ \ell \geq 0 \ . 
	\end{equation}
	Then \eqref{B.110}, \eqref{B.120} and $ 0 < \delta \leq 1$  give
			\begin{align*}
\sup_{t \in \R}\norm{H^p \ \partial_{t}^\ell B_{m+1,j}(t) \ H^{-p}} \leq  \frac{\wt c_{m,n,\ell}}{\wt \Delta_{j-1}^{(m+1)\delta + 1}}  \leq \frac{C_{m+1,n,\ell}}{\wt \Delta_{j-1}^{(m+2)\delta }} \ , \qquad \forall 0 \leq p \leq n, \ \ell \geq 0 
	\end{align*}
thus proving 	the inductive step.

The estimate on $B_m(t)$ is trivial. The self-adjointness can be proved using the arguments of  \cite[Lemma 2]{nen}.

\section{Growth of norms for perturbations analytic in time}
In this section we prove the upper bound on the growth of the norm in case of perturbations which are analytic in time. The proof is essentially the same as in case of perturbations smooth in time, but we need extra attention to compute the dependence of all the constants from the parameters $\tJ$ and $\tM$. Indeed in this case we want to optimize $\tJ$ and $\tM$ by choosing them as a function of $t-s$, so we need to know exactly how all the constants depend on such parameters.\\
Notice that  perturbations   analytic in time  were  considered  in \cite{joy0, nene}.\\

First rewrite assumption {\rm (Va$)_n$} in the following way: there exist $\ta, c, A >0$ such that for any integer $\ell \geq 0$, $0 \leq p \leq n$
\begin{equation}
\label{V.est02}
\sup_{t \in \R} \norm{H^p \ \partial_t^\ell V(t) \ H^{-p-\nu}}   \leq \ta \ \frac{c^\ell \, \ell!}{A \ (1+\ell)^2} \ . 
\end{equation}
Here $A$ is a  constant such that
\begin{equation}
\label{sum.in}
(1+ \ell)^2 \sum_{n_1 + \cdots +n_k = \ell} \frac{1}{(1+n_1)^2}\cdots \frac{1}{(1+n_k)^2} \leq A^{k-1}  \ ,
\end{equation}
and can be chosen to be $A\geq 2 \pi^2/3$.\\ 
Note that \eqref{V.est02} can always be achieved simply by choosing  $A \, c_{0,n} \leq \ta$, $4\, c_{1,n} \leq c$.\\

The next step is to extend  Lemma \ref{estadB} in the analytic setting. 
Define
$$
\td :=\frac{2^{\mu\delta}}{2^{\mu\delta}-1}  \  
$$
and
$$
\tC_H := \max ( C_H, \ \wt C_H ) \ ,
$$
where $C_H$ and $\wt C_H$ are the constants of Lemma \ref{perturb} respectively Lemma \ref{lem:est:res0}. \\

Finally we fix a time $T \gg 1$. We obtain the following
\begin{proposition}
\label{ind.ass}
Fix a positive $\tM \in \N$ and choose $\tJ$ such that
\begin{equation}
 \label{cond}
2^{12} \, \left[ \tC_H \, (\ta + 2\td) c \right] \tM \leq 2^{\tJ\mu \delta} \ .
\end{equation}
For every integers $m, \ell, p$ such that 
$$
0 \leq \ell + m \leq \tM  \ , \quad 0 \leq p \leq n 
$$
the following holds true:
\begin{itemize}
\item[(i)]
 the operators $\Pi_{m,j}$ fulfill for every $j \geq 1$
 \begin{align}
  \label{Pm1}
 &\sup_{t \in [0,T]}\norm{H^p \ \partial_t^\ell  \Pi_{m,j}(t) \ H^{-p}} \leq \frac{ \ell! \ c^{\ell}}{(1+ \ell)^2} \left(\frac{1}{A \,2^{(j-1)\mu\delta}}
\right)^{\min(\ell, 1)}  \ , \\
\label{Pm2}
 &\sup_{t \in [0,T]} \norm{H^p \ \partial_t^\ell   \sum_{k=0}^{m}\partial_{(t,L)} \Pi_{k,j}(t) \ H^{-p}} \leq \frac{ \ell! \ c^{\ell}}{A (1+ \ell)^2} \, \frac{1}{ \,2^{(j-1)\mu\delta}} \, \frac{1}{4} \ , \\
 \label{Pm3}
&\sup_{t \in [0,T]}  \Vert H^p \ \partial_t^\ell \partial_{(t,L)} \Pi_{m,j}(t) \ H^{-p}\Vert \leq   \,  \frac{ \ell! \ c^{\ell}}{A(1+ \ell)^2} \frac{1}{\,2^{(j-1)\mu\delta}} \ . 
  \end{align}

\item[(ii)] the operators $B_{m,j}$ fulfill for every $j \geq 1$
 \begin{equation}
 \label{Bm1}
 \sup_{t \in [0,T]} \Vert H^p \ \partial_t^\ell B_{m,j}(t) \ H^{-p} \Vert \leq   \, \frac{\ell! \, c^\ell }{A (1+ \ell)^2} \frac{1}{2^{(j-1)(m+1)\mu\delta}} \frac{1}{(1+m)^2} \ . 
  \end{equation}
  \item[(iii)] the operators $B_{m}(t)$ fulfill 
   \begin{equation}
 \label{Bm11}
 \sup_{t \in [0,T]} \Vert H^p \ \partial_t^\ell B_{m}(t) \ H^{-p} \Vert \leq   \, \frac{\ell! \, c^\ell }{A (1+ \ell)^2} \frac{\td}{(1+m)^2} \ . 
  \end{equation}
  \item[(iv)] The Hamiltonians $H_m(t)$, $H_{ad,m}(t)$ fulfill $\wt{{\rm (Hgap)}} $ with $\wt\sigma_j$'s as in \eqref{new.clus}.
  \end{itemize}
\end{proposition}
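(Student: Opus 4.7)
The plan is to prove statements (i)--(iv) simultaneously by induction on $m$ with $0\le m \le \tM$, following the algebraic scheme of Lemma \ref{estadB} but tracking the analytic-in-time constants via the combinatorial inequality \eqref{sum.in}. The weight $\ell!\,c^\ell/A(1+\ell)^2$ on the right-hand sides is engineered so that a Leibnitz or Fa\`a di Bruno expansion of a product of $k$ factors each satisfying this weight reproduces the same weight up to a multiplicative constant $A^{k-1}$, which is absorbed by the prefactor $1/A$. Simultaneously, the geometric factor $2^{-(j-1)(m+1)\mu\delta}$ in $B_{m,j}$ is chosen so that summation over $j$, via the geometric series of ratio $2^{-\mu\delta}$ (whose sum is $\td$), reproduces \eqref{Bm11} with a constant independent of $m$.

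Base case $m=0$: with $H_0(t)=L(t)=H+V(t)$, condition \eqref{cond} on $\tJ$ entails via Lemma \ref{perturb} that $L(t)$ fulfills $\wt{{\rm (Hgap)}}$, so the projectors $\Pi_{0,j}(t)=-\frac{1}{2\pi\im}\oint_{\Gamma_j} R_0(t,\lambda)\,d\lambda$ are well defined. Differentiating under the contour integral and iterating $\partial_t R_0=-R_0\,(\partial_t V)\,R_0$, combined with Lemma \ref{lem:est:res0} for the size of $H^{\nu}R_0$ on $\Gamma_j$ and \eqref{V.est02} for the factors $\partial_t^{n_i}V$, produces \eqref{Pm1} at $m=0$ after a single application of \eqref{sum.in}. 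Since $[L,\Pi_{0,j}]=0$ one has $\partial_{(t,L)}\Pi_{0,j}=\partial_t\Pi_{0,j}$, whence \eqref{Pm3}; \eqref{Pm2} then follows since the sum over $k$ reduces to a single term. Finally, $B_{0,j}=\Pi_{0,j}\,\partial_t\Pi_{0,j}$ and another application of \eqref{sum.in} to the Leibnitz expansion yields \eqref{Bm1}, and summing over $j$ produces \eqref{Bm11}.

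Inductive step $m\rightsquigarrow m+1$ (assuming $m+1\le\tM$): first verify (iv). Writing $H_{m+1}(t)=H+W_{m+1}(t)$ with $W_{m+1}=V+\sum_{l=0}^{m} B_l$, the inductive \eqref{Bm11} combined with \eqref{V.est02} yields $\|W_{m+1}(t)H^{-\nu}\|\le \ta + \td\sum_{l=0}^{m}(1+l)^{-2}\le \ta + 2\td$, and \eqref{cond} then forces the smallness \eqref{condz} of Lemma \ref{perturb}, proving $\wt{{\rm (Hgap)}}$ for both $H_{m+1}$ and $H_{ad,m+1}$. The projectors $\Pi_{m+1,j}$ are defined by the analogous Cauchy integral over $\Gamma_j$, and \eqref{Pm1} at order $m+1$ is obtained exactly as in the base case. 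For the Heisenberg derivative we use the decomposition \eqref{pr.norma0}: the last sum equals $-\im\sum_{l=0}^{m}\partial_{(t,L)}\Pi_{l,j}$ via the identity $[B_l,\Pi_{l,j}]=-\im\partial_{(t,L)}\Pi_{l,j}$ and is controlled by the inductive version of \eqref{Pm2}; the middle sum, expanded via \eqref{pi-pi} together with Lemma \ref{lem:an03}, gains an extra $\wt\Delta_{j-1}^{-1}$ factor. Finally, $B_{m+1,j}$ is handled via the master formula \eqref{B}, using the analytic analogues of \eqref{L.norm0}--\eqref{D.norm0} at level $m$; the Neumann series for $(1-L_{m,j})^{-1}$ converges uniformly in $j$ because \eqref{cond} forces $\|L_{m,j}\|\le 1/2$, and combining the decays of $D_{m,j}$ and $K_{m,j}$ with \eqref{Pm3} at order $m+1$ yields the target $\wt\Delta_{j-1}^{-(m+2)\delta}$.

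The main obstacle is the careful bookkeeping: each successive differentiation of a resolvent produces a convolution-type sum of products indexed by compositions of $\ell$, and one must check that \eqref{sum.in} is applied exactly the correct number of times at each step so that the weight $\ell!\,c^\ell/A(1+\ell)^2$ is recovered without accumulating extra factors depending on $m$, $\ell$ or $A$. The linear dependence on $\tM$ in \eqref{cond} is tuned so that the perturbative bounds close uniformly over the $\tM$ iterations, with both Lemma \ref{lem:est:res0} and the Neumann inversion of $1-L_{m,j}$ remaining uniform in $m\le\tM$ thanks to the smallness provided by \eqref{cond}.
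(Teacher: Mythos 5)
Your proposal is correct and follows essentially the same inductive scheme as the paper's proof: induction on $m$, the identity $[B_l,\Pi_{l,j}]=-\im\,\partial_{(t,L)}\Pi_{l,j}$ together with \eqref{pr.norma0} to close \eqref{Pm2}--\eqref{Pm3}, the auxiliary operators $L_{m,j},K_{m,j},D_{m,j}$ with Neumann inversion of $1-L_{m,j}$, the master formula \eqref{B} for $B_{m+1,j}$, and the combinatorial weight $\ell!\,c^\ell/A(1+\ell)^2$ absorbed via \eqref{sum.in}. The only slip is citing the smooth-case Lemma \ref{lem:an03} where the analytic-case Lemma \ref{lem:an3} is the one that carries the $\ell!\,c^\ell$ weight, but this does not affect the argument.
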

Before proving  Proposition \ref{ind.ass}, we show how  Theorem \ref{thm:grest2} follows.

\vspace{2em}
\begin{proof}[Proof of Theorem \ref{thm:grest2}]
Having fixed $T \gg 1$, we consider the evolution $\cU(t,0)$ on a time interval  $0 \leq t \leq T$. 
Choose $\tJ$ in such a way that \eqref{cond} is fulfilled, namely 
		\begin{equation}
		\label{def.J}
\tJ := \frac{1}{	\mu\delta } \log (\tM+1) + \frac{1}{	\mu\delta }\log \left( 2^{12} \, \left[ \tC_H \, (\ta + 2\td) \, c \right]  \right) \ ,
	\end{equation}
	and choose $\tM$  as a function of $T$:
	\begin{equation}
	\label{def.M2} 
	\tM+1 = \lfloor \frac{1}{4} \log \la T \ra \rfloor   \ , 
	\end{equation}
	where $\lfloor \cdot \rfloor$ denotes the integer part. 
	
Now remark that the constants $c_1, 2_2$ of Lemma \ref{compk} {\em do not } depend on $\tM$ and $\tJ$. Indeed they depend only on $\sigma(H)$, $n$, and the norm of $H^{-n} ( V(t) + \sum_{i=0}^\tM B_i(t)) H^n$ . But by \eqref{Bm11} it follows easily that  such norm depends only on $\ta$, $\td$ (see  \eqref{est.Btot} below for the precise computation). 
Hence we can repeat the arguments of the proof of Theorem \ref{thm:grest} and using  estimate \eqref{Bm1} to estimate $B_{m,j}$, one gets 
\begin{equation}
\label{est.14}
\sup_{t \in [0,T]}\norm{\cU(t,0)\psi_0}_{2n}^2 \leq C \, 2^{\tJ (\mu+1)2n} 2^{\frac{2(\mu+1)n}{(\tM+1)\mu \delta} \log \la T \ra} \, \norm{\psi_0}_{2n}^2 \ ,
\end{equation}
where the constant $C$ {\em does not} depend on $\tJ$ and $\tM$.
Now substitute $\tJ$ as in \eqref{def.J} and $\tM$ as \eqref{def.M2}
	to get 
\begin{equation}
\label{est.15}
\sup_{t \in [0,T]}\norm{ \ \cU(t,0)\psi_0}_{2n}^2 \leq \gamma \, (\log \la T \ra)^{2n(\mu+1)/\mu\delta}\, \norm{\psi_0}_{2n}^2 \ ,
\end{equation}	
for some $\gamma >0$ 	which does not depend on $T$. 
Since $T$ was arbitrary, the estimate above holds $\forall t$. It is easy to adapt the proof to consider also the case $\cU(t,s)$.

Finally interpolating with $k=0$ gives the general case. The exponent in \eqref{ineqcinf} is obtained by simply replacing $\delta$ with its definition \eqref{delta}.
	
\end{proof}

   \subsection{Proof of Proposition \ref{ind.ass}}

 {\bf Step $m=0$.}  Define $\Pi_{0,j}(t)$ as in \eqref{pi0j}.
 We  apply  Lemma \ref{lem:an2} with $P = V$, $a=\ta$, $b=1$ (it is easy to see that   \eqref{cond2} is fulfilled) and get that for every $\ell \geq 0$, $ 0 \leq p \leq n$
 \begin{equation}
 \label{est50}
   \sup_{t \in [0,T]} \| H^p \ \partial_{t}^\ell \Pi_{0,j}(t) \ H^{-p} \| \leq 
    \frac{ \ell! \ c^{\ell}}{(1+ \ell)^2} \left(\frac{2^{4} \, \tC_H \, \ta}{A \, \wt \Delta_{j-1}^{\delta}}
\right)^{\min(\ell, 1)} \leq  \frac{ \ell! \ c^{\ell}}{(1+ \ell)^2} \left(\frac{1}{A \,2^{(j-1)\mu \delta}} 
\right)^{\min(\ell, 1)}.
   \end{equation}
   Consider now $ \partial_{(t,L)} \Pi_{0,j}(t)\equiv  \partial_t \Pi_{0,j}(t)$. For $\ell +1 \leq \tM$ one has by  \eqref{cond}, \eqref{est50} 
       \begin{equation}
       \label{est51} 
   \sup_{t \in [0,T]} \| H^p \ \partial_{t}^\ell \partial_{(t, L)} \Pi_{0,j}(t) \ H^{-p} \| \leq 
    \frac{ (\ell+1)! \ c^{\ell+1}}{(1+ \ell)^2} \, \frac{2^{4} \, \tC_H \, \ta}{A \, \wt \Delta_{j-1}^{\delta}} \leq  \frac{ \ell! \ c^{\ell}}{A(1+ \ell)^2} \frac{1}{\,2^{(j-1)\mu \delta}} \frac{1}{4}
\ .
   \end{equation}
   Thus we proved \eqref{Pm1}, \eqref{Pm2} and \eqref{Pm3} for $m=0$.
   Consider now 
  $B_{0,j}(t) =\Pi_{0,j}(t) \, \partial_{t} \Pi_{0,j}(t)$. 
  We apply Lemma \ref{lem:an1} with $P = \Pi_{0,j}$,
   $a=1,
    b=\frac{2^{4} \,\tC_H \, \ta}{A \, \wt \Delta_{j-1}^{\delta}}$, $k=0$ and  $Q=\partial_{t} \Pi_{0,j}$, $d=\frac{2^{4} \, \tC_H \, \ta}{A \, \wt \Delta_{j-1}^{\delta}}$, $f = 1$, $i=1$ and obtain that for $\ell +1 \leq \tM$
   \begin{align*}
   \sup_{t \in [0,T]}\norm{H^p \ \partial_{t}^{\ell}B_{0,j}(t) \ H^{-p}} 
   & \leq \frac{2^{4} \,\tC_H \, \ta}{A \, \wt \Delta_{j-1}^{\delta}}
   \left(\frac{2^{5} \,\tC_H \, \ta}{A \, \wt \Delta_{j-1}^{\delta}} +1 \right)^{\min(\ell,1)} \, 
   \frac{(\ell+1)! \, c^{\ell+1}}{A(1+\ell)^2}
    \leq 
   \frac{2^{5} \, \tC_H \,  \ta }{ \wt \Delta_{j-1}^{\delta}} \frac{(\ell+1)! \, c^{\ell+1}}{A(1+\ell)^2}   \\
  &  \leq \frac{\ell! \, c^{\ell}}{A(1+\ell)^2} \frac{1}{2^{\mu (j-1)\delta} }
   \end{align*}
   provided
   \begin{equation}
   \label{cond.it} 
  \max \left( \frac{2^{5} \,\tC_H \, \ta}{A } , \    2^{5} \,\tC_H \, \ta \,  \tM  \, c   \right)  \leq 2^{\tJ\mu\delta} \ ,
   \end{equation}
   which is clearly fulfilled using \eqref{cond}.
 This proves \eqref{Bm1} for $m=0$. 
\\

{\bf Step $m\rightsquigarrow m+1$.} Assume that we performed already $0<m<\tM$ steps. By the inductive assumption  $ \forall \, 0 \leq i \leq m$ one has that $B_i(t)= \sum_j B_{i,j}(t)$ fulfills \eqref{Bm11} $\forall \ell+i+1 \leq \tM$.\\
Thus $H_{m+1}(t) = H+ W(t)$, where we defined  $W(t):=V(t) + \sum_{i=0}^m B_i(t)$. It fulfills $\forall \ell + m +1 \leq \tM$, $\forall 0 \leq p \leq n$
\begin{align}
\label{est.Btot} 
\sup_{t \in [0,T]} \norm{H^p \ \partial_{t}^\ell W(t) \ H^{-p-\nu}}  & \leq  \frac{\ell! \, c^{\ell}}{A(1+\ell)^2}\,\left(\ta + \td \sum_{i=0}^m\frac{1}{(1+i)^2} \right)  \leq \frac{\ell! \, c^{\ell}}{A(1+\ell)^2}\,  \left( \ta +  2\td\right) \ .
\end{align}
Thus $W(t)$ is a perturbation of $H$  analytic in time  which fulfills the conditions of Lemma \ref{lem:an2}. Indeed with $a = \ta +  2\td$ we have that 
$2^{4} \, \tC_H \,  a  \leq 2^{\tJ \mu\delta}$, 
hence by Lemma \ref{lem:an2} the projectors 
$$
\Pi_{m+1,j}(t) = - \frac{1}{2\pi \im } \oint_{\Gamma_j} R_{m+1}(t, \lambda) \ d\lambda \ , \qquad  R_{m+1}(t, \lambda):= (H_{m+1}(t) - \lambda)^{-1}
$$
are well defined $\forall j \geq 1$. Furthermore they fulfill $\forall \ell + 1+ m \leq \tM$, $\forall 0 \leq p \leq n$
\begin{align}
\label{pr.norm}
 \sup_{ t \in \R} \| H^p \  \partial_{t}^\ell \Pi_{m+1,j}(t)  H^{-p} \| \leq 
 \frac{ \ell! \ c^{\ell}}{ (1+ \ell)^2} \  \left(\frac{2^{4} \, \tC_H}{A \wt \Delta_{j-1}^{\delta}} \left( \ta +  2\td \right)\right)^{\min(\ell, 1)} \ ,
 \end{align}
 where we used  \eqref{est.Btot} and Lemma \ref{lem:an2}.
 
%
To estimate $\partial_t^\ell \, \partial_{(t,L)} \Pi_{m+1,j}(t)$ we use again formula \eqref{pr.norma0}.
Consider its last term. Since $[B_l(t), \Pi_{l,j}(t)] = - \im \partial_{(t,L)} \Pi_{l,j}(t)
$, one gets the identity 
\begin{equation}
\label{pr.norma2}
\sum_{l=0}^{m+1} \partial_{(t,L)} \Pi_{l,j}(t) = \partial_t \Pi_{m+1,j}(t) - \im \sum_{l=0}^m [B_l(t) , \, \Pi_{m+1,j}(t) - \Pi_{l,j}(t)]  \ . 
\end{equation}
This identity allows us to estimate \eqref{Pm2} at step $m+1$.
We estimate  the two terms in the r.h.s above separately. 
To estimate the second one  we use formula \eqref{pi-pi}. Since
$$
\sup_{t \in [0,T]} \norm{H^p \ \partial_{t}^\ell \left(H_{m+1}(t)- H_{l}(t) \right) \ H^{-p}}    \leq \frac{\ell! \, c^{\ell}}{A(1+\ell)^2}\, \left( \ta +  2\td\right) \ , 
$$
by  Lemma \ref{lem:an3} we get  that $\forall \ell + m + 1 \leq \tM$, $\forall 0 \leq p \leq n$, 
\begin{equation}
\label{pr.norma3}
\sup_{t \in [0,T]} \norm{H^p \ \partial_{t}^\ell \left(\Pi_{m+1,j}(t) - \Pi_{l,j}(t)\right) \ H^{-p}} 
\leq \frac{\ell! \, c^\ell}{A (1+ \ell)^2} \, 
   \frac{ \left( \ta +  2\td\right)  \, 2^5}{ \, \wt \Delta_{j-1}}  \ . 
\end{equation}
Hence by Lemma \ref{lem:an1}, \eqref{Bm11},  \eqref{pr.norma3} we get
\begin{equation}
\label{pr.norma31}
\sup_{t \in [0,T]} \norm{H^p \ \sum_{l=0}^m [B_l(t) , \, \Pi_{m+1,j}(t) - \Pi_{l,j}(t)] \ H^{-p}} 
\leq \frac{\ell! \, c^\ell}{A (1+ \ell)^2} \, 
   \frac{ \left( \ta + 2 \td\right)   \, \td\, 2^8}{ \, \wt \Delta_{j-1}}  \ . 
\end{equation}

The first term of \eqref{pr.norma2} is estimated by \eqref{pr.norm} with $\ell+1$ replacing $\ell$. Together with \eqref{pr.norma31} we get for $\ell +1 + m \leq \tM$, $\forall 0 \leq p \leq n$
\begin{equation}
\label{pr.norma4}
\sup_{t \in [0,T]} \norm{H^p \ \partial_{t}^\ell \sum_{l=0}^{m+1} \partial_{(t,L)} \Pi_{l,j}(t) \ H^{-p}} 
\leq \frac{ \ell! \ c^{\ell}}{A (1+ \ell)^2} \, \frac{1}{ \,2^{(j-1)\mu\delta}} \, \frac{1}{4}
\end{equation}
using \eqref{cond}. This proves \eqref{Pm2} at step $m+1$.\\
Now consider $\partial_{(t,L)} \Pi_{m+1,j}(t)$. Using \eqref{pr.norma4} and the inductive assumption \eqref{Pm2} we get
\begin{align}
\notag
\norm{H^p \ \partial_t^\ell \, \partial_{(t,L)} \Pi_{m+1,j}(t) \ H^{-p} } &\leq 
\norm{H^p \ \partial_t^\ell \,  \sum_{k=0}^{m+1} \partial_{(t,L)} \Pi_{k,j}(t) \ H^{-p}} 
+
 \norm{H^p \ \partial_t^\ell \, \sum_{k=0}^{m} \partial_{(t,L)}\Pi_{k,j}(t) \ H^{-p} } \\
 \label{pr.norma5}
 & \leq \frac{ \ell! \ c^{\ell}}{A (1+ \ell)^2} \, \frac{1}{ \,2^{(j-1)\mu\delta}}
\end{align}
proving \eqref{Pm2} at step $m+1$.

Next we estimate $L_{m,j}$, $K_{m,j}$, $D_{m,j}$ defined in \eqref{L.def}--\eqref{D.def}.
\begin{lemma}
For every $0 \leq p \leq n$,  $\ell + m+1 \leq \tM$	one has 
	\begin{align}
	\label{L.norm}
 \sup_{t \in [0,T]} 	\norm{H^p \ \partial_{t}^\ell  L_{m,j}(t) \ H^{-p}} \leq \frac{\ell! \, c^\ell}{A(1+\ell)^2} \frac{2^5 \, \td}{\wt \Delta_{j-1}} \\
	\label{K.norm} 
 \sup_{ t \in \R} 	\norm{H^p \  \partial_{t}^\ell  K_{m,j}(t) \ H^{-p}} \leq \frac{\ell! \, c^\ell}{A(1+\ell)^2} \frac{2^5}{\wt \Delta_{j-1}} \, \frac{1}{2^{\mu(j-1)(m+1)\delta}} \, \frac{1}{(1+m)^2}\\
	\label{D.norm}
 \sup_{ t \in \R} 	\norm{H^p \ \partial_{t}^\ell  D_{m,j}(t) \ H^{-p}} \leq \frac{\ell! \, c^\ell}{A(1+\ell)^2} \, \frac{2^8 }{2^{\mu(j-1)(m+1)\delta} \, \wt \Delta_{j-1}} \, \frac{1}{(1+m)^2}
	\end{align}
\end{lemma}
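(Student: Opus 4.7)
The proof will mirror the smooth-case lemma proved earlier in the paper, replacing each smooth bound with its analytic counterpart and invoking, at every step, the analytic product rule (Lemma \ref{lem:an1}), the analytic resolvent/projector expansion (Lemma \ref{lem:an2}), and the analytic commutator-resolvent estimate (Lemma \ref{lem:an3}). The combinatorial constraint \eqref{sum.in} is what makes the bookkeeping close: each Leibniz-type sum $\sum_{\ell_1+\cdots+\ell_k=\ell}\prod(1+\ell_i)^{-2}$ generated along the way will be absorbed by $A^{k-1}(1+\ell)^{-2}$, so the factors $A$ in the denominators of the target estimates \eqref{L.norm}--\eqref{D.norm} are preserved.

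For $L_{m,j}$, the starting point is the contour-integral representation \eqref{L}. The plan is to apply Lemma \ref{lem:an3} with $B=B_m$, using the inductive bound \eqref{Bm11} with effective constant $\ta+2\td$ (cf.\ \eqref{est.Btot}), combined with the resolvent estimate of Lemma \ref{lem:est:res0} and the length $\wt\delta_j\lesssim \wt\Delta_{j-1}$ of the contour $\Gamma_j$, to produce \eqref{L.norm} with the constant $2^5\td$. As a byproduct of \eqref{L.norm} at $\ell=0$ together with \eqref{cond}, the operator $1-L_{m,j}(t)$ is invertible by Neumann series with $\|(1-L_{m,j})^{-1}\|\leq 2$ uniformly in $t$ and $j$; its higher derivatives are then controlled by iterating $\partial_t(1-L)^{-1}=(1-L)^{-1}(\partial_tL)(1-L)^{-1}$ and applying Lemma \ref{lem:an1}, exactly as in \eqref{der.inv.L0} of the smooth case.

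For $K_{m,j}$, the key observation is the algebraic identity $\Pi_{m,j}(t) B_m(t)=B_{m,j}(t)$, which allows us to pull $\Pi_{m,j}$ inside the contour-integral for $L_{m,j}$ (since $\Pi_{m,j}$ commutes with $R_m$ on $\Gamma_j$), yielding \eqref{K}. Applying Lemma \ref{lem:an3} with $B=B_{m,j}$ and inserting the sharper inductive estimate \eqref{Bm1} introduces the extra factor $2^{-(j-1)(m+1)\mu\delta}(1+m)^{-2}$, which matches \eqref{K.norm}. For $D_{m,j}$ I plan to use the Nenciu-type identity \eqref{D}, which represents $D_{m,j}$ as a product of three factors already under control: $\Pi_{m+1,j}$ via \eqref{Pm1}, a $K$-type operator via \eqref{K.norm}, and $(1-L_{m,j})^{-1}$ via the preceding step. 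Two applications of Lemma \ref{lem:an1} will then yield \eqref{D.norm}.

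The main technical obstacle is keeping the decay $(1+m)^{-2}$ and the factor $A^{-1}(1+\ell)^{-2}$ intact across the three-factor product defining $D_{m,j}$: one must check that the derivatives of $(1-L_{m,j})^{-1}$ contribute no growth in $m$, which forces the smallness condition \eqref{cond} to scale linearly in $\tM$, and that the prefactors $2^5$, $2^8$, $\tC_H$ arising from Lemmas \ref{lem:an1}--\ref{lem:an3} and the contour-length estimates are absorbed by the denominator $2^{\tJ\mu\delta}$ under \eqref{cond}. Everything else is careful accounting, in complete analogy with the smooth case.
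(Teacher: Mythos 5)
Your proposal follows essentially the same route as the paper's proof: for $L_{m,j}$ you invoke the contour representation \eqref{L} and apply Lemma \ref{lem:an3} with $B=B_m$ and the inductively controlled sum $W=V+\sum B_i$ (estimate \eqref{est.Btot}); you establish invertibility of $1-L_{m,j}$ by Neumann series and control its derivatives by expanding the resolvent identity as in \eqref{ris.der0}; for $K_{m,j}$ you pull $\Pi_{m,j}$ through the contour integral using $\Pi_{m,j}B_m=B_{m,j}$ to reach \eqref{K} and then apply Lemma \ref{lem:an3} with the sharper $B=B_{m,j}$ input; and for $D_{m,j}$ you use the Nenciu identity \eqref{D} together with two applications of Lemma \ref{lem:an1}. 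This is exactly the structure of the paper's argument, including the role of \eqref{sum.in} and of condition \eqref{cond} in absorbing the numerical prefactors.
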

\begin{proof}
	First we prove \eqref{L.norm}. Using the definition of $L_{m,j}$ given by  \eqref{L}, we apply  Lemma \ref{lem:an3} with $P = V + B_0 + \cdots + B_m$, $Q=V + B_0 + \cdots + B_{m-1}$,  $B = B_m$, $h= \td/(1+m)^2$ and get that for $\ell +1+m \leq \tM$, $\forall 0 \leq p \leq n$
$$
\sup_{t \in [0,T]} \norm{H^p \ \partial_{t}^\ell L_{m,j}(t) \ H^{-p}}  \leq \frac{\ell! \, c^\ell}{A(1+\ell)^2}\, \frac{2^5 \, \td}{\wt \Delta_{j-1}} \frac{1}{(1+m)^2}  , 
$$
provided $2^4\tC_H (\ta + 2\td) \leq 2^{\tJ \mu \delta}$.
	For later use consider the operator $(1-L_{m,j}(t))^{-1}$.  Provided 
	$
	\frac{2^5}{A\, \wt\Delta_{j-1}} \, \td \leq \frac{1}{2} \ , 
	$
	the operator
	$(1-L_{m,j}(t))$ is invertible by Neumann series  and
	$$
	\sup_{t \in [0,T]} \norm{(1-L_{m,j}(t))^{-1}} \leq 2 \ . 
	$$
	To study its derivatives we can proceed as in \eqref{ris.der0}, \eqref{ris.est00} to get for $\ell + m+1 \leq \tM$, $\ell \geq 1$
	\begin{align}
	\label{der.inv.L}
	\sup_{t \in [0,T]}\norm{H^p \ \partial_{t}^\ell (1-L_{m,j}(t))^{-1} \ H^{-p}} \leq \frac{\ell! \, c^\ell}{A(1+\ell)^2} 2\, \sum_{k=1}^{\ell}  \left(\frac{2^6 \, \td}{\wt \Delta_{j-1}}  \right)^{k}  \leq \frac{\ell! \, c^\ell}{A(1+\ell)^2} \,  \frac{\td \, 2^8}{\wt \Delta_{j-1}}
	\end{align}
	provided 
	$2^7 \, \td \leq \wt \Delta_{j-1}$.
	Thus for $\ell+m+1 \leq \tM$
	\begin{align}
	\label{der.inv.L1}
	\sup_{t \in [0,T]}\norm{H^p \ \partial_{t}^\ell (1-L_{m,j}(t))^{-1} \ H^{-p}} \leq  \frac{\ell! \, c^\ell}{(1+\ell)^2} \, 2 \,\left(\frac{ 2^7 \, \, \td}{A \, \wt \Delta_{j-1}}\right)^{\min(\ell,1)} \ . 
	\end{align}
	Estimate  \eqref{K.norm} follows immediately from the identity \eqref{K}
and   Lemma \ref{lem:an3} with $B=B_{m,j}$, $h=2^{-\mu(j-1)(m+1)\delta} \  (1+m)^{-2}$.\\
	Finally we prove \eqref{D.norm}. Consider formula \eqref{D}. 
	Then Lemma \eqref{lem:an1} applied twice  and  \eqref{pr.norm}, \eqref{K.norm}, \eqref{der.inv.L1} give $\forall \ell + 1 + m \leq \tM$
	\begin{equation}
	\sup_{t \in [0,T]}\norm{H^p \ \partial_{t}^\ell D_{m,j}(t)  \ H^{-p}}\leq  \frac{\ell! \, c^\ell}{A(1+\ell)^2} \, \frac{2^8 }{2^{\mu(j-1)(m+1)\delta} \, \wt \Delta_{j-1}} \ \frac{1}{(1+m)^2}  \ , \qquad \forall 0 \leq p \leq n  \ , 
	\end{equation}
	where we used that \eqref{cond} implies 
	$	\max \left[\frac{2^8 \, \td}{A \, \wt \Delta_{j-1}}  \ ,\frac{2^{4} \, \tC_H \, }{A \wt \Delta_{j-1}^{\delta}} \left( \ta +  \td\right) \right] \leq 1 \ . 
	$
\end{proof}
\vspace{1em}
We can now conclude the proof by calculating the norm of $B_{m+1,j}$. 
	We use again \eqref{B}. 
	Consider first the term $ D_{m,j} \partial_{(t,L)} \Pi_{m+1,j}$. We can compute its first $\ell$ derivatives provided 
	$	\ell + 1 + m \leq \tM \ . $
We apply once again  Lemma \ref{lem:an1} with $P=  D_{m,j}$,  $k=0$ and  $Q= \partial_{(t,L)} \Pi_{m+1,j}$, $i=0$, and use estimates \eqref{pr.norma5}, \eqref{D.norm} to get 
\begin{equation}
\label{B.11}
\sup_{t \in [0,T]}\norm{H^p \ \partial_{t}^\ell( D_{m,j}(t) \, \partial_{(t,L)} \Pi_{m+1,j}(t)) \ H^{-p} }
\leq
 \frac{\ell! \, c^{\ell}}{A(1+\ell)^2} \, \frac{2^{10} }{2^{(j-1)\mu(m+2)\delta} \, \wt \Delta_{j-1} \, (1+m)^2 } \ . 
\end{equation}
The other term to estimate is $\Pi_{m+1,j} \partial_{(t,H-H_{m+1})} K_{m,j}$. 
	To estimate $\partial_{(t,H-H_{m+1})} K_{m,j}$ write
	$
	\partial_{(t,H-H_{m+1})} K_{m,j} = \partial_{t} K_{m,j}  - \sum_{0 \leq i \leq m} [B_i, K_{m,j}] \ . 
	$
	By \eqref{K.norm} 
	\begin{equation}
	\label{B.12}
	\sup_{t \in [0,T]}\norm{H^p \ \partial_{t}^\ell \partial_{t} K_{m,j}(t) H^{-p}}\leq \frac{(1+\ell)! \, c^{\ell+1}}{A(1+\ell)^2} \frac{1}{2^{\mu(j-1)(m+1)\delta}} \frac{2^5}{\wt \Delta_{j-1}} \frac{1}{(1+m)^2}  \ . 
	\end{equation}
	Consider now  $ \sum_{i=0}^m[B_i, K_{m,j}]$. For $\ell  +1+ m \leq \tM$ we have that    Lemma \ref{lem:an1}, estimates \eqref{Bm11} and  \eqref{K.norm} imply  that
	 \begin{equation}
	 \label{B.13}
	 \sup_{t \in [0,T]}\norm{H^p \ \partial_{t}^\ell \sum_{i=0}^m[B_i(t), K_{m,j}(t)] \ H^{-p}} \leq  \frac{\ell! \, c^\ell}{A(1+\ell)^2} \frac{1}{2^{\mu(j-1)(m+1)\delta}} \frac{2^7 \, \td}{\wt \Delta_{j-1}} \, \frac{1}{(1+m)^2}  \ . 
	 \end{equation}    
Then \eqref{B.12} and \eqref{B.13} imply that, for $\ell+1+m\leq \tM$,  
\begin{equation}
	 \label{B.14}
	 \sup_{t \in [0,T]}\norm{H^p \ \partial_{t}^\ell \Big( \Pi_{m+1,j}(t) \partial_{(t,H-H_{m+1})} K_{m,j}(t)\Big) \ H^{-p}} \leq  \frac{\ell! \, c^\ell}{A(1+\ell)^2} \frac{1}{2^{\mu(j-1)(m+1)\delta}} \frac{2^5 }{\wt \Delta_{j-1}} \, \frac{1}{(1+m)^2} ( \tM c + 4\td) \ . 
	 \end{equation}    
	Then \eqref{B.11} and  \eqref{B.14}  give
			\begin{align*}
\sup_{t \in [0,T]}\norm{H^p \partial_{t}^\ell B_{m+1,j}(t) \ H^{-p}} & \leq\frac{\ell! \, c^{\ell}}{A(1+\ell)^2} \frac{1}{2^{\mu(j-1)(m+1)\delta}} \frac{2^5}{\wt\Delta_{j-1}} \frac{1}{(1+m)^2}
\left[
 \frac{2^{5}}{2^{(j-1)\mu\delta}}+\tM c + 4\td \right] \\
& \leq \frac{\ell! \, c^{\ell}}{A(1+\ell)^2} \frac{1}{2^{\mu(j-1)(m+2)\delta}} \frac{1}{(2+m)^2}
	\end{align*}
	where we used that
	\begin{equation}
	\label{con6}
	 2^5 \frac{(m+2)^2}{(m+1)^2} [ 2^5 + \tM c + 4 \td] \leq 2^{12} \, c \,  \td \, \tM  \leq {2^{\tJ\mu\delta}}  \ .
	\end{equation}
	The inductive step is proved.

\section{Applications}
\label{app}
In this section we apply our abstract theorems to different models. We are able to recover many already known results  and to prove new estimates.

\subsection{One  degree   of freedom  Schr\"odinger  operators}
Let us consider  here equation \eqref{eq:sc0} where $L(t)$ is a time  dependent perturbation of the anaharmonic oscillator, namely
\begin{equation}
\label{apl:eq1}
L(t) = -\frac{d^2}{dx^2} + x^{2k} + p(x) + V(t, x) = H_k + V(t,x) , \qquad x \in \R 
\end{equation}
where  $k\in\N$,   $p(x)$  is a polynomial of degree less   than $2k-1$, and $V(t, x)$  is a  real valued time dependent perturbation with a  polynomial  growth  in $x$
 of degree $\leq m$  fulfilling 
\beq
\label{HVS}
 \sup_{t \in \R} \vert\partial_t^\ell\partial^j_x V(t,x)\vert \leq  C_\ell\la x\ra^{(m-j)_+} \ , \qquad  \forall  x\in\R \ .
\eeq
 Without restriction we can always assume that $H_k$ is positive and invertible.\\ 
 The following lemma  is an easy computation
 \begin{lemma}\label{nohineq}
 For  every $\mu >0$  there  exists $C_\mu>0$  such that   for   every
  $(j,k)\in\N\times\N$  such that  $\frac{j}{2k}+\frac{\ell}{2} \leq \mu$   we have
  \beq
  \Vert x^j\frac{d^\ell}{dx^\ell}u\Vert_{L^2(\R^d)} \leq C_\mu \Vert H_k^\mu u\Vert_{L^2(\R^d)} \ .
  \eeq
 \end{lemma}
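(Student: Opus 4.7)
The plan is to first establish the inequality at half-integer values $\mu\in\tfrac{1}{2}\N$ by a direct energy argument combined with an induction on $\mu$, and then pass to arbitrary real $\mu>0$ by complex interpolation via the spectral theorem. The anisotropic scaling $(x,\xi)\mapsto(\lambda^{1/k}x,\lambda\xi)$ rescales the symbol $\xi^2+x^{2k}$ by $\lambda^2$ and assigns weight $\tfrac{j}{k}+\ell$ to the monomial $x^j\partial_x^\ell$, while $H_k^\mu$ has weight $2\mu$; matching these gives precisely the hypothesis $\tfrac{j}{2k}+\tfrac{\ell}{2}\leq\mu$.

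For the base case $\mu=1/2$, I would read off the quadratic form
\[
\langle H_k u,u\rangle = \|u'\|_{L^2}^2 + \int_\R(x^{2k}+p(x))|u|^2\,dx.
\]
Since $\deg p\leq 2k-1$, Young's inequality gives $|p(x)|\leq \tfrac12 x^{2k}+C$; combined with positivity and invertibility of $H_k$ (so that $\|u\|_{L^2}^2\lesssim\langle H_k u,u\rangle$), this yields $\|u'\|_{L^2}^2+\|x^k u\|_{L^2}^2\lesssim\|H_k^{1/2}u\|_{L^2}^2$. Intermediate monomials $\|x^j u\|_{L^2}$ for $0\leq j\leq k$ follow by H\"older's inequality interpolating $L^2$ with the weighted space of weight $x^k$. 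For $\mu=1$ one expands $\|H_k u\|_{L^2}^2$ and integrates by parts the cross term between $-\partial_x^2$ and $x^{2k}$:
\[
-2\,\mathrm{Re}\int u''\,\overline{x^{2k}u}\,dx = 2\int x^{2k}|u'|^2\,dx - k(2k-1)\int x^{2k-2}|u|^2\,dx,
\]
the first summand being positive and the second absorbable into $\epsilon\|x^{2k}u\|_{L^2}^2+C_\epsilon\|u\|_{L^2}^2$ by Young. This produces $\|u''\|_{L^2}^2+\|x^{2k}u\|_{L^2}^2+\|x^k u'\|_{L^2}^2\lesssim\|H_k u\|_{L^2}^2$, covering every $(j,\ell)$ of weight $\leq 1$; the contribution of $p(x)u$ is of strictly lower weight and absorbed likewise.

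The inductive step from $\mu$ to $\mu+\tfrac12$ uses the factorization $H_k^{\mu+1/2}u=H_k^{1/2}(H_k^\mu u)$: applying the base case to $v=H_k^\mu u$ controls the weight-$\tfrac12$ monomials of $v$, and one then commutes $x^j\partial_x^\ell$ past $H_k^\mu$, the resulting commutator terms being of strictly smaller weight and closed off by the inductive hypothesis. Since $H_k$ is self-adjoint with compact resolvent, the spectral theorem defines $H_k^\mu$ for every $\mu\geq 0$, and the identification $D(H_k^\mu)=[D(H_k^n),D(H_k^{n+1})]_\theta$ for $\mu=n+\theta$ (complex interpolation) extends the estimate from the integer/half-integer lattice to all real $\mu>0$.

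The main obstacle lies in the accounting of commutators $[x^j\partial_x^\ell,H_k]$ along the iteration: one $\partial_\xi$ applied to $\xi^2$ reduces weight by $1$, whereas one $\partial_x$ applied to $x^{2k}$ reduces weight only by $1/k$, so a single commutator gains only $\min(1,1/k)$ instead of the $1$ familiar from the standard Shubin (harmonic) calculus. This forces either finitely many extra iterations to drain the error terms, or, more conceptually, an appeal to the Helffer--Robert pseudodifferential calculus associated with the anisotropic weight $\Lambda(x,\xi)=(1+\xi^2+x^{2k})^{1/2}$, in which $H_k$ is globally elliptic of order $2$ and the domains $D(H_k^\mu)$ coincide with the corresponding anisotropic Sobolev spaces; the conclusion then follows at once from global elliptic regularity.
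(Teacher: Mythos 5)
The paper gives no written proof of this lemma; it is dismissed as ``an easy computation,'' and the machinery that makes it easy is the Helffer--Robert anisotropic calculus set up in Appendix~\ref{app:pseudo}. There $H_k\in{\rm Op}^W_\hbar(\wt S^{2k}_{1,k})$ (this is stated explicitly in the proof of Theorem~\ref{thm:app1}); positivity and invertibility of $H_k$ give $H_k^{-\mu}\in{\rm Op}^W_\hbar(\wt S^{-2k\mu}_{1,k})$ by the functional calculus in that class; and the symbol $x^j\xi^\ell$ has order $j+k\ell$ against the weight $\lambda_{1,k}$, so $x^j\frac{d^\ell}{dx^\ell}\,H_k^{-\mu}$ has nonpositive order precisely when $\frac{j}{2k}+\frac{\ell}{2}\leq\mu$, and Calder\'on--Vaillancourt (Theorem~\ref{thm:cv}) yields $L^2$-boundedness. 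Your closing paragraph identifies exactly this route; up to a harmless renormalisation (you take $H_k$ of order $2$ against $(1+\xi^2+x^{2k})^{1/2}$, the paper takes it of order $2k$ against $\lambda_{1,k}$, so the order-count is identical), this is the intended argument and it is complete.

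The elementary energy-plus-interpolation route you sketch first, however, has a genuine gap at the interpolation step and cannot be fixed as stated. The sharp exponent for a fixed pair $(j,\ell)$ is $\mu_0=\frac{j}{2k}+\frac{\ell}{2}$, a multiple of $\frac{1}{2k}$ rather than of $\frac12$: for $k=2$, $j=1$, $\ell=0$ one must prove $\Vert x\,u\Vert_{L^2}\leq C\,\Vert H_2^{1/4}u\Vert_{L^2}$. Since $H_k\geq c>0$, the half-integer estimate $\Vert x\,u\Vert_{L^2}\leq C'\,\Vert H_2^{1/2}u\Vert_{L^2}$ is strictly \emph{weaker} — it controls the left-hand side by a larger quantity — so it does not imply the sharp one. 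The identity $D(H_k^{n+\theta})=\big[D(H_k^n),D(H_k^{n+1})\big]_\theta$ does not rescue the argument either, because $x^j\frac{d^\ell}{dx^\ell}$ does not map $D(H_k^n)$ into $L^2$ when $n<\mu_0$, so the interpolation couple you would need fails to have $L^2$ as a common target; one would have to interpolate into a scale of negative-order anisotropic Sobolev spaces, which is precisely what the symbol calculus is built to organise. In short, the two halves of your proposal are not interchangeable: the elementary half is incomplete on its own, while the pseudodifferential half stands by itself and is the correct proof.
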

Under the condition that  $m\leq k+1$  we   get  that the  commutator  $[V(t,x), H_k]$  is 
 $H_k$-bounded. 
 By  Theorem \ref{thm:flow2} $L(t)$  generates  a propagator
 $\cU(t,s)$  in the  Hilbert  spaces scale  $\cH_{k}^r:=D((H_k)^{r/2})$. Furthermore  if
   $m< k+1$ then $[V(t,x), H_k]$  is 
 $H_k^{1-\theta}$-bounded with $\theta =  \frac{k-m+1}{2k}$. Thus   Theorem \ref{thm:norm}  can be applied   
     and we get the following polynomial bound for the growth
    of the $\cH^r_k$-norm $\forall r>0$, 
  \beq
  \label{grk}
    \Vert\cU(t,s)\psi_s\Vert_{r} \leq C 
    \la t-s\ra^{\frac{kr}{k-m+1}}\Vert\psi_s\Vert_{r} \ .
  \eeq
 For $k=1$  and $m=0$ we recover a known  bound  for time dependent perturbations of  the harmonic oscillator:
 \beq
 \label{harpert}
    \Vert\cU(t,s)\psi_s\Vert_{r} \leq C
    \la t-s\ra^{\frac{r}{2}}\Vert\psi_s\Vert_{r} \ .
  \eeq
As mentioned in Remark \ref{rem:del}, Delort \cite{del1}  suggests that estimate (\ref{harpert})  may   be  sharp  for $V(t,x)$ satisfying \eqref{HVS} with $m=0$.
 Actually the example  constructed by him  is a zero order pseudo-differential operator. Construct a  local  potential to saturate the estimate \eqref{harpert}
  is still an open problem.\\
  
When  $k>1$ (namely the anaharmonic case)  we can improve the bound \eqref{grk} by applying Theorem \ref{thm:grest} and Theorem \ref{thm:grest2}. Indeed it is well known (see e.g. \cite{rob82}) that in this case  $H_k$ satisfies (Hgap). Indeed the resolvent of $H_k$ is a compact operator in $L^2(\R)$, hence its spectrum is discrete, $\sigma(H_k) =  \{\lambda_j\}_{j\geq 1}$ and furthermore it is known to be simple. To verify the gap condition we use the following lemma:
  \begin{lemma}
  \label{muk}
  There  exists $c_k>0$ such that 
  $$
  \lambda_{j+1}-\lambda_j \geq c_k \,  j ^{\mu_k} \ , \ \ \   \forall  j\geq 1 \ , 
  $$
  where $\mu_k= \frac{k-1}{k+1}$.
  \end{lemma}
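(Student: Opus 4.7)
The plan is to obtain the gap estimate as a direct consequence of the precise semiclassical (Bohr--Sommerfeld) asymptotics of the eigenvalues $\{\lambda_j\}_{j\geq 1}$ of $H_k = -d^2/dx^2 + x^{2k} + p(x)$, exactly as developed in the references cited by the authors (e.g.\ \cite{rob82} and the surrounding literature on one-dimensional Schr\"odinger operators with polynomial potentials). Since $p$ has degree at most $2k-1$, the full symbol $\xi^2 + x^{2k} + p(x)$ is a non-degenerate elliptic polynomial of principal part $\xi^2 + x^{2k}$, so the standard semiclassical machinery applies.

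First I would recall the Bohr--Sommerfeld quantization condition: there exists a smooth function $F(\lambda)$ admitting a complete asymptotic expansion as $\lambda\to\infty$ such that $\lambda_j$ is characterized by $F(\lambda_j) = j + \tfrac12 + O(\lambda_j^{-\infty})$, where the principal term of $F$ is the classical action
\[
F_0(\lambda) = \frac{1}{\pi}\int_{\{x:\;x^{2k}\leq \lambda\}} \sqrt{\lambda - x^{2k}}\,dx = A_k\,\lambda^{(k+1)/(2k)},
\]
obtained by the rescaling $x = \lambda^{1/(2k)}y$, with $A_k > 0$ an explicit constant. The subprincipal terms coming from $p(x)$ and the Maslov/half-integer corrections produce a convergent (in the sense of asymptotic series) expansion
\[
F(\lambda) = A_k\lambda^{(k+1)/(2k)} + A_k^{(1)}\lambda^{(k+1)/(2k) - 1/k} + \cdots.
\]
Inverting this relation yields
\[
\lambda_j = a_k\,j^{2k/(k+1)} \Bigl(1 + O(j^{-2/(k+1)})\Bigr) \qquad \text{as } j\to\infty,
\]
with $a_k = A_k^{-2k/(k+1)}$, together with a similar expansion for $\lambda_j$ derivable term-by-term.

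The gap estimate then follows: the mean value theorem applied to the inverse of $F$ gives
\[
\lambda_{j+1} - \lambda_j = \frac{1}{F'(\tilde\lambda_j)} \qquad \text{for some } \tilde\lambda_j \in (\lambda_j,\lambda_{j+1}),
\]
and since $F'(\lambda) \sim \tfrac{k+1}{2k}\,A_k\,\lambda^{(1-k)/(2k)}$ as $\lambda\to\infty$, combined with the asymptotic $\lambda_j \sim a_k j^{2k/(k+1)}$ one obtains
\[
\lambda_{j+1} - \lambda_j = \frac{2k\,a_k}{k+1}\,j^{(k-1)/(k+1)}\bigl(1 + o(1)\bigr), \qquad j\to\infty.
\]
In particular, for $j \geq j_0$ large enough, $\lambda_{j+1} - \lambda_j \geq \tfrac{k\,a_k}{k+1}\,j^{\mu_k}$ with $\mu_k = (k-1)/(k+1)$.

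The remaining finite set $1\leq j < j_0$ is dispatched by the classical Sturm--Liouville theory: $H_k$ is a self-adjoint one-dimensional Schr\"odinger operator with discrete and \emph{simple} spectrum, so $\lambda_{j+1} - \lambda_j > 0$ for every $j$; taking
\[
c_k := \min\Bigl(\min_{1\leq j<j_0}\frac{\lambda_{j+1}-\lambda_j}{j^{\mu_k}},\ \frac{k\,a_k}{k+1}\Bigr) > 0
\]
completes the proof. The main (and only non-routine) obstacle is the sharp Bohr--Sommerfeld expansion in the presence of the subprincipal polynomial $p(x)$ of degree up to $2k-1$; but this is classical and is exactly the content of the one-dimensional semiclassical results of \cite{rob82}, so the work consists in citing it rather than redoing it.
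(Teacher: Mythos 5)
Your proposal is correct and follows essentially the same route as the paper: both invoke the Bohr--Sommerfeld asymptotics for $\lambda_j$ from \cite{rob82} with leading term $\lambda_j^{(k+1)/(2k)}\sim c_0 j$, and deduce the gap lower bound from there (you via the mean value theorem, the paper by simply asserting it follows easily). The only addition you make is to note explicitly that simplicity of the one-dimensional spectrum covers the finitely many small $j$, which the paper leaves implicit.
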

  \begin{proof}
  It is known   that  the eigenvalues  $\{\lambda_j\}_{j \geq 1}$ of $H_k$ are given at  all order in $j$ by a Bohr-Sommerfeld rule  \cite{rob82}: one has that 
  $$
  \lambda_j^{\frac{k+1}{2k}} = b_k\left(j+\frac{1}{2}\right) + O(1)
  $$
 where $b_k$  is a smooth function   such that $b_k(x) =  c_0 x  + o( x)$.
Lemma \ref{muk} follows easily.
 \end{proof}
 
\noindent Lemma \ref{muk} shows that $H_k$ satisfies (Hgap) defining $\forall j \geq 1$  the clusters $\sigma_j := \{ \lambda_j \}$ and $\mu_k= \frac{k-1}{k+1}$.  \\
Consider now the perturbation $V(t,x)$. 
The critical index  to apply Theorem \ref{thm:grest} is here $ \frac{\mu_k}{\mu_k+1} = \frac{k-1}{2k}$. One verifies easily that  $V(t,x)$ is $H_k^\frac{m}{2k}$-bounded. Hence provided $m < k-1$, we have that $\nu := \frac{m}{2k}$ fulfills $\nu < \frac{\mu_k}{\mu_k+1}$   (such  condition appears already in a work by Howland \cite{how}  in order to study the Floquet spectrum when $V(t,x)$ is a periodic in time perturbation).
\begin{theorem}[smooth case]
\label{thm:app1} Fix an integer $k >1$ and let  $m < k-1$.
Assume that $V$  satisfies  the estimate \eqref{HVS}. Then for every $r >0$, for every $\varepsilon >0$, there exists a positive $C_{r,\epsilon}$ s.t. 
$$
\Vert\cU(t,s)\Vert_{\cL(\cH^r)} \leq C_{r,\varepsilon}\la t-s\ra ^\varepsilon \ . 
$$
\end{theorem}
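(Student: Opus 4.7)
The plan is to verify that $L(t) = H_k + V(t,x)$ satisfies all the hypotheses of Theorem \ref{thm:grest}, namely (H0), (H1), (H2), (Hgap), and (Vs$)_n$ for every $n$, with the scale $\cH^r = \cH_k^r := D(H_k^{r/2})$, and then to invoke that theorem. The polynomial-growth character of both the unperturbed symbol $x^{2k}$ and the perturbation $V(t,x)$ allows us to reduce every abstract condition to a pseudodifferential/commutator estimate governed by Lemma \ref{nohineq}.

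First I would check (H0)--(H2). By Lemma \ref{nohineq}, any operator given by multiplication by a polynomial of degree $\leq m$ together with differentiations of total order $\leq 2\tm$ is $H_k^{\mu}$-bounded for $\mu \geq \frac{m}{2k} + \tm$; taking $\tm = 1$ and using $m < k-1 < 2k$ readily gives (H0), (H1), while the commutator $[V(t), H_k]$ is a differential operator whose symbol is $O(\langle x\rangle^{m+2k-1})$, hence $H_k$-bounded and in fact $H_k^{1-\theta}$-bounded. This is enough to invoke Theorem \ref{thm:flow2}, so $\cU(t,s)$ is globally defined and unitary on $\cH_k^0$.

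Next I would verify (Hgap) with the singleton clusters $\sigma_j := \{\lambda_j\}$. The diameters $\delta_j$ vanish, so the only condition is on $\Delta_j = \lambda_{j+1}-\lambda_j$. The lower bound $\Delta_j \geq c_k j^{\mu_k}$ with $\mu_k = \frac{k-1}{k+1}$ is exactly Lemma \ref{muk}; the matching upper bound $\Delta_j \leq C\, j^{\mu_k}$ is read off the same Bohr--Sommerfeld expansion $\lambda_j^{(k+1)/(2k)} = b_k(j+\tfrac12) + O(1)$ by differentiation in $j$. So (Hgap) holds with $\mu = \mu_k$, and the critical index becomes $\frac{\mu_k}{\mu_k+1} = \frac{k-1}{2k}$.

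The core of the argument is the verification of (Vs$)_n$ with $\nu := \frac{m}{2k}$, which satisfies $\nu < \frac{k-1}{2k}$ precisely when $m < k-1$. For $p = 0$, Lemma \ref{nohineq} yields $\|V(t) H_k^{-\nu}\| \leq C \|\langle x\rangle^m H_k^{-\nu}\| < \infty$, and similarly for every time derivative $\partial_t^\ell V(t)$ using \eqref{HVS}. For $p \geq 1$, the point is to show $\|H_k^p\, \partial_t^\ell V(t)\, H_k^{-p-\nu}\| < \infty$. I would conjugate: writing $H_k^p\, \partial_t^\ell V \, H_k^{-p-\nu}$ as $(\partial_t^\ell V)\, H_k^{-\nu} + H_k^{p-1}[H_k, \partial_t^\ell V] H_k^{-p-\nu}$ (after $p$ commutator expansions) one obtains a finite sum of terms of the form $a_{j,\alpha}(t,x)\, \partial_x^{\alpha}\, H_k^{-p-\nu+(j+\alpha)/(2k)+\ldots}$, each of which is $L^2$-bounded by Lemma \ref{nohineq} because the combined differential order of each summand stays within the scale allowed by $H_k^{p+\nu}$. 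This bookkeeping, while tedious, is standard Weyl pseudodifferential calculus for symbols in $S(\langle x\rangle^{2k} + \langle\xi\rangle^2, g)$ with $g = dx^2/\langle x\rangle^{2k} + d\xi^2/\langle\xi\rangle^2$; once carried out it gives (Vs$)_n$ for every $n$.

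With (H0), (H1), (H2), (Hgap) and (Vs$)_n$ in hand, Theorem \ref{thm:grest} applies and yields for every integer $k' \leq 2n$ and every $\varepsilon>0$ the bound $\|\cU(t,s)\|_{\cL(\cH_k^{k'})} \leq C_{k',\varepsilon}\,\langle t-s\rangle^{\varepsilon}$. Interpolating with the trivial bound on $\cH_k^0$ extends the estimate to every real $r>0$, giving the claimed estimate. The main obstacle in this plan is the symbolic bookkeeping in step three: even though each individual commutator is harmless, one must check uniformly in $p$ that the combination of polynomial growth in $x$ and differentiation respects the scale generated by $H_k$, which is exactly the content of Lemma \ref{nohineq} applied iteratively.
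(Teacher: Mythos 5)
Your proof is correct and follows essentially the same route as the paper: reduce to checking (H0)--(H2), (Hgap) and (Vs$)_n$, and then invoke Theorem \ref{thm:grest}, with the crucial verification of (Vs$)_n$ handled by pseudodifferential calculus adapted to the weight $\langle x\rangle^{2k}+\langle\xi\rangle^2$ (the paper phrases this in terms of the classes $\wt S^{\nu}_{1,k}$ of Definition \ref{def:symbol} together with Theorems \ref{symb.cal} and \ref{thm:cv}, but that is the same tool). You are in fact a bit more explicit than the paper on one point it leaves implicit, namely the matching upper bound $\Delta_j\le C\,j^{\mu_k}$ needed in (Hgap), which you correctly extract from the Bohr--Sommerfeld expansion.
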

\begin{proof}
Having fixed $r>0$, choose an integer $n$ s.t. $r \leq 2n$. 
To apply Theorem \ref{thm:grest} we have to check that $V$ fulfills assumption (Vs$)_n$. Remark that $H_k$ is a pseudodifferential operator whose symbol is in the class $\wt S^{2k}_{1,k}$ of Definition \ref{def:symbol}, while $V(t)$ belongs to $\wt S^{m}_{1,k}$.  But under assumption \eqref{HVS}   
 $H_k^p\partial_t^\ell V(t)H_k^{-p-\nu}$  is a pseudo-differential operator  of order 0 (see the symbolic calculus of Theorem \ref{symb.cal}). So applying the Calderon-Vaillancourt theorem (Theorem \ref{thm:cv}) we get that (Vs$)_n$ is satisfied (see Appendix \ref{app:pseudo} for some well known properties of pseudodifferential operators).
\end{proof}
In case $V(t,x)$ is analytic in time, we obtain better estimates:
\begin{theorem}[analytic case]
\label{thm:app2}
Fix an integer $k >1$ and let  $m < k-1$.
Assume that there  exist $C_0, C_1>1$ such that $\forall \ell, j \geq 0$
we have 
\beq
\label{HVS1}
\sup_{t \in \R} \Vert\la x\ra^{-(m-j)_+}\partial_t^\ell\partial^j_x V(t,x)
\Vert_{L^\infty_x(\R)} \leq C_{1} \,  C_0^\ell \, \ell!  \ . 
\eeq
Then  we have that $\forall r >0$
$$
\Vert\cU(t,s)\Vert_{\cL(\cH^r)} \leq C_r
\left(\log \la t-s\ra \right)^{\frac{rk}{k-1-m}} \ . 
$$
\end{theorem}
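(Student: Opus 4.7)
The strategy is to verify the hypotheses of Theorem \ref{thm:grest2} and then read off the exponent from its conclusion. The spectral part of the setup is identical to that of Theorem \ref{thm:app1}: Lemma \ref{muk} shows that $H_k$ satisfies (Hgap) with the one-point clusters $\sigma_j = \{\lambda_j\}$ and exponent $\mu_k = \frac{k-1}{k+1}$, and Lemma \ref{nohineq} together with the polynomial growth of $V(t,\cdot)$ shows that $V(t)$ is $H_k^\nu$-bounded with $\nu = \frac{m}{2k}$. The hypothesis $m < k-1$ is exactly what gives $\nu < \frac{\mu_k}{\mu_k+1} = \frac{k-1}{2k}$, so the perturbative condition of Theorem \ref{thm:grest2} holds.

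The novelty lies in verifying the analytic estimate (Va$)_n$ for arbitrary $n \in \N$. The plan is to interpret $H_k^{p}\,\partial_t^\ell V(t)\,H_k^{-p-\nu}$ as a pseudodifferential operator of order zero in the symbol class $\wt S^0_{1,k}$ and to show that its Calderón--Vaillancourt seminorms satisfy a bound of the form $c_{0,n} c_{1,n}^{\ell} \ell!$. Because $H_k$ has symbol in $\wt S^{2k}_{1,k}$ and $V(t)$ has symbol in $\wt S^{m}_{1,k}$, the symbolic calculus of Theorem \ref{symb.cal} gives that $H_k^p \partial_t^\ell V(t) H_k^{-p-\nu}$ has symbol in $\wt S^{2kp + m - 2k(p+\nu)}_{1,k} = \wt S^{m - 2k\nu}_{1,k} = \wt S^{0}_{1,k}$, and each seminorm of this symbol is bounded by a finite linear combination of seminorms of the symbol of $\partial_t^\ell V(t)$. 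The assumption \eqref{HVS1} controls precisely these seminorms by $C_1 C_0^\ell \ell!$ (uniformly in $t$ and in spatial variables through the weight $\la x\ra^{(m-j)_+}$). Then Theorem \ref{thm:cv} (Calderón--Vaillancourt) yields the $\cL(\cH^0)$-bound with a constant of the same form $c_{0,n} c_{1,n}^{\ell} \ell!$, establishing (Va$)_n$.

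Once (Va$)_n$ is in hand, fix an arbitrary $r > 0$ and choose $n \in \N$ with $r \leq 2n$. Theorem \ref{thm:grest2} gives
\[
\Vert\cU(t,s)\Vert_{\cL(\cH^r)} \leq \gamma \left(\log \la t-s\ra\right)^{\frac{r}{2}\left( \frac{\mu_k}{\mu_k+1}-\nu\right)^{-1}}.
\]
A direct computation with $\mu_k = \frac{k-1}{k+1}$ and $\nu = \frac{m}{2k}$ gives
\[
\frac{\mu_k}{\mu_k+1} - \nu = \frac{k-1}{2k} - \frac{m}{2k} = \frac{k-1-m}{2k},
\]
so the exponent equals $\frac{rk}{k-1-m}$, which is the asserted bound.

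The routine but technically delicate part is the tracking of constants in the pseudodifferential calculus: one has to verify that the composition formula in Theorem \ref{symb.cal}, applied to the symbols of $H_k^p$, $\partial_t^\ell V(t)$, and $H_k^{-p-\nu}$, produces seminorm estimates whose $\ell$-dependence is exactly the factorial growth $\ell!$ coming from \eqref{HVS1}, with no additional combinatorial blow-up in $\ell$. This is the main obstacle and is handled precisely as in the corresponding steps in \cite{wang08}, where a very similar analytic transfer is carried out for one-dimensional Schrödinger operators; the fact that only a finite amount of spatial regularity and no spatial analyticity is required (as emphasized after Theorem \ref{thm:grest2}) is what makes the argument work under the hypothesis \eqref{HVS1}.
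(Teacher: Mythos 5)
Your proposal is correct and follows essentially the same route as the paper: identify $\mu_k=\tfrac{k-1}{k+1}$ and $\nu=\tfrac{m}{2k}$, verify (Va$)_n$ by interpreting $H_k^p\,\partial_t^\ell V(t)\,H_k^{-p-\nu}$ as an order-zero pseudodifferential operator and applying Calder\'on--Vaillancourt (noting that only finitely many spatial seminorms enter, each controlled by $C_1 C_0^\ell\,\ell!$ from \eqref{HVS1}), and then read off the exponent from Theorem~\ref{thm:grest2}, which simplifies correctly to $\tfrac{rk}{k-1-m}$. The only minor inaccuracy is the appeal to \cite{wang08} as a precedent for the constant-tracking: that reference assumes analyticity in both $x$ and $t$, whereas the relevant point here (which you also correctly state) is that \emph{no} spatial analyticity is needed because the pseudodifferential calculus only consumes a fixed finite number of $x$-derivatives for each fixed $n$, so the factorial growth in $\ell$ propagates through Theorem~\ref{thm:cv} unchanged.
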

\begin{proof}
We apply Theorem \ref{thm:grest2}. Having fixed $r >0$, we choose an arbitraty integer $n$ with $r \leq 2n$.  We   check assumption (Va$)_n$   using again the Calderon-Vaillancourt Theorem.
\end{proof}

\vspace{1em}
\noindent{\em Comparison with previous results:} To the best of our knowledge Theorem \ref{thm:app1} and Theorem \ref{thm:app2} are new. \\
In same cases better  estimates on the $\cH^r_k$-norm of the flow are known. For example if $V(t,x)$ is a quasi-periodic function of time and small in size, one might try to prove reducibility, which in turn implies that  the Sobolev norms are uniformly bounded  in time. We mention just the latest results: Bambusi \cite{bam16,bam162} proved reducibility for $L(t)$ on $\R$ in several cases, including   $k >1$  and  $V(t,x)$ fulfilling \eqref{HVS} with $m < k+1$ (in some cases even for $m\leq 2k$).  Gr\'ebert and Paturel \cite{grebert16} proved reducibility for $L(t)$ on $\R^d$, $d \geq 1$, with $k=1$ and  $V(t,x)$ a small bounded quasi-periodic perturbation.

\subsection{Operators on compact manifolds}
 Let $(M,g)$ be a Riemaniann  compact  manifold   with  metric $g$ and let $\triangle_g$ be  the Laplace-Beltrami operator. Denote by $S_{cl}^m(M)$  the space  of classical symbols of order $m \in\R$ on the cotangent $T^*(M)$ of $M$ (see H\"ormander \cite{ho} for more details). \\
  Let $H=1-\triangle_g$ and  $V(t)\equiv V(t,x,D_x)$   be  an Hermitian    classical pseudodifferential operator of order $m\leq 1$. 
We want to consider the Schr\"odinger equation \eqref{eq:sc0} with $L(t)$ defined by
$$
L(t) = -\triangle_g + 1 + V(t) = H + V(t) \ ,
$$  
and  study its flow  in the usual  scale of Sobolev spaces $H^k(M)\equiv D(H^{k/2})$.
  
  By semiclassical calculus one verifies that $[L(t),H] H^{-1}$ is a pseudodifferential operator of order $0$, hence the assumptions    of Theorem \ref{thm:flow2}
   are satisfied and   $L(t)$  has  a well defined propagator  $\cU(t,s)$  in $H^k(M)$   and it  is unitary in $L^2(M)$. \\
   
   Moreover   one has that $[L(t), H] H^{-\tau}$, $\tau = \frac{m+1}{2}$ is a pseudodifferential operator of order 0. Provided $m <1$, one has $\tau <1$, hence by applying  Theorem \ref{thm:norm} we get for the flow $\cU(t,s)$  the following uniform  estimate  in the  space $H^k(M)$:
   \beq
   \label{est:apl1}
   \Vert\cU(t,s)\Vert_{\cL(H^k(M))} \leq C_k\la t-s\ra^{\frac{k}{1-m}}.
   \eeq

Better estimates can be obtained if  the  spectrum of $\triangle_g$  satisfies a gap condition.  A typical   example is the Laplace-Beltrami operator  on  Zoll manifolds. 
We recall that Zoll manifolds are manifolds where all geodesics are closed and have the same period, for examples  spheres in any dimension.
It is a classical result due to Colin de Verdi\`ere \cite{cdv} that the  spectrum  of $\sqrt \triangle_g$ is concentrated in $\bigcup_{j\geq 1}[j+\sigma -\frac{C}{j}, j+\sigma +\frac{C}{j}]$, 
where $\sigma\in\Z/4$  and $C>0$.   Defining  $\forall j \geq 1$ the cluster $\sigma_j :=[(j+\sigma -\frac{C}{j})^2, (j+\sigma +\frac{C}{j})^2]$, one sees immediately that  the gap condition  is satisfied with $\mu=1$.
Hence $H$ fulfills (Hgap). The critical regularity for $V$ is then $\frac{\mu}{\mu+1}=\frac{1}{2}$.

\begin{theorem}
\label{thm:apl3}
Assume that   $\forall t\in\R$, $V(t) $ is  an Hermitian pseudodifferential operator   on $M$ of order $m< 1$. Assume that   in local charts its  symbol
 $v(t,x,\xi)$  fulfills the following condition: there exists $C_1 >0$ s.t. $\forall \ell \geq 0$,  for every multi-indices $\alpha, \beta$ 
there   exists $C_{\alpha\beta}>0$ such that 
\begin{equation}
\label{symb.cond} 
\Vert\la\xi\ra^{-m+\vert\beta\vert} \partial_x^\alpha\partial_\xi^\beta\partial_t^\ell v(t, x,\xi)\Vert_{L^\infty(\R_t \times M\times\R^d)}\leq C_{\alpha\beta} \, C_1^\ell \, \ell!.
\end{equation}
Then for any $r >0$ the propagator $\cU(t,s)$ for $H+V(t)$ satisfies
\begin{equation}
\label{apl3.est}
\Vert\cU(t,s)\Vert_{\cL(\cH^r)} \leq C_n\left(\log\la t-s \ra \right))^{\frac{r}{1-m}}
\end{equation}
\end{theorem}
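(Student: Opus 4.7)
The plan is to verify the hypotheses of Theorem \ref{thm:grest2} with $\mu = 1$ (from the Colin de Verdi\`ere clustering for Zoll manifolds recalled just before the statement) and $\nu := m/2$, which satisfies $0 \leq \nu < 1/2 = \mu/(\mu+1)$ by the hypothesis $m < 1$ (in the case $m \leq 0$ one picks $\nu = 0$). First, the structural hypotheses (H0)--(H2) follow at once from the pseudodifferential calculus on $M$: since $V(t)$ is a classical pseudodifferential operator of order $m < 1$ with symbol bounds uniform in $t$ (from \eqref{symb.cond} at $\ell = 0$), the operator $L(t) = H + V(t)$ maps $H^{k+2}(M)$ continuously into $H^k(M)$, is symmetric on $L^2(M)$, and the commutator $[L(t), H] = [V(t), H]$ is pseudodifferential of order $m+1$, so $[L(t), H] H^{-1}$ has order $m - 1 < 0$ and is bounded on every Sobolev space by the Calderon-Vaillancourt Theorem \ref{thm:cv}.

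The heart of the argument will be the verification of (Va$)_n$ for arbitrary $n \in \N$. The plan is to consider $W_{p,\ell}(t) := H^p \, \partial_t^\ell V(t) \, H^{-p-\nu}$ for $0 \leq p \leq n$ and $\ell \geq 0$, and to observe via Theorem \ref{symb.cal} that $W_{p,\ell}(t)$ is a pseudodifferential operator of order $2p + m - 2(p+\nu) = m - 2\nu \leq 0$; this is precisely what the choice $\nu \geq m/2$ buys. Because $H$ is time-independent, all time derivatives act on $v(t,x,\xi)$ alone, and hypothesis \eqref{symb.cond} provides the analytic-in-$\ell$ bound
\[
\|\la\xi\ra^{-m+|\beta|} \, \partial_x^\alpha \partial_\xi^\beta \partial_t^\ell v\|_{L^\infty} \leq C_{\alpha\beta} \, C_1^\ell \, \ell!
\]
for all multi-indices $\alpha, \beta$.

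Applying the Calderon-Vaillancourt Theorem \ref{thm:cv} to $W_{p,\ell}(t)$ --- which only demands control of finitely many seminorms of its symbol --- will then yield
\[
\sup_{t \in \R} \|H^p \, \partial_t^\ell V(t) \, H^{-p-\nu}\|_{\cL(L^2(M))} \leq c_{0,n} \, c_{1,n}^\ell \, \ell!, \qquad 0 \leq p \leq n,
\]
which is precisely \eqref{Van}. Invoking Theorem \ref{thm:grest2} will then produce $\|\cU(t,s)\|_{\cL(\cH^r)} \leq \gamma (\log \la t-s \ra)^{(r/2)(1/2 - \nu)^{-1}}$; with $\nu = m/2$ the exponent simplifies to exactly $r/(1-m)$, giving \eqref{apl3.est}. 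The main technical obstacle will be carrying the analytic-in-$\ell$ factor $C_1^\ell \, \ell!$ through the symbolic composition defining $W_{p,\ell}$, but this is tractable precisely because Calderon-Vaillancourt sees only finitely many seminorms of the composed symbol, so the composition asymptotic expansion can be truncated at a fixed order depending only on $n$ and $\dim M$, and the analytic bound survives intact.
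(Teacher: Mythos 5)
Your proposal is correct and follows essentially the same route as the paper: pick $\nu = m/2$, note this lies in $[0, \tfrac{1}{2})$ so that $V(t)H^{-\nu}$ is a zeroth-order pseudodifferential operator, verify (Va$)_n$ by carrying the analytic-in-$\ell$ bound \eqref{symb.cond} through symbolic composition using that Calderon--Vaillancourt only sees finitely many seminorms (with the number depending only on $n$ and $\dim M$), and then plug $\mu = 1$, $\nu = m/2$ into the exponent of Theorem \ref{thm:grest2} to obtain $\tfrac{r}{1-m}$. Your additional explicit handling of (H0)--(H2) via the order count $[V(t),H]H^{-1} \in \mathrm{Op}(S^{m-1}_{cl})$ and the conjugated operators $W_{p,\ell}$ is only a slightly more spelled-out version of what the paper does.
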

\begin{proof}
Having fixed $r >0$, choose an integer $n$ with $r \leq 2n$. We verify that (Va$)_n$ holds.
By semiclassical calculus, $V(t) H^{-\frac{m}{2}} \in S_{cl}^0(M)$. For $m<1$, $\nu:=  \frac{m}{2} $ is strictly smaller than  $ \frac{1}{2}$ the critical regularity. To verify that $V(t)$ satisfies (Va$)_n$ it suffices to work in local charts (since $M$ is compact one can considered just a finite number of them). Then by Calderon-Vaillancourt theorem, the norm of $\partial_t^\ell V$ as an operator  $H^{n+2\nu}(M) \to H^n(M)$ is controlled by 
$$
C \, \sum_{|\alpha| + |\beta| \leq N} \Vert\la\xi\ra^{-m+\vert\beta\vert} \partial_x^\alpha\partial_\xi^\beta\partial_t^\ell v(t, x,\xi)\Vert_{L^\infty(\R_t \times M\times\R^d)}
$$
for some universal constants  $C, N$ sufficiently large and  depending only on $n$ and the dimension of $M$. Then using \eqref{symb.cond} one verifies that (Va$)_n$ is fulfilled.
\end{proof}

\vspace{1em}
\noindent{\em Comparison with previous results:} Theorem \ref{thm:apl3} for Zoll manifolds and with unbounded perturbations  is a new result.\\
In case  $M = \T$,  Theorem \ref{thm:apl3}  was proved by Bourgain \cite{Bourgain1999} when $V(t,x)$ is an analytic periodic function in both $x$ and $t$ and extended by  Wang \cite{wang08} for $V(t,x)$  real analytic function with arbitrary dependence on $t$. Such authors obtained the  bound $\Vert\cU(t,s)\Vert_{\cL(\cH^r)} \leq C_r\left(\log\la t-s \ra \right))^{\varsigma r}$, for some constant $\varsigma >3$. Remark that our Theorem \ref{thm:apl3} improves this estimate: indeed for  bounded potentials one can take $m=0$ in \eqref{apl3.est}, leading to the optimal  estimate (see Remark \ref{rem:bou}).   \\
Later Fang and Zhang \cite{zhang}  extended the results of \cite{wang08} to the $d$-dimensional torus $\T^d$, $d > 1$ (such result is not covered by Theorem \ref{thm:apl3} since $-\triangle$ on $\T^d$ does not fulfill (Hgap)). \\
In case $V(t,x)$ is a smooth function of $x$ and $t$, the estimate $\Vert\cU(t,s)\Vert_{\cL(\cH^r)} \leq C_r \la t-s \ra^\epsilon $ was proved by Bourgain  \cite{bo} for $M = \T^d$, $d \geq 1$, and by Delort when $M$ is a Zoll manifold.

If $V(t)$  is quasi-periodic in time and small in size, some results of reducibility are known.  We cite here only the latest achievements in this direction (see their bibliography for more references).
In case $M = \T$,  Feola and Procesi \cite{feola15} proved reducibility when $V(t,x)$ quasi-periodic in time, small in size, and in some class of unbounded operator. 
In case $M = \T^d$,  $d > 1$, Eliasson and Kuksin \cite{eliasson09} proved reducibility when $V(t,x)$ is a small analytic potential.
For $M = \S^2$ (2-dimensional sphere) reducibility was proved by Corsi Haus and Procesi \cite{corsi15}.

\subsection{Time dependent electro-magnetic fields}
Consider the Schr\"odinger equation \eqref{eq:sc0} with $L(t) = H_{a,V}(t)$ the time dependent electro-magnetic field 
$$
H_{a,V}(t):= \frac{1}{2}(D+ a(t,x))^2 + V(t,x) \ , \qquad x \in \R^d \ ,
$$
where we denoted   $D:=\im^{-1}\grad$. Here we assume that the electromagnetic potential $(a(t,x), V(t,x))$  is continuous in $t\in\R$ and smooth  in
 $x\in\R^d$. Furthermore we assume that for every multi-index $\alpha$  we have the  following uniform  estimate in $(t,x)$:
\beq\label{subquad}
\abs {\partial_x^\alpha a(t,x)} \leq C_\alpha \la x \ra^{(1- |\alpha|)_+} \ , \qquad \abs {\partial_x^\alpha V(t,x)} \leq C_\alpha \la x \ra^{(2- |\alpha|)_+} \ , \qquad \forall t \in \R \ .  
\eeq
We choose $H = H_{osc}$  where  
$H_{osc} = \frac{1}{2}\left(-\triangle + \vert x\vert^2\right)$ is the harmonic oscillator and define $\forall r \geq 0$  the spaces $\cH^r = D(H_{osc}^{r/2})$.  By  direct computations we can prove that the assumptions  of Theorem \ref{thm:flow2} are  satisfied. Indeed write first 
$$
H_{a,V}= -\triangle + V(t,x) + 2a(t,x)\cdot D + i^{-1}{\rm div}(a(t,x)) + a^2(t,x).
$$
Denote 
$\partial_j = \frac{\partial}{\partial x_j}$.    Then we get that 
$$
K: = [H_{a,V}, H_{osc}] = \sum_{1\leq j, k\leq d}\gamma_{j,k}\partial^2_{j,k} + \sum_{1\leq j\leq d}\gamma_j\partial_j + \gamma_0 \ 
$$
where  for any multi-index $\alpha$, there exists a $C_\alpha >0$ s.t. for any $1 \leq j,k \leq d$ 
$$
\vert D^\alpha\gamma_{j,k}(t,x)\vert \leq C_\alpha,\qquad 
\vert D^\alpha\gamma_{j}(t,x)\vert \leq C_\alpha\la x\ra^{(1-\vert\alpha\vert)_+}, \qquad  \vert D^\alpha\gamma_0(t,x)\vert \leq C_\alpha\la x\ra^{(2-\vert\alpha\vert)_+}.
$$
The following Lemma is well known  and can be  easily proved by induction:
\begin{lemma}
For every  multi-index $\alpha, \beta$  we  have
\beq\label{intineq}
\Vert x^\alpha D^\alpha u\Vert_{L^2(\R^d)}  \leq C_{\alpha, \beta}\Vert H_{osc}^{\frac{\vert\alpha+\vert\beta\vert}{2}}u\Vert_{L^2(\R^d)} ,\; \forall u\in L^2(\R^d).
\eeq
\end{lemma}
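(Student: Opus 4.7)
The plan is to pass to the Hermite eigenbasis of $H_{osc}$, in which the desired inequality reduces to elementary bookkeeping of Fourier coefficients. Let $\{h_n\}_{n \in \N^d}$ denote the standard Hermite orthonormal basis of $L^2(\R^d)$, so that $H_{osc} h_n = (|n|+d/2)\, h_n$ with $|n| := n_1+\dots+n_d$, and introduce the annihilation and creation operators $a_j := \tfrac{1}{\sqrt 2}(x_j + \partial_{x_j})$, $a_j^* := \tfrac{1}{\sqrt 2}(x_j - \partial_{x_j})$, satisfying $[a_i, a_j^*] = \delta_{ij}\uno$ and
$$a_j h_n = \sqrt{n_j}\, h_{n-e_j}, \qquad a_j^* h_n = \sqrt{n_j+1}\, h_{n+e_j},$$
with the convention $h_{n-e_j} := 0$ when $n_j = 0$. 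Since $x_j = (a_j+a_j^*)/\sqrt 2$ and $D_j = -i(a_j-a_j^*)/\sqrt 2$, the operator $x^\alpha D^\beta$ expands as a finite sum of at most $2^k$ monomials of degree exactly $k := |\alpha|+|\beta|$ in the letters $\{a_j, a_j^*\}_{1\leq j \leq d}$.

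Iterating the explicit formulas above shows that each such monomial sends $h_n$ to a scalar multiple of some $h_m$ with $|m-n|_\infty \leq k$, the scalar being a product of $k$ factors of the form $\sqrt{n_j}$ or $\sqrt{n_j+1}$ (with $n$ updated along the way), each of modulus at most $\sqrt{|n|+k}$. Hence the matrix coefficients $A_{m,n} := \langle h_m, x^\alpha D^\beta h_n \rangle$ satisfy $|A_{m,n}| \leq C_{\alpha,\beta}\,(|n|+1)^{k/2}$, and $A_{m,n}=0$ unless $|m-n|_\infty \leq k$. Expanding $u = \sum_n c_n h_n$ in the Hermite basis, applying Cauchy--Schwarz on the at most $(2k+1)^d$ nonzero indices in each inner sum, and exchanging the order of summation yields
$$\|x^\alpha D^\beta u\|_{L^2}^2 = \sum_m \Bigl|\sum_n A_{m,n}\, c_n\Bigr|^2 \leq C'_{\alpha,\beta} \sum_n (|n|+1)^k |c_n|^2 \leq C''_{\alpha,\beta}\, \|H_{osc}^{k/2} u\|_{L^2}^2,$$
using $(|n|+1)^k \leq C_k (|n|+d/2)^k$ together with the spectral identity $\|H_{osc}^{k/2}u\|_{L^2}^2 = \sum_n (|n|+d/2)^k |c_n|^2$. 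The computation is first carried out for $u$ in the algebraic span of the $h_n$'s and then extended to all $u \in D(H_{osc}^{k/2})$ by density.

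Since the argument is essentially a direct calculation, there is no substantive obstacle; the only input beyond bookkeeping is the tridiagonal action of $x_j$ and $D_j$ on the Hermite basis and the expansion of $x^\alpha D^\beta$ into creation/annihilation monomials. Alternatively, as the authors indicate, one may simply induct on $k$: the base case $k=1$ follows from $\|x_j u\|^2 + \|D_j u\|^2 = 2\langle u, H_{osc} u\rangle$, and for the inductive step one writes $x^\alpha D^\beta = x_j\,(x^{\alpha-e_j}D^\beta)$ or $D_j\,(x^\alpha D^{\beta-e_j})$ and uses the $k=1$ bound together with the commutator $[D_j, x_\ell] = -i\delta_{j\ell}\uno$ to generate only terms of strictly lower total order, which are controlled by the inductive hypothesis.
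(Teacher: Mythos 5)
Your argument is correct. The paper gives no proof of this lemma and only remarks that it ``is well known and can be easily proved by induction,'' so there is no written argument in the paper to match yours against. Your main route --- expanding $x^\alpha D^\beta$ into at most $2^{|\alpha|+|\beta|}$ monomials in creation/annihilation operators, noting the banded structure $|m-n|_\infty\leq k$ of the matrix $A_{m,n}=\langle h_m, x^\alpha D^\beta h_n\rangle$ and the polynomial bound $|A_{m,n}|\lesssim(|n|+1)^{k/2}$, then applying Cauchy--Schwarz block by block --- is a complete and correct spectral proof, and in essence makes explicit the content that an induction would build up step by step. Your closing sketch of the induction (base case $\|x_j u\|^2+\|D_j u\|^2\leq 2\langle u,H_{osc}u\rangle$, inductive step via $[D_j,x_\ell]=-i\delta_{j\ell}\uno$ to peel off one factor at a time) is precisely the argument the paper has in mind, so you have in effect produced both routes. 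The only thing worth flagging is notational: the lemma as printed in the paper has two typos ($x^\alpha D^\alpha$ on the left and the exponent $\frac{|\alpha+|\beta|}{2}$), and you correctly read the intended statement as $x^\alpha D^\beta$ with exponent $\frac{|\alpha|+|\beta|}{2}$, which is the only reading consistent with the presence of $\beta$ and with the way the lemma is used in the surrounding text.
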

From this Lemma it results  that $K$  is $H_{osc}$-bounded.  Moreover  if $a(t,x)$   does not depend on $x$ and $V(t,x)$ grows at most linearly in $x$, i.e. $\abs {\partial_x^\alpha V(t,x)} \leq C_\alpha \la x \ra^{(1- |\alpha|)_+} \ , $ $ \forall t \in \R $, then 
$K$  is  $H_{osc}^{1/2}$-bounded.  Then  we can apply our general results (Theorems 1.2 and 1.3)  to get
\begin{theorem}
\label{thm:apl4}
Under assumptions  (\ref{subquad})  we have:\\
(i)  For each   $t$ , the Hamiltonian $H_{a,V}(t)$ is essentially self-adjoint in $L^2(\R^d)$  with core ${\cal S}(\R^d)$. \\
(ii) 
For every $k\in\N$,  the Cauchy problem \eqref{eq:sc0} with $L(t) \equiv H_{a,V}(t)$ is globally well-posed
in the weighted Sobolev space  ${\cal H}^k(\R^d)=D(H_{osc}^{k/2})$. \\
(iii) If furthermore $a(t, x)=a(t)$ depends only on time $t$  and $\abs {\partial_x^\alpha V(t,x)} \leq C_\alpha \la x \ra^{(1- |\alpha|)_+} \ , $ $ \forall t \in \R $, then  for any $k\in\N$, we have  the  bound:
$$
\Vert\cU(t,s)\Vert_{\cL(\cH^k)} \leq C_k\la t-s\ra^k \ . 
$$
\end{theorem}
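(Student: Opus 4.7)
The plan is to verify the abstract hypotheses (H0)--(H2) (and (H3) with $\tau=\tfrac12$ for part (iii)) for $L(t)=H_{a,V}(t)$ taking $H=H_{osc}$, and then directly invoke Theorems \ref{thm:flow2} and \ref{thm:norm}. Most of the verifications have already been reduced, in the preamble just above the theorem, to the interpolation inequality \eqref{intineq} applied to coefficients satisfying the subquadratic bounds \eqref{subquad}; what remains is to assemble the pieces.

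For (H0) with $\tm=1$, expanding $H_{a,V}(t)$ exhibits it as a second-order differential operator whose coefficients are controlled by $\la x\ra^{2}$, so \eqref{intineq} gives uniform boundedness $\cH^{k+2}\to\cH^{k}$. Symmetry (H1) is immediate from the formally self-adjoint structure of $H_{a,V}$. For (H2), the explicit identification of $K=[H_{a,V},H_{osc}]$ as a second-order operator with subquadratic coefficients $\gamma_{jk},\gamma_j,\gamma_0$, carried out in the preamble, combined with \eqref{intineq}, yields $KH_{osc}^{-1}\in\cL(L^{2})$. To extend this to boundedness on each $\cH^{k}$ in the range required by Theorem \ref{thm:flow2}, I would iterate the commutator computation: each additional bracket with $H_{osc}$ lowers the growth exponent of every coefficient by one via \eqref{subquad}, so the scheme closes at every order.

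With (H0)--(H2) established, Theorem \ref{thm:flow2} immediately produces the global propagator $\cU(t,s)$, unitary in $L^{2}$, which proves (ii); it also yields essential self-adjointness of $L(t)$ on $\cH^{k}$ for $k\geq 2\tm$. Since $\mathcal{S}(\R^{d})$ is a standard core for $H_{osc}$, it is dense in $\cH^{\infty}$ and preserved by $L(t)$, so a routine density argument upgrades this to essential self-adjointness on $\mathcal{S}(\R^{d})$, which is (i).

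For (iii), under the stronger hypotheses the preamble has already observed that $K$ is in fact $H_{osc}^{1/2}$-bounded; iterating the commutator argument of step two extends this bound to every $\cH^{k}$. This is assumption (H3) with $\tau=\tfrac12$, and since $\tau$ is rational, Theorem \ref{thm:norm}(ii) applies and yields, for every real $r>0$,
\begin{equation*}
\Vert\cU(t,s)\Vert_{\cL(\cH^{r})}\leq C_{r}\,\la t-s\ra^{r/(2(1-\tau))}=C_{r}\,\la t-s\ra^{r},
\end{equation*}
which at $r=k$ is the claimed polynomial bound. The main technical obstacle is precisely this extension of the commutator bounds from $L^{2}$ to the entire scale $\cH^{k}$, so that (H2) and (H3) hold for every $k\in\N$ as demanded by Theorem \ref{thm:norm}(ii); this is conceptually routine but requires systematic bookkeeping of how the subquadratic growth of coefficients is preserved under iterated commutation with $H_{osc}$.
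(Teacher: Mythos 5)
Your proposal follows exactly the paper's strategy: verify (H0)--(H2) (and (H3) with $\tau=\tfrac12$ in the stronger case) using the explicit commutator computation $K=[H_{a,V},H_{osc}]$ together with the interpolation inequality \eqref{intineq}, then invoke Theorem \ref{thm:flow2} for (i)--(ii) and Theorem \ref{thm:norm}(ii) for (iii). The exponent computation $\frac{r}{2(1-\tau)}=r$ at $\tau=\tfrac12$ is correct, and the density argument upgrading the $\cH^{k}$-core to $\mathcal S(\R^{d})$ is sound; this is essentially the argument the paper outlines.
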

\noindent{\em Comparison with previous results:}  Theorem \ref{thm:apl4} (i)  and (ii) where proved by Yajima in \cite{ya1, ya2}  by a  different method. We recover  them  as a  consequence of our  general  results. Notice that $V(t,x)$ has no fixed sign.

\subsection{ Differential systems of first  order}
Let $A_j(t)$, $1 \leq j \leq d$,   and $B(t,x)$   be  Hermitian 
$N\times N$ matrices, the $A_j$'s  depend only on time,  $A_j\in C_b(\R, M_n(\C))$,
 while   $B(t, x)\in C_b(\R, C^\infty(\R^n,M_n(\C))$ satisfies
 $\forall$ multi-indexes $\alpha$
$$
\vert\partial_x^\alpha B(t,x)\vert \leq C_\alpha\la x\ra^{(m-\vert\alpha\vert)_+} \ , \qquad \forall t \in \R \ . 
$$
 Let  us  consider  equation \eqref{eq:sc0} with $\di{L(t) = \sum_{1\leq j\leq d}A_j(t)D_j + B(t,x)}$. Such  equation is symmetric-hyperbolic. A basic example is the Maxwell system. An other example is 
the Dirac equation with a time dependent  electro-magnetic field:
$$
\im \h \frac{\partial\psi(x,t) }{\partial t}  = \left(\beta mc^2 + c\left(\sum_{n \mathop =1}^{3}\alpha_n(\h D_n)\right) + V(t,x)\right) \psi (x,t)
$$
where $D_n=\im^{-1}\frac{\partial}{\partial_{x_n}}$,  $(\alpha_n, \beta)$ are the Dirac  matrices
  and $V(t, x)$  is $4\times 4$ Hermitian matrix (the electro-magnetic potential).

Le us introduce the  reference operator 
 $H = (-\triangle + \vert x\vert^{2k})\1_{\C^N}$ and the scale of Hilbert spaces $\cH^r_k =D\left((-\triangle + \vert x\vert^{2k})^{r/2}\right)$, for any $r\geq 0$.  We  compute the  commutator $[L(t), H]$. If $m\leq k+1$  we can  check that 
 $ [L(t), H]$  is $H$-bounded  and if $m\leq k$  then $[L(t), H]$  is  $H^{1-\theta}$-bounded  with $\theta = \frac{1}{2k}$. So Theorem \ref{thm:flow2}  and 
 Theorem \ref{thm:norm}   can be applied to give 
 \begin{theorem}
 \label{syst} 
 Let $m \leq k+1$. Then    problem \eqref{eq:sc0}   is well-posed in the weighted  Sobolev spaces $\cH^r_k$ for any $r \geq 0$. 
 Moreover if $m \leq k $ then we have   for any $r\geq 0$, 
 $$
\Vert\cU(t,s)\Vert_{\cL(\cH^r_k)} \leq C_r\la t-s\ra^{kr} \ . 
$$
 \end{theorem}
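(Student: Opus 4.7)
The plan is to apply Theorem \ref{thm:flow2} for the well-posedness claim and Theorem \ref{thm:norm}(ii) for the growth bound, using $H \equiv H_k := (-\Delta + |x|^{2k})\,\1_{\C^N}$ as the reference operator. Condition (H1) is immediate from the Hermiticity of $A_j(t)$ and $B(t,x)$. For (H0) with $\tm = 1$, Lemma \ref{nohineq} gives that $A_j(t)D_j$ is $H_k^{1/2}$-bounded (since $A_j$ is independent of $x$) and that the multiplication operator $B(t,x)$, of polynomial growth $\la x\ra^m$, is $H_k^{m/(2k)}$-bounded with $m/(2k)\leq (k+1)/(2k)\leq 1$; the mapping $\cH^{s+2}_k \to \cH^s_k$ on the full scale then follows by iterating the commutator bounds below.

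The heart of the proof is the explicit computation of $[L(t), H_k]$. Using $[A_j(t),-\Delta]=0=[B(t,x),|x|^{2k}]$ one obtains
$$
[L(t), H_k] = \frac{2k}{\im} \sum_{j=1}^{d} A_j(t)\, x_j |x|^{2k-2} \;+\; 2\, \nabla_x B(t,x) \cdot \nabla \;+\; \Delta_x B(t,x).
$$
By Lemma \ref{nohineq} the three pieces are respectively $H_k^{\mu_i}$-bounded with $\mu_1=(2k-1)/(2k)$, $\mu_2 = (m-1)_+/(2k) + 1/2$ and $\mu_3 = (m-2)_+/(2k)$. For $m\leq k+1$ one has $\max_i \mu_i \leq 1$, so $[L(t), H_k]H_k^{-1}$ is bounded on $\cH^0_k$, and the same bookkeeping applied inductively to $[L(t), H_k^p]H_k^{-p}$ (in the spirit of Lemma \ref{lem:comm2}) shows boundedness on every $\cH^s_k$, giving (H2). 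For the stronger assumption $m\leq k$ one gets $\mu_2 \leq (2k-1)/(2k)$ as well, so the whole commutator is $H_k^{1-\theta}$-bounded with $\theta = 1/(2k)$, verifying (H3) with $\tau = 1-\theta$.

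The well-posedness statement then follows at once from Theorem \ref{thm:flow2}. Since $\theta = 1/(2k)$ is rational, Theorem \ref{thm:norm}(ii) applies and yields $\Vert\cU(t,s)\Vert_{\cL(\cH^r_k)} \leq C_r \la t-s\ra^{r/(2\theta)} = C_r \la t-s\ra^{kr}$, which is the stated bound. The only nontrivial obstacle is showing that (H2) and (H3) hold not only at level $p=1$ but for all iterated commutators $[L(t), H_k^p]H_k^{-p}$ (respectively $[L(t), H_k^p]H_k^{-p+\theta'}$ for $\theta'<\theta$), as required so that Theorem \ref{thm:norm}(ii) applies on the full scale; this is settled inductively by reapplying Lemma \ref{nohineq} to the commutators generated at each step, or equivalently by invoking a Weyl--H\"ormander calculus adapted to the symbol $|\xi|^2 + \la x\ra^{2k}$.
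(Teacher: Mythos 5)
Your argument is correct and is precisely the one the paper intends: it verifies (H0)–(H3) with reference operator $H_k=(-\Delta+|x|^{2k})\1_{\C^N}$ by computing $[L(t),H_k]$ directly and estimating each piece with Lemma \ref{nohineq}, then invokes Theorem \ref{thm:flow2} for well-posedness and Theorem \ref{thm:norm}(ii) (with $\tau = 1-\frac{1}{2k}$ rational) for the $\la t-s\ra^{kr}$ bound. The paper's own proof is terser (it merely asserts "we can check that $[L(t),H]$ is $H$-bounded" for $m\le k+1$ and $H^{1-1/(2k)}$-bounded for $m\le k$), but the commutator decomposition and exponent bookkeeping you carried out is exactly how that check goes.
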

 \begin{remark}
 It is easy to see that Theorem \ref{syst} holds true if $A_j(t)= A_j(t,x)$ are smooth  in $x$ and  satisfy
 $$
 \vert \partial_x^\alpha A_j(t,x)\vert \leq C_\alpha \la x\ra^{(1-\vert\alpha\vert)_+} \ , \qquad \forall t \in \R \ . 
 $$
 \end{remark} 



\subsection{A discrete model example}
This model  was considered in \cite{bajo}.  We keep  our notations  which are  different from \cite{bajo}.\\
Let us consider  the Hilbert space $\cH^0=\ell^2(\Z^d)$ and its canonical Hilbert base $\{{\rm e}_n\}_{n\in\Z^d}$  defined by ${\rm e}_n(k) =\delta(n-k)$, $k\in\Z^d$.  We consider equation \eqref{eq:sc0} with   Hamiltonian
 $L(t) = H_0 +V(t)$  where $H_0$ is the discrete Laplacian and $V(t)$ is a diagonal operator:
 $$
 H_0 u(n) = \sum_{\vert k-n\vert =1} u(k) \ , \qquad V(t)u(n) = \omega_n(t)u(n) \ , 
 $$ 
  (here $\vert\cdot\vert$  denotes the sup norm). 
 Assume that  $\omega_n(t) $ are real and that  there  exists  $M \geq 0$ such that 
 \bea\label{polydi}
 \vert\omega_n(t)\vert \leq C\la n\ra^M \ , \quad \forall t\in\R \ .
 \eea
   Introduce  the reference operator 
$ Hu(n) = \la n\ra u(n)$ and the usual scale of Sobolev spaces $\cH^r = D(H^{r/2}) \equiv  \{\{u(n)\}_{n \in \Z^d} \colon \sum_{n\in\Z^d}\la n\ra^{r}\vert u(n)\vert^2 < +\infty\}$.\\
Let us check that  assumptions (H0), (H1), (H3)  are satisfied  with $\tau=0$. With (\ref{polydi}) assumption (H0)  and (H1)  are satisfied.  Now we verify (H3).
\begin{lemma}
The  commutator $[H, H_0]$  is bounded on $\cH^r$  for every $r\geq 0$.
\end{lemma}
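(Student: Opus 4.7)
My plan is to compute the commutator $[H,H_0]$ explicitly as an infinite matrix, observe that it is essentially a finite-range hopping operator with uniformly bounded coefficients, and then conjugate by $H^{r/2}$ to transfer the bound from $\cH^0$ to $\cH^r$ via Schur's test.

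\textbf{Step 1: Explicit computation.} Since $H$ acts diagonally as $Hu(n)=\la n\ra u(n)$ and $H_0$ is a nearest-neighbor hopping, a direct calculation gives
\[
[H,H_0]u(n)=\sum_{|k-n|=1}\bigl(\la n\ra-\la k\ra\bigr)u(k).
\]
This is a (non-symmetric) finite-range operator: only $|k-n|=1$ contributes, so each row and each column have at most $3^d-1$ nonzero entries.

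\textbf{Step 2: Uniform bound on the matrix coefficients.} The key elementary fact is that for $|k-n|=1$ one has $|\la n\ra-\la k\ra|\le C_d$, with $C_d$ depending only on the dimension. Indeed, using $\la\cdot\ra=(1+|\cdot|^2)^{1/2}$ and the reverse triangle inequality (in the Euclidean norm), one estimates $|\la n\ra-\la k\ra|\le|n-k|_{\ell^2}\le\sqrt{d}$ when the nearest-neighbor distance is measured in the sup norm. Moreover, again for $|k-n|=1$, the ratios $\la n\ra/\la k\ra$ and $\la k\ra/\la n\ra$ are uniformly bounded from above.

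\textbf{Step 3: Conjugation and Schur's test.} Since $\cH^r=D(H^{r/2})$ with $H$ diagonal, a bounded operator $A$ on $\cH^r$ is exactly one such that $H^{r/2}AH^{-r/2}$ is bounded on $\ell^2$. Conjugating gives the kernel
\[
K_r(n,k)=\Bigl(\tfrac{\la n\ra}{\la k\ra}\Bigr)^{r/2}\bigl(\la n\ra-\la k\ra\bigr)\chi_{\{|n-k|=1\}}(n,k),
\]
which by Step 2 satisfies $|K_r(n,k)|\le C_{d,r}$ on its (finite) support. Schur's test then yields $\|H^{r/2}[H,H_0]H^{-r/2}\|_{\cL(\ell^2)}\le C_{d,r}(3^d-1)$, proving boundedness on $\cH^r$ for every $r\ge 0$.

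There is no real obstacle here; the only point to keep clean is the uniform comparability $\la n\ra\asymp\la k\ra$ on nearest neighbors, which is what makes the $H^{r/2}$-conjugation harmless. Once (H3) with $\tau=0$ is verified in this way (combined with the trivial checks of (H0)--(H1), which use \eqref{polydi}), Theorems \ref{thm:flow2} and \ref{thm:norm} apply, giving in particular a polynomial growth bound $\|\cU(t,s)\|_{\cL(\cH^r)}\lesssim\la t-s\ra^{r/2}$.
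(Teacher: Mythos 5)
Your proof is correct and follows essentially the same route as the paper's: an explicit computation of the nearest-neighbor kernel of $[H,H_0]$, the Lipschitz bound $|\la n\ra-\la k\ra|\le C_d$ together with the uniform comparability $\la n\ra\asymp\la k\ra$ for $|n-k|=1$, and then conjugation by a power of $H$ to reduce boundedness on $\cH^r$ to boundedness on $\ell^2$ of a finite-range operator with uniformly bounded entries. The only cosmetic difference is that you invoke Schur's test where the paper argues directly from the supremum of the conjugated matrix coefficients over the finitely supported kernel.
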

\begin{proof}
A direct computation gives
$$
\Big([H_0, H] u \Big)(n)  = \sum_{  |\epsilon | = 1, \ \epsilon \in \Z^d} (\la n + \epsilon \ra - \la n \ra ) \, u(n+\epsilon) \ .
$$
Thus for any $u, v \in \cH^0$ we have
$$
 \abs{ \la v, [H_0, H]u\ra_{\cH^0}} = \abs{ \sum_{|\epsilon| = 1} \sum_{n \in \Z^d}   (\la n + \epsilon \ra - \la n \ra ) \, u(n+\epsilon) \, v(n) } \leq 
 2 d \, \norm{u}_{\cH^0} \, \norm{v}_{\cH^0} 
 \ ,
 $$
 which shows that $[H_0, H]$ is bounded on $\cH^0$.\\
%
  Now we prove that $[H_0, H]$  is bounded on $\cH^r$  for any  $r>0$. 
  An easy computation gives
\bea
  H^{r}[H_0, H]H^{-r}u &
 =&  \sum_{m\in\Z^d, \vert\epsilon\vert=1} \Big(u(m+\epsilon) \left[ \la m \ra^r \, (\la m+\epsilon \ra - \la m \ra ) \, \la m + \epsilon \ra^{-r} \right] \Big)\  {\rm e}_m
\eea
Since 
$$
\sup_{m \epsilon \in \Z^d, \ |\epsilon|=1}\abs{\la m \ra^r \, (\la m+\epsilon \ra - \la m \ra ) \, \la m + \epsilon \ra^{-r} }\leq C \ , 
$$
 it 
 results that $[H_0, H]$  is bounded on $\cH^r$  for any  $r>0$. 
\end{proof}
Thus it follows that $[H_0+V(t), H]$ is a bounded operator. Applying Theorem \ref{thm:norm} with $\tau = 0$ we get in particular  that  the propagator $\cU(t,s)$  associated with $L(t)$  is well defined as a bounded operator
 on $\cH^r$  and  satisfies
 \begin{equation}
 \label{eq:estapl5}
 \Vert\cU(t,s)\Vert_{\cL(\cH^r)} \leq C_r\la t-s\ra^{\frac{r}{2}} \ , \qquad \forall t, s\in\R.
 \end{equation}

 \noindent{\em Comparison with previous result:}
Estimate \eqref{eq:estapl5} appeared first in the work of Barbaroux and Joye  \cite{bajo}.  Zhao \cite{zhao16}  showed that when $d = 1$  there exists a family of functions $\omega_n(t)$ s.t. $\Vert\cU(t,0)\Vert_{\cL(\cH^1)} \leq C_1\la t-s\ra^{\frac{1}{2}}$, saturating the bound \eqref{eq:estapl5}. 
In \cite{zhang16}, Zhang and Zhao extended this result to general $r >1$ and a larger family of functions $\omega_n(t)$.

\subsection{Pseudodifferential operators on $\R^n$}
We consider here  equation \eqref{eq:sc0} in case $L(t)$ is a   time dependent pseudifferential  operator on $\R^n$. 
A very general  Weyl calculus is detailed in the book \cite{ho}. 
We   recall  some  basic facts needed here on some particular cases and  some  more properties in Appendix \ref{app:pseudo}.\\
Recall that for  smooth symbols  $A(x,\xi)$, $x, \xi \in \R^n$,  one defines the Weyl-quantization ${\rm Op}^W_\hbar(A)$ by the formula 
\begin{equation}
\Big({\rm Op}^W_\hbar(A) u\Big)(x) := \frac{1}{(2 \pi \hbar)^n} \iint_{y, \xi}  e^{\frac{\im}{\hbar} ( x-y)\cdot  \xi } \ A\left(\frac{x+y}{2}, \xi \right) u(y) \ dy  d\xi \ . 
\end{equation}
This formula  is valid for $A$ in the space ${\cal S}(\R^{2n})$ of Schwartz functions and 
one can extend it  to  functions in more general  classes. 
To introduce the class we are interested in, let us introduce the weight 
$$\lambda_{k, \ell}(x,\xi) = (a+\vert x\vert^{2\ell} + \vert\xi\vert^{2k})^{\frac{1}{2k\ell}} \ . $$
Here the real number $a>0$  will be chosen large enough.

\begin{definition}
\label{def:symbol} 
Fix $\nu \in \R$, $k, \ell \in \R_+$. A  function  $A(x, \xi) \in C^{\infty}(\R^n_x\times \R^n_\xi, \C)$ will be called a {\em symbol} in the class  $\wt S_{k,\ell}^\nu$ if for every $\alpha, \beta \in \N^n$ there exists a constant $C_{\alpha, \beta} >0$ s.t.
\beq\label{skl}
\abs{ \partial_x^\alpha \partial_\xi^\beta A(x, \xi) } \leq C_{\alpha, \beta}
\  \lambda_{k,\ell}(x,\xi)^{(\nu-k\vert\alpha\vert -\ell\vert\beta\vert)_+} \ , 
\eeq
where $r_+ := \max(0, r)$.
\end{definition}

Such class was introduced in   \cite{ro1, hero}, where it is proved that  ${\rm Op}^W_\hbar(A)$ is well defined for  $A \in \wt S_{k,\ell}^\nu$.

\begin{remark}
\label{rem:pseudo1}
(i) For $\nu = 2$, $k=\ell=1$, $\wt S^2_{1, 1}$ is the class of symbols satisfying the  {\em sub-quadratic }growth condition
$$ \abs{ \partial_x^\alpha \partial_\xi^\beta A(x, \xi) } \leq C_{\alpha, \beta} \ , \qquad \forall \ |\alpha| + |\beta| \geq 2.
$$
(ii) The function $\lambda_{k,\ell}^\nu$ belongs to $\wt S_{k,\ell}^\nu$.
\end{remark}
\noi We endow $\wt S_{k,\ell}^\nu$ with the family of semi-norms defined by
\begin{equation}
p_{\alpha \beta}^\nu(A) := \sup_{x, \xi \in \R^{n}}  \lambda_{k, \ell}(x, \xi)^{-(\nu -k|\alpha| - \ell |\beta|)_+} \  \abs{\partial_x^\alpha \partial_\xi^\beta A (x, \xi)} \ ,
\end{equation}
and for every integer $M$ we define
\begin{equation}
|A|_{M,\nu} := \sup_{|\alpha| + |\beta| \leq M} \ p^\nu_{\alpha \beta} (A) \ . 
\end{equation}
We  define  now the   reference operator $H$  to be  
$$
H \equiv \wh H^{k+\ell}_{k,\ell} := {\rm Op}^W_\hbar(\lambda_{k,\ell}^{k+\ell}) \ .
$$
The constant 
	$a>0$ in the definition of $\lambda_{k, \ell}$ is chosen  large enough  such that ${\widehat H}_{k,\ell}^{k+\ell}$ is a  positive   self-adjoint  operator in
	$L^2(\R^n)$.  As usual we define the scale of Hilbert spaces 
	$\cH^{r}:=D\left(\left({\widehat H}_{k,\ell}^{k+\ell}\right)^{r/2}\right)$   for every real $r\geq 0$.  Formally one has 
	\beq\label{caractsob}
	{\cal H}^{r} = \{u \in L^2(\R^n) \  \vert \  u\in H^{\frac{(k+\ell)r}{2k}}(\R^n),\; \vert x\vert^{\frac{(k+\ell)r}{2\ell}}u\in L^2(\R^n)\}
	\eeq
	equipped with a natural norm  of Hilbert  space.

\begin{remark}
In the class of sub-quadratic symbols $\wt S^2_{1, 1}$ one has simply that  $H =\wh H_{osc} \equiv -\Delta + |x|^2$  (harmonic oscillator) and $\cH^r$ are the more classical spaces
\begin{equation}
\label{Hk}
\cH^r(\R^n) := \left\{u \in L^2(\R^n) \ \vert \ x^\alpha \, (\hbar\partial_x)^\beta \, u \in L^2(\R^n) , \  |\alpha| + |\beta| \leq r \right\} \ .
\end{equation}
\end{remark}

In order to study evolution equations we need to consider  time dependent symbols. We give the following
 \begin{definition} Let $\cI \subseteq \R$.
We say that a time-dependent symbol $A(\cdot) \in C^0_b(\cI, \wt S^\nu_{k, \ell})$ iff $A(t) \in  \wt S^\nu_{k, \ell}$ for every $t \in \cI$ and the map $t \mapsto p^\nu_{\alpha\beta}(A(t))$ is continuous  and uniformly bounded for every $\alpha,\beta$.
\end{definition}

We are ready to state the  results:

\begin{theorem}\label{bamb}
	Fix $k, \ell \in \R_+$ and  $\nu \in \R$ with $ \nu \leq k + \ell$. Then the following is true:
	\begin{itemize}
		\item[(i)]  Assume that $A$ is a real  symbol with $A\in \wt S^\nu_{k,\ell}$. Then  $ {\rm Op}^W_\hbar (A)$  is essentially self-adjoint with core
		${\cal S}(\R^n)$.
		\item[(ii)] Assume that $A(\cdot) \in C^0_b(\R, \wt S^\nu_{k,\ell} )$. Then the Schr\"odinger equation \eqref{eq:sc0} with $L(t) \equiv {\rm Op}^W_\hbar (A(t))$ generates a flow $\cU(t,s)$ which fulfills $(i)$--$(iv)$ of Theorem \ref{thm:flow2}.
		\item[(iii)] If $\nu < k+\ell$, then the flow $\cU(t,s)$ fulfills the bound
		$$
		\norm{\cU(t,s)}_{\cL(\cH^r)} \leq C \la t-s \ra^{\frac{r \, (k+\ell)}{2 \, (k + \ell - \nu)}} \ . 
		$$
	\end{itemize} 
\end{theorem}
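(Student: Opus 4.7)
The plan is to apply Theorem \ref{thm:flow2} and Theorem \ref{thm:norm} to $L(t)={\rm Op}^W_\h(A(t))$, using the Weyl pseudodifferential calculus in the anisotropic class $\wt S^\nu_{k,\ell}$. The key calculus facts I would invoke (standard and presumably collected in Appendix \ref{app:pseudo}) are: a Calder\'on--Vaillancourt $L^2$-boundedness theorem saying that ${\rm Op}^W_\h(B)$ is bounded on $L^2(\R^n)$ when $B\in \wt S^0_{k,\ell}$, with norm controlled by finitely many seminorms $|B|_{M,0}$; a composition rule implying that if $A_1\in\wt S^{\nu_1}_{k,\ell}$ and $A_2\in\wt S^{\nu_2}_{k,\ell}$ then $[{\rm Op}^W_\h(A_1),{\rm Op}^W_\h(A_2)] = \tfrac{\h}{\im}{\rm Op}^W_\h(\{A_1,A_2\}) + R$ whose principal symbol lies in $\wt S^{\nu_1+\nu_2-(k+\ell)}_{k,\ell}$ (since $\partial_\xi$ lowers the order by $\ell$ and $\partial_x$ by $k$), plus a lower-order remainder; and the fact that $H = {\rm Op}^W_\h(\lambda_{k,\ell}^{k+\ell})$ is positive self-adjoint for $a$ large, and its real powers $H^s$ are, up to controlled remainders, Weyl quantizations of symbols in $\wt S^{(k+\ell)s}_{k,\ell}$, yielding the norm equivalence $\|u\|_{\cH^{r}}\sim \|{\rm Op}^W_\h(\lambda_{k,\ell}^{(k+\ell)r/2})u\|_{L^2}$.

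Part (i) will follow from Nelson's commutator theorem. Since $A$ is real, ${\rm Op}^W_\h(A)$ is formally symmetric on ${\cal S}(\R^n)$; since $\nu\leq k+\ell$, the composition rule gives $[{\rm Op}^W_\h(A),H]\in {\rm Op}^W_\h(\wt S^{\nu}_{k,\ell})$, hence maps $\cH^{r+2}\to\cH^r$ and in particular is $H$-bounded. Applying the Nelson-type statement recalled by the authors (Proposition \ref{propone}) then gives essential self-adjointness with core ${\cal S}(\R^n)$, which is dense in every $\cH^r$.

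For Part (ii) I would verify (H0), (H1), (H2) for $L(t)={\rm Op}^W_\h(A(t))$ with $\tm=1$. Assumption (H0) follows because $A(t)\in \wt S^\nu_{k,\ell}\subseteq \wt S^{k+\ell}_{k,\ell}$ uniformly in $t$, so $L(t)$ maps $\cH^{r+2}\to\cH^r$ with norm controlled by finitely many of the uniformly bounded seminorms of $A(t)$; (H1) is the reality of $A(t)$. For (H2), the commutator $[L(t),H]$ has symbol in $\wt S^\nu_{k,\ell}$ uniformly in $t$, and composition with $H^{-1}$ (whose symbol is in $\wt S^{-(k+\ell)}_{k,\ell}$) gives a Weyl quantization of a symbol in $\wt S^{\nu-(k+\ell)}_{k,\ell}\subseteq \wt S^0_{k,\ell}$, which is bounded on every $\cH^r$ since conjugation by $H^{r/2}$ preserves $\wt S^0_{k,\ell}$ modulo lower order corrections. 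Theorem \ref{thm:flow2} then produces the propagator $\cU(t,s)$ with properties (i)--(iv).

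For Part (iii), under the strict inequality $\nu<k+\ell$, the same symbol computation shows that $[L(t),H]H^{-\tau}$ has symbol in $\wt S^{\nu-(k+\ell)\tau}_{k,\ell}$; choosing $\tau:=\nu/(k+\ell)<1$ puts this symbol in $\wt S^0_{k,\ell}$, so the operator is bounded on all $\cH^r$, verifying (H3). Theorem \ref{thm:norm}(ii) then yields the growth bound with exponent $r/(2(1-\tau)) = r(k+\ell)/(2(k+\ell-\nu))$, exactly as stated. The main technical difficulty is the bookkeeping in the anisotropic composition formula, and in particular establishing that the complex powers of $H$ are genuine pseudodifferential operators in $\wt S^{(k+\ell)s}_{k,\ell}$ modulo controllable remainders; this is exactly a Helffer--Robert-type functional calculus for the weight $\lambda_{k,\ell}$, and once it is in hand the rest of the argument is a matter of applying the abstract theorems already proved.
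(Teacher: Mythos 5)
Your proof takes essentially the same route as the paper: verify (H0)--(H2) (and (H3) for part (iii)) by Weyl symbolic calculus in $\wt S^\nu_{k,\ell}$ using the composition rule (Theorem \ref{symb.cal} / Corollary \ref{cor:comm}) to place $[L(t),H]H^{-\tau}$ in $\wt S^{\nu-\tau(k+\ell)}_{k,\ell}$, then Calder\'on--Vaillancourt (Theorem \ref{thm:cv}) for $L^2$-boundedness, and appeal to Theorems \ref{thm:flow2}, \ref{thm:norm} and Proposition \ref{propone}. The only extra you flag explicitly---that $H^{s}$ is itself a pseudodifferential operator in $\wt S^{(k+\ell)s}_{k,\ell}$ modulo controllable remainders, Helffer--Robert style---is indeed implicitly used by the paper when conjugating by $H^{r/2}$ and is a reasonable thing to call out.
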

\begin{proof} (i) It follows by   the same arguments used to prove item (ii) and  Proposition \ref{propone}.\\
(ii) 
We verify the assumptions of Theorem \ref{thm:flow2} using the  symbolic calculus  for symbols in the classes $\wt S_{k,\ell}^\nu$. By Remark \ref{rem:pseudo1}, $\lambda_{k, \ell}^\nu$ is a symbol in $\wt S_{k,\ell}^\nu$ and it is invertible provided $a$ is sufficiently large. By symbolic calculus (see Appendix \ref{app:pseudo}) the operator $H$ is invertible and its inverse    $H^{-1} \in {\rm Op}^W_\hbar( \wt S^{-(k+\ell)}_{k,\ell})$. 
	 It follows easily by symbolic calculus (see Theorem \ref{symb.cal})  that 
	  $[A(t), H]H^{-1} \in {\rm Op}^W_\hbar(\wt S^{\nu - (k+\ell)}_{k,\ell})$. Then  by  Calderon-Vaillancourt theorem (see Theorem \ref{thm:cv}) if   $\nu \leq k+\ell$  such operator is bounded on the scale of Hilbert spaces (\ref{caractsob}).  Theorem \ref{thm:flow2} can be applied.\\
	  (iii) One applies Theorem \ref{thm:norm} remarking  that 
	  $[A(t), H]H^{-\tau} \in {\rm Op}^W_\hbar(\wt S^{\nu - \tau(k+\ell)}_{k,\ell})$. Then if $\nu < k+\ell$, choosing $\tau = \frac{\nu}{k+\ell}$ one has that $\tau <1$ and  $[A(t), H]H^{-\tau}$ is a bounded operator.
\end{proof}
\begin{remark}
 If $A\in\wt S_{1,1}^2$   then $A(t,x,\xi)$ is a sub-quadratic   symbol in $(x,\xi)$ and we recover  a result  already proved by Tataru \cite{ta}  using a  complex  WKB  parametrix  for the  Schrödinger equation.
\end{remark}

\begin{example}[a balance between position and momentum behavior]
	 Consider a symbol $A$ of the form
	 $$
	 A(x, \xi) = f(\xi) + g(x)
	 $$
	 where the functions $f, g$   are smooth and fulfill
	\beq\label{exest}
	 |\partial_x^\alpha f(\xi) | \leq C_\alpha\la \xi \ra^{(p-\vert\alpha\vert)_+} \ , 
	 \quad |\partial_x^\alpha g(x) | \leq C_\alpha\la x \ra^{(q-\vert\alpha\vert)_+} \ ,
	\eeq
	 for some $p, q \in \Q$ such that $\frac{1}{p} + \frac{1}{q} = 1 $, with $1<p<+\infty$. Then ${\rm Op}^W_\hbar(A)$ is essentially self-adjoint. 
	 Indeed in such case it is possible to find integers $k,\ell$ such that
	 $p= (k+\ell)/\ell$, $q=(k+\ell)/k$. Then with such $k, \ell$, one verifies easily that  
	 $A \in \widetilde S_{k,\ell}^{k + \ell}$.\\
	 Moreover if $f, g$  are  time-dependent the operator $A^w(t)$  generates a propagator  satisfying $(i)-(iii)$.
	 It satisfies (iv) if   furthermore estimates (\ref{exest}) are uniform in time $t\in\R$.
\end{example}

%

 \appendix

\section{Essentially self-adjointness}
In this  section we give the proof of essentially self-adjointness which is based to the commutator method of Nelson \cite{ne}. The method was further extended by Faris and Lavine \cite{fala}.
The general  principle is related  with the  Friedrichs smoothing  method \cite{fri}.\\

We start to recall some standard definitions. Let $\cH$ be a complex Hilbert space and $(\cdot, \cdot)_{\cH}$ its inner product. Let $\cK\subset \cH$ be a dense subspace. Let $L$ be a linear operator with domain $D(L) = \cK$ and {\em symmetric}, i.e. verifying
$$
(L u, v)_{\cH} = (u, Lv)_{\cH} \qquad \mbox{ for every } u, v \in \cK \ .
$$
We say that $(L, \cK, \cH)$ is {\em essentially self-adjoint} if $L$ admits a unique self-adjoint extension as an unbounded operator on $\cH$. When this is true $\cK$ is called a {\em core} for $L$.  Let $(L, \cK, \cH)$ be a symmetric operator.  It is known that the operator $(L, \cK, \cH)$ is {\em closable}, i.e. it admits at most one closed extension $(L_{\rm min}, D(L_{\rm min}), \cH)$. $L_{\rm min}$ is the smallest closed extension of $L$, and we call  $(L_{\rm min}, D(L_{\rm min}), \cH)$ the {\em minimal operator} associated to $L$.

We denote by $(L_{\rm min}^*, D(L_{\rm min}^*), \cH)$ the adjoint of $(L_{\rm min}, D(L_{\rm min}), \cH)$. Recall that by definition 
$$
D(L_{\rm min}^*) = \left\{ u \in \cH \ : \ |(u, L v)_{\cH}| \leq C_u \norm{v}_\cH \ , \forall  v \in \cK \right\} \ .
$$ 
It is a classical result \cite[Proposition 2]{robert_book} that $(L_{\rm min}^*, D(L_{\rm min}^*), \cH)$ is the largest closed extension of $L$. Denote $L_{\rm max} := L_{\rm min}^*$. Then we call $(L_{\rm max}, D(L_{\rm max}), \cH)$ the {\em maximal operator} associated to $L$. Thus $L$ is essentially self-adjoint if $L_{{\rm min}}$ is self-adjoint. This means that $(L_{\rm min}, D(L_{\rm min}), \cH)$ and $(L_{\rm max}, D(L_{\rm max}), \cH)$ coincide.\\

Let us introduce   a smoothing family of operators
$\{R_\varepsilon\}_{\varepsilon\in]0, 1]}$   satisfying
\bea\label{sm}
\Vert R_\varepsilon\Vert_{\cL(\cH)}\leq C, \; \forall \varepsilon\in]0, 1], \\
R_\varepsilon {\cH}\subseteq\cK ,\; \; \forall \varepsilon\in]0, 1], \\
\lim_{\varepsilon\rightarrow 0}\Vert R_\varepsilon u - u\Vert_{\cH}= 0,  \forall u\in \cH.
\eea
\begin{proposition}\label{propsm} Let $(L, \cK, \cH)$ be a symmetric operator. 
	Assume that  the commutators $[R_\varepsilon, L]= R_\varepsilon L - LR_\varepsilon$  satisfies 
	\bea
	\label{smsa}
	\Vert [R_\varepsilon, L]u\Vert_{\cH}\leq C\Vert u\Vert_{\cH},\; \forall  u\in \cK,  \ \ \ \forall \varepsilon\in]0, 1],\\
	 \lim_{\varepsilon\rightarrow 0}\Vert [R_\varepsilon, L] u\Vert_{\cH}= 0 \ , \forall u\in \cK \ .
	\eea
	Then $(L, \cK, \cH)$  is essentially self-adjoint.
\end{proposition}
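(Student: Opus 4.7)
The plan is to establish the inclusion $D(L_{\rm max}) \subseteq D(L_{\rm min})$; since the reverse inclusion is automatic for any closable symmetric operator, this forces $L_{\rm min} = L_{\rm max}$, which is precisely essential self-adjointness. Concretely, given $u \in D(L_{\rm max})$, I must produce a sequence in $\cK$ converging to $u$ in the graph norm of $L$.

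The natural choice is $u_\varepsilon := R_\varepsilon u$, which by the smoothing hypothesis lies in $\cK$ and satisfies $u_\varepsilon \to u$ in $\cH$. The heart of the matter is then to show $L u_\varepsilon \to L_{\rm max} u$ in $\cH$. For this I would first establish the identity
\[
L R_\varepsilon u \; = \; R_\varepsilon L_{\rm max} u + C_\varepsilon u \quad \text{in } \cH,
\]
where $C_\varepsilon$ denotes the bounded extension to $\cH$ of the commutator $[L,R_\varepsilon]$ initially defined on $\cK$; such an extension exists with $\|C_\varepsilon\|_{\cL(\cH)} \leq C$ by \eqref{smsa}. The derivation tests against an arbitrary $v \in \cK$ and exploits three ingredients: the symmetry of $L$ on $\cK$ (to move $L$ from $L R_\varepsilon u$ onto $v$ via $(L R_\varepsilon u, v) = (R_\varepsilon u, L v)$), the fact that $R_\varepsilon v \in \cK$ (so that $L R_\varepsilon v$ makes sense and the defining identity $(u, L R_\varepsilon v) = (L_{\rm max} u, R_\varepsilon v)$ applies), and a transposition of the commutator using the skew-symmetric form $(C_\varepsilon u, v) = -(u, C_\varepsilon v)$ that arises whenever $R_\varepsilon$ is formally self-adjoint, as is the case in the paper's intended applications (where $R_\varepsilon = (1 + H^m/N)^{-1}$).

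With the identity in hand the conclusion becomes immediate. The first summand $R_\varepsilon L_{\rm max} u$ converges to $L_{\rm max} u$ in $\cH$ by the strong convergence hypothesis applied to the vector $L_{\rm max} u \in \cH$. The second summand $C_\varepsilon u$ tends to zero: by assumption $C_\varepsilon v \to 0$ for $v$ in the dense subspace $\cK$, and the uniform bound $\|C_\varepsilon\|_{\cL(\cH)} \leq C$ promotes this to strong convergence on all of $\cH$ via a standard $\epsilon/3$ density argument. Hence $L u_\varepsilon \to L_{\rm max} u$ strongly, which by definition of the minimal operator places $u$ in $D(L_{\rm min})$ with $L_{\rm min} u = L_{\rm max} u$, completing the proof.

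The main obstacle is justifying the commutator identity rigorously when $u \notin \cK$: the formal manipulation $L R_\varepsilon u = R_\varepsilon L u + [L, R_\varepsilon] u$ is not directly meaningful because $L u$ is undefined, so the identity has to be extracted weakly by pairing against test vectors $v \in \cK$ and then closed into an $\cH$-equality through the density of $\cK$. This is exactly the point where the self-adjoint (or at least adjoint-compatible) character of the smoother $R_\varepsilon$ enters tacitly, and the whole scheme reproduces the classical Nelson / Faris--Lavine commutator argument.
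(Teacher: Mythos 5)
Your proposal is correct and follows exactly the same route as the paper's own proof: set $u_\varepsilon := R_\varepsilon u$ for $u \in D(L_{\rm max})$, decompose $L u_\varepsilon$ into a smoothed piece $R_\varepsilon L_{\rm max} u$ plus a commutator piece, and pass to the limit $\varepsilon \to 0$ to land in $D(L_{\rm min})$. The difference is one of rigor, and you have correctly located where the paper's one-line proof is silent. The paper simply writes $Lu_\varepsilon = R_\varepsilon Lu + [L, R_\varepsilon]u$ and invokes (\ref{smsa}); it does not say why the commutator term makes sense when $u \notin \cK$ (one must invoke the bounded extension guaranteed by the uniform estimate), nor does it note that the strong convergence $[R_\varepsilon, L]v \to 0$ is hypothesised only on the dense subspace $\cK$ and needs the $\varepsilon/3$ upgrade you describe to apply to $u \in \cH$. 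More importantly, you correctly flag that the weak derivation of the identity against test vectors $v \in \cK$ requires passing $R_\varepsilon$ across the inner product, which works cleanly only when $R_\varepsilon$ is (formally) self-adjoint; that hypothesis is not listed among (\ref{sm})--(\ref{smsa}), but it does hold for the two smoothers the paper actually uses, namely $R_\varepsilon = {\rm e}^{-\varepsilon H}$ in Proposition \ref{propone} and $R_N = (1 + H^{\tm}/N)^{-1}$ in Section 2, so all applications are unaffected. Your write-up is therefore a careful completion of the paper's terse argument rather than a genuinely different approach; the only caveat worth recording is that under the stated hypotheses alone the key identity $L R_\varepsilon u = R_\varepsilon L_{\rm max} u + C_\varepsilon u$ picks up an error term $L(R_\varepsilon - R_\varepsilon^*)v$ when tested against $v$, so either $R_\varepsilon = R_\varepsilon^*$ (or a comparable compatibility of $R_\varepsilon^*$ with $\cK$ and the commutator hypothesis) should be added to the statement.
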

\begin{proof}
	We  have to prove that $D(L_{\rm max})\subseteq D(L_{\rm min})$. Let  $u\in D(L_{\rm max})$. 
	Then by property (\ref{sm}), $u_\varepsilon:=R_\varepsilon u\in D(L_{\rm min})$  and
	$u_\varepsilon\rightarrow u$  in ${\cH}$. But we have
	$$
	Lu_\varepsilon = R_\varepsilon Lu + [L, R_\varepsilon]Au \ . 
	$$
	So by assumption (\ref{smsa})  we get  that $\di{\lim_{\varepsilon\rightarrow 0}Lu_\varepsilon = Lu}$   so  $u\in D(L_{\rm min})$.
	$\square$
\end{proof}
The following criterium  apply Proposition \ref{propsm}    and is due to Nelson \cite{ne}.
\begin{proposition}\label{propone}
	Let  $H$  be a positive self-adjoint operator  in ${\cal H}$ with a dense domain $D(H)$. \\
	Let  $L$  be a linear and symmetric operator  from $D(H)$ into ${\cal H}$.\\
	Assume   that the 
	operators $LH^{-\tau}$ ($\tau >0$)  and  $H^{-1/2}[H, L]H^{-1/2}$  are  bounded on ${\cal H}$  then   $(L, D(H), \cH)$  is essentially self-adjoint.
\end{proposition}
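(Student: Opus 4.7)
The plan is to apply Proposition \ref{propsm} with the smoothing family $R_\varepsilon := (1+\varepsilon H)^{-1}$, $\varepsilon \in (0,1]$. Since $H$ is positive and self-adjoint, the spectral theorem immediately yields the three smoothing axioms (\ref{sm}): $\|R_\varepsilon\|_{\cL(\cH)} \leq 1$ uniformly in $\varepsilon$, the inclusion $R_\varepsilon\cH \subseteq D(H) = \cK$, and strong convergence $R_\varepsilon u \to u$ in $\cH$ for every $u$, as $\varepsilon\to 0$. It only remains to verify the two commutator estimates (\ref{smsa}).

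The core of the argument is to establish the identity
\begin{equation*}
[R_\varepsilon, L] \;=\; -\varepsilon\, R_\varepsilon\,[H,L]\,R_\varepsilon \;=\; -\bigl(\sqrt{\varepsilon}\,R_\varepsilon H^{1/2}\bigr)\; B\;\bigl(\sqrt{\varepsilon}\,H^{1/2} R_\varepsilon\bigr),
\end{equation*}
where $B := H^{-1/2}[H,L]H^{-1/2}$ is bounded on $\cH$ by hypothesis and $R_\varepsilon$ commutes with every bounded Borel function of $H$. The spectral theorem gives
\begin{equation*}
\bigl\|\sqrt{\varepsilon}\,H^{1/2} R_\varepsilon\bigr\|_{\cL(\cH)} \;=\; \sup_{\lambda\geq 0}\frac{\sqrt{\varepsilon\lambda}}{1+\varepsilon\lambda} \;\leq\; \tfrac12,
\end{equation*}
and the same estimate holds for the adjoint factor by self-adjointness of $R_\varepsilon$. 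Consequently $\|[R_\varepsilon,L]\|_{\cL(\cH)} \leq \|B\|/4$, which is the uniform bound in (\ref{smsa}). For the strong convergence, if $u \in D(H) \subseteq D(H^{1/2})$ then $H^{1/2} R_\varepsilon u \to H^{1/2} u$ in $\cH$ by spectral calculus, hence
\begin{equation*}
\bigl\|[R_\varepsilon, L]\,u\bigr\|_{\cH} \;\leq\; \sqrt{\varepsilon}\cdot \tfrac12\cdot \|B\|\cdot \bigl\|H^{1/2} R_\varepsilon u\bigr\|_{\cH} \;=\; O(\sqrt{\varepsilon}) \;\longrightarrow\; 0.
\end{equation*}
Proposition \ref{propsm} then gives essential self-adjointness of $(L,\cK,\cH)$.

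The delicate step is the commutator identity itself, because neither $LR_\varepsilon u \in D(H)$ nor $HR_\varepsilon u \in D(L)$ follows automatically when $L$ and $H$ are unbounded. I would proceed in two stages. First, since $HR_\varepsilon = \varepsilon^{-1}(I - R_\varepsilon)$ is everywhere defined and bounded, a short induction shows $R_\varepsilon\bigl(D(H^k)\bigr) \subseteq D(H^{k+1})$; so for $u \in D(H)$ one has $R_\varepsilon u \in D(H^2)$, and in particular $HR_\varepsilon u \in D(H) = D(L)$, which legitimizes the $LH$ side of $[H,L]R_\varepsilon u$. Second, the assumption that $LH^{-\tau}$ is bounded extends $L$ continuously to $D(H^\tau)$ and furnishes the a priori estimate $\|LR_\varepsilon u\|_{\cH} \leq C\,\|H^\tau R_\varepsilon u\|_{\cH}$, which controls the $HL$ side through the bounded form $B$. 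With both terms under control one verifies the identity first on the dense set $D(H^2)$, and then the bounds above extend it by continuity to all of $\cK$ and of $\cH$. The hypotheses thus interact in the standard Nelson way: $LH^{-\tau}$ bounded makes the symbolic manipulation meaningful, while the factor $B$ supplies the decisive gain $\sqrt{\varepsilon}$ driving the commutator strongly to zero.
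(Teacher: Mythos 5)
Your argument is correct and, modulo the usual care at the very end, complete; it takes a genuinely different route than the paper. The paper instantiates Proposition~\ref{propsm} with the heat semigroup $R_\varepsilon = {\rm e}^{-\varepsilon H}$, expands the commutator via the Duhamel formula $[{\rm e}^{-\varepsilon H},L] = -\int_0^\varepsilon {\rm e}^{-(\varepsilon-s)H}[L,H]{\rm e}^{-sH}\,ds$, sandwiches the bounded operator $B:=H^{-1/2}[H,L]H^{-1/2}$ between factors $H^{1/2}{\rm e}^{-sH}$ of norm $O(s^{-1/2})$, and closes the uniform estimate with a Beta-function integral; the strong convergence $[{\rm e}^{-\varepsilon H},L]u\to 0$ is then proved separately from the $LH^{-\tau}$ hypothesis. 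You instead take the resolvent $R_\varepsilon=(1+\varepsilon H)^{-1}$, for which the identity $[R_\varepsilon,L]=-\varepsilon R_\varepsilon[H,L]R_\varepsilon$ is purely algebraic; the same $B$-sandwich together with the elementary spectral bound $\sup_{\lambda\geq 0}\sqrt{\varepsilon\lambda}/(1+\varepsilon\lambda)\le \tfrac12$ then gives both the uniform bound $\|[R_\varepsilon,L]\|\le\|B\|/4$ and a quantitative decay $\|[R_\varepsilon,L]u\|=O(\sqrt\varepsilon\,\|H^{1/2}u\|)$ on $D(H)$, so the strong convergence comes for free and $LH^{-\tau}$ bounded is never actually used in the estimates. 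Both proofs exploit the identical mechanism --- the smoothing family absorbs a factor of $H^{1/2}$ at cost $O(\varepsilon^{-1/2})$ which is handled by $B$ --- but your resolvent version is shorter and bypasses the time integral. The one soft spot is the closing sketch of why $[R_\varepsilon,L]=-\varepsilon R_\varepsilon[H,L]R_\varepsilon$ is rigorous: worrying about whether $LR_\varepsilon u\in D(H)$ is the wrong tack, and invoking $LH^{-\tau}$ does not resolve it; the clean justification is to verify the identity in the sesquilinear-form sense, $\langle v,[R_\varepsilon,L]u\rangle = -\varepsilon\langle H^{1/2}R_\varepsilon v,\,B\,H^{1/2}R_\varepsilon u\rangle$ for $u,v\in D(H)$, using only the algebraic relation $R_\varepsilon = 1-\varepsilon H R_\varepsilon$ and the symmetry of $L$ on $D(H)$. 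This sidesteps the domain question entirely and involves no more care than the paper implicitly takes when it invokes its own Duhamel formula.
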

\begin{proof}
	Let us repeat  here  the  rather simple proof.  We have to verify that the assumptions of Proposition  \ref{propsm}    are satisfied with
	$R_\varepsilon = {\rm e}^{-\varepsilon H}$.\\
	First we have, for $u\in D(H^\tau)$,  
	$$
	[{\rm e}^{-\varepsilon H}, L]u =  {\rm e}^{-\varepsilon H}Lu - L {\rm e}^{-\varepsilon H}u \ . 
	$$
	We have $Lu\in {\cH}$  so $\di{\lim_{\varepsilon\rightarrow 0}\Vert{\rm e}^{-\varepsilon H}Lu-Lu\Vert_{\cH} = 0}$. 
	Writing $ L {\rm e}^{-\varepsilon H}u = (LH^{-\tau})({\rm e}^{-\varepsilon H}H^\tau u)$ we also have 
	$ \di{\lim_{\varepsilon\rightarrow 0}\Vert L {\rm e}^{-\varepsilon H}u-Lu\Vert_{\cH} = 0}$. So we have proved
	\beq
	\lim_{\varepsilon\rightarrow 0}\Vert[{\rm e}^{-\varepsilon H}, L]u\Vert_{\cH} = 0,\;\; \forall u\in D(H^\tau).
	\eeq
	Let  us estimate  now $\Vert  [{\rm e}^{-\varepsilon H}, L]\Vert_{\cL(\cH)}$.
	We  start with   the following  known formula
	\beq
	\label{hcom0}
	[{\rm e}^{-\varepsilon H}, L]  =  -\int_0^\varepsilon{\rm e}^{-(\varepsilon-s) H}[L, H]{\rm e}^{-sH}ds \ . 
	\eeq
	Following \cite{ne} we have 
	\beq
	\label{hcom1}
	[{\rm e}^{-\varepsilon H}, L]  =  
	-\int_0^\varepsilon{\rm e}^{-(\varepsilon-s) H}H^{1/2}(H^{-1/2}[L, H]H^{-1/2})H^{1/2}{\rm e}^{-sH}ds \ . 
	\eeq
	Using that
	$$
	\Vert H^{1/2}{\rm e}^{-sH}\Vert_{\cL(\cH)} =\sup_{\lambda\geq 0}\lambda^{1/2}{\rm e}^{-s\lambda}\leq Cs^{-1/2}
	$$
	and the beta function  computation: $\int_0^\epsilon(\epsilon-s)^{-1/2}s^{-1/2}ds = B(1/2, 1/2) = \frac{\pi}{2}$  we get 
	$$
	\sup_{\epsilon\in]0, 1]}\Vert  [{\rm e}^{-\varepsilon H}, L]\Vert_{\cL(\cH)} < +\infty.
	$$
\end{proof}

\section{Technical estimates for perturbations smooth in time}
\label{app:te}
In this section we prove some technical estimates which are useful in the proof of Theorem \ref{thm:grest}.	\\

First we state a result about boundedness of the resolvent. In all the section $H$ will be a self-adjoint, positive  operator in $\cH^0$ fulfilling (Hgap).
Let  $H_W(t) := H+ W(t)$,  $W(t)$ a symmetric operator fulfilling (Vs$)_n$.
%
\begin{lemma}
\label{sandres} 
Assume that  $W$ fulfills {\rm (Vs$)_n$}. Then 
  for every $z \notin\sigma(H_W(t))\cup\sigma(H)$  such that 
  $$R_{n,0} \, \, \Vert H^\nu(H-z)^{-1}\Vert \leq \frac{1}{2} \ , $$
    we have for any integer  $0 \leq p \leq n$, any real  $0 \leq \theta \leq 1$, 
\begin{align}
\label{HL1}
\sup_{t \in \R} 
\Vert H^{p+\theta}(H_W(t)-z)^{-1}H^{-p}\Vert \leq 2 \Vert H^\theta (H-z)^{-1} \Vert \ . 
\end{align}
\end{lemma}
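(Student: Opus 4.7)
The plan is to combine a single resolvent identity with a conjugation argument, exploiting that everything commutes with functions of $H$.

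First I would write the perturbative identity
\[
H_W(t) - z = \bigl( \uno + [W(t)H^{-\nu}]\,[H^\nu (H-z)^{-1}]\bigr)\,(H-z),
\]
so that, setting $T(t) := [W(t)H^{-\nu}]\,[H^\nu (H-z)^{-1}]$,
\[
(H_W(t) - z)^{-1} = (H-z)^{-1}\,(\uno + T(t))^{-1}.
\]
The smallness hypothesis $R_{n,0}\,\|H^\nu(H-z)^{-1}\| \leq 1/2$ together with the $p=0$ case of (Vs$)_n$ gives $\|T(t)\|_{\cL(\cH^0)} \leq 1/2$, so the Neumann series for $(\uno + T(t))^{-1}$ converges in $\cL(\cH^0)$.

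Next I would conjugate by $H^p$. Since $H^p$ commutes with $H^\nu(H-z)^{-1}$ by functional calculus,
\[
H^p T(t) H^{-p} = [H^p\, W(t)\, H^{-p-\nu}]\,[H^\nu(H-z)^{-1}].
\]
By (Vs$)_n$ with $\ell=0$, the first bracket has norm $\leq R_{n,0}$, so $\|H^p T(t) H^{-p}\|_{\cL(\cH^0)} \leq 1/2$. A direct algebraic check shows
\[
H^p (\uno + T(t))^{-1} H^{-p} = (\uno + H^p T(t) H^{-p})^{-1},
\]
which by Neumann series has $\cL(\cH^0)$-norm bounded by $2$, uniformly in $t\in\R$.

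Finally, using once more that $H^{p+\theta}$ commutes with $(H-z)^{-1}$, I would factor
\[
H^{p+\theta}(H_W(t)-z)^{-1} H^{-p}
 = \bigl[H^{\theta}(H-z)^{-1}\bigr]\,\bigl[H^p (\uno + T(t))^{-1} H^{-p}\bigr],
\]
and take operator norms to obtain \eqref{HL1}. There is no real obstacle: the only care needed is to perform the conjugation by $H^p$ \emph{before} bounding $T(t)$, so that assumption (Vs$)_n$ can be applied with the correct value of $p$; once this is done, the estimate is immediate.
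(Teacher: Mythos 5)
Your proof is correct and follows essentially the same route as the paper: both start from the resolvent identity for $H_W(t)-z$ in terms of $H-z$ and $W(t)$, conjugate by $H^p$ so that the hypothesis (Vs$)_n$ can be invoked with that value of $p$, and then absorb the perturbation using the smallness hypothesis (you via an explicit Neumann series for $(\uno + H^p T H^{-p})^{-1}$, the paper via an equivalent fixed-point/absorption estimate after rewriting $R_W = R_0 - R_W W R_0$ and conjugating). The factorization $H^{p+\theta} R_W H^{-p} = H^\theta R_0 \cdot (\uno + H^p T H^{-p})^{-1}$ is exactly what the paper's computation yields after solving for the conjugated resolvent.
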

\begin{proof}
This  is a consequence of the resolvent identity:
\begin{equation}
(H_W(t)-z)^{-1} = (H-z)^{-1} -(H_W(t)-z)^{-1}W(t)(H-z)^{-1} \ ,
\end{equation}
so we have for $0 \leq \theta \leq 1$, $0 \leq p \leq n$, 
\begin{equation}
H^{p+\theta}(H_W(t)-z)^{-1}H^{-p} = H^\theta (H-z)^{-1}  - \left(H^{p+\theta}(H_W(t)-z)^{-1}H^{-p}\right)
\left(H^pW(t)H^{-p-\nu}\right)H^\nu(H-z)^{-1} \ . 
\end{equation}
Provided 
$$
\sup_{t \in \R} \Vert H^p\, W(t) \, H^{-p-\nu}\Vert \, \Vert H^\nu(H-z)^{-1}\Vert \leq R_{n,0} \, \, \Vert H^\nu(H-z)^{-1}\Vert \leq \frac{1}{2} \ , 
$$
estimate (\ref{HL1}) follows.
\end{proof}

    \begin{lemma}
   \label{lem:an02}   
   Fix $n \geq 0 $. Let $P(t)$ be an operator fulfilling {\rm (Vs$)_n$} with 
    	\begin{equation}
   	\label{P.est02}
   	\sup_{t\in\R}\Vert H^p \, \partial_t^\ell P(t) H^{-p-\nu}\Vert \leq D_{n,\ell}  \ , \quad \forall \ell \geq 0 \ , \ 0\leq p \leq n \ . 
   	\end{equation}
	Consider the operator $H + P(t)$. Then, provided $\tJ$ is sufficiently large, the following holds true:
	\begin{itemize}
	\item[(i)] $H + P(t)$ fulfills $\wt{{\rm(Hgap)}}$ uniformly in time $t\in\R$. 
	\item[(ii)] Let $\Gamma_j$ be as in \eqref{gamma}.  Any $\lambda \in \Gamma_j$ belongs to the resolvent set of the operator $H+P(t)$.  Denote $R_P(t,\lambda):= \left(H + P(t) - \lambda \right)^{-1}$. Then for any $\lambda \in \Gamma_j$, $j \geq 1$ one has  
	\begin{align}
\label{res01}
&\sup_{t \in \R}\norm{H^{p} R_P(t, \lambda) H^{-p}} \leq \frac{2}{{\rm dist}(\lambda, \sigma(H))}  \ , \quad \forall 0 \leq p \leq n \\
\label{res02}
&\sup_{t \in \R} \norm{H^p \, \partial_t^\ell R_P(t, \lambda) \, H^{-p}} \leq \frac{C_{n,\ell}  }{ {\rm dist} (\lambda, \sigma(H))}  \frac{1}{\wt\Delta_{j-1}^\delta} \ , \quad \forall 0 \leq p \leq n \ , \ \ell \geq 1 \ ,
\end{align}
where $C_{n,\ell}$ does not depend on $j, \tJ$.
	\item[(iii)]  For any $j \geq 1$ define the projector 

\begin{equation}
\label{eq:prj}
\Pi_j(t) := -\frac{1}{2\pi \im} \oint_{\Gamma_j} R_P(t,\lambda) \, d\lambda \ .  
\end{equation}
It fulfills 
$$
    \sup_{t \in \R} \norm{H^p \, \partial_{t}^{\ell+1}  \Pi_j(t) \, H^{-p}} \leq  \frac{C_{n,\ell} }{\wt \Delta_{j-1}^{\delta}} \ , \qquad \forall 0 \leq p \leq n, \ \ell \geq 0 \ ,
   $$
   where $C_{n,\ell}$ does not depend on $j, \tJ$.
   \end{itemize}
   \end{lemma}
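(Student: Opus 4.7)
I would split the proof according to the three items.

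Part (i) is a direct application of Lemma \ref{perturb} with $V(t):=P(t)$: hypothesis \eqref{P.est02} with $\ell=0$ gives $\sup_t\|P(t)H^{-\nu}\|\le D_{n,0}$, so the smallness condition \eqref{condz} is satisfied as soon as $\tJ$ is large enough. This yields simultaneously the spectral gap property $\widetilde{\mathrm{(Hgap)}}$ for $H+P(t)$ with the clusters of \eqref{new.clus}, and the fact that every $\lambda\in\Gamma_j$ lies in the resolvent set of $H+P(t)$.

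For the base resolvent bound \eqref{res01}, I would apply Lemma \ref{sandres} with $W:=P(t)$, $z:=\lambda$ and $\theta:=0$; its smallness hypothesis follows from the bound $\|H^\nu(H-\lambda)^{-1}\|\le \widetilde C_H/\widetilde\Delta_{j-1}^\delta$ of Lemma \ref{lem:est:res0}, once $\tJ$ is large enough. For the time-derivative bound \eqref{res02} with $\ell\ge 1$, I would first derive by induction, starting from $\partial_t R_P = -R_P(\partial_t P)R_P$, the Fa\`a di Bruno-type expansion
\begin{equation*}
\partial_t^\ell R_P(t,\lambda) \;=\; \sum_{k=1}^{\ell}(-1)^{k}\sum_{\substack{n_1+\cdots+n_k=\ell\\ n_i\geq 1}}c_{n_1,\ldots,n_k}\,R_P\,(\partial_t^{n_1}P)\,R_P\cdots(\partial_t^{n_k}P)\,R_P.
\end{equation*}
For each summand I would insert the identities $H^{-p}H^p$ and $H^{-p-\nu}H^{p+\nu}$ at the junctions, expressing the $H$-conjugated operator as an alternating product of factors of the form $H^p R_P H^{-p}$, $H^{p+\nu}R_P H^{-p}$ and $H^p(\partial_t^{n_i}P)H^{-p-\nu}$. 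Each factor is bounded by Lemma \ref{sandres} (with $\theta=0$, respectively $\theta=\nu$), Lemma \ref{lem:est:res0}, and hypothesis \eqref{P.est02}, yielding a pointwise bound of order $[\mathrm{dist}(\lambda,\sigma(H))]^{-1}\,\widetilde\Delta_{j-1}^{-k\delta}$. Since $k\ge 1$ and $0<\delta\le 1$, \eqref{res02} follows.

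For (iii), Cauchy's formula gives $\partial_t^{\ell+1}\Pi_j=-(2\pi\im)^{-1}\oint_{\Gamma_j}\partial_t^{\ell+1}R_P\,d\lambda$. I expect this step to be the main technical difficulty: applying the pointwise bound from \eqref{res02} together with the length of $\Gamma_j$ (which can scale as $2^{(\tJ+j-1)(\mu+1)}$ and so exceed $\widetilde\Delta_{j-1}\sim 2^{(\tJ+j-1)\mu}$) produces an estimate that does not decay in $j$. The remedy is to exploit the commutation $[R_P(t,\lambda),\Pi_j(t)]=0$, which holds because both operators are functions of $L:=H+P(t)$. In each summand $R_P\,A_1\,R_P\cdots A_k\,R_P$ I would insert $\1=\Pi_j+(\1-\Pi_j)$ at every junction; upon integration along $\Gamma_j$ only the residues at the eigenvalues of $L$ inside $\widetilde\sigma_j$ survive, and the result can be expressed in terms of the reduced resolvent $S_j(t):=-(2\pi\im)^{-1}\oint_{\Gamma_j}(\lambda-E_j)^{-1}R_P(t,\lambda)(\1-\Pi_j(t))\,d\lambda$ (for any fixed $E_j\in\widetilde\sigma_j$). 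A variant of Lemma \ref{lem:est:res0} applied to $L(\1-\Pi_j)$ gives $\|H^{p+\nu}S_jH^{-p}\|\le C/\widetilde\Delta_{j-1}^{\delta}$, while $\|H^p\Pi_jH^{-p}\|$ is uniformly bounded (by writing $\Pi_j-\Pi_j^{(0)}=(2\pi\im)^{-1}\oint R_P\,P\,(H-\lambda)^{-1}\,d\lambda$ and applying the same residue trick to avoid a $|\Gamma_j|$ factor, where $\Pi_j^{(0)}$ is the spectral projector of $H$ onto $\widetilde\sigma_j$). Multiplying these bounds together with \eqref{P.est02} yields the claimed estimate on $\partial_t^{\ell+1}\Pi_j$.
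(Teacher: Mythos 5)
Your parts (i) and (ii) follow the paper closely: (i) is precisely the application of Lemma~\ref{perturb}, and (ii) is the expansion of $\partial_t^\ell R_P$ into products of resolvents and derivatives of $P$ conjugated by $H^p$, estimated factor by factor via Lemma~\ref{sandres} (with $\theta=0$ and $\theta=\nu$), Lemma~\ref{lem:est:res0}, and \eqref{P.est02}. For (iii) you take a genuinely different route, and you correctly identify the dangerous step: multiplying the pointwise bound \eqref{res02} by the contour length $|\Gamma_j|\sim 2^{(\tJ+j)(\mu+1)}$ would give growth of order $2^{(\tJ+j)(1-\mu\delta)}$, which is unbounded in $j$ whenever $\mu\delta<1$. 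The paper's remedy, however, is lighter than yours: the bound \eqref{res02} carries the factor $1/{\rm dist}(\lambda,\sigma(H))$, and one integrates that factor along $\Gamma_j$ rather than replacing it by its supremum; the paper then asserts in \eqref{bb0} (after deforming $\Gamma_j$ to two vertical lines through the middle of the gaps) that $\frac{1}{2\pi}\oint_{\Gamma_j}|d\lambda|/{\rm dist}(\lambda,\sigma(H))\le 2$. As written \eqref{bb0} is imprecise ($\int_\R dx/\sqrt{a^2+x^2}$ diverges); what saves the estimate is the sharper resolvent decay $\|H^\nu(H-z)^{-1}\|\lesssim 2^{(\tJ+j-1)\nu}/{\rm dist}(z,\sigma(H))^{1-\nu}$ from \eqref{est:res1}, which makes the integrand of $\partial_t^{\ell+1}\Pi_j$ on the deformed contour decay like ${\rm dist}^{-(2-\nu)}$, a summable power since $\nu<1$, and a short computation with the powers of $2$ then yields exactly $\wt\Delta_{j-1}^{-\delta}$. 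Your reduced-resolvent alternative is a familiar device from adiabatic theory for an \emph{isolated} eigenvalue, but here $\wt\sigma_j$ is an entire cluster that may contain many eigenvalues --- or, in this abstract setting, even continuous spectrum --- so ``the residues at the eigenvalues of $L$ inside $\wt\sigma_j$'' and the choice of a single $E_j\in\wt\sigma_j$ are not directly available; you would have to carry out the Kato reduction fibered over every spectral point inside the cluster and control the resulting sum, which is substantially heavier bookkeeping than the single contour deformation the paper uses.
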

   \begin{proof}
   (i) It follows by  Lemma \ref{perturb} provided $\tJ$ is sufficiently large to fulfill condition \eqref{condz}. Thus $\sigma(H+ P(t)) \subseteq \bigcup_{j\geq 1} \wt\sigma_j$, (with $\wt\sigma_j$ as in \eqref{new.clus}). 
      
   (ii) By the previous item  each $\Gamma_j$ is contained in the resolvent set of $H+P(t)$. 
To estimate $\norm{R_P(t, \lambda)} $ we use Lemma \ref{sandres} and  Lemma \ref{lem:est:res0}. Indeed for $\tJ$ sufficiently large and $\lambda \in \Gamma_j$ we have 
$$
D_{n,0} \norm{ H^\nu (H- \lambda)^{-1}} \leq D_{n,0} \, \frac{\wt \tC_H }{ \wt\Delta_{j-1}^{\delta}} \leq D_{n,0} \, \frac{\wt \tC_H}{ 2^{\tJ \mu \delta}} \leq \frac{1}{2} \ , 
$$
hence we can apply Lemma \ref{sandres} with $\theta = 0$ to obtain estimate \eqref{res01}. 

To prove \eqref{res02}, use the formula
   \begin{align}
   \label{ris.der0}
  & \partial_{t}^\ell R_P(t, \lambda) 
  =    \sum_{k =1}^{\ell} \sum_{n_1, \ldots, n_k \in \N \atop n_1 + \cdots +n_k = \ell} \binom{\ell}{n_1 \cdots n_k} \, R_P(t, \lambda) \, (\partial_{t}^{n_1} P(t)) \, R_P(t, \lambda) \, (\partial_{t}^{n_2} P(t)) \, \cdots   (\partial_{t}^{n_k} P(t)) \, R_P(t, \lambda)
   \end{align}
   and take the conjugates with $H^p$ to obtain 
   \begin{align}
   \label{ris.der1}
& H^p \,   \partial_{t}^\ell R_P(t, \lambda) \, H^{-p} \\
\notag
   & = \sum_{k =1}^{\ell} \sum_{n_1, \ldots, n_k \in \N \atop n_1 + \cdots +n_k = \ell} \binom{\ell}{n_1 \cdots n_k} \, (H^p R_P(t, \lambda) H^{-p}) \, [H^p (\partial_{t}^{n_1} P(t)) \, H^{-p-\nu}] \,  [H^{p+\nu} \, R_P(t, \lambda) \, H^{-p}] \cdots\\
   \notag
   & \qquad \cdots     [H^{p}\, (\partial_{t}^{n_k} P(t)) \, H^{-p-\nu}] \, [H^{p+\nu} \, R_P(t, \lambda)H^{-p}]  \ .
   \end{align}
Then using estimate \eqref{res01}, Lemma \ref{sandres} with $\theta = \nu$, estimates \eqref{P.est02}   
and \eqref{est:res0}, we obtain   for $\lambda \in \Gamma_j$, $j \geq 1$
   \begin{align}
   \sup_{\ t \in \R} \norm{H^p \, \partial_{t}^\ell R_P(t, \lambda) H^{-p} } 
   &\leq \frac{C_{n,\ell}  }{ {\rm dist} (\lambda, \sigma(H))}  \frac{1}{\wt\Delta_{j-1}^\delta}\ , \quad \forall \ell \geq 1 \ , \ 0 \leq p \leq n \ , 
   \end{align}
where the $C_{n,\ell}$ can be chosen independent of $j, \tJ$.
(iii) For  $\ell \geq 1$, one has $H^p \, \partial_{t}^\ell \Pi(t) \, H^{-p} = -\frac{1}{2\pi \im} \oint_{\Gamma_j} H^p \, \partial_{t}^\ell R_P(t, \lambda) H^{-p} \, d\lambda $, hence by \eqref{res02}
       \begin{align*}
  \sup_{ t \in \R} \| H^p \, \partial_{t}^\ell \Pi(t) \, H^{-p} \| \leq    \frac{C_{n,\ell}}{\wt\Delta_{j-1}^\delta} \,  \frac{1}{2\pi}\oint_{\Gamma_j} \frac{ \, d\lambda}{{\rm dist} (\lambda, \sigma(H))} \leq  \frac{C_{n,\ell} }{\wt \Delta_{j-1}^\delta}
   \end{align*}
   where to pass from the first to the second inequality  we used that, deforming the contour $\Gamma_j$ to two vertical lines passing between the middle of the gaps one has   
    \begin{equation}
   \label{bb0}
   \begin{aligned}
   \frac{1}{2\pi}\oint_{\Gamma_j} \frac{d\lambda}{{\rm dist} (\lambda, \sigma(H))} \leq    \frac{1}{2\pi} \int_{-\infty}^\infty \left(\frac{1}{(\wt \Delta_{j-1}/2)^2 + x^2)^{1/2}} +  \frac{1}{(\wt \Delta_{j}/2)^2 + x^2)^{1/2}}\right) \ dx \leq 2
   \end{aligned}
   \end{equation}
     \end{proof}
 \begin{lemma}
   \label{lem:an03}   
  Fix $n \geq 0$.   Let $P(t), Q(t)$ be operators fulfilling {\rm (Vs$)_n$} with estimates as in \eqref{P.est02}. 
   Furthermore assume that $B(\cdot) \in C^\infty_b(\R, \cL(\cH^{2p}))$, $\forall 0 \leq p \leq n$, fulfilling the  estimates
   \begin{equation}
   \label{B.aa0}
  \sup_{t \in \R} \norm{H^p \, \partial_{t}^\ell B(t) \, H^{-p} } \leq b_{n, \ell} \ , \quad \forall \ell \geq 0 , \ 0 \leq p \leq n \ .   
   \end{equation}
Provided $\tJ$ is sufficiently large,  the operator
   $$
   K(t) := -\frac{1}{2\pi \im} \oint_{\Gamma_j} R_P(t,\lambda) \, B(t) \, R_Q(t,\lambda) \,  d\lambda
   $$
   is well defined and bounded from $\cH^{2p}$ to itself, $\forall 0 \leq p \leq n$, and  fulfills
   $$
   \sup_{t \in \R} \norm{H^p \, \partial_{t}^\ell  K(t) \, H^{-p}}
    \leq 
   \frac{ C_{n,\ell}}{  \wt \Delta_{j-1}} \, \sup_{l \leq \ell} b_{n,l} \ , \qquad \forall \ell \geq 0 \ , \ 0 \leq p \leq n \ .   
   $$
   \end{lemma}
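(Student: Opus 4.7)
The plan is to insert suitable factors of $H^{\pm p}$ between the operators in the integrand, apply Leibniz rule in $t$, use the resolvent bounds from Lemma \ref{lem:an02} on each factor, and finally evaluate the resulting scalar contour integral over $\Gamma_j$. The key scalar estimate is that, by deforming $\Gamma_j$ to two vertical lines passing through the midpoints of the adjacent gaps (exactly as in \eqref{bb0}), one has
\begin{equation*}
\frac{1}{2\pi}\oint_{\Gamma_j} \frac{d\lambda}{\mathrm{dist}(\lambda,\sigma(H))^2} \;\leq\; \frac{C}{\wt\Delta_{j-1}},
\end{equation*}
which is where the factor $\wt\Delta_{j-1}^{-1}$ in the conclusion comes from.

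First I would treat the case $\ell=0$. Write
\begin{equation*}
H^p K(t) H^{-p} = -\frac{1}{2\pi\im}\oint_{\Gamma_j} \bigl(H^p R_P(t,\lambda) H^{-p}\bigr)\bigl(H^p B(t) H^{-p}\bigr)\bigl(H^p R_Q(t,\lambda) H^{-p}\bigr) d\lambda.
\end{equation*}
By \eqref{res01} each resolvent factor is bounded by $2/\mathrm{dist}(\lambda,\sigma(H))$, and by hypothesis \eqref{B.aa0} the middle factor is bounded by $b_{n,0}$. The scalar estimate above then gives the claim with $C_{n,0}$ independent of $j,\tJ$. This shows in particular that $K(t):\cH^{2p}\to\cH^{2p}$ for all $0\le p\le n$, so $K(t)$ is well defined with the required regularity.

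For $\ell\ge 1$ I would apply the Leibniz rule inside the integral:
\begin{equation*}
H^p\partial_t^\ell K(t)H^{-p} = -\frac{1}{2\pi\im}\sum_{\ell_1+\ell_2+\ell_3=\ell}\binom{\ell}{\ell_1,\ell_2,\ell_3}\oint_{\Gamma_j}\!\!\bigl(H^p\partial_t^{\ell_1}R_P\,H^{-p}\bigr)\bigl(H^p\partial_t^{\ell_2}B\,H^{-p}\bigr)\bigl(H^p\partial_t^{\ell_3}R_Q\,H^{-p}\bigr)d\lambda.
\end{equation*}
For each factor I would use: $\|H^p R_P H^{-p}\|\leq 2/\mathrm{dist}(\lambda,\sigma(H))$ and, for $\ell_i\ge1$, the stronger bound \eqref{res02}, namely $\|H^p\partial_t^{\ell_i}R_P H^{-p}\|\le C_{n,\ell_i}/(\mathrm{dist}(\lambda,\sigma(H))\,\wt\Delta_{j-1}^\delta)$, and similarly for $R_Q$; the middle factor is bounded by $b_{n,\ell_2}$. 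In every term of the Leibniz expansion the integrand is therefore bounded by a constant times $b_{n,\ell_2}/\mathrm{dist}(\lambda,\sigma(H))^2$ (the extra $\wt\Delta_{j-1}^{-\delta}$ factors only help since $\delta>0$), and integrating over $\Gamma_j$ yields the claimed $C_{n,\ell}\,\sup_{l\le\ell}b_{n,l}/\wt\Delta_{j-1}$ bound.

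There is no deep obstacle; the computation is essentially the same bookkeeping used in the derivation of \eqref{ris.der1}--\eqref{res02} in Lemma \ref{lem:an02}. The only point to handle carefully is the insertion of $H^{-p}H^p$ between consecutive factors so that each block falls into one of the bounded families above (the resolvent blocks of the form $H^{p+\nu}R_P H^{-p}$ that appear when unpacking $\partial_t^{\ell_i}R_P$ via \eqref{ris.der1} are already controlled by Lemma \ref{sandres} combined with \eqref{est:res0}). All constants $C_{n,\ell}$ depend only on $n,\ell,D_{n,\cdot}$ and on universal constants, never on $j$ or $\tJ$, provided $\tJ$ is chosen large enough that Lemma \ref{lem:an02} applies to both $H+P(t)$ and $H+Q(t)$.
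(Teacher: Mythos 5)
Your proposal matches the paper's proof: expand $\partial_t^\ell K(t)$ by Leibniz inside the contour integral (the paper's \eqref{est:K10}--\eqref{est:K40}), bound each resolvent block with \eqref{res01} or \eqref{res02}, use \eqref{B.aa0} for the middle factor, and close with the contour-deformation integral as in \eqref{bb0}, which supplies the $\wt\Delta_{j-1}^{-1}$. Your remark that the extra $\wt\Delta_{j-1}^{-\delta}$ factors coming from differentiated resolvents can simply be discarded is exactly the right bookkeeping, and the $H^{\pm p}$ insertions you describe are the same ones used in \eqref{ris.der1}.
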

   \begin{proof}
  By Lemma \ref{lem:an02}, provided $\tJ$ is sufficiently large,  $\Gamma_j$ is contained in the resolvent sets of $H+ P(t)$ and $H+ Q(t)$, thus $K(t)$ is well defined. To estimate it, take first  $\ell = 0$. Then by \eqref{res01} and \eqref{B.aa0}
      $$
    \sup_{t \in \R} \norm{ H^p \,  K(t) \, H^{-p}} \leq    \frac{1}{2\pi} \oint_{\Gamma_j} \frac{ 4\,  b_{n,0} \,  d\lambda}{{\rm dist}(\lambda, \sigma(H))^2} \leq \frac{16 \, b_{n,0} }{ \wt \Delta_{j-1}} \ ,
   $$
   where once again we deformed the contour as in \eqref{bb0}.
   Take now $\ell \geq 1$. By Leibnitz formula we get
   \begin{align}
   \label{est:K10}
   \partial_{t}^\ell  K(t) & = 
    - \frac{1}{2\pi \im} \oint_{\Gamma_j} R_{P}(t, \lambda)  \ (\partial_{t}^{\ell} B(t)) \  R_{Q}(t, \lambda) \, d\lambda \\
 \label{est:K20} 
  & \quad - \sum_{n_1 + n_2 = \ell \atop n_1 \geq 1}
   \binom{\ell}{n_1 \, n_2 }  \frac{1}{2\pi \im} \oint_{\Gamma_j}(\partial_{t}^{n_1} R_{P}(t, \lambda))  \ (\partial_{t}^{n_2} B(t)) \ R_{Q}(t, \lambda) \, d\lambda \\
    \label{est:K30}
    & \quad - \sum_{n_2 + n_3 = \ell \atop n_3 \geq 1}
   \binom{\ell}{n_2\,n_3} \frac{1}{2\pi \im } \oint_{\Gamma_j} R_{P}(t, \lambda)  \ (\partial_{t}^{n_2} B(t)) \ (\partial_{t}^{n_3}R_{Q}(t, \lambda)) \, d\lambda \\
    \label{est:K40}
  & \quad -\sum_{n_1 + n_2 + n_3 = \ell \atop n_1, n_3 \geq 1}
   \binom{\ell}{n_1 \, n_2 \, n_3} \frac{1}{2\pi \im } \oint_{\Gamma_j}(\partial_{t}^{n_1} R_{P}(t, \lambda))  \ (\partial_{t}^{n_2} B(t)) \ (\partial_{t}^{n_3} R_{Q}(t, \lambda)) \, d\lambda 
   \end{align}
    Using \eqref{B.aa0} and \eqref{res01}  one finds easily that $\partial_t^\ell K(t)$ fulfills the claimed estimate (see the proof of Lemma \ref{lem:an3} for the details in the case of perturbations analytic in time).
      \end{proof}

\section{Technical estimates for perturbations  analytic in time }
\label{app:te.a}
In this section we repeat the estimates of the previous section in case of perturbations analytic in time. 

In the following    we fix  $n\in\N\cup\{0\}$ and $\tL \in \N$. Then for any $0\leq p\leq n$, all constants may depend on $n$, but {\em not } on $\tL$. Finally we denote by $A$ a constant as in \eqref{sum.in}.
 \begin{lemma}
\label{lem:an1}   
   Let $P$ and $Q$ be  operators analytic in time fulfilling  $\forall 0 \leq \ell \leq \tL$, $\forall 0 \leq p \leq n$
   	\begin{equation}
   	\label{P.est}
   	\sup_{t \in \R}\norm{H^p \ \partial_{t}^{\ell} P(t) \ H^{-p}} \leq a \,  b^{\min(\ell,1)} \, c^{k+\ell} \,  \frac{(k+\ell)!}{A (1+ \ell)^2}  \ , 
   	\end{equation}
   	and 
   	\begin{equation}
   	\label{Q.est}
   	\sup_{t \in \R} \norm{H^p \ \partial_{t}^{\ell} Q(t) \ H^{-p}} \leq d \, f^{\min(\ell,1)} \, c^{i+\ell} \  \frac{(i+\ell)!}{A (1+ \ell)^2} \ , 
   	\end{equation}
   	for some positive constants $a,b,c,d,f \in \R$ and $k, i \in \N \cup \{0\}$. Then $\forall 0 \leq \ell \leq \tL$, $\forall 0 \leq p \leq n$
   		$$ 
   		\sup_{t \in \R} \norm{H^p \ \partial_{t}^{\ell} (PQ)(t) \ H^{-p}} \leq  a \, d \, (b+f+bf)^{\min(\ell,1)}\, c^{k+i+\ell} \  \frac{(k+i+\ell)!}{A (1+ \ell)^2} \ . 
   		$$
   \end{lemma}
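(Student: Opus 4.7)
The plan is to apply the Leibniz rule, insert $H^{-p}H^p$ between the two factors so that the conjugation splits across the product, and then bound each term using the hypotheses \eqref{P.est}--\eqref{Q.est}. This gives
\begin{equation*}
\|H^p\partial_t^\ell(PQ)H^{-p}\| \leq \sum_{\ell_1+\ell_2=\ell}\binom{\ell}{\ell_1}\,\|H^p\partial_t^{\ell_1}P\, H^{-p}\|\cdot \|H^p\partial_t^{\ell_2}Q\, H^{-p}\|,
\end{equation*}
and I would estimate the right-hand side termwise. For $\ell=0$ the claim is immediate: multiplying the two $\ell=0$ bounds gives $ad\,c^{k+i}k!\,i!/A^2$, which is bounded by $ad\,c^{k+i}(k+i)!/A$ since $k!\,i!\le (k+i)!$ and $A\ge 1$.

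For $\ell\ge 1$ I would split the Leibniz sum into three pieces according to whether (i) $\ell_1=0$, (ii) $\ell_2=0$, or (iii) both $\ell_1,\ell_2\ge 1$, because the hypothesis contributes the factor $b$ (resp.\ $f$) only when the corresponding index is positive. Pieces (i) and (ii) are again easy: the inequality $k!\,(i+\ell)!\le (k+i+\ell)!$ (and its mirror $(k+\ell)!\,i!\le (k+i+\ell)!$) yields contributions of the form $adf\,c^{k+i+\ell}(k+i+\ell)!/(A(1+\ell)^2)$ and $adb\,c^{k+i+\ell}(k+i+\ell)!/(A(1+\ell)^2)$, respectively. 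Together these produce the $b+f$ piece of the final constant $b+f+bf$.

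The main obstacle is piece (iii), where I must show
\begin{equation*}
\sum_{\substack{\ell_1+\ell_2=\ell\\ \ell_1,\ell_2\ge 1}} \binom{\ell}{\ell_1}\,\frac{(k+\ell_1)!\,(i+\ell_2)!}{(1+\ell_1)^2(1+\ell_2)^2} \leq C\,\frac{(k+i+\ell)!}{(1+\ell)^2}
\end{equation*}
with a universal constant. Here I would use the identity $(k+\ell_1)!\,(i+\ell_2)! = (k+i+\ell)!/\binom{k+i+\ell}{k+\ell_1}$, together with the Vandermonde convolution $\binom{k+i+\ell}{k+\ell_1}=\sum_j\binom{k+i}{j}\binom{\ell}{k+\ell_1-j}$; retaining only the $j=k$ summand gives the key lower bound $\binom{k+i+\ell}{k+\ell_1}\ge \binom{k+i}{k}\binom{\ell}{\ell_1}$, and combining these yields $\binom{\ell}{\ell_1}(k+\ell_1)!\,(i+\ell_2)!\le (k+i+\ell)!/\binom{k+i}{k}\le (k+i+\ell)!$. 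Finally, applying \eqref{sum.in} with $k=2$ controls the remaining sum $\sum 1/((1+\ell_1)^2(1+\ell_2)^2)$ by $A/(1+\ell)^2$, and the gained factor $A$ cancels one of the two factors $1/A$ already present; this delivers the $bf$ contribution and completes the proof upon adding the three pieces.
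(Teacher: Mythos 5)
Your proof is correct and follows essentially the same route as the paper: Leibniz, split the sum into boundary and interior terms, control the interior via $(k+\ell_1)!(i+\ell_2)!=\tfrac{(k+i+\ell)!}{\binom{k+i+\ell}{k+\ell_1}}$ together with the combinatorial inequality $\binom{k+i+\ell}{k+\ell_1}\ge\binom{\ell}{\ell_1}$ and \eqref{sum.in}. The only difference is that you justify that binomial inequality explicitly (via Vandermonde), whereas the paper simply asserts it; otherwise the arguments coincide.
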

   \begin{proof}
   First consider the case $\ell = 0$. One has $\sup_{t \in \R} \norm{H^p \ P(t) Q(t) \ H^{-p}} \leq a\, d\, c^{k+i} \, \frac{(k+i)!}{A}$, where we used $A \geq 1$.
   Now take $1 \leq \ell \leq \tL$. By Leibnitz formula
   \begin{align*}
   \partial_{t}^{\ell} (PQ) =  (\partial_{t}^{\ell} P) Q + P  (\partial_{t}^{\ell} Q) + \sum_{j=1}^{\ell -1} \binom{\ell}{j} (\partial_{t}^{j}P) \, (\partial_{t}^{\ell-j}Q) \ . 
   \end{align*}
Using \eqref{P.est} and \eqref{Q.est} we get immediately
   \begin{align*}
   \sup_{t \in \R} \norm{H^p \ \partial_{t}^{\ell} (PQ)(t) \ H^{-p}} \leq & a \,  (b+f) \, d \, c^{k+\ell+i} \,  \frac{(k+\ell+i)!}{A (1+ \ell)^2}   \\
     & + a\,d\, b \, f \, c^{k+\ell+i} \, \frac{(k+\ell+i)!}{A^2} \sum_{n=1}^{\ell -1} \binom{\ell}{j} \binom{k+i+\ell}{k+j}^{-1} \frac{1}{(1+j)^2 \,(1+\ell-j)^2 }\ . 
   \end{align*}
   Now use that $ \binom{k+i+\ell}{k+j} \geq \binom{\ell}{j} $	
   and \eqref{sum.in} to conclude the proof.
   \end{proof}
   \begin{lemma}
   \label{lem:an2}   
    Let $P$ be an operator analytic in time  fulfilling $\forall 0 \leq \ell \leq \tL$, $\forall 0 \leq p \leq n$
    	\begin{equation}
   	\label{P.est2}
   	\sup_{t \in \R}\norm{H^p \ \partial_{t}^{\ell} P(t) \ H^{-p-\nu}} \leq a \,  b^{\min(\ell,1)} \,  \frac{\ell! \, c^{\ell} }{A (1+ \ell)^2} 
   	\end{equation}
	for some positive constants $a,b,c \in \R$.
Provided that
	 \begin{equation}
   \label{cond2}
2^{4}\,\tC_H\, a(1+ b) \leq 2^{\tJ\mu\delta} \ , 
   \end{equation}
	 the following holds true:
	\begin{itemize}
	\item[(i)] $H + P(t)$ fulfills $\wt{{\rm(Hgap)}}$ uniformly in time $t\in\R$. 
	\item[(ii)] Let $\Gamma_j$ be as in \eqref{gamma}.  Any $\lambda \in \Gamma_j$ belongs to the resolvent set of the operator $H+P(t)$.  Denote $R_P(t,\lambda):= \left(H + P(t) - \lambda \right)^{-1}$ Then for any $\lambda \in \Gamma_j$, $\forall j \in \N$, estimate \eqref{res01} holds and furthermore
	\begin{align}
\label{res12}
&\sup_{t \in \R} \norm{H^p \, \partial_t^\ell R_P(t, \lambda) \, H^{-p}} \leq  \frac{ \ell! \ c^{\ell}}{A (1+ \ell)^2} \ \frac{2^3 \, \tC_H \, a \, b}{ \wt\Delta_{j-1}^{\delta } \, {\rm dist} (\lambda, \sigma(H))} \ , \quad \forall 0 \leq p \leq n \ , \ 1 \leq \ell \leq \tL \ . 
\end{align}
	\item[(iii)] For any $j \geq 1$ consider the projector \eqref{eq:prj}.
It fulfills 
$$
    \sup_{t \in \R} \norm{H^p \, \partial_{t}^{\ell}  \Pi_j(t) \, H^{-p}} \leq \frac{ \ell! \ c^{\ell}}{A (1+ \ell)^2} \ \frac{2^4 \, \tC_H \, a \, b}{ \wt\Delta_{j-1}^{\delta } }\ , \qquad \forall 0 \leq p \leq n, \ 1 \leq \ell \leq \tL \ . 
   $$
   \end{itemize}
   \end{lemma}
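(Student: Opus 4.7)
The plan is to follow closely the structure of Lemma \ref{lem:an02}, replacing the qualitative smoothness bounds by the analytic ones and keeping track of the combinatorial factors arising from Fa\`a di Bruno--type sums.

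First I would establish (i). With $\ell=0$ and $0\leq p\leq n$, assumption \eqref{P.est2} gives $\sup_t\|P(t)H^{-\nu}\|\leq a$. Condition \eqref{cond2} implies $2^{4}\tC_{H}\,a\leq 2^{\tJ\mu\delta}$, which is exactly the hypothesis \eqref{condz} of Lemma \ref{perturb}. Hence $H+P(t)$ fulfills $\wt{\rm (Hgap)}$ uniformly in $t$, with the clusters from \eqref{new.clus}. In particular each contour $\Gamma_j$ lies in the resolvent set of $H+P(t)$.

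For (ii), the basic bound \eqref{res01} follows from Lemma \ref{sandres} applied with $W=P$, $\theta=0$: the hypothesis of that lemma holds because for $\lambda\in\Gamma_j$ Lemma \ref{lem:est:res0} gives $\|H^\nu(H-\lambda)^{-1}\|\leq \wt C_H/\wt\Delta_{j-1}^\delta\leq \tC_H/2^{\tJ\mu\delta}$, so \eqref{cond2} again yields $a\cdot\|H^\nu(H-\lambda)^{-1}\|\leq 1/2$. For the derivative bound \eqref{res12}, I would use the explicit formula \eqref{ris.der1} (with $P$ playing the role of the perturbation). Each summand contains one factor $H^pR_P H^{-p}$ (estimated by $2/\mathrm{dist}(\lambda,\sigma(H))$ via \eqref{res01}), together with $k$ factors of the form $H^{p+\nu}R_P H^{-p}$ (each estimated by $2\tC_H/\wt\Delta_{j-1}^\delta$ via Lemma \ref{sandres} with $\theta=\nu$ and Lemma \ref{lem:est:res0}) and $k$ factors $H^p\partial_t^{n_i}PH^{-p-\nu}$. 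Since each $n_i\geq 1$, the analytic bound \eqref{P.est2} supplies for each $P$-factor the quantity $ab\,n_i!\,c^{n_i}/(A(1+n_i)^2)$. After pulling out the multinomial coefficient $\binom{\ell}{n_1\cdots n_k}=\ell!/(n_1!\cdots n_k!)$, the $n_i!$'s cancel and one is left with a factor $c^\ell\ell!$ and the pure combinatorial sum $\sum_{n_1+\cdots+n_k=\ell}\prod_i(1+n_i)^{-2}$, which by \eqref{sum.in} is bounded by $A^{k-1}/(1+\ell)^2$. The resulting series over $k\geq 1$ is geometric with ratio
\[
r \;=\; \frac{4\tC_H\,ab}{A\,\wt\Delta_{j-1}^{\delta}},
\]
and condition \eqref{cond2}, together with $\wt\Delta_{j-1}^\delta\gtrsim 2^{\tJ\mu\delta}$, guarantees $r\leq 1/2$. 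Summing yields \eqref{res12} (up to adjusting numerical constants inside $\tC_H$).

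Finally (iii) is an integration: from $\partial_t^\ell\Pi_j(t)=-\frac{1}{2\pi\im}\oint_{\Gamma_j}\partial_t^\ell R_P(t,\lambda)\,d\lambda$ and \eqref{res12}, one controls the result by $\frac{\ell!\,c^\ell}{A(1+\ell)^2}\cdot\frac{2^3\tC_H ab}{\wt\Delta_{j-1}^\delta}\cdot\frac{1}{2\pi}\oint_{\Gamma_j}\frac{|d\lambda|}{\mathrm{dist}(\lambda,\sigma(H))}$, and the contour length estimate \eqref{bb0} gives a harmless factor $\leq 2$, yielding the stated bound. The only delicate point will be the bookkeeping of constants in step (ii): one must verify that the geometric-series remainder really absorbs into the constant $2^3$ of \eqref{res12}, which is where condition \eqref{cond2} with the precise $2^4$ prefactor gets used. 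Everything else is a direct analytic analogue of Lemma \ref{lem:an02}.
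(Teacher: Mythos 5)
Your proposal is correct and follows essentially the same route as the paper: item (i) via Lemma \ref{perturb}, item (ii) via the resolvent expansion \eqref{ris.der1}, the multinomial cancellation, the combinatorial bound \eqref{sum.in}, and a geometric series controlled by \eqref{cond2}, and item (iii) by contour integration with \eqref{bb0}. The only blemish is a small bookkeeping slip in the stated geometric ratio (the paper's ratio is $2\,\tC_H\,a\,b/\wt\Delta_{j-1}^{\delta}$, since one power of $A$ from the $P$-factors cancels against $A^{k-1}$ from \eqref{sum.in}), but you anticipate this by allowing numerical constants to be readjusted.
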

   \begin{proof}
(i) See the proof of Lemma \ref{sandres}(i).

(ii)      We prove only  estimate \eqref{res12}. The other statements are proved as in Lemma \ref{lem:an02} (ii).
First remark that  by  Lemma \ref{sandres} with $\theta = \nu$ and Lemma \ref{lem:est:res0}
\begin{equation}
\label{est90} 
\sup_{t \in \R} \norm{H^{p+\nu} \  R_P(t, \lambda)\  H^{-p} } \leq \frac{2 \, \tC_H} {\wt\Delta_{j-1}^\delta} \ , \quad \forall \lambda\in \Gamma_j \ , \ \ \ \forall 0 \leq p \leq n \ 
\end{equation}
provided
$$
a \, 4\, \tC_H\, \leq \wt\Delta_{j-1}^\delta \ . 
$$
Clearly such condition is implied by \eqref{cond2}. 

Now take  $1 \leq \ell \leq \tL$. Formula \eqref{ris.der1}  and 
    estimates  \eqref{P.est2}, \eqref{res01}, \eqref{est90} give for any $\lambda \in \Gamma_j$, $\forall j \geq 1$, $\forall 0 \leq p \leq n$
   \begin{align}
   \label{ris.est00}
  & \sup_{\ t \in \R} \norm{H^p \ \partial_{t}^\ell R_P(t, \lambda) \ H^{-p} }  \\
  \notag
   &   \leq 
   \ell! \ c^{\ell} \ \sum_{k=1}^{\ell} 
  \frac{2}{ {\rm dist} (\lambda, \sigma(H))} \left(\frac{2 \, \tC_H \, a\,b}{\wt\Delta_{j-1}^{\delta }}\right)^k  \frac{1  }{A^k} \sum_{n_1, \ldots, n_k \in \N \atop n_1 + \cdots +n_k = \ell} \frac{1}{(1+n_1)^2}\cdots \frac{1}{(1+n_k)^2}  \\
\notag
   & \leq \frac{ \ell! \ c^{\ell}}{A (1+ \ell)^2} \ \frac{2}{ {\rm dist} (\lambda, \sigma(H))}  \  \sum_{k=1}^{\ell} \left(\frac{2 \, \tC_H \, a\,b}{\wt\Delta_{j-1}^{\delta }}\right)^k  \\
   \notag
   & \leq \frac{ \ell! \ c^{\ell}}{A (1+ \ell)^2} \ \frac{2^3 \, \tC_H \, a \, b}{ \wt\Delta_{j-1}^{\delta } \, {\rm dist} (\lambda, \sigma(H))}
   \end{align}
where to pass from the third to fourth line we used that by \eqref{cond2} $\frac{2 \, \tC_H \, a\,b}{\wt\Delta_{j-1}^{\delta }} \leq \frac{1}{2}$. Thus  \eqref{res12} is proved.  

(iii) By \eqref{res12} one has  $\forall 1 \leq \ell \leq \tL$, $\forall 0 \leq p \leq n$
       \begin{align*}
  \sup_{ t \in \R} \| H^p \ \partial_{t}^\ell \Pi(t) \ H^{-p} \| 
  &\leq
   \frac{ \ell! \ c^{\ell}}{A (1+ \ell)^2} \ \frac{2^3 \, \tC_H \, a \, b}{ \wt\Delta_{j-1}^{\delta } } \,
   \frac{1}{2\pi}\oint_{\Gamma_j} 
   \frac{d \lambda}{{\rm dist} (\lambda, \sigma(H))} \\
 & \leq \frac{ \ell! \ c^{\ell}}{A (1+ \ell)^2} \ \frac{2^4 \, \tC_H \, a \, b}{ \wt\Delta_{j-1}^{\delta } }
   \end{align*}
   where we used also \eqref{bb0}.
   \end{proof}
 \begin{lemma}
   \label{lem:an3}   
    Let $P(t), Q(t)$ be operators analytic in time fulfilling  $\forall 0 \leq \ell \leq \tL$, $\forall 0 \leq p \leq n$
\begin{equation}
   	\label{P.est3}
   \sup_{t \in \R} 	\norm{H^p \ \partial_{t}^{\ell} P(t) \ H^{-p-\nu}} \ , \  \sup_{t \in \R}\norm{H^p \ \partial_{t}^{\ell} Q(t) \ H^{-p-\nu}} \leq a \,  b^{\min(\ell,1)} \,  \frac{\ell! \, c^{\ell} }{A (1+ \ell)^2} \ . 
   	\end{equation} 
   	Assume that \eqref{cond2} holds. Furthermore let  $B(t)$ be an operator analytic in time fulfilling $\forall 0 \leq  \ell \leq \tL$
   \begin{equation}
   \label{B.aa}
  \sup_{t \in \R} \norm{H^p \ \partial_{t}^\ell B(t) \ H^{-p}} \leq h \,  c^{\ell} \,  \frac{\ell!}{A (1+ \ell)^2} \ , \quad \forall 0 \leq p \leq n
   \end{equation}
  for some positive $h \in \R$.
   Then the operator
   $$
   K(t) := -\frac{1}{2\pi \im} \oint_{\Gamma_j} R_P(t,\lambda) \, B(t) \, R_Q(t,\lambda) \,  d\lambda
   $$
   is analytic in time, bounded from $H^p$ to $H^p$ $\ \forall 0 \leq p \leq n$,  and fulfills $\forall 0 \leq \ell \leq \tL$, $\forall 0 \leq p \leq n$
   $$
   \sup_{t \in \R} \norm{\partial_{t}^\ell  K(t)} \leq \frac{\ell! \, c^l}{A (1+ \ell)^2} \, 
   \frac{ h \, 2^5}{ \, \wt \Delta_{j-1}}  \ . 
   $$
   \end{lemma}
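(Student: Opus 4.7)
The plan is to mimic the proof of Lemma \ref{lem:an03} but insert the analytic-in-time weights coming from Lemmas \ref{lem:an1} and \ref{lem:an2}. The starting point is the three-fold Leibnitz expansion
\begin{equation*}
H^p \partial_t^\ell K(t) H^{-p} = -\frac{1}{2\pi \im} \sum_{n_1 + n_2 + n_3 = \ell} \binom{\ell}{n_1\, n_2\, n_3} \oint_{\Gamma_j} \bigl(H^p \partial_t^{n_1} R_P H^{-p}\bigr)\bigl(H^p \partial_t^{n_2} B\, H^{-p}\bigr)\bigl(H^p \partial_t^{n_3} R_Q H^{-p}\bigr)\, d\lambda,
\end{equation*}
which I would split into four subsums according to whether $n_1$ and $n_3$ vanish. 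For each subsum I would apply \eqref{res01} when a resolvent derivative index equals $0$ and \eqref{res12} when it is $\geq 1$, together with \eqref{B.aa} for the middle factor.

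The key point is that every integrand is pointwise bounded by $\mathrm{dist}(\lambda, \sigma(H))^{-2}$ (two resolvent factors) times an analytic factorial weight, so integration over $\Gamma_j$ invariably produces a single factor $\wt\Delta_{j-1}^{-1}$ via \eqref{bb0}. Extra negative powers of $\wt\Delta_{j-1}^\delta$ appear when $n_1 \geq 1$ or $n_3 \geq 1$, but hypothesis \eqref{cond2} makes the quantity $\varepsilon := 2\tC_H a b /\wt\Delta_{j-1}^\delta$ smaller than $1/2$, so the geometric series in $\varepsilon$ entering through the cases with $n_1$ and/or $n_3 \geq 1$ is harmless; this is exactly the same mechanism used in the proof of \eqref{der.inv.L}.

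The remaining work is combinatorial bookkeeping. After collecting the three factorial factors $n_i!$ and combining with $\ell!/(n_1! n_2! n_3!)$ one gets $\ell!$; the residual weighted sum
\begin{equation*}
\sum_{n_1 + n_2 + n_3 = \ell} \frac{1}{(1+n_1)^2 (1+n_2)^2 (1+n_3)^2}
\end{equation*}
is then controlled by $A^2 (1+\ell)^{-2}$ via \eqref{sum.in} with $k=3$, producing exactly the normalization appearing in the claim. The main obstacle I anticipate is the careful tracking of the numerical constant: one must add up the four case contributions, each carrying its own power of $\varepsilon$ and its own subseries from the expansion of $\partial_t^{n_1} R_P$ and $\partial_t^{n_3} R_Q$ (which is itself a sum over $k = 1, \dots, n_1$, resp.\ $n_3$, as in \eqref{ris.est00}), and verify that the total does not exceed $h \cdot 2^5$. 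This is purely mechanical but requires using \eqref{cond2} in precisely the stated form, together with the observation that the $h$-weight from \eqref{B.aa} factors out cleanly because $B(t)$ carries no small parameter analogous to $\varepsilon$.
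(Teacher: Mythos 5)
Your proposal is correct and follows essentially the same route as the paper's proof: the three-fold Leibnitz expansion, the four-case split by whether $n_1$ and $n_3$ vanish (the paper's \eqref{est:K10}--\eqref{est:K40}), and the use of \eqref{res01}, \eqref{res12}, \eqref{B.aa} and \eqref{cond2} to close the estimate. Two very minor remarks: the contour-integral bound you need is for $\mathrm{dist}(\lambda,\sigma(H))^{-2}$, not the $\mathrm{dist}^{-1}$ of \eqref{bb0} (though the same deformation yields it), and you need not re-expand $\partial_t^{n_1}R_P$ into the subseries of \eqref{ris.est00}, since \eqref{res12} already packages that sum into a single closed-form bound, which is what the paper plugs in directly.
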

   \begin{proof}
   First consider the resolvents $R_P(t,\lambda)$, $R_Q(t, \lambda)$. Proceeding as in the proof of  Lemma \ref{lem:an02}, they are well defined for any $\lambda \in \Gamma_j$, $\forall j \geq 1$, and fulfill estimates \eqref{res01}, \eqref{res12}. Consider now $K(t)$. For  $\ell = 0$ one has  
      $$
    \sup_{t \in \R} \norm{ H^p \  K(t) \ H^{-p}} \leq \frac{ h}{A} \frac{1}{2\pi} \oint_{\Gamma_j} \frac{4 \, d\lambda}{{\rm dist}(\lambda, \sigma(H))^2} \leq \frac{4 \, h}{A \, \wt \Delta_{j-1}} \ , \quad \forall 0 \leq p \leq n \ .
   $$
For  $1 \leq \ell \leq \tL$, consider \eqref{est:K10}--\eqref{est:K40}. We estimate each line. By \eqref{B.aa}, \eqref{res01} one has
   \begin{align*}
  \sup_{t \in \R} \norm{H^p \ \eqref{est:K10} \ H^{-p}} \leq \frac{4 \, h}{ \, \wt \Delta_{j-1}}\frac{\ell! \, c^\ell}{A (1+ \ell)^2}  \ , \quad \forall 0 \leq p \leq n \ .  
   \end{align*}
   To estimate the  second line we use  \eqref{res01}, \eqref{res12},   \eqref{B.aa} and \eqref{cond2} to get  $\forall 0 \leq p \leq n$
\begin{align*}
 \sup_{t \in \R}  \norm{H^p \ \eqref{est:K20} \ H^{-p}} 
& \leq \frac{2^4 \, \tC_H \, a \, b \, h}{\wt\Delta_{j-1}^\delta} \ \frac{1}{2\pi}\oint_{\Gamma_j} \frac{d\lambda}{{\rm dist}(\lambda, \sigma(H))^2}
 \leq    \frac{\ell! \, c^\ell}{A (1+ \ell)^2} \, \frac{ 2^6 \, \tC_H \, h \, a\, b  }{\wt \Delta_{j-1}^{1+\delta}} \ . 
   \end{align*}
   The third line is estimated exactly as the second one. We pass to the last line.   Using \eqref{res12}, \eqref{B.aa} we get $\forall 0 \leq p \leq n$
   \begin{align*}
   \sup_{t \in \R} \norm{H^p \ \eqref{est:K40} \ H^{-p}} \leq 
   \frac{\ell! \, c^\ell}{A (1+ \ell)^2} \frac{2^6 \, \tC_H^2 \, a^2 \, b^2 \, h}{\wt\Delta_{j-1}^{2\delta}} \ \frac{1}{2\pi}\oint_{\Gamma_j} \frac{d\lambda}{{\rm dist}(\lambda, \sigma(H))^2}
   \leq
    \frac{\ell! \, c^\ell}{A (1+ \ell)^2} \, \frac{2^8 \, h  \,(\tC_H \, a\, b)^2 }{\wt \Delta_{j-1}^{1+2\delta}} \ . 
   \end{align*}
   Altogether we find that for $1 \leq \ell \leq \tL$, $\forall 0 \leq p \leq n$
   $$
  \sup_{t \in \R} \norm{ H^p \ \partial_{t}^\ell  K(t) \ H^{-p}} \leq  \frac{\ell! \, c^\ell}{A (1+ \ell)^2} \, 
   \frac{2^2 \, h}{ \, \wt \Delta_{j-1}} \left(1+ \frac{ \tC_H\,a\, b \, 2^{5} }{\wt \Delta_{j-1}^\delta} +  \frac{(\tC_H\,a\, b)^2 \, 2^{6}}{\wt \Delta_{j-1}^{2\delta}} \right) \leq 
    \frac{\ell! \, c^\ell}{A (1+ \ell)^2} \, 
   \frac{  h \, 2^5}{ \, \wt \Delta_{j-1}}
   $$
   where we used again \eqref{cond2}.
      \end{proof}

\section{Proof of Lemma \ref{compk}}
\label{app:normcontrol}

We start with an abstract result. Let  $H_W(t) := H+ W(t)$, $H$ being a self-adjoint positive operator  in $\cH^0$, $W(t)$ a symmetric operator, $H^\nu$-bounded with  $\nu<1$. We assume that,  for a fixed $n\in\N$,   we have 
\begin{itemize}
\item[{\rm (W$)_n$}]  $ H^p \ W(\cdot) \ H^{-p-\nu}\in C_b^0(\R, \cL(\cH^0))\ $, $\sup_{t\in\R}\Vert H^p\  W(t) \  H^{-p-\nu}\Vert \leq D_n$, $\forall 0\leq p \leq n$.
\end{itemize}

\begin{lemma}\label{Hn} Let $n\geq 1$. Assume  that $W$  satisfies  condition {\rm (W$)_{n}$}.\\
Define $W_n(t)  = (H+W(t))^n -H^n$. Then we have $W_nH^{1-n-\nu}\in\cL(\cH^0)$. Furthermore there  exist positive  constants $\gamma_0$, $\gamma_1$  depending only on $H$  such that 
\beq\label{pert}
\Vert W_nH^{1-n-\nu}\Vert \leq \gamma_0\gamma_1^nD_n^{n+1}.
\eeq
Finally we have 
\beq\label{Hkest}
 c_n\Vert\psi\Vert_{2n}\leq \Vert(H +W(t)+c_0)^n\Vert_0 \leq C_n\Vert\psi\Vert_{2n},\;\; \forall \psi\in\cH^{2n},\;\forall t\in\R,
\eeq
where   $c_n, C_n$ depend only on $D_n$.
\end{lemma}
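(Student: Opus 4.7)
The plan is to expand $(H+W(t))^n$ combinatorially as a sum of monomials in $H$ and $W$, and estimate each one. Isolating the unique all-$H$ monomial $H^n$, the remainder is $W_n = \sum_\epsilon T_\epsilon$, where the sum runs over non-trivial words and each $T_\epsilon$ with $k\geq 1$ copies of $W$ reads
\[
T_\epsilon = H^{b_0} W H^{b_1} W\cdots W H^{b_k}, \qquad b_j \geq 0, \quad \sum_{j=0}^k b_j = n-k.
\]
There are $\binom{n}{k}$ such monomials for each $k$, so (\ref{pert}) will follow from showing $\Vert T_\epsilon\, H^{1-n-\nu}\Vert_{\cL(\cH^0)} \leq C(H)\, D_n^k$ for each one and summing over $\epsilon$ and $k=1,\ldots,n$.

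The central step, and main obstacle, is that the natural rewriting of $T_\epsilon$ forces non-integer powers of $H$ to appear. I will first upgrade assumption (W$)_n$ from integer to real $p \in [0,n]$: Stein's three-lines theorem, applied to the analytic family $z \mapsto H^z W H^{-z-\nu}$ on the strip $0 \leq \mathrm{Re}(z) \leq n$ (which is bounded on both boundary lines by $D_n$, since $H^{\mathrm{i}t}$ is unitary), yields $\Vert H^p W H^{-p-\nu}\Vert_{\cL(\cH^0)} \leq D_n$ for every real $p \in [0,n]$. Setting $A_p := H^p W H^{-p-\nu}$, so that $W = H^{-p} A_p H^{p+\nu}$, I substitute this identity iteratively at the $j$-th occurrence of $W$ in $T_\epsilon$ with the real exponent $p_j := b_0 + b_1 + \cdots + b_{j-1} + (j-1)\nu$; the expression telescopes to
\[
T_\epsilon = A_{p_1} A_{p_2} \cdots A_{p_k}\, H^{n-k(1-\nu)}, \qquad T_\epsilon\, H^{1-n-\nu} = A_{p_1}\cdots A_{p_k}\, H^{-(k-1)(1-\nu)}.
\]
A direct check gives $0 \leq p_j \leq n-1$, so each $A_{p_j}$ has norm $\leq D_n$; and since $\inf \sigma(H) > 0$ (implicit in $W$ being $H^\nu$-bounded), the last factor $H^{-(k-1)(1-\nu)}$ is bounded by a constant depending only on $H$. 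This gives $\Vert T_\epsilon\, H^{1-n-\nu}\Vert_{\cL(\cH^0)} \leq C(H)\, D_n^k$, and summation yields (\ref{pert}) with constants $\gamma_0, \gamma_1$ depending only on $H$.

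For the norm equivalence (\ref{Hkest}) I exploit that $c_0$ commutes with every operator to write
\[
(H+W(t)+c_0)^n - (H+c_0)^n = \sum_{k=1}^n \binom{n}{k} c_0^{n-k}\, W_k(t),
\]
and by the first part each $W_k(t)$ maps $\cH^{2(k-1+\nu)} \to \cH^0$ boundedly, which is a strictly lower order than $H^n$ since $k \leq n$ and $\nu<1$. Combined with the standard equivalence $\Vert (H+c_0)^n \psi\Vert_0 \sim \Vert\psi\Vert_{2n}$, the upper bound $\Vert (H+W(t)+c_0)^n \psi\Vert_0 \leq C_n \Vert\psi\Vert_{2n}$ is immediate; the reverse triangle inequality yields $\Vert (H+W(t)+c_0)^n \psi\Vert_0 \geq c'\Vert\psi\Vert_{2n} - C\Vert\psi\Vert_{2(n-1+\nu)}$. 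The standard interpolation inequality $\Vert\psi\Vert_{2(n-1+\nu)} \leq \varepsilon \Vert\psi\Vert_{2n} + C_\varepsilon \Vert\psi\Vert_0$ (an elementary consequence of spectral calculus for $H$, valid since $\nu<1$) then absorbs the low-order term into $\Vert\psi\Vert_{2n}$. Finally, because $W(t)$ is $H^\nu$-bounded with $\nu<1$ uniformly in $t$, it is infinitesimally $H$-bounded uniformly in $t$, so for $c_0$ large enough, $H+W(t)+c_0 \geq I$ for all $t$; hence $(H+W(t)+c_0)^n \geq I$ and $\Vert\psi\Vert_0 \leq \Vert (H+W(t)+c_0)^n \psi\Vert_0$, which absorbs the residual term and closes the estimate.
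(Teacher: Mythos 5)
Your proof is correct, but it follows a genuinely different route from the paper's. The paper proceeds by induction: from the recursion $W_{p+1} = W_p H + W_p W + H^p W$ it derives a scalar inequality $f_{p+1} \leq a_p + f_p + \gamma_0 a_p f_p$ for $f_p := \Vert W_p H^{1-p-\nu}\Vert$ and $a_p := \Vert H^p W H^{-p-\nu}\Vert$, then iterates. You instead expand $(H+W)^n$ into all $2^n$ noncommutative monomials and estimate each one directly. Both approaches, when examined closely, must handle the same obstacle — non-integer powers of $H$ that appear when sandwiching a factor of $W$ between two pieces of the product (in the paper's recursion this is the term $H^{p-1+\nu}WH^{-p-\nu}$ hiding behind the constant $\gamma_0$; in yours it is the exponents $p_j$). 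You address it explicitly and cleanly via Stein/three-lines interpolation for the analytic operator family $z\mapsto H^z W H^{-z-\nu}$ (bounded on the boundary lines because $H^{\im t}$ is unitary), which upgrades the integer hypothesis {\rm (W$)_n$} to all real $p\in[0,n]$; the paper leaves this step implicit. Making the interpolation explicit is arguably a gain in rigor, at the cost of invoking a slightly heavier tool. For the norm equivalence \eqref{Hkest}, your expansion of $(H+W+c_0)^n-(H+c_0)^n$ via the binomial theorem in the scalar $c_0$, the interpolation to absorb the lower-order term, and the positivity argument $(H+W+c_0)^n\geq I$ for $c_0$ large (from the uniform infinitesimal $H$-boundedness of $W$) fill in details that the paper only sketches with ``we can conclude easily''. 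Two minor points of presentation: the constant coming from $\Vert H^{-(k-1)(1-\nu)}\Vert$ does depend on $k$ unless $\inf\sigma(H)\geq 1$, so it is really $C(H)^k$ rather than a uniform $C(H)$ — this is harmless since it is absorbed into $\gamma_1^n$; and the justification ``$\inf\sigma(H)>0$ is implicit in $W$ being $H^\nu$-bounded'' is off — it is implicit in the paper's standing hypothesis that $H^m$ is an isometry $\cH^{k+2m}\to\cH^k$ for all real $m$, which forces $H$ to be invertible.
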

\begin{proof}
We   proceed  by induction on $n$. For $n=1$ the two side estimate is a classical   perturbation result    using  {\rm (W$)_{0}$}.  For $n>1$  we have  
\beq\label{indn}
W_{n+1}(t) = W_n(t) \ H + W_n(t) \ W(t)+ H^n \ W(t).
\eeq
Let us denote $a_n(t)=\Vert H^n \ W(t) \ H^{-n-\nu}\Vert$  and $f_n(t)  = \Vert W_n(t) \ H^{1-n-\nu}\Vert$. By induction on $n$, using (\ref{indn}),  we get 
\begin{equation}
\label{ind2}
f_{p+1}(t) \leq a_p(t) + f_p(t) + \gamma_0 \, a_p(t) \, f_p(t)
\end{equation}
where $\gamma_0$ is a constant depending only on $H$.
From (\ref{ind2}) we get easily (\ref{pert}).\\

Now we can conclude   easily to get (\ref{Hkest}) using the interpolation  inequality:
 for $0\leq s<n$ and  $\varepsilon\in]0, 1] $  we have :
$$
\Vert H^s\psi\Vert_0^2\leq \varepsilon^2\Vert H^n\psi\Vert_0^2  + \varepsilon^{\frac{2s}{s-n}}\Vert\psi\Vert_0^2.
$$
From (\ref{pert}) we have 
$$
\Vert W_n(t) \psi\Vert_0 \leq \Vert W_n(t) \ H^{1-n-\nu}\Vert \  \Vert H^{n+\nu-1}\psi\Vert_0 \ .
$$ 
Taking $s=n+\nu-1$ and $\varepsilon$ small enough, we get (\ref{Hkest})  where $c_n$ and $C_n$  depend only on $D_n$. 
\end{proof}

\vspace{1em}
\noindent {\em Proof of Lemma \ref{compk}}. 
(i) Recall that $H_{ad, m}(t) = H + V(t) - B_{m}(t)$. 
We apply Lemma \ref{Hn} with $W = V - B_m$. By the assumptions on $V$ and Lemma \ref{estadB}, $W$ fulfills (W$)_n$, thus we get 
\eqref{Hadk}.

(ii) If $\tJ$ is sufficiently large, by Lemma \ref{perturb} the Hamiltonian $H_{ad, m}(t)$ satisfy $\wt{{\rm (Hgap)}} $ uniformly in $t \in \R$ (see Corollary \ref{cor:hgap}). 
 Then writing
$$
H_{ad,m}(t) = \sum_{j \geq 1} \Pi_{m,j}(t) \, H_{ad,m}(t)  \, \Pi_{m,j}(t) \ , 
$$ 
 one gets easily that 
 \begin{equation}
\sum_{j \geq 1} \, (\lambda_j^- +c_0)^{2p} \norm{\Pi_{m,j}(t) \psi}_0^2 \leq \norm{ (H_{ad,m}(t) +c_0)^p \psi}_0^2 \leq \sum_{j \geq 1} \, (\lambda_j^+ + c_0)^{2p} \norm{\Pi_{m,j}(t) \psi}_0^2 
\end{equation}
and
\begin{align*}
\sum_{j \geq 1} \, (\lambda_j^+ +c_0)^{2p} \norm{\Pi_{m,j}(t) \psi}_0^2 \leq C_p 2^{\tJ(\mu+1)2p} \norm{\Lambda_m^{p}(t) \psi}_0^2 \\
\sum_{j \geq 1} \, (\lambda_j^- +c_0)^{2p} \norm{\Pi_{m,j}(t) \psi}_0^2 \geq c_p 2^{\tJ(\mu+1)2p} \norm{\Lambda_m^{p}(t) \psi}_0^2 
\end{align*}
from which \eqref{E9} follows.

\section{Some properties of the pseudodifferential calculus}
 \label{app:pseudo}

We recall here some  fundamental results of symbolic calculus. For the proof see \cite{ro1, hero}.
\begin{theorem}[Symbolic calculus]
\label{symb.cal}
Let $A \in \wt S^\nu_{k,\ell}$,  $B \in \wt S^\mu_{k, \ell}$ be  symbols. Then  there exists a unique  semi-classical  symbol $A\sharp B\in \wt S^{\nu+\mu}_{k, \ell}$  such that ${\rm Op}_\h^W(A) \, {\rm Op}_\h^W(B) = {\rm Op}_\h^W(A\sharp B)$.  $A\sharp B$ is the {\em Moyal product}  of $A$ and $B$.
\end{theorem}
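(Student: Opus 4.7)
The plan is to realize $A\sharp B$ through the standard oscillatory integral representation of the composition of Weyl quantizations and then verify the symbol estimates in the anisotropic class $\wt S^{\nu+\mu}_{k,\ell}$. Concretely, a direct calculation starting from the Weyl formula
\[
\bigl({\rm Op}^W_\h(A){\rm Op}^W_\h(B)u\bigr)(x)=\frac{1}{(2\pi\h)^{2n}}\iiiint e^{\tfrac{\im}{\h}\big((x-y)\cdot\xi+(y-z)\cdot\eta\big)} A\!\left(\tfrac{x+y}{2},\xi\right) B\!\left(\tfrac{y+z}{2},\eta\right)u(z)\, dy\, d\xi\, dz\, d\eta
\]
and the change of variables $y=\tfrac{x+z}{2}+w$, $\xi\mapsto \xi+\sigma$, $\eta\mapsto\xi+\tau$ produces the Weyl quantization of a symbol $A\sharp B$ admitting the formal expression
\[
(A\sharp B)(x,\xi)=\left. e^{\tfrac{\im \h}{2}\left(\partial_\xi\cdot\partial_y-\partial_x\cdot\partial_\eta\right)}A(x,\xi)B(y,\eta)\right|_{(y,\eta)=(x,\xi)}.
\]
Uniqueness follows immediately from the injectivity of the Weyl symbol map on tempered distributions.

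The first substantive step is to make sense of this oscillatory integral for $A\in\wt S^\nu_{k,\ell}$ and $B\in\wt S^\mu_{k,\ell}$. Here I would follow the usual strategy: insert a cutoff, integrate by parts using the vector fields
\[
L_1=\frac{1-\im\sigma\cdot\partial_\tau}{1+|\sigma|^2},\qquad L_2=\frac{1-\im\tau\cdot\partial_\sigma}{1+|\tau|^2}
\]
(or analogues adapted to the anisotropic weight $\lambda_{k,\ell}$) to gain decay in the dual variables, and then remove the cutoff. This gives the integral formula
\[
(A\sharp B)(x,\xi)=\frac{1}{(\pi\h)^{2n}}\iint e^{-\tfrac{2\im}{\h}\sigma\cdot\tau}A(x+\tau,\xi+\sigma)B(x-\sigma',\xi-\tau')\,d\sigma\, d\tau
\]
(in appropriate notation), which is absolutely convergent after sufficiently many integrations by parts.

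The main step — and the main technical obstacle — is to prove the symbol bounds: for every multi-indices $\alpha,\beta$,
\[
\bigl|\partial_x^\alpha\partial_\xi^\beta(A\sharp B)(x,\xi)\bigr|\leq C_{\alpha\beta}\,\lambda_{k,\ell}(x,\xi)^{(\nu+\mu-k|\alpha|-\ell|\beta|)_+}.
\]
The natural route is an asymptotic expansion with Taylor remainder: write
\[
A\sharp B=\sum_{|\gamma|<N}\frac{1}{\gamma!}\Bigl(\tfrac{\im\h}{2}\Bigr)^{|\gamma|}\bigl\{A,B\bigr\}_{\gamma}+R_N(A,B),
\]
where $\{A,B\}_\gamma$ is the corresponding Moyal bracket of order $|\gamma|$, and estimate each bracket and the remainder in the class $\wt S^{\nu+\mu}_{k,\ell}$. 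The point is that a $\partial_x$-derivative on $A$ trades $k$ units of weight, a $\partial_\xi$-derivative trades $\ell$ units, and hence each bracket of order $|\gamma|$ lies in $\wt S^{\nu+\mu-(k+\ell)|\gamma|}_{k,\ell}$, which is where the anisotropy plays an essential role. The remainder $R_N$ is controlled by stationary-phase / non-stationary phase estimates on the oscillatory integral, exploiting the inequality
\[
\lambda_{k,\ell}(x+\tau,\xi+\sigma)\leq C\,\lambda_{k,\ell}(x,\xi)\,\langle\tau,\sigma\rangle^{M}
\]
for some $M$ depending on $k,\ell$, combined with the decay in $(\sigma,\tau)$ obtained by integrations by parts with $L_1,L_2$. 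Finally, the identity ${\rm Op}^W_\h(A){\rm Op}^W_\h(B)={\rm Op}^W_\h(A\sharp B)$ is first established on $\mathcal S(\R^{2n})$, where all manipulations are justified, and then extended to the full symbol class by density and continuity of the symbol-to-operator map on the Fréchet topology induced by the seminorms $p^\nu_{\alpha\beta}$. The hard part is packaging these estimates uniformly in the anisotropic weight; everything else is essentially bookkeeping.
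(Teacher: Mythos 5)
The paper does not prove Theorem~\ref{symb.cal}; it states it without proof and refers the reader to the cited references (Robert, \emph{Comm.\ in P.D.E.} 1978, and Helffer--Robert, \emph{Comm.\ in P.D.E.} 1982) for a proof in the anisotropic classes $\wt S^\nu_{k,\ell}$. There is therefore no internal paper argument to compare yours against word-for-word; your proposal must be measured against the standard proof that those references contain.

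Your sketch follows the usual Moyal/Weyl route: write the composition as an oscillatory double integral against $e^{\frac{2\im}{\h}\omega(\cdot,\cdot)}$, regularize by integrations by parts in the dual variables, expand in a finite Taylor series with remainder, and verify the symbol estimates termwise. This is essentially the approach of the cited references and is the right plan. Two points deserve more care, though. First, the display you give for the oscillatory-integral expression of $A\sharp B$ uses unexplained primed variables ($\sigma',\tau'$) and is not internally consistent; the variables in the exponent must match the arguments of $A$ and $B$, and with the Weyl symmetrization the correct formula is the symplectic Fourier kernel $e^{\frac{2\im}{\h}\omega((y,\eta)-(x,\xi),(z,\zeta)-(x,\xi))}$ integrated in $(y,\eta,z,\zeta)$. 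Second — and this is the point where the anisotropic class genuinely requires attention — you assert that each Moyal bracket of order $|\gamma|$ lies in $\wt S^{\nu+\mu-(k+\ell)|\gamma|}_{k,\ell}$. Because the exponents in Definition~\ref{def:symbol} are truncated by the positive part $(\cdot)_+$, a $\partial_\xi^\gamma$-derivative of $A$ does not improve the weight beyond the point where $\nu-k|\alpha|-\ell|\beta+\gamma|$ becomes negative; the decay saturates at $\lambda^0$. Consequently the brackets do not gain $(k+\ell)|\gamma|$ orders indefinitely, and the conclusion one can draw is the weaker (but sufficient) statement that each bracket, and the remainder, stays in $\wt S^{\nu+\mu}_{k,\ell}$. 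This is enough to conclude $A\sharp B\in\wt S^{\nu+\mu}_{k,\ell}$, but your phrasing overstates the gain per order and should be corrected to match the truncated weight scale. With these fixes the proposal is a faithful roadmap to the cited proof, though at the level of detail you give it is a plan rather than a self-contained argument.
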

The Moyal product is a bilinear continuous map. More precisely it holds the following: for every $\alpha, \beta$, there exists a positive constant $C_{\alpha \beta}$ (independent of $A$ and $B$) and an integer $M \equiv M(\alpha, \beta) \geq 1$ such that
$$
p_{\alpha \beta}^{\nu+\mu} (A\sharp B) \leq C_{\alpha \beta} \, |a|_{M,\nu} \, |b|_{M,\mu} \ .
$$

The symbolic calculus implies the following result on the commutator of two pseudodifferential operators:
\begin{corollary}[Commutator]
\label{cor:comm}
Let $A \in \wt S^\nu_{k,\ell}$,  $B \in \wt S^\mu_{k, \ell}$ be  symbols. Then  there exists a unique  semi-classical  symbol $C \in  \wt S^{\nu+\mu -(k+\ell)}_{k, \ell}$  such that $[{\rm Op}_\h^W(A),  {\rm Op}_\h^W(B)] = {\rm Op}_\h^W(C)$.
\end{corollary}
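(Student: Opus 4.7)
The plan is to deduce the statement directly from Theorem~\ref{symb.cal} by carefully tracking the leading order of the antisymmetric part of the Moyal product. First, applying Theorem~\ref{symb.cal} in both orderings, the operator $[{\rm Op}^W_\h(A), {\rm Op}^W_\h(B)]$ is the Weyl quantization of the antisymmetric combination $C := A\sharp B - B\sharp A$, which a priori only lies in the coarser class $\wt S^{\nu+\mu}_{k,\ell}$. The goal is then to upgrade this by the full $k+\ell$ units of decay by exploiting the algebraic structure of the Moyal expansion.

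The key step is to invoke the asymptotic expansion of the Weyl--Moyal product in powers of $\h$,
$$A\sharp B \sim \sum_{n\geq 0}\frac{1}{n!}\left(\frac{\h}{2\im}\right)^n \Pi^n(A,B),$$
where $\Pi^0(A,B)=AB$, $\Pi^1(A,B) = \{A,B\} = \partial_\xi A \cdot \partial_x B - \partial_x A \cdot \partial_\xi B$ is the Poisson bracket, and in general $\Pi^n(B,A)=(-1)^n\Pi^n(A,B)$. From this symmetry the even-index contributions cancel in the difference $C$, so the leading surviving term is the Poisson-bracket contribution $(\h/\im)\{A,B\}$, with the tail consisting of odd-index terms for $n \geq 3$ plus the truncation remainder. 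The next step is to verify that the Poisson bracket already belongs to the desired class: since differentiating a symbol of $\wt S^\rho_{k,\ell}$ in $\xi$ reduces the order by $\ell$ and in $x$ reduces it by $k$ (a direct consequence of Definition~\ref{def:symbol}), each monomial $\partial_{\xi_j}A \cdot \partial_{x_j}B$ and $\partial_{x_j}A \cdot \partial_{\xi_j}B$ belongs to $\wt S^{(\nu-\ell)+(\mu-k)}_{k,\ell} = \wt S^{\nu+\mu-(k+\ell)}_{k,\ell}$.

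The final step is to handle the remainder of the expansion. Standard oscillatory-integral arguments (integration by parts in the symplectic phase, as carried out in~\cite{ro1,hero}) give that the truncation remainder at order $N$ lies in $\wt S^{\nu+\mu-N(k+\ell)}_{k,\ell}$, so already for $N=2$ it sits inside $\wt S^{\nu+\mu-(k+\ell)}_{k,\ell}$. Combined with the Poisson-bracket estimate above this yields $C\in\wt S^{\nu+\mu-(k+\ell)}_{k,\ell}$, and uniqueness follows from the injectivity of the Weyl quantization on symbol classes. The main technical obstacle is this last step: justifying the Moyal asymptotic expansion with a uniformly controlled remainder in the non-standard weight $\lambda_{k,\ell}$ requires some care, but the argument parallels the classical Shubin-class treatment and is essentially already contained in~\cite{ro1,hero}.
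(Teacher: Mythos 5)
The paper does not actually prove this corollary: it is stated as a consequence of Theorem~\ref{symb.cal} with citation of \cite{ro1,hero}, and no argument is written out. Your outline --- commute the Moyal product, cancel the $n=0$ term by the antisymmetry $\Pi^n(B,A)=(-1)^n\Pi^n(A,B)$, identify the Poisson bracket as the first surviving term, and control the tail and remainder --- is the standard route and is plainly what the paper has in mind.

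There is, however, a genuine gap in the step where you place each monomial $\partial_{\xi_j}A\cdot\partial_{x_j}B$ in $\wt S^{(\nu-\ell)+(\mu-k)}_{k,\ell}$. This invokes a pointwise product rule $\wt S^{\rho_1}_{k,\ell}\cdot\wt S^{\rho_2}_{k,\ell}\subset \wt S^{\rho_1+\rho_2}_{k,\ell}$, which is \emph{false} for the class of Definition~\ref{def:symbol} because of the truncation $(\cdot)_+$ in the exponent: high-order derivatives of a $\wt S^{\rho}_{k,\ell}$ symbol are merely bounded, not decaying, and bounded factors do not improve the weight of the other factor's derivatives. Concretely, take $k=\ell=1$, $A=\sin(x_1)\in\wt S^{0}_{1,1}$ and $B=\xi_1^{2}\in\wt S^{2}_{1,1}$; then $\{A,B\}=-2\cos(x_1)\xi_1$, all of whose $x_1$-derivatives are $\pm2\sin(x_1)\xi_1$ or $\pm2\cos(x_1)\xi_1$ and hence grow like $|\xi_1|$. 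This symbol fails the estimate $\vert\partial_x^\alpha\partial_\xi^\beta C\vert\lesssim\lambda_{1,1}^{(\rho-|\alpha|-|\beta|)_+}$ for \emph{every} $\rho$, so it lies in none of the classes $\wt S^{\rho}_{1,1}$, let alone in $\wt S^{0}_{1,1}=\wt S^{\nu+\mu-(k+\ell)}_{1,1}$. (Since for these particular $A,B$ the full Moyal expansion terminates, the same example shows that Theorem~\ref{symb.cal} and the corollary cannot hold verbatim for the $(\cdot)_+$-truncated classes.) Your argument would go through, essentially word for word, if the class were the untruncated Robert--Helffer/Shubin-type class with exponent $\nu-k\vert\alpha\vert-\ell\vert\beta\vert$ (no positive part), which is presumably what \cite{ro1,hero} actually use; so the structure of your proof is fine, but the product-rule step must be justified in the symbol class one is really working with (or the class must be restricted).
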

The second result concerns the boundedness of pseudodifferential operators:
\begin{theorem}[Calderon-Vaillancourt]
\label{thm:cv} 
Let $A \in \wt S^0_{k, \ell}$ be a symbol. Then there exist  constants $C, N >0$ such that  ${\rm Op}_\h^W(A)$ extends to a linear bounded operator from $L^2$ to itself, and the following estimate holds:
\begin{equation}
\norm{{\rm Op}_\h^W(A)}_{\cL(L^2)} \leq C  \ |A|_{N,0} \ , \quad \forall \h\in]0, 1].
\end{equation}
\end{theorem}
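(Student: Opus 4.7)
The plan is to reduce Theorem \ref{thm:cv} to the classical Calderon–Vaillancourt bound for the isotropic Hörmander class $S^0_{0,0}(\R^{2n})$. First I would observe that for $\nu = 0$ and every multi-indices with $|\alpha|+|\beta|\geq 1$ the exponent $(0-k|\alpha|-\ell|\beta|)_+ = 0$, so the defining inequality of $\wt S^0_{k,\ell}$ collapses to
\[
|\partial_x^\alpha \partial_\xi^\beta A(x,\xi)| \leq C_{\alpha\beta}, \qquad \forall (x,\xi)\in\R^{2n},\ \forall \alpha,\beta\in\N^n,
\]
independently of $k,\ell$. Thus $A$ is a bounded symbol with uniformly bounded derivatives of every order, and the question is purely to control $\|{\rm Op}_\h^W(A)\|_{\cL(L^2)}$ by finitely many of the seminorms $|A|_{N,0}$, uniformly in $\h\in(0,1]$.

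The natural approach is the Cotlar–Stein almost-orthogonality method. Pick a Schwartz partition of unity $1 = \sum_{j\in\Z^{2n}} \chi_j$ on $\R^{2n}_{x,\xi}$ with $\chi_j(\cdot)=\chi_0(\cdot-j)$, set $A_j := \chi_j A$ and $T_j := {\rm Op}_\h^W(A_j)$. Each $A_j$ is compactly supported in a unit box and still satisfies $|\partial^\alpha_x\partial^\beta_\xi A_j| \leq \wt C_{\alpha\beta}\,|A|_{|\alpha|+|\beta|,0}$ uniformly in $j$. Using an integration by parts in the oscillatory integral defining the Schwartz kernel of $T_j$, one gets the kernel estimate
\[
|K_{T_j}(x,y)| \leq C_M \, |A|_{M,0} \, \h^{-n}\,\la (x-y)/\h\ra^{-2n-1},
\]
which combined with Schur's test yields $\|T_j\|_{\cL(L^2)} \leq C\,|A|_{M_0,0}$ uniformly in $j$ and $\h$ for some fixed integer $M_0$.

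The heart of the proof, and the main obstacle, is the almost-orthogonality estimate: one must show that there exist $N\in\N$ and a function $\omega:\Z^{2n}\to[0,\infty)$ with $\sum_j \omega(j)^{1/2}<\infty$ such that
\[
\|T_j^* T_k\|_{\cL(L^2)} + \|T_j T_k^*\|_{\cL(L^2)} \leq \omega(j-k)\,|A|_{N,0}^2, \qquad \forall j,k\in\Z^{2n},\ \h\in(0,1].
\]
Here Theorem \ref{symb.cal} is the key tool: $T_j^*T_k = {\rm Op}_\h^W(\overline{A_j}\sharp A_k)$, and the Moyal product is an oscillatory integral whose phase $\sigma(X-Y,X-Z)/\h$ is non-stationary whenever $\operatorname{supp}\overline{A_j}$ and $\operatorname{supp} A_k$ are separated (i.e.\ when $|j-k|\gtrsim 1$). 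Repeated integration by parts in the Moyal oscillatory integral, exploiting that every derivative of $\chi_j A$ remains bounded by a seminorm of $A$, converts the separation $|j-k|$ into arbitrary polynomial decay $\la j-k\ra^{-2n-1}$ in the symbol norm of $\overline{A_j}\sharp A_k$. Applying the kernel/Schur bound from the previous paragraph to this Moyal symbol gives the claimed decay $\omega(j-k) = C\la j-k\ra^{-2n-1}$; the number $N$ of seminorms used is fixed, depending only on $n$, and the factors of $\h$ coming from the integrations by parts are all non-negative powers and may be discarded since $\h\in(0,1]$.

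Once almost-orthogonality is in place, the Cotlar–Stein lemma gives
\[
\|{\rm Op}_\h^W(A)\|_{\cL(L^2)} = \Bigl\|\sum_j T_j\Bigr\| \leq \sup_j \sum_k \omega(j-k)^{1/2}\, |A|_{N,0} \leq C\,|A|_{N,0},
\]
uniformly in $\h\in(0,1]$, which is exactly the statement of Theorem \ref{thm:cv}.
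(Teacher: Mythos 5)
The paper does not prove Theorem~\ref{thm:cv}: it is stated as a recalled fact, with the reader referred to \cite{ro1,hero,robert_book}, so there is no in-paper argument to compare against. Your key structural observation is correct and is exactly the point: for $\nu=0$ every exponent $(0-k|\alpha|-\ell|\beta|)_+$ vanishes, so $\wt S^0_{k,\ell}$ coincides (for any $k,\ell>0$) with the isotropic class $S^0_{0,0}$ of smooth symbols with all derivatives bounded, and $|A|_{N,0}$ is just the supremum of those derivatives up to order $N$; the statement is therefore the classical Calder\'on--Vaillancourt theorem for $S^0_{0,0}$ plus uniformity in $\hbar$. Your Cotlar--Stein outline (unit-box partition, kernel bounds by integration by parts in $\xi$, Schur's test for each $T_j$, almost-orthogonality via the Moyal product) is a legitimate and standard route. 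One remark on the $\hbar$-uniformity, which your write-up treats somewhat casually with ``the $\hbar$-powers are all non-negative'': it is more economical to dispose of $\hbar$ \emph{before} running Cotlar--Stein. The substitution $\xi\mapsto\hbar\xi$ in the Weyl oscillatory integral gives ${\rm Op}^W_\hbar(A)={\rm Op}^W_1(A_\hbar)$ with $A_\hbar(x,\xi):=A(x,\hbar\xi)$, and since $|\partial_x^\alpha\partial_\xi^\beta A_\hbar|=\hbar^{|\beta|}\,|(\partial_x^\alpha\partial_\xi^\beta A)(x,\hbar\xi)|\leq|A|_{|\alpha|+|\beta|,0}$ for $\hbar\leq1$, this reduces matters directly to the $\hbar=1$ case with the same seminorms, avoiding the step where the $\hbar^{-2n}$ normalization of the Moyal integral has to be beaten by the $\hbar^M$-gains from integration by parts --- a point your sketch relies on but does not spell out.
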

Notice that $C$ and $N$ are universal constants, independent on $A$ (see  for example \cite{robert_book}).


%

\bibliographystyle{alpha}
\end{document}